\newtheorem{theorem}{Theorem}[section] 
\newtheorem{lemma}[theorem]{Lemma}
\newtheorem{corollary}[theorem]{Corollary}
\newtheorem{proposition}[theorem]{Proposition}
\theoremstyle{definition}
\newtheorem{definition}[theorem]{Definition}
\newtheorem{example}[theorem]{Example}
\theoremstyle{remark}
\newtheorem{remark}[theorem]{Remark}
\numberwithin{equation}{section}
\newcommand{\op}{\operatorname}
\begin{document}

\title{Wave Front Sets of Reductive Lie Group Representations III}

\author{Benjamin Harris}
\address{Bard College at Simon's Rock, 84 Alford Road, Great Barrington, MA 01230}
\email{Benjamin.Harris@math.okstate.edu}

\author{Tobias Weich}
\address{Institut f\"ur Mathematik, Universitat Paderborn, Paderborn, Germany}
\email{weich@math.uni-paderborn.de}


\subjclass[2010]{43A30,22E45,22E46,43A85}

\date{April 12, 2015}


\keywords{Wave Front Set, Singular Spectrum, Analytic Wave Front Set, Lie Group, Real Linear Algebraic Group, Real Reductive Algebraic Group, Induced Representation, Plancherel Measure, Orbit Method, Tempered Representation}

\begin{abstract}
Let $G$ be a real, reductive algebraic group, and let $X$ be a homogeneous space for $G$ with a non-zero invariant density. We give an explicit description of a Zariski open, dense subset of the asymptotics of the tempered support of $L^2(X)$. Under additional hypotheses, this result remains true for vector bundle valued harmonic analysis on $X$. These results follow from an upper bound on the wave front set of an induced Lie group representation under a uniformity condition.
\end{abstract}

\maketitle

\section{Introduction}
Let $G$ be a real, linear algebraic group with Lie algebra $\mathfrak{g}$. Denote the collection of purely imaginary linear functionals on $\mathfrak{g}$ by $i\mathfrak{g}^*:=\op{Hom}_{\mathbb{R}}(\mathfrak{g},i\mathbb{R})$. The Kirillov-Kostant orbit method seeks to associate a coadjoint orbit $\mathcal{O}_{\pi}\subset i\mathfrak{g}^*$ to every irreducible, unitary representation $\pi$ of $G$. This program is not possible in full generality; some irreducible, unitary representations do not naturally correspond to coadjoint orbits. However, it has been carried out for irreducible, unitary representations occurring in the Plancherel formula for $G$ \cite{Du82}, and there are encouraging signs of success for other important families of representations (see for instance \cite{Vog00}). The passage from $\mathcal{O}_{\pi}$ to $\pi$ is often referred to as \emph{quantization}.
\bigskip

Let $X$ be a homogeneous space for $G$ with a non-zero invariant density. For each $x\in X$, let $G_x$ denote the stabilizer of $x$ in $G$, and let $\mathfrak{g}_x$ denote the Lie algebra of $G_x$. Abstract harmonic analysis seeks to decompose $L^2(X)$ into irreducible, unitary representations of $G$. Let $\op{supp}L^2(X)$ denote the support of the Plancherel formula for $L^2(X)$. Loosely speaking, it is the collection of irreducible, unitary representations of $G$ that occur in the direct integral decomposition of $L^2(X)$.
\bigskip

Folklore in this field posits that the functional analysis question of decomposing $L^2(X)$ under the action of $G$ should be related to the geometry of the action of $G$ on $iT^*X$. One way to make the connection is via the \emph{momentum map}
\begin{equation}\label{eq:momentum_map1}
\mu\colon iT^*X\longrightarrow i\mathfrak{g}^*
\end{equation}
by 
\begin{equation}\label{eq:momentum_map2}
(x,\xi)\mapsto \xi\in iT_x^*X\simeq i(\mathfrak{g}/\mathfrak{g}_x)^*\subset i\mathfrak{g}^*.
\end{equation} 

This leads to the naive conjecture

\begin{equation}\label{conj:eq_1}
\pi\in \op{supp}L^2(X) \Longleftrightarrow \mathcal{O}_{\pi}\subset \mu(iT^*X).
\end{equation}

Unfortunately, this conjecture is false even in simple examples. In this paper, we show that it is \emph{asymptotically true} when $G$ is a real, reductive algebraic group and $\pi$ is an irreducible, tempered representation with regular infinitesimal character. Let us make a more precise statement. 
\bigskip

Let $G$ be a real, reductive algebraic group. The support of the Plancherel measure for $L^2(G)$ is the collection of irreducible, tempered representations of $G$, which we denote by $\widehat{G}_{\text{temp}}$. Within this collection is the set of irreducible, unitary representations of $G$ with regular infinitesimal character, which we denote by $\widehat{G}_{\text{temp}}^{\text{\ }\prime}$. `Most' irreducible, tempered representations have regular, infinitesimal character; more precisely, the complementary set $\widehat{G}_{\text{temp}}\setminus \widehat{G}_{\text{temp}}^{\text{\ }\prime}$ has Plancherel measure zero. Rossmann \cite{Ro78}, \cite{Ro80} and Duflo \cite{Du82} (in greater generality) associate a finite union of coadjoint orbits $\mathcal{O}_{\pi}\subset i\mathfrak{g}^*$ to every irreducible, tempered representation $\pi\in \widehat{G}_{\text{temp}}$. When $\pi$ has regular infinitesimal character, $\mathcal{O}_{\pi}$ is a single coadjoint orbit. 

If $W$ is a finite dimensional real vector space, then $\mathcal{C}\subset W$ is a \emph{cone} in $W$ if $t\mathcal{C}=\mathcal{C}$ whenever $t>0$. If $S\subset W$ is a subset of a finite-dimensional, real vector space, then we define the \emph{asymptotic cone} of $S$ in $W$ to be 
$$\operatorname{AC}(S)=\left\{\xi\in W\Big|\ \xi\in \mathcal{C}\ \text{an\ open\ cone} \implies \mathcal{C}\cap S\ \text{is\ unbounded}\right\}\cup \{0\}.$$
Let $(i\mathfrak{g}^*)'\subset i\mathfrak{g}^*$ denote the Zariski open, dense subset of regular, semisimple elements.
 
\begin{theorem}\label{thm:main} Suppose $G$ is a real, reductive algebraic group, and suppose $X$ is a homogeneous space for $G$ with a non-zero invariant density. Then
\begin{equation}\label{eq:main}
\operatorname{AC}\left(\bigcup_{\substack{ \sigma\in \operatorname{supp}L^2(X)\\ \sigma\in \widehat{G}_{\mathrm{temp}}^{\text{\ }\prime}}}\mathcal{O}_{\sigma}\right)\cap (i\mathfrak{g}^*)'=\overline{\mu(iT^*X)}\cap (i\mathfrak{g}^*)'.
\end{equation}
\end{theorem}

Let us unpack this statement. On the spectral side, we associate a coadjoint orbit to every irreducible, tempered representation with regular infinitesimal character occurring in the decomposition of $L^2(X)$, and then we take the union of all of these orbits. On the geometric side, we take the closure of the image of the momentum map applied to $iT^*X$. Theorem \ref{thm:main} states that after taking asymptotics and intersecting with a suitable Zariski open, dense subset of $i\mathfrak{g}^*$, the spectral side and geometric side agree.

\bigskip

Let us consider an application of Theorem \ref{thm:main}. Let $G=\operatorname{Sp}(2n,\mathbb{R})$, and let
\[X_{l,m,n}:=\operatorname{Sp}(2n,\mathbb{R})/[\operatorname{Sp}(2l,\mathbb{Z})\times \operatorname{Sp}(2m,\mathbb{R})]\]
for $l+m\leq n$. It follows from Theorem~\ref{thm:main} that whenever $2m\leq n$, there exist infinitely many Harish-Chandra discrete series representations $\sigma$ of $\operatorname{Sp}(2n,\mathbb{R})$ such that
\[
\op{Hom}_{\op{Sp}(2n,\mathbb{R})}(\sigma,L^2(X_{l,m,n}))\neq \{0\}. 
\]

We can give a weak converse to this statement. Suppose $T\subset G=\operatorname{Sp}(2n,\mathbb{R})$ is a maximal torus with Lie algebra $\mathfrak{t}\subset \mathfrak{g}=\op{sp}(2n,\mathbb{R})$, and, using the decomposition $\mathfrak{g}=[\mathfrak{t},\mathfrak{g}]\oplus \mathfrak{t}$, identify $i\mathfrak{t}^*\subset i\mathfrak{g}^*$. Let $W=N_G(T)/T$ be the real Weyl group of $T$ with respect to $G$. Every Harish-Chandra discrete series representation $\sigma$ of $\operatorname{Sp}(2n,\mathbb{R})$ corresponds (via its Harish-Chandra parameter) to a single $W$ orbit $\lambda_{\sigma}=\mathcal{O}_{\sigma}\cap i\mathfrak{t}^*$. If $\mathcal{C}\subset \overline{\mathcal{C}}\subset (i\mathfrak{t}^*)'$ is an open cone whose closure is contained in the set of regular semisimple elements of $i\mathfrak{t}^*$ and $2m>n$, then there exist at most finitely many Harish-Chandra discrete series representations $\sigma$ of $\op{Sp(2n,\mathbb{R})}$ such that $\lambda_{\sigma}\cap \mathcal{C}\neq \emptyset$ and 
\[
\op{Hom}_{\op{Sp}(2n,\mathbb{R})}(\sigma,L^2(X_{l,m,n}))\neq \{0\}. 
\]
In the special case $m=0$, much stronger results are already known (see Proposition 10.5 on pages 117-118 of \cite{KK16}). In Section \ref{sec:examples}, we will give a version of Theorem \ref{thm:main} that is easier to compute in examples, and we will explain how to deduce the statements in this example from Theorem \ref{thm:main}.

Due to the recent work of Benoist-Kobayashi \cite{BK15}, one has a wealth of examples of homogeneous spaces $X$ for which one knows $\op{supp}L^2(X)\subset \widehat{G}_{\text{temp}}$.
 
\begin{theorem}\label{thm:tempered} Suppose $G$ is a real, reductive algebraic group, suppose $X$ is a homogeneous space for $G$ with a non-zero invariant density, and suppose 
\[\op{supp}L^2(X)\subset \widehat{G}_{\text{temp}}.\]
Then
\begin{equation}\label{eq:tempered}
\operatorname{AC}\left(\bigcup_{\substack{\sigma\in \operatorname{supp}L^2(X)\\ \sigma\in \widehat{G}_{\mathrm{temp}}}}\mathcal{O}_{\sigma}\right)=\overline{\mu(iT^*X)}.
\end{equation}
\end{theorem}

Philosophically, the reason that we have to intersect both sides of (\ref{eq:main}) with $(i\mathfrak{g}^*)'$ is that there may be non-tempered representations occurring in the decomposition of $L^2(X)$ which should contribute to $i\mathfrak{g}^*\setminus (i\mathfrak{g}^*)'$ on the spectral side. In Theorem \ref{thm:tempered}, we assume that such representations do not occur in the decomposition of $L^2(X)$ yielding a sharper statement. 

Under an additional assumption, we give a bundle valued version of these results in Corollary \ref{cor:dense semisimple cor}.

All of these results are corollaries of new results on wave front sets of induced Lie group representations. In the next section, we outline these results, which the authors believe to be fundamental in their own right. Wave front sets of Lie group representations have other applications beyond those explored in this introduction. For instance, they play a fundamental role in the seminal series of papers \cite{Ko94}, \cite{Ko98b}, \cite{Ko98c}, which give necessary and sufficient conditions for discrete decomposability of Lie group representations. This is the third in a series of papers on wave front sets of Lie group representations \cite{HHO16}, \cite{Ha}. The ideas in this series are heavily influenced by the earlier work of Kobayashi.

\textbf{\emph{Acknowledgements:}}

B. Harris is indebted to Hongyu He and Gestur \'{O}lafsson, his former postdoctoral advisers at Louisiana State University, for their support, encouragement, knowledge, and wisdom. The ideas in this paper are a continuation of the work begun in Baton Rouge.

B. Harris would like to thank Joachim Hilgert and Bernhard Kr\"otz for their hospitality during a visit to Universit\"at Paderborn in the summer of 2014. It was a pleasure to learn from them and their myriad bright postdoctoral colleagues. In particular, the authors started this project during the first author's stay.

B. Harris would like to thank Toshiyuki Kobayashi for his constant encouragement and for several conversations.

T. Weich acknowledges financial support via the grant DFG HI 412 12-1 and he would like to thank Joachim Hilgert, S\"onke Hansen and Bernhard Kr\"otz for helpful remarks and motivating discussions.

\section{The Wave Front Set of an Induced Lie Group Representation}

If $f$ is a continuous function on a Lie group $G$, then $\operatorname{WF}_e(f)=\operatorname{WF}(f)\cap iT^*_eG$ (resp. $\op{SS}_e(f)=\op{SS}(f)\cap iT_e^*G$) denotes the wave front set of $f$ (resp. singular spectrum of $f$) intersected with the fiber of $iT^*G$ over the identity. If $G$ is a Lie group and $(\pi,V)$ is a unitary representation of $G$, then the \emph{wave front set} and \emph{singular spectrum} of $\pi$ are defined by
\[
\operatorname{WF}(\pi)=\overline{\bigcup_{u,v\in V}\operatorname{WF}_e(\pi(g)u,v)},\ \operatorname{SS}(\pi)=\overline{\bigcup_{u,v\in V}\operatorname{SS}_e(\pi(g)u,v)}.
\]
In words, the wave front set of $\pi$ is the closure of the unions of the wave front sets at the identity of the matrix coefficients of $\pi$. These notions were first introduced by Kashiwara-Vergne \cite{KV79} and Howe \cite{Ho81}. It is a well known fact that $\operatorname{WF}(\pi)$ and $\operatorname{SS}(\pi)$ are $\op{Ad}^*(G)$-invariant, closed cones in $i\mathfrak g^* = iT^*_e G$. For precise definitions of the wave front set and singular spectrum which are given in a way that is compatible with the exposition in this paper see Section 2 of the first paper in this series \cite{HHO16}. 

Let $G$ be a Lie group, let $H\subset G$ be a closed subgroup, and let $(\tau,V_{\tau})$ be a unitary representation of $H$. Let $X=G/H$, let $\mathcal{D}^{1/2}\rightarrow X$ denote the bundle of complex half densities on $X$ (see Appendix A), and let $\mathcal{V}_{\tau}=G\times_H V_{\tau}\rightarrow X$ denote the unique $G$ equivariant vector bundle on $X$ whose fiber over $\{H\}$ is the unitary representation $\tau$. Then the representation of $G$ \emph{induced} from the representation $(\tau,V_{\tau})$ of $H$ is defined by \[
\operatorname{Ind}_H^G\tau:=L^2(X,\mathcal{D}^{1/2}\otimes \mathcal{V}_{\tau})
\]
with the natural action of $G$ on the square integrable sections on the vector bundle $\mathcal D^{1/2}\otimes \mathcal V_\tau\to X$.

In this article, we are interested in relating $\operatorname{WF}(\tau)$ and $\operatorname{WF}(\op{Ind}_H^G(\tau))$ (resp. $\operatorname{SS}(\tau)$ and $\operatorname{SS}(\op{Ind}_H^G(\tau))$). Let $\mathfrak{g}$ (resp. $\mathfrak{h}$) denote the Lie algebra of $G$ (resp. $H$), let $i\mathfrak{g}^*$ (resp. $i\mathfrak{h}^*$) be the set of purely imaginary linear functionals on $\mathfrak{g}$ (resp. $\mathfrak{h}$). Then recall that $\operatorname{WF}(\tau),\operatorname{SS}(\tau)\subset i\mathfrak h^*$ and $\operatorname{WF}(\op{Ind}_H^G(\tau)),\operatorname{SS}(\op{Ind}_H^G(\tau))\subset i\mathfrak g^*$ are closed, $\op{Ad}^*(G)$ invariant cones. Using the natural projection $q\colon i\mathfrak{g}^*\rightarrow i\mathfrak{h}^*$ we define a natural way of inducing closed $\op{Ad}^*(G)$ invariant cones in $i\mathfrak g^*$ from closed, $\op{Ad}^*(H)$-invariant cones in $i\mathfrak h^*$: If $S\subset i\mathfrak{h}^*$ is a subset of $i\mathfrak{h}^*$, then we may form the \emph{induced} subset of $i\mathfrak{g}^*$ by
\[
\operatorname{Ind}_{H}^{G}S:=\overline{\operatorname{Ad}^*(G)\cdot q^{-1}(S)}. 
\]
By construction, this is a closed $\op{Ad}^*(G)$-invariant cone in $i\mathfrak g^*$ if $S$ is a closed, $\op{Ad}^*(H)$-invariant cone in $i\mathfrak h^*$.

In Theorem 1.1 of \cite{HHO16}, it is shown that
$$\operatorname{WF}(\operatorname{Ind}_H^G\tau)\supset \operatorname{Ind}_H^G\operatorname{WF}(\tau)$$
and
$$\operatorname{SS}(\operatorname{Ind}_H^G\tau)\supset \operatorname{Ind}_H^G\operatorname{SS}(\tau).$$

In this paper, we address the converse statement. First, we address the case where $X=G/H$ has an invariant measure and $\tau=\mathbbm{1}$ is the trivial representation of $H$. 

\begin{theorem} \label{thm:L^2 space} Suppose $X=G/H$ is a homogeneous space for a Lie group $G$ equipped with a nonzero invariant density. Then 
$$\operatorname{WF}(L^2(X))=\operatorname{WF}(\operatorname{Ind}_H^G\mathbbm{1})=\operatorname{Ind}_H^G\operatorname{WF}(\mathbbm{1})=\overline{\operatorname{Ad}^*(G)\cdot i(\mathfrak{g}/\mathfrak{h})^*}$$
and 
$$\operatorname{SS}(L^2(X))=\operatorname{SS}(\operatorname{Ind}_H^G\mathbbm{1})=\operatorname{Ind}_H^G\operatorname{SS}(\mathbbm{1})=\overline{\operatorname{Ad}^*(G)\cdot i(\mathfrak{g}/\mathfrak{h})^*}.$$
\end{theorem}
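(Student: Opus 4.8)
The plan is to reduce all eight identities to a single microlocal upper bound, which I would then read off from the matrix coefficients of $L^2(X)$ via the wave front set calculus. \emph{Reductions.} Since $X$ carries a nonzero $G$-invariant density, its positive square root is a nowhere-vanishing $G$-invariant section of $\mathcal D^{1/2}$, so $\mathcal D^{1/2}$ is equivariantly trivial and $\operatorname{Ind}_H^G\mathbbm{1}=L^2(X,\mathcal D^{1/2})\cong L^2(X)$ as unitary $G$-modules; this is the first equality in each line. The matrix coefficients of the trivial representation of $H$ are constant, so $\operatorname{WF}(\mathbbm{1})=\operatorname{SS}(\mathbbm{1})=\{0\}$, and since $q^{-1}(0)=(\mathfrak g/\mathfrak h)^*$ one gets $\operatorname{Ind}_H^G\operatorname{WF}(\mathbbm{1})=\operatorname{Ind}_H^G\operatorname{SS}(\mathbbm{1})=\overline{\operatorname{Ad}^*(G)\cdot(\mathfrak g/\mathfrak h)^*}$, the common value of the four middle terms. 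The inclusions $\operatorname{WF}(L^2(X))\supseteq\operatorname{Ind}_H^G\operatorname{WF}(\mathbbm{1})$ and $\operatorname{SS}(L^2(X))\supseteq\operatorname{Ind}_H^G\operatorname{SS}(\mathbbm{1})$ are Theorem 1.1 of \cite{HHO}, and the singular spectrum always contains the $C^\infty$ wave front set, so $\operatorname{WF}(L^2(X))\subseteq\operatorname{SS}(L^2(X))$. Hence the whole statement follows from the single inclusion $\operatorname{SS}(L^2(X))\subseteq\overline{\operatorname{Ad}^*(G)\cdot(\mathfrak g/\mathfrak h)^*}$.

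\emph{The target as a moment map image.} Identifying $iT^*X$ with $G\times_H i(\mathfrak g/\mathfrak h)^*$, the moment map $\mu\colon iT^*X\to i\mathfrak g^*$ of the cotangent action — characterized by $\langle\mu(\alpha_x),Z\rangle=\langle\alpha_x,Z_X(x)\rangle$, where $Z_X$ denotes the vector field on $X$ generated by $Z\in\mathfrak g$ — is $[g,\lambda]\mapsto\operatorname{Ad}^*(g)\lambda$, so $\mu(iT^*X)=\operatorname{Ad}^*(G)\cdot i(\mathfrak g/\mathfrak h)^*$. Equivalently, for fixed $x=gH$ the linear map $\mu_x\colon iT^*_xX\to i\mathfrak g^*$, $\langle\mu_x(\alpha),Z\rangle=\langle\alpha,Z_X(x)\rangle$, is injective with image the linear subspace $\operatorname{Ad}^*(g)(\mathfrak g/\mathfrak h)^*$. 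Thus the target set is $\overline{\bigcup_{x\in X}\mu_x(iT^*_xX)}$, and it suffices to show that every matrix coefficient of $L^2(X)$ has, at the identity, singular spectrum contained in $\bigcup_{x\in X}\mu_x(iT^*_xX)$.

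\emph{The main computation.} Fix $u,v\in L^2(X)$ and put $\phi(g)=\langle\pi(g)u,v\rangle=\int_X u(g^{-1}x)\overline{v(x)}\,dx$, the integral being against the invariant density. Let $a\colon G\times X\to X\times X$, $a(g,x)=(g^{-1}x,x)$, be the (real-analytic, surjective) submersion, and let $p\colon G\times X\to G$ be the projection; then $\phi=p_*\bigl(a^*(u\boxtimes\bar v)\bigr)$. Running the analytic wave front set calculus — singular spectrum of an exterior product lies in the product of the factors' singular spectra (augmented by zero sections), singular spectrum of a pull-back along a submersion lies in the image of the transposed differential, and singular spectrum of a fiber integral lies in the set of covectors restricting to $0$ along the fibers — and tracking covectors over $g=e$: the transpose of $da_{(e,x)}$ sends $(\alpha,\beta)\in iT^*_xX\oplus iT^*_xX$ to $\bigl(-\mu_x(\alpha),\ \alpha+\beta\bigr)\in i\mathfrak g^*\oplus iT^*_xX$, and the fiber-integration step forces $\alpha+\beta=0$. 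Therefore every element of $\operatorname{SS}_e(\phi)$ has the form $-\mu_x(\alpha)$ for some $x\in X$ and $\alpha\in iT^*_xX$, i.e.\ $\operatorname{SS}_e(\phi)\subseteq\bigcup_{x\in X}\mu_x(iT^*_xX)$ since each $\mu_x(iT^*_xX)$ is a linear subspace. Taking the closed union over all $u,v$ gives the required inclusion.

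\emph{The main obstacle.} The genuine difficulty is that $u$ and $v$ are only $L^2$, so the fiber integration $p_*$ cannot be invoked verbatim when $X$ is noncompact: $p$ is not proper on the support of $a^*(u\boxtimes\bar v)$, and one cannot simply pass to a dense family of compactly supported vectors, because wave front sets are not continuous under limits while the upper bound must hold for \emph{every} pair $u,v$. I would instead carry out the underlying oscillatory-integral estimate directly: in exponential coordinates near $e$, with $\chi$ a cutoff supported near $0\in\mathfrak g$,
\[
\widehat{\chi\phi}(\lambda)=\int_X\overline{v(x)}\,I(x,\lambda)\,dx,\qquad I(x,\lambda)=\int_{\mathfrak g}e^{-i\langle\lambda,Z\rangle}\,\chi(Z)\,u\bigl(\exp(-Z)\cdot x\bigr)\,dZ;
\]
for $\lambda$ in a closed cone $\Gamma\subseteq i\mathfrak g^*$ disjoint from $\operatorname{Ad}^*(G)\cdot(\mathfrak g/\mathfrak h)^*$, the pull-back $Z\mapsto u(\exp(-Z)\cdot x)$ has at $Z=0$ singular spectrum avoiding $\Gamma$ — and, crucially, uniformly in $x\in X$, by the moment-map computation — which one upgrades to rapid decay of $\|I(\cdot,\lambda)\|_{L^2_x}$ on a slightly smaller cone $\Gamma'\subseteq\Gamma$, after which the Cauchy--Schwarz inequality in $x$ against $\|v\|_{L^2}$ finishes the estimate. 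The heart of the proof is thus this uniform-in-$X$ microlocal decay bound; it holds automatically here because the $G$-action is transitive and real-analytic, and it is of exactly the type abstracted by the ``uniformity condition'' that the paper's general theorem imposes once $\tau$ is nontrivial or the density fails to be $G$-invariant.
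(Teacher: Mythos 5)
Your reductions (triviality of $\mathcal D^{1/2}$ from the invariant density, $\operatorname{WF}(\mathbbm{1})=\operatorname{SS}(\mathbbm{1})=\{0\}$, the lower bound from Theorem 1.1 of \cite{HHO}, and $\operatorname{WF}\subset\operatorname{SS}$) match the paper, and your ``direct oscillatory-integral estimate'' — exchange the $X$ and $G$ integrals and estimate the inner $G$-integral uniformly in $x$ — is indeed the paper's strategy. But the step you yourself call the heart of the proof is asserted rather than proved, and the justification offered (``it holds automatically because the $G$-action is transitive and real-analytic'') is not correct as it stands. For fixed $x$, the function $Z\mapsto u(\exp(-Z)x)$ is the pullback of a mere $L^2$ function; microlocal generalities give you only that its singular spectrum at $0$ lies in $i(\mathfrak g/\mathfrak g_x)^*$, with decay constants $C_{N,x}$ depending on $x$ (and on the local behaviour of $u$ near the relevant piece of orbit) in an uncontrolled way. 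Knowing \emph{where} the wave front sets sit uniformly does not control the \emph{constants} uniformly — this is precisely the issue the paper isolates (and abstracts as condition U). The mechanism that actually produces the uniform bound in the scalar case is concrete and is missing from your proposal: one integrates by parts only in the $x$-dependent direction $Y_x=\operatorname{pr}_{\mathfrak g_x}(-i\eta_0)/|\operatorname{pr}_{\mathfrak g_x}(\eta_0)|\in\mathfrak g_x$, viewed as a right-invariant vector field on $G$, along which $g\mapsto u(g^{-1}x)$ is \emph{constant}, so no derivative ever falls on the $L^2$ datum; uniformity in $x$ then comes from $\inf_{x}|\operatorname{pr}_{\mathfrak g_x}(\eta_0)|>0$ (equivalent to $\eta_0\notin\overline{\bigcup_x i\mathfrak g_x^{\perp}}$) together with a uniform lower bound on the phase derivative $\mu(\xi,Y_x,g)$ for $g$ near $e$ and $\xi$ near $\eta_0$, and the resulting constant has the form $C^{N+1}(N+1)^N\|f_1\|_{L^2}\|f_2\|_{L^2}$, which is then integrated in $x$ by Cauchy--Schwarz. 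Without this (or an equivalent) device, your claimed rapid decay of $\|I(\cdot,\lambda)\|_{L^2_x}$ is unsubstantiated.

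A second, independent gap: having reduced the whole theorem to the inclusion for $\operatorname{SS}$, you cannot finish with a single smooth cutoff $\chi$ and ``rapid decay'' of $\widehat{\chi\phi}$ — that is the criterion for the $C^\infty$ wave front set, not for the singular spectrum. The analytic case requires the H\"ormander family $\varphi_{N,U_1,U_2}$ (with $|D^{\alpha+\beta}\varphi_N|\le C_\alpha^{|\beta|+1}(N+1)^{|\beta|}$) and estimates of the form $C^{N+1}(N+1)^N t^{-N}$ with $C$ independent of $N$, which is exactly how the paper runs the integration by parts. Also note that your first ``main computation'' via the pullback/pushforward calculus is, as you acknowledge, inapplicable to $L^2$ data with non-proper projection, so it can only serve as a heuristic for locating the target set, not as part of the proof.
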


This result is proven in Section \ref{sec:scalar}. Observe 
\[\operatorname{Ad}^*(G)\cdot i(\mathfrak{g}/\mathfrak{h})^*=\mu(iT^*(G/H))\]
(see (\ref{eq:momentum_map1}) and (\ref{eq:momentum_map2})). Next, we address the case where $\tau$ is a finite dimensional, unitary representation of $H$. 

\begin{theorem} \label{thm:dense semisimple} Suppose $G$ is a real, linear algebraic group, suppose $H\subset G$ is a closed subgroup with Lie algebra $\mathfrak{h}$, and suppose $\tau$ is a finite dimensional, unitary representation of $H$. Assume the existence of a closed, real algebraic subgroup $H_1\subset G$ with Lie algebra $\mathfrak{h}$ and assume that the set of semisimple elements in $\mathfrak{h}$ is dense in $\mathfrak{h}$. If $\tau$ is a finite dimensional, unitary representation of $H$, then
\[
\operatorname{WF}(\operatorname{Ind}_H^G\tau)=\operatorname{Ind}_H^G\operatorname{WF}(\tau) 
\]
and 
\[
\operatorname{SS}(\operatorname{Ind}_H^G\tau)=\operatorname{Ind}_H^G\operatorname{SS}(\tau).
\]
\end{theorem}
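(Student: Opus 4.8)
The plan is to reduce Theorem~\ref{thm:dense semisimple} to Theorem~\ref{thm:L^2 space} together with the general uniformity bound alluded to in the abstract. Since the lower bounds $\operatorname{WF}(\operatorname{Ind}_H^G\tau)\supset\operatorname{Ind}_H^G\operatorname{WF}(\tau)$ and the analogous statement for $\operatorname{SS}$ are already known from \cite{HHO}, the entire content is the reverse inclusion, and I would treat $\operatorname{WF}$ and $\operatorname{SS}$ in parallel since the arguments differ only in which microlocal calculus (smooth vs.\ analytic) is invoked. First I would use the hypothesis that $H_1\subset G$ is a closed real algebraic subgroup with Lie algebra $\mathfrak h$: then $H$ and $H_1$ have the same identity component, so $\operatorname{Ind}_H^G\tau$ and $\operatorname{Ind}_{H_1}^G(\operatorname{Ind}_H^{H_1}\tau)$ have matrix coefficients with the same germs at $e$ up to the finitely many components, and hence the same wave front set and singular spectrum; moreover $\operatorname{Ind}_H^G S=\operatorname{Ind}_{H_1}^G\operatorname{Ind}_H^{H_1}S$ for cones, and one checks $\operatorname{WF}(\operatorname{Ind}_H^{H_1}\tau)\subset\operatorname{Ind}_H^{H_1}\operatorname{WF}(\tau)$ directly because $H_1/H$ is compact (a finite-dimensional unitary rep of a subgroup of finite covolume index behaves well). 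So without loss of generality I may assume $H=H_1$ is itself real algebraic.

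Next I would peel off $\tau$. A finite-dimensional unitary representation $\tau$ of the algebraic group $H$ factors through a compact quotient, so $\ker\tau$ contains a finite-index-in-components normal subgroup; after passing to $H/\ker\tau$-type considerations one reduces to the case where $\tau$ is a character or, more cleanly, one embeds $\tau$ into $L^2(H/H_\tau)$-type spaces. The cleanest route: realize $\operatorname{Ind}_H^G\tau$ as a summand (or subrepresentation) of $\operatorname{Ind}_{H'}^G\mathbbm 1=L^2(G/H')$ for a suitable closed subgroup $H'\subset H$ with $\mathfrak h'=\mathfrak h$ and $H/H'$ compact, chosen so that $\tau$ occurs in $\operatorname{Ind}_{H'}^H\mathbbm 1=L^2(H/H')$; this uses the hypothesis that semisimple elements are dense in $\mathfrak h$ to guarantee that $H$ is (essentially) unimodular-enough and that such an $H'$ exists with $H/H'$ compact carrying an invariant density. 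Because $\operatorname{WF}$ and $\operatorname{SS}$ of a subrepresentation are contained in those of the ambient representation, and by induction in stages $\operatorname{Ind}_{H'}^G\mathbbm 1=\operatorname{Ind}_H^G(\operatorname{Ind}_{H'}^H\mathbbm1)$, I would then get
\[
\operatorname{WF}(\operatorname{Ind}_H^G\tau)\subset\operatorname{WF}(L^2(G/H'))=\overline{\operatorname{Ad}^*(G)\cdot(\mathfrak g/\mathfrak h')^*}=\overline{\operatorname{Ad}^*(G)\cdot(\mathfrak g/\mathfrak h)^*},
\]
using $\mathfrak h'=\mathfrak h$ and Theorem~\ref{thm:L^2 space} applied to $G/H'$. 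Finally I would verify $\overline{\operatorname{Ad}^*(G)\cdot(\mathfrak g/\mathfrak h)^*}=\operatorname{Ind}_H^G\operatorname{WF}(\tau)$: since $\tau$ is finite dimensional, $\operatorname{WF}(\tau)=\{0\}$ inside $i\mathfrak h^*$ (matrix coefficients of finite-dimensional reps are real-analytic with trivial wave front set, hence the closure is $\{0\}$), so $q^{-1}(\operatorname{WF}(\tau))=q^{-1}(0)=(\mathfrak g/\mathfrak h)^*$ and $\operatorname{Ind}_H^G\operatorname{WF}(\tau)=\overline{\operatorname{Ad}^*(G)\cdot(\mathfrak g/\mathfrak h)^*}$, matching. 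The $\operatorname{SS}$ statement is identical with the analytic wave front set replacing the $C^\infty$ one throughout, which is legitimate because $L^2(G/H')$ is a representation of the real algebraic group $G$ and the analytic-microlocal machinery of \cite{HHO},\cite{Ha} applies under exactly the algebraicity hypothesis in force.

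The main obstacle I anticipate is the second step — producing the closed subgroup $H'$ with $\mathfrak h'=\mathfrak h$, $H/H'$ compact, and an invariant density on $G/H'$, such that $\tau\hookrightarrow L^2(H/H')$. Without the ``semisimple elements dense in $\mathfrak h$'' hypothesis this can fail: unipotent directions can obstruct unimodularity and the embedding of $\tau$, and the induced-cone identity $\operatorname{Ind}_H^GS=\overline{\operatorname{Ad}^*(G)q^{-1}(S)}$ can genuinely undershoot $\operatorname{WF}(\operatorname{Ind}_H^G\tau)$. So the real work is a structure-theory lemma: using that $H_1$ is algebraic with semisimple locus dense, decompose $\mathfrak h$ and build $H'$ as (the algebraic hull of) the kernel of an appropriate extension of $\tau$ to a neighborhood, checking unimodularity of $G/H'$ via the modular character and the density of semisimple elements. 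Once that lemma is in place, everything else is bookkeeping with known inclusions, induction in stages, and Theorem~\ref{thm:L^2 space}.
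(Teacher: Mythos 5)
There is a genuine gap, and it sits exactly where you flag "the real work": the structure-theory lemma you need is false, not merely delicate. You propose to find a closed subgroup $H'\subset H$ with $\mathfrak h'=\mathfrak h$, $H/H'$ compact, and $\tau\hookrightarrow L^2(H/H')$, so that $\operatorname{Ind}_H^G\tau$ embeds in $L^2(G/H')$ and Theorem \ref{thm:L^2 space} finishes the job. But $\mathfrak h'=\mathfrak h$ forces $H'$ to be open in $H$, hence to contain the identity component $H_e$, so $L^2(H/H')$ restricted to $H_e$ is a multiple of the trivial representation; consequently $\tau$ can occur in it only if $\tau$ is trivial on $H_e$. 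Already for $G=\operatorname{SL}(2,\mathbb R)$, $H=A\cong\mathbb R_{>0}$ a split torus (which satisfies the dense-semisimple hypothesis) and $\tau(a)=a^{i\lambda}$ a nontrivial unitary character, no such $H'$ exists. Relatedly, the claim that a finite-dimensional unitary representation of an algebraic group "factors through a compact quotient" is false (the image of such a character is a non-closed subgroup of $U(1)$-type data; $H/\ker\tau$ need not be compact), and the alternative $H'=\ker\tau$ destroys the equality $\mathfrak h'=\mathfrak h$ and yields a strictly larger induced cone, so the upper bound obtained that way is too weak. Your first reduction is also shaky: $H$ need not be contained in $H_1$ (they only share a Lie algebra), so $\operatorname{Ind}_H^{H_1}\tau$ is not defined, and $H$ may have infinitely many components; but this is secondary to the main obstruction.

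The deeper point is that for nontrivial finite-dimensional $\tau$ the theorem is not a corollary of the scalar case plus bookkeeping: even though $\operatorname{WF}(\tau)=\{0\}$ (which you correctly note, and which does give $\operatorname{Ind}_H^G\operatorname{WF}(\tau)=\overline{\operatorname{Ad}^*(G)\cdot i(\mathfrak g/\mathfrak h)^*}$, as in the paper), the matrix coefficients of $\operatorname{Ind}_H^G\tau$ involve the fiber representations $\tau_x$, $\sigma_x$ of the stabilizers $G_x$, which are conjugates of $\tau$ by unboundedly large group elements. The whole difficulty is uniformity in $x\in X$: one must bound $\lvert d\tau_x(Y_x)\rvert_{\mathrm{op}}$ and $\lvert d\sigma_x(Y_x)\rvert_{\mathrm{op}}$ uniformly while integrating by parts in stabilizer directions $Y_x\in\mathfrak g_x$. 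This is where the paper actually uses the dense-semisimple hypothesis: finiteness of $H$-conjugacy classes of maximal toral subalgebras (Proposition \ref{prop:finitetoral}) and the norm-controlled conjugation of semisimple elements into $\mathfrak h$ (Lemma \ref{lem:bounded_AdY_norm}, Corollary \ref{cor:vectorchoices}) give exactly these uniform bounds, verifying wavefront/singular spectrum condition U (Theorem \ref{thm:cond_U_dense_semisimple}), after which Theorems \ref{thm:WF_condU} and \ref{thm:SS_condU} apply. The counterexamples of Section \ref{sec:counterexamples} (a one-dimensional unitary $\tau=\chi_\lambda$ of $N\subset\operatorname{SL}(2,\mathbb R)$) show that without such uniform control the asserted upper bound genuinely fails, so no argument that ignores the analytic dependence of $\tau_x$ on $x$ can be correct. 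You would need to replace your second step with a proof of a uniformity statement of this kind.
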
 
Observe that the hypotheses of Theorem \ref{thm:dense semisimple} hold whenever $H$ is a real, reductive algebraic group or a parabolic subgroup of a real, reductive algebraic group. However, the hypotheses fail when $H$ is a unipotent group. We want to emphasize that the assumption of dense semisimple elements is not only of technical nature in our proof, but that one cannot hope to prove the upper bounds on the wavefront sets without further assumptions. Already for the classical example of $G=\operatorname{SL}(2,\mathbb{R})$ and $H=N$ being the standard unipotent subgroup (thus not fulfilling the dense semisimple condition) one finds counterexamples to the equality $\operatorname{WF}(\operatorname{Ind}_H^G\tau)=\operatorname{Ind}_H^G\operatorname{WF}(\tau)$. A family of such counterexamples is presented in Section~\ref{sec:counterexamples} utilizing Matumoto's work on the theory of Whittaker functionals for real, reductive algebraic groups \cite{Ma92}. 

Next, we consider the case where $\tau$ is an arbitrary, unitary representation of $H$ and $X=G/H$ is compact. 

\begin{theorem} \label{thm:compact} Suppose $G$ is a Lie group, $H\subset G$ is 
a closed subgroup, $X=G/H$ is a compact homogeneous space, and $\tau$ is a unitary representation of $H$. Then
$$\operatorname{WF}(\operatorname{Ind}_H^G\tau)=\operatorname{Ind}_H^G\operatorname{WF}(\tau)$$
and 
$$\operatorname{SS}(\operatorname{Ind}_H^G\tau)=\operatorname{Ind}_H^G\operatorname{SS}(\tau).$$ 
\end{theorem}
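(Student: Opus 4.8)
The plan is to reduce to the already-established lower bound from Theorem 1.1 of \cite{HHO}, so the real content is the upper bound $\operatorname{WF}(\operatorname{Ind}_H^G\tau)\subset \operatorname{Ind}_H^G\operatorname{WF}(\tau)$ (and verbatim the same for $\op{SS}$). First I would exploit compactness of $X=G/H$ to decompose the induced representation. Since $X$ is compact, one can cover it by finitely many charts, choose a finite partition of unity, and thereby write a matrix coefficient $\langle\operatorname{Ind}_H^G\tau(g)u,v\rangle$ as a finite sum of terms each supported (in the $X$-variable) in a single chart. Because wave front sets (and singular spectra) behave well under finite sums — $\operatorname{WF}(f_1+f_2)\subset\operatorname{WF}(f_1)\cup\operatorname{WF}(f_2)$ — it suffices to bound the contribution of each local piece. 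This is exactly where compactness is essential: in the noncompact case the partition of unity would be infinite and the union of wave front sets need not be closed with the right description, which is why Theorems 1.1 and 1.2 need their extra hypotheses.

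Next I would set up the local analysis. Fix a point $x_0\in X$ and a local section, i.e. a chart $U\subset\mathfrak g/\mathfrak h\cong\mathbb R^n$ with coordinates transverse to $H$, so that near $x_0$ a section of $\mathcal D^{1/2}\otimes\mathcal V_\tau$ is just a $V_\tau$-valued (half-density-weighted) function on $U$. For $u,v$ supported in such charts, the matrix coefficient $g\mapsto\langle\operatorname{Ind}_H^G\tau(g)u,v\rangle$ is an integral over $X$ of the inner product of $u$ with the translate of $v$, and near the identity of $G$ it can be written as an oscillatory-type integral
\[
\langle\operatorname{Ind}_H^G\tau(g)u,v\rangle=\int_{U} \big\langle \tau(h(g,x))\, u(g^{-1}\cdot x),\, v(x)\big\rangle\, d\mu(x),
\]
where $h(g,x)\in H$ is the cocycle recording the $H$-part of $g^{-1}$ acting in the chosen local trivialization, and $d\mu$ is the invariant measure. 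The dependence of the integrand on $\tau$ enters only through the matrix coefficients $\langle\tau(h(g,x))a,b\rangle$ of $\tau$ for $a,b$ in (a dense subspace of) $V_\tau$; all other ingredients — the change of variables $x\mapsto g^{-1}\cdot x$, the cocycle $h(g,x)$, the half-density factor — are real-analytic in $g$ and smooth in $x$, hence contribute no new singular directions.

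The key step is then a wave-front-set calculus estimate: if $F(g,h)$ is a function on $G\times H$ whose wave front set over $(e,h)$ is controlled by $\operatorname{WF}(\tau)$ (pulled back appropriately), and $h=h(g,x)$ is a smooth map with surjective-enough differential, then the wave front set of the pushforward $g\mapsto\int F(g,h(g,x))\,d\mu(x)$ at $e$ is contained in the image under the relevant pullback/pushforward of $\operatorname{WF}_e(\text{matrix coeffs of }\tau)$. Concretely, one uses Hörmander's theorems on wave front sets of pullbacks and of integrals (fiber integration), noting that the conormal bookkeeping is precisely the map $q^{-1}$ followed by $\operatorname{Ad}^*(G)$-translation that defines $\operatorname{Ind}_H^G$. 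One must check the transversality (non-characteristic) hypothesis for the pullback along $(g,x)\mapsto h(g,x)$; this is where the cocycle identity and the freeness of the transverse coordinates are used, and it is the main technical obstacle. Assembling the finitely many charts, taking the union over a dense set of $u,v$, and then taking closures gives $\operatorname{WF}(\operatorname{Ind}_H^G\tau)\subset\operatorname{Ind}_H^G\operatorname{WF}(\tau)$; combined with the reverse inclusion from \cite{HHO} this yields equality, and the identical argument with $\op{SS}$ in place of $\operatorname{WF}$ (using the analytic wave front calculus, legitimate because the geometric data here is real-analytic) gives the singular spectrum statement.

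I expect the main obstacle to be making the fiber-integration / pushforward estimate rigorous when $\tau$ is infinite-dimensional: one has to phrase the wave front set of the operator-valued (or weakly-defined) matrix coefficient correctly and justify interchanging the integral over $X$ with the microlocal estimates, uniformly in $u,v$. Compactness of $X$ is what rescues this, since it makes the $x$-integration a genuine finite-measure fiber integration with no escape-to-infinity issues.
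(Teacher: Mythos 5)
Your proposal stalls at the point that is actually the heart of the matter. The reduction ``the dependence on $\tau$ enters only through matrix coefficients $\langle\tau(h(g,x))a,b\rangle$ for $a,b$ in a dense subspace, and everything else is smooth, hence no new singular directions'' is not justified. The vectors fed into $\tau$ are not fixed: they are the values $u(g^{-1}x)$, $v(x)$ of arbitrary $L^2$ sections, varying measurably with $(g,x)$. Consequently the integrand $K(g,x)$ is a product involving $L^2$ functions of $x$ whose wave front sets are completely uncontrolled, so H\"ormander's product theorem (which needs the non-antipodality hypothesis) and the pullback/pushforward calculus give no bound on $\operatorname{WF}(K)$, and the fiber-integration step has nothing to work with. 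What is really needed is a decay estimate for $\int_{\mathfrak g_x}\langle\tau_x(e^Y)v_1,v_2\rangle\varphi(Y)e^{t\langle\xi,Y\rangle}dY$ that is \emph{uniform in $x\in X$} and \emph{linear in $\|v_1\|\,\|v_2\|$}, together with uniform control of the half-density factor $\sigma_x$. This is exactly the ``wavefront condition U'' of Section 3 of the paper; the role of compactness is not that the $X$-integration has finite measure, but that one can choose representatives $g_x$ with $x=g_xH$ from a precompact subset of $G$ (Lemma 4.2), conjugate every stabilizer integral back to $\mathfrak h$, and land in a precompact set of frequencies avoiding $\operatorname{WF}(\tau)$, which yields the uniformity. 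The paper then never attempts a joint $(g,x)$ microlocal calculus: it interchanges the $X$- and $G$-integrals, splits $\mathfrak g=\mathfrak g_x\oplus\mathfrak g_x^\perp$ for each $x$, and combines condition U on the ball around $q_x(\eta_0)$ with a uniform nonstationary phase lemma off that ball. Your sketch flags the uniformity issue as ``the main obstacle'' but offers no mechanism to resolve it, and the mechanism is the proof.

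A second, independent gap concerns the singular spectrum statement. You propose to localize with a finite $C^\infty$ partition of unity and then run ``the identical argument'' for $\operatorname{SS}$; but multiplication by smooth, non-analytic cutoffs destroys analytic wave front information, so $\op{SS}$ cannot be localized this way. The paper instead works with the special cutoff sequences $\varphi_{N,U_1,U_2}$ whose derivatives grow like $C^{|\beta|+1}(N+1)^{|\beta|}$ and tracks constants of the form $C^{N+1}(N+1)^N t^{-N}$ throughout (singular spectrum condition U and the corresponding half of Lemma 3.6); this bookkeeping is a genuine extra layer, not a verbatim repetition of the $C^\infty$ case.
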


This result was obtained by Kashiwara-Vergne in the case where $G$ is compact \cite{KV79}.  Suppose $G$ is a real, reductive algebraic group, $H=P\subset G$ is a parabolic subgroup, and $P=MAN$ a Langlands decomposition of $P$. When combined with work of Rossmann \cite{Ro95}, the work of Barbasch and Vogan implies Theorem \ref{thm:compact} in the special case where $\tau$ is an irreducible, unitary representation of $MA$ extended to $P$ \cite{BV}. Although we do not give applications of Theorem \ref{thm:compact} in this paper, the authors believe that the case where $\tau$ is a highly reducible representation of $MA$ will likely play an important role in future work.
\bigskip


Note that these Theorems likely hold for an arbitrary reductive Lie group of Harish-Chandra class. We state our corollaries for $G$ a real, reductive algebraic group because arguments in the previous paper \cite{Ha} utilize results that have only been written down in that special case.

Let us end this second introduction with a short outline of the article. In Section~\ref{sec:scalar} we give a direct proof of Theorem~\ref{thm:L^2 space}. In Section \ref{sec:uniformity}, we formulate a general 
\emph{wavefront condition U} under which we can prove 
\[
\operatorname{WF}(\operatorname{Ind}_H^G\tau)=\operatorname{Ind}_H^G\operatorname{WF}(\tau) 
\]
and an analogous \emph{singular spectrum condition U} under which we show that
\[
\operatorname{SS}(\operatorname{Ind}_H^G\tau)=\operatorname{Ind}_H^G\operatorname{SS}(\tau). 
\]
In Section \ref{sec:CU_compact} and Section \ref{sec:CU_dense_semisimple}, we verify wavefront condition U and singular spectrum condition U in special cases thereby deducing Theorem \ref{thm:dense semisimple} and Theorem \ref{thm:compact}. Theorem~\ref{thm:L^2 space} could also be obtained by verifying wavefront condition U and singular spectrum condition U. We nevertheless wanted to include the direct proof of Theorem~\ref{thm:L^2 space} because it nicely illustrates the central ideas which are exploited in the following sections.
We therefore also end Section~\ref{sec:scalar} with a discussion of the challenges which one encounters in generalizing these ideas to nontrivial $H$-representations $\tau$ and how they are handled in the subsequent sections. In Section~\ref{sec:examples} we show how to use Theorem \ref{thm:L^2 space} to deduce Theorem \ref{thm:main} and Theorem \ref{thm:tempered}. We also utilize Theorem \ref{thm:dense semisimple} to deduce additional results, and we present concrete examples of these results. Finally in Section~\ref{sec:counterexamples} we attempt to justify the extra hypotheses in Theorem \ref{thm:dense semisimple} by providing counterexamples to a stronger statement.

\section{Proof of Theorem \ref{thm:L^2 space}}
\label{sec:scalar}

In this section, we give a proof of Theorem \ref{thm:L^2 space}.

As in the introduction, let $G$ be a Lie group with Lie algebra $\mathfrak{g}$, 
let $H\subset G$ be a closed subgroup with Lie algebra $\mathfrak{h}$, and assume 
$X=G/H$ has a nonzero invariant density. If 
$\eta_0 \notin \overline{\op{Ad}^*(G)\cdot i(\mathfrak{g}/\mathfrak{h})^*}$, 
then it suffices to show that $\eta_0\notin \op{SS}(\op{Ind}_H^G\mathds 1)$. 
This suffices for the wavefront case as well since 
$\op{WF}(\op{Ind}_H^G\mathds 1)\subset \op{SS}(\op{Ind}_H^G\mathds 1)$.

To handle the singular spectrum, we require some notation. Choose a basis 
$\{X_1,\ldots,X_n\}$ of $\mathfrak{g}$, and for every multi-index 
$\alpha=(\alpha_1,\ldots,\alpha_n)\in\mathbb{N}^n$, define the differential operator 
\[
D^{\alpha}=\partial_{X_1}^{\alpha_1}\partial_{X_2}^{\alpha_2}\cdots \partial_{X_n}^{\alpha_n}
\]
on $\mathfrak{g}$. If $0\in U_1\subset U_2\subset \mathfrak{g}$ are precompact, 
open sets with $\overline{U_1}\subset U_2$, then (see pages 25-26, 282 of \cite{Hor83}), 
we may find a sequence of functions $\{\varphi_{N,U_1,U_2}\}$ indexed by $N\in \mathbb{N}$ 
satisfying the following properties:
\begin{itemize}

\item $\varphi_{N,U_1,U_2}\in C_c^{\infty}(U_2)$  
\item $\varphi_{N,U_1,U_2}(x)=1\ \text{if}\ x\in U_1$
\item There exist constants $C_{\alpha}>0$ for every multi-index 
$\alpha=(\alpha_1,\ldots,\alpha_n)$ such that
\begin{equation}\label{eq:varphi_def}
|D^{\alpha+\beta}\varphi_{N,U_1,U_2}(x)|\leq C_{\alpha}^{|\beta|+1} (N+1)^{|\beta|}
\end{equation}
for every $x\in U_2$ and every multi-index $\beta=(\beta_1,\ldots,\beta_n)$. 
Here $|\beta|=\beta_1+\cdots+\beta_n$.
\end{itemize}

For every pair of subsets $0\in U_1\subset U_2\subset \mathfrak{g}$, we fix such 
a sequence of functions $\{\varphi_{N,U_1,U_2}\}$.

The strategy of the proof is as follows. Let 
$\eta_0 \notin \overline{\op{Ad}^*(G)\cdot i(\mathfrak{g}/\mathfrak{h})^*}$. 
We have to show (see for instance Definition 2.3 of \cite{HHO16})
that for fixed $f_1,f_2\in L^2(X)$, 
there exists an open neighborhood $\eta_0\in V_0\subset i\mathfrak{g}^* $ and open subsets 
$0\in U_1\Subset U_2\subset \mathfrak{g}$ such that
\begin{equation}\label{eq:SS_bound}
 (\star):=t^N\left|\int\limits_G\int\limits_X f_1(g^{-1} x) \overline{f_2(x)} (\log^*\varphi_{N,U_1,U_2})(g)e^{t\langle \xi,\log(g)\rangle} dx dg\right| \leq C^{N+1}(N+1)^N
\end{equation}
uniformly for $t>0$,  $\xi\in V_0 $ and for all $N\in \mathbb{N}$ (Note that the 
constant $C$ must not depend on $N$). Here $dg$ is a nonzero invariant density 
on $G$ and $dx$ is a nonzero invariant density on $X$; note that we must choose 
$U_2\subset \mathfrak{g}$ sufficiently small for the logarithm function to be 
well defined. As $\varphi_{N,U_1,U_2}$ is 
compactly supported all the integrals are absolutely convergent and we can 
interchange the order of integration and get
\begin{equation}\label{eq:SS_int1}
 (\star)=t^N\left|\int\limits_X \int\limits_Gf_1(g^{-1} x) \overline{f_2(x)} 
 (\log^*\varphi_{N,U_1,U_2})(g)e^{t\langle \xi,\log(g)\rangle} dg dx\right|
\end{equation}
The idea is now to prove the bound (\ref{eq:SS_bound}) by integrating the 
$G$-integral by parts with respect to a right invariant vector field $Y_x$ on $G$ 
that depends continuously on the point $x\in X$.

We will now construct these vector fields. First note that 
\[
\overline{\op{Ad}^*(G)\cdot i(\mathfrak{g}/\mathfrak{h})^*} = \overline{\bigcup_{x\in X}  
                                                \left(i\mathfrak{g}/\mathfrak{g}_x\right)^*}
\]
where $\mathfrak{g}_x$ denotes the Lie algebra of the stabilizer subgroup $G_x\subset G$
of the point $x\in X$. We fix an arbitrary, not necessarily 
$\op{Ad}(G)$ invariant scalar product on $\mathfrak{g}^*$. By this scalar product we can identify $\mathfrak{g}\cong\mathfrak{g}^*$ 
and obtain
\[
 (\mathfrak{g}/\mathfrak{g}_x)^* \cong \mathfrak{g}_x^\perp
\]
where $\mathfrak{g}_x^\perp$ denotes the orthogonal complement with respect to the chosen scalar product.

For $\eta_0\notin \overline{\bigcup\limits_{x\in X}  i\mathfrak{g}_x^\perp}$, we define a
continuous family of normalized elements in the Lie algebra $\mathfrak{g}$ parametrized by
$x\in X$,
\[
 Y_x := \frac{\textup{pr}_{\mathfrak{g}_x}(-i \eta_0)}{|\textup{pr}_{\mathfrak{g}_x}(\eta_0)|}\in \mathfrak{g}.
\]
Here $\textup{pr}_{\mathfrak{g}_x}$ denotes the orthogonal projection on the subspace $\mathfrak{g}_x$.
Note that $Y_x$ is well defined and continuous in $x\in X$ as 
$\eta_0\notin \overline{\bigcup\limits_{x\in X} i\mathfrak{g}_x^\perp}$ implies that
$|\textup{pr}_{\mathfrak{g}_x}(\eta_0)|$ is bounded away from zero. 

If we consider $Y_x$ as a right invariant vector field on $G$  we can study its
action on the smooth function $\exp(t\langle \xi,\log(\cdot)\rangle)$ when restricted to $\tilde{U}_2:=\exp(U_2)\subset G$
\[
 Y_x e^{t\langle \xi,\log(g)\rangle} = \frac{d}{ds}\Big|_{s=0} e^{t\langle \xi,\log(e^{sY_x} g)\rangle} =t \mu(\xi,Y_x,g) e^{t\langle \xi,\log(g)\rangle}
\]
where 
\begin{equation}\label{eq:mu}
  \mu(\xi,Y,g):=\frac{d}{ds}\Big|_{s=0} \langle \xi,\log(e^{sY} g)\rangle. 
\end{equation}
In the above expression, when $X\in \mathfrak{g}$, we are writing $e^X:=\exp(X)$ 
for the image of $X$ in $G$ under the exponential map.
We can thus insert the operator $(t^{-1}\mu(\xi,Y_x,g)^{-1}Y_x)^N$ in (\ref{eq:SS_int1})
in front of $e^{t\langle \xi,\log(g)\rangle}$ and integrate the $G$ integral 
by parts:
\begin{eqnarray} \label{eq:partial_integral}
(\star)&= t^N\left|\int\limits_X \int\limits_G f_1(g^{-1} x) \overline{f_2(x)} 
(\log^*\varphi_{N,U_1,U_2})(g) \left(t^{-1}\mu(\xi,Y_x,g)^{-1}Y_x\right)^Ne^{t\langle \xi,\log(g)\rangle} dg dx\right|
\nonumber \\
&\ \ =\left|\int\limits_X  \int\limits_G f_1(g^{-1} x) \overline{f_2(x)}
\left((Y_x \mu(\xi,Y_x,g)^{-1})^N (\log^*\varphi_{N,U_1,U_2})(g)\right) e^{t\langle\xi,\log(g)\rangle} dg dx\right|
\end{eqnarray}
where we used the fact that $Y_x\in\mathfrak{g}_x$ and thus $Y_xf_1(g^ {-1}x) =0$. 
Utilizing (\ref{eq:varphi_def}) as well as the fact that $\mu(\xi,Y_x,g)$ is an 
analytic function, we obtain bounds
\[
|(Y_x \mu(\xi,Y_x,g)^{-1})^N(\log^*\varphi_{N,U_1,U_2})(g)| \leq C^{N+1}(N+1)^N |\mu(\xi,Y_x,g)|^{-2N}
\]
for some constant $C$ independent of $g\in \exp(U_2)$, $\xi\in V_0$ and $Y_x$. 
It thus remains to consider the term $\mu(\xi,Y_x,g)^{-2N}$. Note that from the 
definition of $\mu$ and $Y_x$ we have
\[
  |\mu(\eta_0,Y_x,e)| = |\op{pr}_{\mathfrak g_x}(\eta_0)|
\]
and as remarked above this quantity is bounded away from zero for $x\in X$ so we 
have a positive constant
\[
 c:=\inf\limits_{x\in X} |\op{pr}_{\mathfrak g_x}(\eta_0)|>0
\]
As $\mu: i\mathfrak g^*\times \mathfrak g \times G \to \mathbb C$ is a smooth function 
and as $Y_x$ only takes values in the compact unit sphere in $\mathfrak g$ we 
can choose sufficiently small neighborhoods $V_0 \subset i\mathfrak g^*$ of $\eta_0$
and $\tilde{U_2}\subset G$ of $e$ such that 
\[
 |\mu(\xi,Y_x,g)|\geq \frac{c}{2}
\]
uniformly in $\xi\in V_0 , g\in \tilde{U}_2$ and $x\in X$. 

Then we obtain for all $N\in\mathbb N$
\begin{eqnarray*}
(\star) &\leq& \left(\frac{2}{c}\right)^{2N} C^{N+1}(N+1)^N\int_X \int_{\exp(U_2)} |f_1(g^{-1} x)f_2(x)|dg dx\\
&\leq& \left(\frac{2}{c}\right)^{2N} C^{N+1}(N+1)^N \|f_1\|_{L^2(X)}\|f_2\|_{L^2(X)} \op{vol}(\exp(U_2))\\
&\leq& C_1^{N+1}(N+1)^N
\end{eqnarray*}
uniformly in $t>0$ and $\xi\in V_0 $. We have thus established the bound 
(\ref{eq:SS_bound}) and proven Theorem~\ref{thm:L^2 space}.

Note that the crucial point in this proof was first the interchanging of the
order of integration on $X$ and $G$ and second the partial integration
on $G$ performed for each point $x\in X$ into a direction of the stabilizer subalgebra $\mathfrak g_x$ 
of the point $x$. Only because this $x$-dependent choice of our differential operator
were we able to obtain in (\ref{eq:partial_integral}) that the factor $f_1(g^{-1} x) f_2(x)$
is differentiable (even constant) into this direction (Note that in other directions
not in $\mathfrak g_x$ this would not have been the case because $f_1$ is not smooth
but only in $L^2(X)$). 

Let us now discuss what difficulties arise if we do not induce from the
trivial representation but instead start from a finite dimensional unitary 
$H$-representation $(\tau,V)$. We thus have to consider $L^2$-sections in the 
Hermitian vector bundle $\mathcal V=G\times_{\tau} V$. Let 
$\langle\bullet,\bullet\rangle_{\mathcal V_x}$ denote the scalar product in the fibre 
$\mathcal V_x$. Then, in (\ref{eq:partial_integral}), we then have to
derive $\langle f_1(g^{-1} x), f_2(x)\rangle_{\mathcal V_x}$ into a direction
$Y_x\in \mathfrak g_x$. As $\tau$ is finite dimensional and thus smooth this is 
still possible, but the derivatives do not vanish anymore. Instead there appear
powers of the operator $d\tau_x(Y_x)$ where $\tau_x$ is the unitary 
representation of the stabilizer subgroup $G_x$ on the fibre $\mathcal V_x$. While
for any $x\in X$ the linear operators $d\tau_x(Y_x)$ are bounded, it is in general
false, that this bound is uniform in $x\in X$. However the dense semisimple
condition introduced in Section~\ref{sec:CU_dense_semisimple} will allow us to
obtain these uniform bounds and prove the upper bounds on the wavefront sets. 
Note that the question of uniform bounds of $d\tau_x(Y_x)$ is not only a technical 
problem of our proof-strategy but the examples in Section~\ref{sec:counterexamples}
show that these non-uniform bounds may indeed lead to larger wavefront sets. 

A similar problem occurs if $X=G/H$ admits no $G$ invariant measure anymore and
if one has to tensor the Hermition vector bundle $\mathcal V$ with the half density
bundle $\mathcal D^{1/2}$. In this case partial differentiation does not only 
create  derivatives of $\tau_x$ but there occur also 
derivatives of the one dimensional $G_x$-representations $\sigma_x$ on the 
fibres $\mathcal D^{1/2}_x$ of the half density bundle. However, we will see that
the dense semisimple condition is also suited to find uniform bounds for these 
terms, so the additional difficulties coming from the density bundles is of the 
same class as the one coming from inducing from finite dimensional representations
$\tau$. 

The reason why the partial integration approach would work for finite dimensional 
$(\tau,V)$ comes from the fact that it has a trivial, empty wavefront set, 
i.e. all matrix coefficients are smooth. If we, however, try to apply this approach to general 
infinite dimensional representations $(\tau,V)$ we face the problem that 
$\langle f_1(g^{-1} x), f_2(x)\rangle_{\mathcal V_x}$ might not be differentiable
anymore in the $\mathfrak g_x$-direction. The partial integration approach breaks down.
We however still can use the principal idea of the partial integration approach, 
which is to interchange the $X$ and $G$-integral and use the $x$-dependent 
splitting $\mathfrak g=\mathfrak g_x\oplus\mathfrak g_x^\perp$. This way we can 
reduce our problem to studying the oscillating integrals belonging to the matrix coefficients of 
$(\tau_x,\mathcal V_x)$ (see the calculation at the beginning of Section~\ref{sec:uniformity}). 
Of course the same uniform bound problems as in the finite dimensional case also occurs
in a more general formulation involving oscillating integrals. In Section~\ref{sec:uniformity} 
we prove the upper bounds on the wavefront set (resp. singular spectrum) under some precise 
uniformity condition on these oscillating integrals which we term \emph{wave front condition U} 
(resp. \emph{singular spectrum condition U}). The advantage of this approach is that for compact $X$ 
and arbitrary unitary $\tau$ these uniformity conditions can be verified without
too much additional effort; we carry this out in Section \ref{sec:CU_compact}. 
In addition, the proof of condition U for finite dimensional 
$\tau$ in the dense semisimple case is easier than\ a possible direct proof of 
the upper bound on the wavefront set by partial integration. Finally we are 
convinced that these uniformity conditions will also be useful in the future for 
proving upper bounds of the wavefront set for cases not covered in this article,
for example for noncompact $X$ and special infinite dimensional 
representations $\tau$. 

\section{A Uniformity Condition and the Wavefront Set}
\label{sec:uniformity}

Let $G$ be a Lie group, let $H\subset G$ be a closed subgroup, and let $(\tau,V)$ be a unitary 
representation of $H$ on a possibly infinite dimensional Hilbert space $V$. As in the introduction,
we form the unitary representation $\op{Ind}_H^G\tau$ of $G$. In Theorem 1.1 
of \cite{HHO16}, it is shown that
$$\op{WF}(\op{Ind}_H^G\tau)\supset \op{Ind}_H^G\op{WF}(\tau);\ \op{SS}(\op{Ind}_H^G(\tau))\supset \op{Ind}_H^G\op{SS}(\tau).$$
In this section, we formulate a wavefront condition U on $G$, $H$, and $\tau$, and we show that 
\[
\op{WF}(\op{Ind}_H^G\tau)\subset \op{Ind}_H^G\op{WF}(\tau)
\]
when the wavefront condition U holds. In addition, we formulate an analogous 
singular spectrum condition U on $G$, $H$, and $\tau$, and we show that 
\[
\op{SS}(\op{Ind}_H^G\tau)\subset \op{Ind}_H^G\op{SS}(\tau) 
\]
when the singular spectrum condition U holds. In order to formulate the wave 
front condition U and the singular spectrum condition U, we require additional 
notation. 

Let $X=G/H$ be the corresponding homogeneous space on which $G$ acts transitively from the left. 
If $x\in X$, we denote by $G_x\subset G$ the stabilizer 
subgroup of the point $x$ in $G$. Obviously we have $G_{eH}=H$ and for all $x$,
$G_x$ is conjugate to $H$. Let $\mathcal{V}=G\times_H V$ denote the $G$ equivariant bundle on $X=G/H$ 
associated to $(\tau,V)$, and let $\mathcal{D}^{1/2}\rightarrow X$ denote the 
bundle of complex half densities on $X$ (See Appendix \ref{app:dense}).
The group $G$ acts in the standard way from the left on $\mathcal V\otimes \mathcal D^ {1/2}$
by $g[x,v\otimes z]=[gx,v\otimes z]$. This left action leads to a unitary representation 
on $L^2(X,\mathcal{V}\otimes \mathcal{D}^{1/2})$ which we denote by 
$\operatorname{Ind}_H^G\tau$. 

Let $\mathcal{V}_x$ (resp. $\mathcal{D}^{1/2}_x$) denote the fiber of 
$\mathcal{V}$ (resp. $\mathcal{D}^{1/2}$) over $x\in X$. Then the left $G$ action 
induces a representation of $G_x$ on $\mathcal{V}_x$ (resp. $\mathcal{D}_x^{1/2}$)
which we denote by $\tau_x$ (resp. $\sigma_x$).
Note that when $x=eH$, $\tau_{eH}$ coincides with $\tau$. We analogously denote 
$\sigma_{eH}$ by $\sigma$. 
Observe that we have the formula
\begin{equation}\label{eq:sigma_eH}
\sigma_{eH}(h) = |{\det}_{T_{eH}X}(dh_{|eH})|^ {-1/2} 
\end{equation}
where $\det_{T_{eH}X}(dh_{|eH})$ is the determinant of the differential 
\[
dh_{|eH}:T_{eH}X\to T_{eH}X
\]
(see (\ref{eq:sigma_x_explicit})).

Let $\mathfrak{g, h}$, and $\mathfrak{g}_x$ denote the Lie algebras of $G,H$, 
and $G_x$ respectively. Note that on the Lie algebra level there is a canonical 
embedding $\mathfrak g_x\hookrightarrow\mathfrak g$ as a subalgebra and on 
the dual side there is a canonical projection $q_x:i\mathfrak{g}^*\rightarrow i\mathfrak{g}_x^*$ 
which is defined by the restriction of a form in $i\mathfrak g^*$ to elements in 
$\mathfrak g_x\subset \mathfrak g$. Throughout the article, we 
fix an arbitrary scalar product on $\mathfrak g$ which allows us
to identify Lie algebras with their adjoints and defines a unique Lebesgue measure on 
$\mathfrak g$ and all of its linear subspaces. 

For $x=eH$ we will drop the subscript in $q_x$ and write
$q:=q_{eH}:i\mathfrak{g}^* \rightarrow i\mathfrak{h}^*$. 
If $S\subset i\mathfrak{h}^*$ is a subset, recall from the introduction
the notation
\[
\operatorname{Ind}_H^G S=\overline{\operatorname{Ad}^*(G)\cdot q^{-1}(S)}\subset i\mathfrak g^*.
\]

In the sequel we will be particularly interested in $\operatorname{Ind}_H^G \operatorname{WF}(\tau)$.
We first want to relate this quantity to the representations $\tau_x$ on the 
fibers over $x$. For this purpose, we recall how the representations $\tau_x$ and their wavefront 
sets are related. Let $x\in X$ and fix $g_x$ such that $x=g_x H$. Then conjugation by
$g_x$ defines an isomorphism
\[
 \operatorname{C}_{g_x} : H \to G_x,~h\mapsto g_x h g_x^{-1}.
\]
Additionally, the left action by $g_x$ defines a Hilbert space isomorphism 
\[
 g_x: V = \mathcal{V}_{eH} \to \mathcal V_x
\]
and we obtain for $h\in H$
\begin{equation}\label{eq:tau_x}
 \tau(h) = g_x^{-1} \tau_x(\operatorname{C}_{g_x} h) g_x.
\end{equation}

Thus, $\tau$ and $\tau_x\circ\operatorname{C}_{g_x}$ are equivalent representations.
In the same way we obtain
\begin{equation}\label{eq:sigma_x}
 \sigma(h) = g_x^{-1} \sigma_x(\operatorname{C}_{g_x} h) g_x.
\end{equation}

For the wavefront sets one consequently obtains from \cite[Theorem 8.2.4]{Hor83}
\begin{equation}\label{eq:WF_tau_x}
\operatorname{WF}(\tau) = \operatorname{WF}(\tau_x \circ \operatorname{C}_{g_x}) = (\operatorname{Ad}(g_x))^*(\operatorname{WF}(\tau_x)).
\end{equation}
Here $\operatorname{Ad}(g_x): \mathfrak h\to\mathfrak g_x$ appears as the differential 
of $C_{g_x}$ in the identity element $e\in H$ and by pullback it defines canonically
an isomorphism $(\operatorname{Ad}(g_x))^*:i\mathfrak g_x^* \to i\mathfrak h^*$. 
The analogous statement for the singular spectrum
\begin{equation}\label{eq:SS_tau_x}
\operatorname{SS}(\tau) = \operatorname{SS}(\tau_x \circ \operatorname{C}_{g_x}) = (\operatorname{Ad}(g_x))^*(\operatorname{SS}(\tau_x))
\end{equation}
can be obtained from \cite[Theorem 8.5.1]{Hor83} (H\"ormander uses the term 
\emph{analytic wave front set} instead of singular spectrum and writes $\op{WF}_{\op{A}}$ 
instead of $\op{SS}$ in his book).

Pullback and coadjoint action are always compatible with the natural projections
as recorded in the following Lemma.

\begin{lemma}
If $\operatorname{Ad}^*(g_x) : i\mathfrak g^*\to i\mathfrak g^*$ is the 
coadjoint representation and $(\operatorname{Ad}(g_x))^*:i\mathfrak g_x^*\to i\mathfrak h^*$
the pullback map, then
\[
 q\circ \operatorname{Ad}^*(g_x^{-1})=(\operatorname{Ad}(g_x))^*\circ q_x.
\] 
\end{lemma}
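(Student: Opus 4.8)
The plan is to unwind both sides on a test vector and use the associativity of the various identifications. Concretely, fix $\xi \in i\mathfrak{g}^*$ and an element $Y \in \mathfrak{h}$, and compare the values of $q\circ \operatorname{Ad}^*(g_x^{-1})(\xi)$ and $(\operatorname{Ad}(g_x))^*\circ q_x(\xi)$ when paired against $Y$. Recall that $q$ is restriction of functionals on $\mathfrak{g}$ to $\mathfrak{h}$, that $q_x$ is restriction to $\mathfrak{g}_x$, that $\operatorname{Ad}(g_x):\mathfrak{h}\to\mathfrak{g}_x$ is the differential of $C_{g_x}$ at $e$, and that $(\operatorname{Ad}(g_x))^*$ is its transpose. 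So the whole statement is, at heart, the tautology that pulling a functional back along a linear map and then restricting equals restricting and then pulling back, once one keeps careful track of which ambient spaces and which maps are in play.

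First I would write, for $Y\in\mathfrak{h}$,
\[
\langle q\circ \operatorname{Ad}^*(g_x^{-1})(\xi),\, Y\rangle
= \langle \operatorname{Ad}^*(g_x^{-1})(\xi),\, Y\rangle
= \langle \xi,\, \operatorname{Ad}(g_x^{-1})\cdot\iota(Y)\rangle,
\]
where in the first step I use that $Y\in\mathfrak{h}$ so that $q$ just forgets that $\xi$ was defined on all of $\mathfrak{g}$, and in the second step $\iota:\mathfrak{h}\hookrightarrow\mathfrak{g}$ is the inclusion and I use the defining relation between $\operatorname{Ad}^*$ and $\operatorname{Ad}$. Here I need the sign/inverse convention to be consistent, namely $\langle \operatorname{Ad}^*(g)\xi, Z\rangle = \langle \xi, \operatorname{Ad}(g^{-1})Z\rangle$ for $Z\in\mathfrak{g}$; one must simply make sure this is the convention the paper has adopted.

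Next I would expand the right-hand side similarly:
\[
\langle (\operatorname{Ad}(g_x))^*\circ q_x(\xi),\, Y\rangle
= \langle q_x(\xi),\, \operatorname{Ad}(g_x)\cdot Y\rangle
= \langle \xi,\, \operatorname{Ad}(g_x)\cdot Y\rangle,
\]
where the last equality holds because $\operatorname{Ad}(g_x)Y \in \mathfrak{g}_x$, so $q_x$ (restriction to $\mathfrak{g}_x$) may be dropped. Thus the identity reduces to checking that $\operatorname{Ad}(g_x^{-1})\cdot\iota(Y)$ and $\operatorname{Ad}(g_x)\cdot Y$ are the same element of $\mathfrak{g}$ — wait, that is not right as written, so in fact the correct comparison must use that $\operatorname{Ad}(g_x)$ as a map $\mathfrak{h}\to\mathfrak{g}_x$ really is the differential of $h\mapsto g_x h g_x^{-1}$, i.e. it sends $Y$ to $\operatorname{Ad}(g_x)Y$ viewed inside $\mathfrak{g}$, and the map on the left is genuinely $\operatorname{Ad}(g_x^{-1})$; so one of the two must actually be $\operatorname{Ad}(g_x^{-1})$ and the statement as phrased with $\operatorname{Ad}^*(g_x^{-1})$ on the left is exactly what makes them match. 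The real content, and the step I expect to be the only subtle one, is bookkeeping: verifying that the composite $\mathfrak{h}\xrightarrow{\iota}\mathfrak{g}\xrightarrow{\operatorname{Ad}(g_x^{-1})}\mathfrak{g}$ agrees with $\mathfrak{h}\xrightarrow{\operatorname{Ad}(g_x)}\mathfrak{g}_x\xrightarrow{\iota_x}\mathfrak{g}$ — which it does not literally, so the pairing identity must instead be read as dualizing a single commuting square of the inclusions and adjoint maps. I would therefore reorganize the proof to start from the evident commuting square of linear maps
\[
\begin{CD}
\mathfrak{h} @>{\operatorname{Ad}(g_x)}>> \mathfrak{g}_x\\
@| @VV{\iota_x}V\\
\mathfrak{h} @>>{?}> \mathfrak{g},
\end{CD}
\]
identify the bottom arrow correctly using $x=g_xH$ and $G_x = g_x H g_x^{-1}$, and then dualize; the Lemma is precisely the dual square, and once the square is stated correctly the dualization is immediate. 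The main obstacle is thus purely one of pinning down conventions (left vs.\ right action, $g_x$ vs.\ $g_x^{-1}$, which inclusion is which); there is no analytic or geometric difficulty whatsoever.
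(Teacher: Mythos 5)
Your overall strategy is exactly the paper's: fix $\xi\in i\mathfrak g^*$ and $Y\in\mathfrak h$, evaluate both composites on $Y$, and use that $q$, $q_x$ are restrictions and that $\operatorname{Ad}(g_x)$ (the differential of $C_{g_x}$) carries $\mathfrak h$ into $\mathfrak g_x$. But as written your proof does not close, and the obstruction you run into is one you created yourself. With the convention you state, $\langle \operatorname{Ad}^*(g)\xi, Z\rangle = \langle \xi, \operatorname{Ad}(g^{-1})Z\rangle$, setting $g=g_x^{-1}$ gives
\[
\langle \operatorname{Ad}^*(g_x^{-1})\xi,\,\iota(Y)\rangle \;=\; \langle \xi,\,\operatorname{Ad}(g_x)\,\iota(Y)\rangle ,
\]
not $\langle \xi,\,\operatorname{Ad}(g_x^{-1})\iota(Y)\rangle$ as in your first display. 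Since $\operatorname{Ad}(g_x)\iota(Y)$ is precisely the element $\operatorname{Ad}(g_x)Y\in\mathfrak g_x\subset\mathfrak g$ (because $G_x=g_xHg_x^{-1}$, so the restriction of the ambient $\operatorname{Ad}(g_x)$ to $\mathfrak h$ is the map $\mathfrak h\to\mathfrak g_x$ of the paper), the left side already equals $\langle q_x(\xi),\operatorname{Ad}(g_x)Y\rangle=\langle(\operatorname{Ad}(g_x))^*q_x(\xi),Y\rangle$, i.e.\ your second computation, and the lemma is proved. The ``mismatch'' between $\operatorname{Ad}(g_x^{-1})\iota(Y)$ and $\operatorname{Ad}(g_x)Y$ that you then try to repair is an artifact of misapplying your own convention (and the paper's: its proof passes directly from $[\operatorname{Ad}^*(g_x^{-1})\xi](H)$ to $\xi(\operatorname{Ad}(g_x)H)$).

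The subsequent attempted repair is also incomplete: the commuting square you draw has the identity on the left column and an unidentified bottom arrow ``?'', and you never state or verify the square, so ``dualize and we are done'' is a promise rather than an argument. The correct square, whose dual is literally the lemma, is the one with vertical arrows the inclusions $\iota:\mathfrak h\hookrightarrow\mathfrak g$ and $\iota_x:\mathfrak g_x\hookrightarrow\mathfrak g$, top arrow $\operatorname{Ad}(g_x):\mathfrak h\to\mathfrak g_x$, and bottom arrow $\operatorname{Ad}(g_x):\mathfrak g\to\mathfrak g$; dualizing and using $\iota^*=q$, $\iota_x^*=q_x$, and $(\operatorname{Ad}(g_x))^*=\operatorname{Ad}^*(g_x^{-1})$ on $i\mathfrak g^*$ gives the identity. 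Either fix the one line in your pairing computation or actually write down this square; as submitted, the proof is not complete.
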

\begin{proof}
 If $\xi\in i\mathfrak g^*$ and $H\in \mathfrak h$, then we calculate
 \begin{align*}
  [q\circ \operatorname{Ad}^*(g_x^{-1}) (\xi)](H) &= [\operatorname{Ad}^*(g_x^{-1}) (\xi)](H)\\
  &=\xi(\operatorname{Ad}(g_x) H) \\
  &= [q_x(\xi)](\operatorname{Ad}(g_x)H)\\
  &=[(\operatorname{Ad}(g_x))^*\circ q_x](H).
 \end{align*}
\end{proof}
From this Lemma, (\ref{eq:WF_tau_x}), and (\ref{eq:SS_tau_x}) we conclude
\[
 q_x^{-1}(\operatorname{WF}(\tau_x)) = \operatorname{Ad}^*(g_x)q^{-1}(\operatorname{WF}(\tau)).
\]
and 
\[
 q_x^{-1}(\operatorname{SS}(\tau_x)) = \operatorname{Ad}^*(g_x)q^{-1}(\operatorname{SS}(\tau)).
\]
Finally we can express
\[
\mathcal W:=\operatorname{Ind}_H^G\operatorname{WF}(\tau) =\overline{\bigcup\limits_{g\in G}\operatorname{Ad}^*(g) q^{-1}(\operatorname{WF}(\tau))} = \overline {\bigcup\limits_{x\in X}q_x^ {-1}(\operatorname{WF}(\tau_x))}
\]
and
\[
\mathcal S:=\operatorname{Ind}_H^G\operatorname{SS}(\tau) =\overline{\bigcup\limits_{g\in G}\operatorname{Ad}^*(g) q^{-1}(\operatorname{SS}(\tau))} = \overline {\bigcup\limits_{x\in X}q_x^ {-1}(\operatorname{SS}(\tau_x))}.
\]

As a consequence for any $\eta \notin \mathcal W$ and for any $x\in X$ we have 
$q_x(\eta)\notin \operatorname{WF}(\tau_x)$
and by \cite[Theorem 1.4]{Ho81} there is a neighborhood 
$V_x\subset i\mathfrak{g}_x$ of $q_x(\eta)$ and a function $\varphi_x\in C^\infty_c(\mathfrak g_x)$
supported in a neighborhood of $0\in \mathfrak g_x$ such that for any $N$ there 
is $C_{N,x}>0$ such that
\begin{equation}
 \label{eq:nonuniform_decay}
 \left|\int\limits_{\mathfrak g_x} \langle \tau_x(e^ Y) v_1,v_2\rangle_{\mathcal{V}_x} \varphi_x(Y) e^{\langle t\xi,Y\rangle}dY\right| \leq C_{N,x} \|v_1\|_{\mathcal V_x}\|v_2\|_{\mathcal V_x} t^{-N}
\end{equation}
for $t>0,~v_1,v_2\in \mathcal V_x$. Here $dY$ is the 
Lebesgue measure on $\mathfrak g_x$ which is fixed by the choice of the metric
on $\mathfrak g$. The estimate is uniform in $\xi\in V_x$; however, we have 
a-priori no information about the uniformity of these estimates in $x$. 

As the induced representations are modeled on $\mathcal V\otimes \mathcal D^ {1/2}$,
we will not only have to consider the representations $\tau_x$
but $\tau_x\otimes \sigma_x$ on $\mathcal V_x\otimes \mathcal D^{1/2}_x$
and will be led to the study of
\[
 \left|\int\limits_{\mathfrak g_x} \langle \tau_x(e^ Y) v_1,v_2\rangle_{\mathcal{V}_x} 
 \left((\sigma_x(e^Y) z_1) \otimes \overline{z_2}\right) \varphi_x(Y)
 e^{\langle t\xi,Y\rangle} dY\right|_{\mathcal D_x}
\]
which now takes values in the fiber $(\mathcal D^{1}_{\geq 0})_x$. From (\ref{eq:sigma_eH})
and (\ref{eq:sigma_x}) we conclude that for every $x$, the factor 
$\left((\sigma_x(e^Y) z_1) \otimes \overline{z_2}\right)$ is a smooth function 
on $Y$ with values in $\mathcal D^1_x$ 
and consequently one immediately gets fast decay analogous to (\ref{eq:nonuniform_decay})
with an $x$ dependent constant $C_{x,N}$. However the presence of the additional 
factor might be an additional source of $x$ dependence of these constants. 

The following condition imposes a uniformity
in $x\in X$ on the decay of these Fourier transforms of the matrix coefficients
and we will see in Sections~\ref{sec:CU_compact} and \ref{sec:CU_dense_semisimple}
that this condition holds in many important examples.
\begin{definition}[Wavefront Condition U]\label{def:wf_cond_U}
 With the notation from above we say that \emph{wavefront condition U} is satisfied if
 for all $\eta\notin \op{Ind}_H^G\op{WF}(\tau)$ there is a neighborhood 
 $\Omega \subset i\mathfrak{g}^*\setminus \op{Ind}_H^G\op{WF}(\tau)$ of $\eta$ and a neighborhood
 $U_{\textup{CU,WF}} \subset \mathfrak g$ around $0 \in \mathfrak g$ such that for
 all functions  $\varphi\in C^{\infty}_c(U_{\textup{CU,WF}})$ and for all 
 $N\geq 0$ there is a constant $C_{N,\varphi}$ 
 such that
 \begin{equation}
 \label{eq:wf_uniform_decay}
 \begin{split}
 \left|\int\limits_{\mathfrak g_x} \langle \tau_x(e^Y) v_1,v_2\rangle_{\mathcal{V}_x}\cdot
 \left((\sigma_x(e^Y) z_1) \otimes \overline{z_2}\right) \cdot\varphi(Y) e^{\langle t\xi,Y\rangle}
 dY\right|_{\mathcal D^1_x} \\ 
 \leq C_{N,\varphi} (\|v_1\|_{\mathcal V_x} |z_1|_{\mathcal D^{1/2}_x})  \otimes(\|v_2\|_{\mathcal V_x}
 |z_2|_{\mathcal D^{1/2}_x} )t^{-N}
 \end{split}
\end{equation}
for $t>0$ and uniformly for all $\xi\in\Omega$, $x\in X$, $v_1,v_2\in \mathcal{V}_x$, 
and $z_1,z_2\in \mathcal{D}_x^{1/2}$. Additionally we 
demand that there is a constant $C$ such that for any $x\in X$, any 
$Y\in \mathfrak g_x\cap U_{\textup{CU,WF}}$ and any $z\in \mathcal D_x^{1/2}$, we 
have
\begin{equation}
 \label{eq:wf_uniform_sigma_bound}
 |\sigma_x(e^Y)z|_{\mathcal D_x^{1/2}}\leq C|z|_{\mathcal D_x^{1/2}}.
\end{equation}
\end{definition}

\begin{remark}
 Note that (\ref{eq:wf_uniform_decay}) is formulated as an inequality in the 
 fiber $(\mathcal D^1_{\geq_0})_x$. Using a local point of reference 
 $0\neq z\in(\mathcal D^1_{\geq 0})_x$
 these equations can simply be reduced to an inequality of non-negative real numbers.
 The advantage of the formulation (\ref{eq:wf_uniform_decay}) is that the left
 side as well as the right side can be considered as sections in $\mathcal D_{\geq 0}^1$
 and we can consider (\ref{eq:wf_uniform_decay}) as an inequality of sections which 
 will allow us to bound the integrals over $X$ using (\ref{eq:integral_inequ_pos}).
\end{remark}

We require an analogous condition for the singular spectrum. For this definition, 
we need additional notation. Choose a basis $\{X_1,\ldots,X_n\}$ of $\mathfrak{g}$, 
and for every multi-index $\alpha=(\alpha_1,\ldots,\alpha_n)\in \mathbb{N}^n$, define the 
differential operator 
\[
D^{\alpha}=\partial_{X_1}^{\alpha_1}\partial_{X_2}^{\alpha_2}\cdots \partial_{X_n}^{\alpha_n} 
\]
on $\mathfrak{g}$. If $0\in U_1\subset U_2\subset \mathfrak{g}$ are precompact, 
open sets with $\overline{U_1}\subset U_2$, then (see pages 25-26, 282 of \cite{Hor83}), 
we may find a sequence of functions $\{\varphi_{N,U_1,U_2}\}$ indexed by $N\in \mathbb{N}$ 
satisfying the following properties:
\begin{enumerate}
\item $\varphi_{N,U_1,U_2}\in C_c^{\infty}(U_2)$  
\item $\varphi_{N,U_1,U_2}(x)=1\ \text{if}\ x\in U_1$
\item There exist constants $C_{\alpha}>0$ for every multi-index $\alpha=(\alpha_1,\ldots,\alpha_n)$ 
such that
\[
|D^{\alpha+\beta}\varphi_{N,U_1,U_2}(x)|\leq C_{\alpha}^{|\beta|+1} (N+1)^{|\beta|} 
\]
for every $x\in U_1$ and every multi-index $\beta=(\beta_1,\ldots,\beta_n)$. 
Here $|\beta|=\beta_1+\cdots+\beta_n$.
\end{enumerate}

From now on, we fix a basis $\{X_1,\ldots,X_n\}$ of $\mathfrak{g}$ and for every 
pair of precompact, open sets $0\in U_1\subset U_2\subset \mathfrak{g}$ with 
$\overline{U_1}\subset U_2$, we fix a sequence of functions $\{\varphi_{N,U_1,U_2}\}$ 
satisfying the above properties.

\begin{definition}[Singular Spectrum Condition U]\label{def:ss_cond_U}
 With the notation from above we say that \emph{singular spectrum condition U} 
 is satisfied if
 for all $\eta\notin \op{Ind}_H^G\op{SS}(\tau)$ there is a neighborhood 
 $\Omega \subset i\mathfrak{g}^*\setminus \op{Ind}_H^G\op{SS}(\tau)$ of $\eta$ and a neighborhood
 $U_{\textup{CU,SS}}\subset \mathfrak g$ around $0 \in \mathfrak g$ such that for
 every pair of neighborhoods $0\in U_1\Subset U_2\subset \mathfrak{g}$ with $U_2\subset U_{\textup{CU,SS}}$ and all 
$N\in \mathbb{N}$, there is a constant $C_{U_1,U_2}$ such that
 \begin{equation}
 \label{eq:ss_uniform_decay}
 \begin{split}
 \left|\int\limits_{\mathfrak g_x} \langle \tau_x(e^Y) v_1,v_2\rangle_{\mathcal{V}_x}\cdot
 \left((\sigma_x(e^Y) z_1) \otimes \overline{z_2}\right) \cdot\varphi_{N,U_1,U_2}(Y) e^{\langle t\xi,Y\rangle}
 dY\right|_{\mathcal D^1_x} \\ 
 \leq C_{U_1,U_2}^{N+1}(N+1)^N (\|v_1\|_{\mathcal V_x} |z_1|_{\mathcal D^{1/2}_x})  \otimes(\|v_2\|_{\mathcal V_x}
 |z_2|_{\mathcal D^{1/2}_x} )t^{-N}
 \end{split}
\end{equation}
for $t>0$ and uniformly for all $\xi\in \Omega$, $x\in X$, $v_1,v_2\in \mathcal{V}_x$, and $z_1,z_2\in \mathcal{D}_x^{1/2}$. Additionally we 
demand that there is a constant $C$ such that for any $x\in X$, any 
$Y\in \mathfrak g_x\cap U_{\textup{CU,SS}}$ and any $z\in \mathcal D_x^{1/2}$, we 
have
\begin{equation}
 \label{eq:ss_uniform_sigma_bound}
 |\sigma_x(e^Y)z|_{\mathcal D_x^{1/2}}\leq C|z|_{\mathcal D_x^{1/2}}.
\end{equation}
\end{definition}

We then can show
\begin{theorem}\label{thm:WF_condU}
  Let $G$ be a Lie group, let $H\subset G$ be a closed subgroup, and let $\tau$ be a unitary
  $H$ representation. If wavefront condition U is satisfied, then we have
\[
\operatorname{WF}(\operatorname{Ind}_H^G\tau)\subset \operatorname{Ind}_H^G\operatorname{WF}(\tau).
\]
\end{theorem}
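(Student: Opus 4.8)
The plan is to mimic the structure of the direct proof of Theorem~\ref{thm:L^2 space} given in Section~\ref{sec:scalar}, but with the partial integration on $G$ replaced by an appeal to wavefront condition U. Fix $\eta\notin\op{Ind}_H^G\op{WF}(\tau)=\mathcal{W}$; it suffices to show $\eta\notin\op{WF}(\op{Ind}_H^G\tau)$, i.e. that for every pair of sections $f_1,f_2\in L^2(X,\mathcal V\otimes\mathcal D^{1/2})$ the matrix coefficient $g\mapsto\langle(\op{Ind}_H^G\tau)(g)f_1,f_2\rangle$ has a rapidly decaying Fourier transform near $\eta$ after localizing at the identity. Concretely, with $\Omega$ and $U_{\textup{CU,WF}}$ as furnished by wavefront condition U and $\varphi\in C_c^\infty(U_{\textup{CU,WF}})$ a cutoff equal to $1$ near $0$, one must bound
\[
t^N\left|\int_G\langle(\op{Ind}_H^G\tau)(e^Z)f_1,f_2\rangle\,\varphi(Z)e^{t\langle\xi,Z\rangle}\,dZ\right|
\]
uniformly for $t>0$ and $\xi\in\Omega$ (using $\exp$-coordinates on a neighborhood of $e$ in $G$, shrinking $U_{\textup{CU,WF}}$ as needed so $\log$ is defined).

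The key maneuver, exactly as in Section~\ref{sec:scalar}, is to unfold the matrix coefficient as an integral over $X$ and interchange the order of integration, writing the inner $G$-integral in terms of the fiberwise data $(\tau_x,\sigma_x)$ on $\mathcal V_x\otimes\mathcal D^{1/2}_x$. Using the $x$-dependent splitting $\mathfrak g=\mathfrak g_x\oplus\mathfrak g_x^\perp$, the $Z$-integration near $e$ factors (up to a smooth Jacobian and a harmless change of variables) into an integration over $\mathfrak g_x^\perp$ — which only moves the base point in $X$ and contributes an $L^2$ pairing controlled by $\|f_1\|\,\|f_2\|$ — and an integration over $\mathfrak g_x$ of precisely the oscillatory integral of the matrix coefficient of $\tau_x\otimes\sigma_x$ appearing in Definition~\ref{def:wf_cond_U}. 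Here I expect the linear functional $\xi\in\Omega$ to restrict to $q_x(\xi)$ on $\mathfrak g_x$, and the point is that $q_x(\xi)\notin\op{WF}(\tau_x)$ for all $x$ because $\Omega\cap\mathcal W=\emptyset$ and $\mathcal W=\overline{\bigcup_x q_x^{-1}(\op{WF}(\tau_x))}$; this is what lets wavefront condition U apply with a single constant $C_{N,\varphi}$ valid for all $x\in X$. The estimate~\eqref{eq:wf_uniform_sigma_bound} on $\sigma_x$ is needed to keep the $\mathfrak g_x^\perp$-integration (the half-density transport) under control uniformly in $x$, and the fiberwise inequality~\eqref{eq:wf_uniform_decay} is then integrated over $X$ against $|f_1|\otimes|f_2|$ as sections of $\mathcal D^1_{\geq 0}$, using the Cauchy--Schwarz-type estimate~(\ref{eq:integral_inequ_pos}) promised in the appendix to land on $t^{-N}\|f_1\|\,\|f_2\|$ times a constant independent of $N$'s argument $\xi$ and of $t$.

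Assembling the pieces: the interchange of integration is justified by absolute convergence ($\varphi$ compactly supported, $f_1,f_2\in L^2$), the fiberwise reduction is a coordinate computation of the same flavor as~(\ref{eq:partial_integral}), and the uniform-in-$x$ decay is handed to us by the hypothesis. Combining gives, for each $N$, a bound $t^N|\cdots|\le C_{N,\varphi}\,\|f_1\|\,\|f_2\|\,\op{vol}(\op{supp}\varphi)$ uniformly in $\xi\in\Omega$ and $t>0$, which is exactly the defining estimate for $\eta\notin\op{WF}(\op{Ind}_H^G\tau)$ (cf. Definition~2.3 of~\cite{HHO} and page~254 of~\cite{Hor83}). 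Since $\eta\notin\mathcal W$ was arbitrary, $\op{WF}(\op{Ind}_H^G\tau)\subset\op{Ind}_H^G\op{WF}(\tau)$.

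\textbf{Main obstacle.} The delicate step is the fiberwise reduction: writing the localized $G$-integral, after interchanging with the $X$-integral, as a clean product of an $L^2$-contracting integral over $\mathfrak g_x^\perp$ and exactly the oscillatory integral over $\mathfrak g_x$ that Definition~\ref{def:wf_cond_U} controls. One must track the Jacobian of the map $\mathfrak g_x^\perp\times\mathfrak g_x\to G$, $(W,Y)\mapsto \exp(W)\exp(Y)$ (or whichever parametrization is used) and verify it is bounded above and below uniformly in $x$ on a fixed small neighborhood — this is where the smoothness of $\mu$ and compactness of the unit sphere in $\mathfrak g$ were used in the scalar case, and the same ingredients should suffice here. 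A secondary point requiring care is that the cutoff $\varphi$ on $\mathfrak g$ does not restrict to the specific test function on $\mathfrak g_x$ one might want, so one should either build $\varphi$ as a product adapted to the splitting (possible since the splitting varies continuously and $X$ may be covered by finitely many charts in the cases of interest, or handled by a partition-of-unity argument) or, more simply, absorb the restriction $\varphi|_{\mathfrak g_x}$ into the class of admissible test functions allowed in Definition~\ref{def:wf_cond_U}, which is stated for \emph{all} $\varphi\in C_c^\infty(U_{\textup{CU,WF}})$ and hence already covers it.
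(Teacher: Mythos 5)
Your skeleton (interchange the $X$ and $G$ integrals, split $\mathfrak g=\mathfrak g_x\oplus\mathfrak g_x^\perp$ via $(Y,Z)\mapsto e^Ye^Z$, feed the $\mathfrak g_x$-integral to condition U and control the rest by $L^2$ bounds) is indeed the paper's skeleton, but there is a genuine gap at the step you call the fiberwise reduction. After the change of variables the phase is $\langle t\eta,\log(e^Ye^Z)\rangle$, which is \emph{not} $\langle t\,q_x(\eta),Y\rangle$ plus a term independent of $Y$: for $Z\neq 0$ the $Y$-dependent correction to the linear phase oscillates at scale $t$, so it can neither be absorbed into the amplitude (its $C^k$-norms grow with $t$) nor be removed by integrating by parts in $Y$, because for infinite dimensional $\tau$ the matrix coefficients $\langle\tau_x(e^Y)v_1,v_2\rangle_{\mathcal V_x}$ need not be differentiable in $Y$ --- this non-smoothness is precisely why condition U is formulated with a linear phase. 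Hence the inner $\mathfrak g_x$-integral is \emph{not} ``precisely the oscillatory integral appearing in Definition~\ref{def:wf_cond_U}'', and the estimate (\ref{eq:wf_uniform_decay}) cannot be invoked directly, which is the load-bearing step of your argument.

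The paper's proof supplies exactly the machinery your proposal omits: it writes the inner integral as an $L^2(\mathfrak g_x)$ pairing, passes to Fourier transforms, and splits the dual variable into $B_{2\varepsilon}(q_x(\eta_0))$ and its complement. On the ball, condition U controls the Fourier transform of the matrix-coefficient factor, with the extra $(x,Z)$-dependent cutoff $\rho_{x,Z}(Y)=\varphi(e^Ye^Z)j_{\mathfrak g_x,\mathfrak g_x^\perp}(Y+Z)$ handled by a convolution argument via the uniform bound (\ref{eq:wf_case1_unif_decay_b}) --- note that your alternative, absorbing this family into the admissible test functions of Definition~\ref{def:wf_cond_U}, does not work as stated, since the constant $C_{N,\varphi}$ there depends on the (single, fixed) test function on $\mathfrak g$, whereas $\rho_{x,Z}$ varies with $x$ and $Z$. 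Off the ball, a uniform non-stationary phase estimate (Lemma~\ref{lem:unif_nonstat}) gives the decay of the Fourier transform of the oscillatory factor $e^{\langle t\eta,\log(e^\bullet e^Z)\rangle}\tilde\varphi(\bullet)$, uniformly in $x$ and $Z$, while the matrix-coefficient factor is merely bounded using unitarity of $\tau_x$ and (\ref{eq:wf_uniform_sigma_bound}). Without this frequency splitting (or an equivalent device to decouple the nonlinear phase from the non-smooth matrix coefficient), the bound you assert does not follow; your identification of the Jacobian uniformity and the role of (\ref{eq:wf_uniform_sigma_bound}) is correct but secondary.
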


In addition, we can show

\begin{theorem}\label{thm:SS_condU}
  Let $G$ be a Lie group, let $H\subset G$ be a closed subgroup, and let $\tau$ be a unitary
  $H$ representation. If singular spectrum condition U is satisfied, then we have
\[
\operatorname{SS}(\operatorname{Ind}_H^G\tau)\subset \operatorname{Ind}_H^G\operatorname{SS}(\tau).
\]
\end{theorem}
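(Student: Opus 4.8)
\textbf{Proof proposal for Theorem~\ref{thm:SS_condU}.}
The plan is to mirror the direct argument of Section~\ref{sec:scalar}, but to replace the step ``$Y_x f_1(g^{-1}x)=0$'' (which used that the matrix coefficients of the trivial representation are constant) by the uniform oscillatory-integral estimate \eqref{eq:ss_uniform_decay} from singular spectrum condition~U. Fix $\eta_0\notin\mathcal S=\op{Ind}_H^G\op{SS}(\tau)$, and fix $f_1,f_2\in L^2(X,\mathcal V\otimes\mathcal D^{1/2})$. Following Definition~2.3 of \cite{HHO}, it suffices to produce a neighborhood $V_0$ of $\eta_0$ and precompact open sets $0\in U_1\Subset U_2\subset\mathfrak g$ so that the quantity
\[
(\star)=t^N\left|\int_G\int_X \langle f_1(g^{-1}x),f_2(x)\rangle_{\mathcal V_x\otimes\mathcal D^{1/2}_x}\,(\log^*\varphi_{N,U_1,U_2})(g)\,e^{t\langle\xi,\log g\rangle}\,dx\,dg\right|
\]
is bounded by $C^{N+1}(N+1)^N$ uniformly in $t>0$, $\xi\in V_0$, $N\in\mathbb N$. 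Shrink $U_2$ inside the neighborhood $U_{\textup{CU,SS}}$ supplied by condition~U (applied to $\eta=\eta_0$), and take $V_0\subset\Omega$.

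The first step is to interchange the order of integration (legitimate since $\varphi_{N,U_1,U_2}$ is compactly supported and $f_1,f_2\in L^2$, so everything is absolutely convergent), giving an inner $G$-integral for each fixed $x\in X$. The second step is the key reduction: for fixed $x$ use the scalar product $\langle\cdot,\cdot\rangle$ on $\mathfrak g$ to split $\mathfrak g=\mathfrak g_x\oplus\mathfrak g_x^\perp$, change variables on $\exp(U_2)$ accordingly, and observe that $f_1(g^{-1}x)$ and $f_2(x)$ only ``feel'' the $\mathfrak g_x$-direction of $g$ through the fiber representations $\tau_x\otimes\sigma_x$; integrating out the $\mathfrak g_x^\perp$-direction against the (smooth, compactly supported) cutoff leaves, for each $x$, precisely an oscillatory integral of the shape appearing on the left-hand side of \eqref{eq:ss_uniform_decay} — with $v_1=f_1(g^{-1}x)$-type data, $v_2=f_2(x)$-type data, and the cutoff being (a $\mathfrak g_x^\perp$-averaged version of) $\varphi_{N,U_1,U_2}$ restricted to $\mathfrak g_x$. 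One must check that this averaged cutoff still satisfies the defining derivative bounds with the same form of constants; this is where one picks the pair $U_1\Subset U_2$ so that the restriction to $\mathfrak g_x$ stays a legitimate member of the fixed family (or is dominated by one), using the $x$-uniform geometry of the splittings. Then \eqref{eq:ss_uniform_decay} gives, for the inner integral, a bound
\[
C_{U_1,U_2}^{N+1}(N+1)^N\,t^{-N}\,\bigl(\|f_1(g^{-1}x)\|\,|\cdot|\bigr)\otimes\bigl(\|f_2(x)\|\,|\cdot|\bigr)
\]
as an inequality of positive densities on $X$, uniformly in $x$ and $\xi\in V_0$.

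The third step is to integrate this density inequality over $X$: by \eqref{eq:integral_inequ_pos} and Cauchy--Schwarz the $X$-integral of $\|f_1(g^{-1}x)\|\cdot\|f_2(x)\|$ (as a density) is bounded by $\|f_1\|_{L^2(X)}\|f_2\|_{L^2(X)}$, which is finite and independent of everything. The auxiliary bound \eqref{eq:ss_uniform_sigma_bound} is used to control the half-density factor $\sigma_x$ that is produced when one commutes the fiberwise $G_x$-action past the cutoff in the change of variables, ensuring it contributes only a fixed multiplicative constant. Multiplying by the prefactor $t^N$ cancels $t^{-N}$, and we obtain $(\star)\le C^{N+1}(N+1)^N$ as required; this is exactly the estimate certifying $\eta_0\notin\op{SS}(\op{Ind}_H^G\tau)$, so $\op{SS}(\op{Ind}_H^G\tau)\subset\mathcal S$.

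The main obstacle I expect is the bookkeeping in the second step: carefully carrying out the $\mathfrak g_x$/$\mathfrak g_x^\perp$ change of variables on $G$ near the identity (the logarithm, the Jacobian, and the non-commutativity of the group) so that the resulting inner integral genuinely matches the left-hand side of \eqref{eq:ss_uniform_decay}, and verifying that the ``averaged'' cutoff on $\mathfrak g_x$ inherits the Hörmander-type derivative estimates \eqref{eq:varphi_def} with constants uniform in $x$. This requires that the family of subspaces $\{\mathfrak g_x\}_{x\in X}$ vary nicely and that $\mu(\xi,\cdot,\cdot)$-type smooth factors coming from $\log$ be controlled uniformly, just as in the scalar case; the infinite-dimensionality of $\tau$ causes no new trouble here precisely because condition~U packages the only place where it could matter into a hypothesis. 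The proof of Theorem~\ref{thm:WF_condU} is identical except that one uses a single cutoff $\varphi\in C^\infty_c(U_{\textup{CU,WF}})$ and the estimate \eqref{eq:wf_uniform_decay} in place of \eqref{eq:ss_uniform_decay}, yielding $t^{-N}$ decay with an $N$-dependent constant rather than the $C^{N+1}(N+1)^N$ control needed for analyticity.
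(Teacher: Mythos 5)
Your overall frame (interchange the $X$ and $G$ integrals, split $\mathfrak g=\mathfrak g_x\oplus\mathfrak g_x^\perp$ via $(Y,Z)\mapsto e^Ye^Z$, reduce to a fiberwise oscillatory integral, then integrate the resulting density inequality over $X$ with Cauchy--Schwarz) matches the paper's proof, but your second step has a genuine gap. After the change of variables the inner $\mathfrak g_x$-integral carries the phase $e^{\langle t\eta,\log(e^Ye^Z)\rangle}$, which is \emph{not} of the form $e^{\langle t\xi,Y\rangle}$ for any fixed $\xi\in\Omega$: by Baker--Campbell--Hausdorff the phase is nonlinear in $Y$ (with $Z$-dependent corrections of size $O(|Y||Z|)$), and it oscillates at the full rate $t$. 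Hence the inner integral is not "precisely an oscillatory integral of the shape appearing on the left-hand side of \eqref{eq:ss_uniform_decay}", and the discrepancy cannot be absorbed into your "averaged cutoff": the $Y$-derivatives of $e^{t\langle\eta,\log(e^Ye^Z)-Y-Z\rangle}$ grow like powers of $t$, so the modified cutoff violates the H\"ormander-type bounds \eqref{eq:varphi_def} that membership in the family $\{\varphi_{N,U_1,U_2}\}$ requires. Nor can you fall back on the scalar-case device of differentiating along $Y_x\in\mathfrak g_x$, since for a general unitary $\tau$ the fiberwise matrix coefficients need not be differentiable --- that is exactly the situation condition U is designed to circumvent. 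Treating this as "bookkeeping" of smooth $\log$-factors is where the argument would fail.

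The paper closes this gap with an extra mechanism you do not have: it inserts a second cutoff $\varphi_{N,\widetilde{U_1},\widetilde{U_2}}$ from the fixed family, rewrites the inner $\mathfrak g_x$-integral by Parseval as an $L^2(i\mathfrak g_x^*)$-pairing of two Fourier transforms (one of the phase factor, one of the matrix-coefficient factor times $\rho_{x,Z}$), and splits the frequency domain into $B_{2\varepsilon}(q_x(\eta_0))$ and its complement. On the ball, $\xi$ lies in $q_x(\Omega)$, so singular spectrum condition U \eqref{eq:ss_uniform_decay} gives the $C^{N+1}(N+1)^Nt^{-N}$ decay of the matrix-coefficient factor, and a convolution estimate (using the uniform $\langle\xi\rangle^{-N}$ decay of $\mathcal F[\rho_{x,Z}]$, Lemma~6.2 of \cite{HHO}) shows this decay survives; the phase factor is only trivially bounded there. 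Off the ball, the roles reverse: the uniform non-stationary phase estimate of Lemma~\ref{lem:unif_nonstat} --- proved uniformly in $x$ and $Z$ via the compactness of the Grassmannian, and using that $D_Y\langle\eta,\log(e^Ye^Z)\rangle$ stays within $\varepsilon/2$ of $q_x(\eta)$ --- gives joint $|t|^{-N}\langle\xi\rangle^{-N}$ decay of the phase-factor transform, while \eqref{eq:ss_uniform_sigma_bound} together with the unitarity of $\tau_x$ supplies the needed uniform bound on the other factor. Without this frequency splitting and the non-stationary phase lemma, the direct appeal to condition U in your second step does not go through.
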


We will spend the remainder of this section proving Theorem \ref{thm:WF_condU} 
and Theorem \ref{thm:SS_condU}.

Let $U_{\mathrm{inj}}\subset \mathfrak{g}$ be an open neigbourhood of zero such that the 
exponential map, 
\[
\exp\colon U_{\mathrm{inj}}\subset \mathfrak{g}\rightarrow \exp(U_{\mathrm{inj}})\subset G, 
\]
is a diffeomorphism and denote its inverse map by 
\[
\log:\exp(U_{\mathrm{inj}})\subset G\to U_{\mathrm{inj}}\subset 
\mathfrak{g}.
\]

For $f_1,f_2\in L^2(X,\mathcal{V}\otimes\mathcal{D}^{1/2})$ and $\varphi\in C^ {\infty}_c(G)$
with $\operatorname{supp} \varphi \subset \exp(U_{\mathrm{inj}})$, 
let us introduce the integral
\[
 I(f_1,f_2,\varphi,t\eta)=\int_G \left(\int_X \langle l_g f_1(g^{-1}\cdot x),f_2(x) \rangle_{\mathcal V_x\otimes\mathcal D^{1/2}_x}\right) \varphi(g)e^{\langle t\eta,\log(g)\rangle} dg .
\]
Here $dg$ is a non-zero, left invariant Haar measure. Note that 
\[
\langle l_g f_1(g^{-1}\cdot x),f_2(x) \rangle_{\mathcal V_x\otimes\mathcal{D}_x^{1/2}}
\]
takes values in $\mathcal D^1$ and can thus be integrated over $X$.

If $\eta_0 \notin \mathcal W =\operatorname{Ind}_H^G \operatorname{WF}(\tau)$, 
then in order to prove Theorem \ref{thm:WF_condU} we have to show 
that $\eta_0 \notin \operatorname{WF}(\operatorname{Ind}_H^G\tau)$. 
Thus we have to show that for the chosen $\eta_0$, there is a cutoff function 
$\varphi\in C^\infty_c(G)$ with $\varphi(e)\neq 0$ and a neighborhood 
$V_0$ of $\eta_0$ such that the integral has fast decay in $t$ uniformly in 
$\eta\in V_0$:
\begin{equation}\label{eq:wf_fast_decay}
 \left|I(f_1,f_2,\varphi,t\eta)\right|\leq C_{N,f_1,f_2,\varphi}|t|^{-N}. 
\end{equation}

On the other hand, in order to prove Theorem \ref{thm:SS_condU}, we have to show
that for a chosen $\eta_0\notin \mathcal S$ there is a neighborhood 
$V_0$ of $\eta_0$ and two precompact, open neighborhoods 
$0\in U_1\Subset U_2\subset U_{\mathrm{inj}}\subset \mathfrak{g}$ 
such that 
\begin{equation}\label{eq:ss_fast_decay}
 \left|I(f_1,f_2,\log^*\varphi_{N,U_1,U_2},t\eta)\right|\leq C_{f_1,f_2,\varphi}^{N+1}(N+1)^N|t|^{-N} 
\end{equation}
for every natural number $N\in \mathbb{N}$. 

Recall that we fixed  an inner product on $\mathfrak g$ and can
identify $\mathfrak g\cong i\mathfrak g^*$ by this inner product. 
With $\Omega$ from Definition~\ref{def:wf_cond_U} (or Definition~\ref{def:ss_cond_U}) 
there is $\varepsilon>0$ such that $B_{2\varepsilon}(\eta_0)$, the ball of 
radius $2\varepsilon$ around $\eta_0$, is contained in $\Omega$. We set 
\[
V_0:=B_{\varepsilon}(\eta_0).
\]

In order to formulate the precise conditions on the uniform cutoff function 
$\varphi$ which we will need in the proof of Theorem \ref{thm:WF_condU} 
and the conditions on the sets $0\in U_1\subset U_2\subset \mathfrak{g}$ 
which we will need in the proof of Theorem \ref{thm:SS_condU}, let us first 
formulate the following lemma. This lemma is a uniform non-stationary 
phase approximation and we will see 
below that together with wave front condition U and singular spectrum condition U,
it will be one of the main ingredients in 
order to obtain the desired estimates 
(\ref{eq:wf_fast_decay}, \ref{eq:ss_fast_decay}).

In what follows, whenever $L$ is a Lie group with Lie algebra $\mathfrak{l}$ 
and $\exp\colon \mathfrak{l}\rightarrow L$ is the exponential map, we will write 
$e^X=\exp(X)$ for $X\in \mathfrak{l}$.

\begin{lemma}\label{lem:unif_nonstat}
 Let $\varepsilon>0$. There is a neighborhood $U_{\textup{nsp}} \subset \mathfrak g$ of 
 $0\in \mathfrak g$ such that for any $\tilde\varphi \in C^\infty_c(U_{\textup{nsp}})$
 and for any $N$, there is a constant $C_{N,\tilde \varphi}$ such that
 \begin{equation}
  \label{eq:wf_unif_nonstat}
  \left|\int\limits_{\mathfrak g_x} e^{t(\langle\xi,Y\rangle-\langle\eta, \log(e^Ye^Z)\rangle)}\tilde\varphi(Y)dY\right|\leq C_{N,\tilde \varphi} \langle\xi\rangle^{-N}|t|^{-N}
 \end{equation}
uniformly for $x\in X$, for all 
$\xi\in i\mathfrak g_x^*\setminus B_{2\varepsilon}(q_x(\eta_0))$, $Z\in U_{\textup{nsp}}\cap\mathfrak g_x^\perp$, and $\eta\in V_0$. In addition, if $0\in \widetilde{U_1}\Subset \widetilde{U_2}\subset U_{\textup{nsp}}\subset \mathfrak{g}$, are open neighborhoods of zero, then there exists a constant
$C_{\widetilde{U_1},\widetilde{U_2}}$ such that for every $N\in \mathbb{N}$,
 \begin{equation}
  \label{eq:ss_unif_nonstat}
  \left|\int\limits_{\mathfrak g_x} e^{t(\langle\xi,Y\rangle-\langle\eta, \log(e^Ye^Z)\rangle)}\varphi_{N,\widetilde{U_1},\widetilde{U_2}}(Y)dY\right|\leq C_{\widetilde{U_1},\widetilde{U_2}}^{N+1}(N+1)^N \langle\xi\rangle^{-N}|t|^{-N}
 \end{equation}
uniformly for $x\in X$, for all $\xi\in i\mathfrak g_x^*\setminus B_{2\varepsilon}(q_x(\eta_0))$, $Z\in U_{\textup{nsp}}\cap\mathfrak g_x^\perp$, and $\eta\in V_0=B_\varepsilon(\eta_0)$.
\end{lemma}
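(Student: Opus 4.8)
The plan is to reduce both estimates \eqref{eq:wf_unif_nonstat} and \eqref{eq:ss_unif_nonstat} to a single uniform non-stationary phase statement by controlling, uniformly in $x \in X$ and $Z$, the phase function
\[
\Phi_{x,Z,\eta}(Y) := \langle \xi, Y\rangle - \langle \eta, \log(e^Y e^Z)\rangle, \qquad Y \in \mathfrak g_x.
\]
First I would write $\log(e^Y e^Z) = Y + Z + \tfrac12[Y,Z] + \cdots$ using the Baker--Campbell--Hausdorff series (convergent for $Y,Z$ in a small enough $U_{\textup{nsp}}$), so that the $Y$-gradient of the phase is
\[
\nabla_Y \Phi_{x,Z,\eta}(Y) = \xi - \operatorname{pr}_{\mathfrak g_x^*}\!\big( (d\log)_{e^Y e^Z}^* \circ (d L_{e^Z})^* \,\eta \big),
\]
i.e. $\xi$ minus a smooth $\mathfrak g_x^*$-valued function of $(Y,Z,\eta)$ that equals $q_x(\eta)$ when $Y = Z = 0$ (here $d\log$ at the identity is the identity). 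Since $X$ is covered by the conjugates of $H$ and the family $\{\mathfrak g_x\}$, together with the orthogonal projections $\operatorname{pr}_{\mathfrak g_x^*}$, varies continuously and the unit sphere bundle is what matters, one can shrink $U_{\textup{nsp}}$ so that this smooth function stays within distance $\varepsilon/2$ of $q_x(\eta_0)$ for all $x\in X$, all $Y,Z \in U_{\textup{nsp}}$, and all $\eta \in V_0 = B_\varepsilon(\eta_0)$ — the key point being that the estimates one needs are on the compact unit sphere of each $\mathfrak g_x$ and everything in sight depends continuously on $x$. Consequently, for $\xi \notin B_{2\varepsilon}(q_x(\eta_0))$ we get a lower bound $|\nabla_Y \Phi| \gtrsim \varepsilon + c\langle \xi\rangle$ uniformly, and moreover all higher $Y$-derivatives of $\Phi$ are bounded uniformly in $x, Z, \eta$ on the support of the cutoffs.

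With these uniform bounds in hand, the estimate \eqref{eq:wf_unif_nonstat} follows from the standard non-stationary phase argument: repeatedly integrate by parts using the operator $L = \big(it\,|\nabla_Y\Phi|^2\big)^{-1}\sum_j \partial_{Y_j}\Phi \cdot \partial_{Y_j}$, which has adjoint applied to the constant function $1$, gaining a factor $(|t|(\langle\xi\rangle + \varepsilon))^{-1}$ at each step; after $N$ integrations by parts the integrand is bounded by $C_{N,\tilde\varphi}(|t|\langle\xi\rangle)^{-N}$, uniformly in $x$ because every derivative of the phase and of $\tilde\varphi$ that appears is uniformly bounded, and the integration is over a fixed compact set in $\mathfrak g_x$ of uniformly bounded volume. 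For \eqref{eq:ss_unif_nonstat} one does the same $N$-fold integration by parts but now with the $N$-dependent cutoff $\varphi_{N,\widetilde{U_1},\widetilde{U_2}}$; the Leibniz rule produces terms involving $D^\beta \varphi_{N,\widetilde{U_1},\widetilde{U_2}}$ with $|\beta| \le N$, and property \eqref{eq:varphi_def} of these cutoffs bounds each such derivative by $C_\alpha^{|\beta|+1}(N+1)^{|\beta|}$; tracking the combinatorial constants (the number of terms is $\le C^N$ and the largest derivative contributes $(N+1)^N$) gives exactly the claimed bound $C_{\widetilde{U_1},\widetilde{U_2}}^{N+1}(N+1)^N\langle\xi\rangle^{-N}|t|^{-N}$. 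This is the same bookkeeping already carried out in the proof of Theorem \ref{thm:L^2 space} and on pages 282 of \cite{Hor83}.

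I expect the main obstacle to be establishing the \emph{uniformity in $x\in X$} of the constants, since $X$ need not be compact: one must check that shrinking $U_{\textup{nsp}}$ once works for all $x$ simultaneously. The resolution is that the only data entering the phase are the BCH series (intrinsic to $\mathfrak g$, independent of $x$) and the subspace $\mathfrak g_x \subset \mathfrak g$ together with its orthogonal complement and projection; all of the relevant quantities are continuous (indeed real-analytic) in the point of the Grassmannian determined by $\mathfrak g_x$, the map $x \mapsto \mathfrak g_x$ lands in this Grassmannian, and the required inequalities are \emph{open} conditions that hold at $Y = Z = 0$ for every $x$. One then uses that $q_x(\eta_0)$ is a continuous function of $x$ and that the relevant estimates are homogeneous (scale-invariant in $\xi$), so it suffices to control things on the unit sphere bundle $\bigsqcup_x S(\mathfrak g_x)$; a compactness-free argument — the defining inequality is satisfied with a fixed margin $\varepsilon/2$ on the whole family once $U_{\textup{nsp}}$ is small — then yields the uniform constants. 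A secondary technical point is ensuring $\log$ is well-defined on $e^Y e^Z$ for $Y \in \mathfrak g_x$, $Z \in \mathfrak g_x^\perp$ in $U_{\textup{nsp}}$; this is arranged by taking $U_{\textup{nsp}}$ small enough that $\exp(U_{\textup{nsp}})\cdot\exp(U_{\textup{nsp}}) \subset \exp(U_{\mathrm{inj}})$.
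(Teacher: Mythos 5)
Your proposal is correct and takes essentially the same route as the paper: observe that $D_Y\langle\eta,\log(e^Ye^Z)\rangle$ equals $q_x(\eta)$ at $Z=0$ and stays close to it for $Y,Z$ in a small neighborhood uniformly over the family of subspaces $\mathfrak g_x$ (the paper gets this uniformity from compactness of the Grassmannian of $\dim\mathfrak h$-dimensional subspaces), then run the standard non-stationary-phase integration by parts with the operator built from $\xi-D_Y\psi$, using $\left|\xi-q_x(\eta)\right|\geq\varepsilon$ and $\left|\xi-q_x(\eta)\right|\gtrsim\langle\xi\rangle$, and in the singular spectrum case track constants via the H\"ormander cutoff bounds to get $C^{N+1}(N+1)^N$. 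Two small corrections to your write-up: the gradient can only be kept within $\varepsilon/2$ of $q_x(\eta)$, hence within $3\varepsilon/2$ of $q_x(\eta_0)$ (still leaving the margin $\varepsilon/2$ since $\xi\notin B_{2\varepsilon}(q_x(\eta_0))$ and $\eta\in B_\varepsilon(\eta_0)$), and the asserted ``compactness-free'' uniformity in $x$ needs an actual argument --- either the paper's compactness of the Grassmannian, or a subspace-independent bound $\|D_Y\psi^{\mathfrak g_x}_\eta(Y,Z)-q_x(\eta)\|\leq C|Z|$ coming from a uniform bound on the mixed second derivative of $(A,B)\mapsto\langle\eta,\log(e^Ae^B)\rangle$ on a fixed ball.
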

\begin{proof}
For any ($\dim \mathfrak h$)-dimensinal linear subspace $V\subset \mathfrak g$
we define
\[
 \psi^V_\eta: \mathfrak g=V\oplus V^\perp \to i\mathbb R,~Y+Z\mapsto \langle\eta,\log(e^Ye^Z)\rangle.
\]
If 
\[
D_Y: C^\infty(V,i\mathbb R)\to C^{\infty}(V,iV^*)
\]
is the differential, then we clearly have
\[
 D_Y \psi^V_\eta(Y,0) = q_V(\eta)
\]
where $q_V:i\mathfrak g^*\to iV^*$ is the restriction map. As 
$ D_Y \psi^V_\eta(Y,Z)$ depends analytically on $Z$, for each $V$ there is an open 
neighborhood of zero $U_V\subset \mathfrak g$ such that 
\begin{equation}\label{eq:DPsi_estimate}
 \left\|D_Y \psi^V_\eta(Y,Z) - q_V(\eta)\right\|_{iV^*}<\frac{\varepsilon}{2}
\end{equation}
for all $Z\in U_V\cap V^\perp$. As the Grassmanian of all $\dim\mathfrak h$-dimensional 
subspaces $V$ is compact we can set
\[
 U_{\textup{nsp}}:=\bigcap\limits_{V\subset \mathfrak g} U_V
\]
which is an open neighborhood of zero. 

We now perform a partial integration via the differential operator 
\[
 L^x = t^{-1}\frac{\langle\xi -D_Y\psi^{\mathfrak g_x}_\eta(Y,Z),D_Y\rangle_{i\mathfrak g_x^*}}{\langle\xi -D_Y\psi^{\mathfrak g_x}_\eta(Y,Z),\xi -D_Y\psi^{\mathfrak g_x}_\eta(Y,Z) \rangle_{i\mathfrak g_x^*}}
\]
which has the property
\[
 L^x e^{t(\langle\xi,Y\rangle-\psi^{\mathfrak g_x}_\eta(Y,Z)\rangle)}= e^{t(\langle\xi,Y\rangle-\psi^{\mathfrak g_x}_\eta(Y,Z)\rangle)}.
\]
Using (\ref{eq:DPsi_estimate}) and the fact that 
$|\xi-q_x(\eta)|_{i\mathfrak g_x^*}>\varepsilon$ for all $\eta\in V_0$ and 
$\xi\in i\mathfrak g_x^*\setminus B_{2\varepsilon}(q_x(\eta_0))$, we calculate
\begin{eqnarray}
 \left\|\xi -D_Y\psi^{\mathfrak g_x}_\eta(Y,Z)\right\|_{i\mathfrak g_x^*}&\geq&
 \left\|\xi -q_x(\eta)\right\|_{i\mathfrak g_x^*}-\left\|q_x(\eta)-D_Y\psi^{\mathfrak g_x}_\eta(Y,Z)\right\|_{i\mathfrak g_x^*}\nonumber\\
  &\geq& \varepsilon - \frac{\varepsilon}{2} = \frac{\varepsilon}{2}.\label{eq:xi_DPsi_estimate}
\end{eqnarray}
So $L^x$ is well defined in the desired $\xi$ and $\eta$ range. Performing $N$-times
partial integration via $L^x$ on the integral on the left hand side of (\ref{eq:wf_unif_nonstat})
yields
 \begin{equation}\label{eq:wf_partint}
  \left|\int\limits_{\mathfrak g_x} e^{t(\langle\xi,Y\rangle-\langle\eta, \log(e^Ye^Z)\rangle)}\tilde\varphi(Y)dY\right|\leq C_{N,\tilde\varphi} \left\|\xi -D_Y\psi^{\mathfrak g_x}_\eta(Y,Z)\right\|_{i\mathfrak g_x^*}^{-N}|t|^{-N}.
 \end{equation}
Finally using the observation that there is a constant $c>0$ such that 
\[
 \left|\xi -q_x(\eta)\right|_{i\mathfrak g_x^*} \geq c\langle \xi\rangle
\]
uniform in $x\in X, \xi\in\mathfrak g_x^*\setminus B_{2\varepsilon}(q_x(\eta_0)),\eta\in V_0$ as well as 
(\ref{eq:xi_DPsi_estimate}) we obtain (\ref{eq:wf_unif_nonstat}). 
In order to obtain the stronger statement
(\ref{eq:ss_unif_nonstat}), we must keep track of the constants in the 
partial integration more carefully. Let us denote by $(L^x)^*$ the adjoint 
of the differential operator on $L^2(\mathfrak g_x)$. Utilizing statement (3) from the 
remarks preceding Definition \ref{def:ss_cond_U} the fact that $(L^x)^*$ 
is a first order differentiable operator as well as the fact, that $\psi_\eta^{\mathfrak g_x}$ 
is analytic, we obtain bounds
$$|((L^x)^*)^N\varphi_{N,\widetilde{U_1},\widetilde{U_2}}(y)|\leq C_{\widetilde{U_1},\widetilde{U_2}}^{N+1}(N+1)^N\left\|\xi -D_Y\psi^{\mathfrak g_x}_\eta(Y,Z)\right\|_{i\mathfrak g_x^*}^{-N}|t|^{-N}$$
for some constant $C_{\widetilde{U_1},\widetilde{U_2}}$ and all $y\in U_2$. Plugging this into (\ref{eq:wf_partint}) after replacing $\tilde\varphi$ with $\varphi_{N,\widetilde{U_1},\widetilde{U_2}}$ yields (\ref{eq:ss_unif_nonstat}).
\end{proof}

Now consider for any $(\dim \mathfrak h)$-dimensional subvectorspace $V\subset \mathfrak g$
the map
\[
 \kappa_V:\mathfrak g = V\oplus V^\perp \to G,~Y+Z\mapsto e^Ye^Z.
\]
Note that this map depends continuously on $V$ as a point in the Grassmanian and
as this Grassmanian is compact, there is a neighborhood of zero 
$U'_{\textup{inj}} \subset\mathfrak g$ such that $(\kappa_V)_{|U'_{\mathrm{inj}}}$
is a diffeomorphism for
all $V$. Now, in the proof of Theorem~\ref{thm:WF_condU} let $\tilde \varepsilon>0$ be such that 
\begin{equation}\label{eq:wf_tilde_eps}
B_{\tilde \varepsilon}(0) \Subset U_{\textup{nsp}}\cap U'_{\textup{inj}}\cap U_{\textup{CU,WF}}. 
\end{equation}
And in the proof of Theorem~\ref{thm:SS_condU} let $\tilde \varepsilon>0$ be such that 
\begin{equation}\label{eq:ss_tilde_eps}
B_{\tilde \varepsilon}(0) \Subset U_{\textup{nsp}}\cap U'_{\textup{inj}}\cap U_{\textup{CU,SS}}. 
\end{equation}
Again from the compactness of the Grassmanian we conclude that 
$\bigcap\limits_{V} \kappa_V(B_{\tilde \varepsilon}(0))\subset G$ is a nonempty 
open neighborhood of $e\in G$. With this observation we can find a cutoff function
$\varphi \in C_c^\infty(G)$ such that $\varphi(e)\neq0$ and
\begin{equation}\label{eq:supp_phi}
 \operatorname{supp} \varphi \subset \left(\bigcap\limits_{V} \kappa_V(B_{\tilde \varepsilon}(0))\right)\subset G
\end{equation}
to be used in the proof of Theorem~\ref{thm:WF_condU}. Analogously, we may choose
$0\in U_1\Subset U_2\subset \mathfrak{g}$ precompact, open subsets such that
\begin{equation}\label{eq:supp_U_2}
 \exp(U_2) \subset \left(\bigcap\limits_{V} \kappa_V(B_{\tilde \varepsilon}(0))\right)\subset G
\end{equation}
to be used in the proof of Theorem~\ref{thm:SS_condU}.

Let us now come back to the oscillatory integral $I(f_1,f_2,\varphi,t\eta)$: 
Interchanging the $X$ and $G$ integrals and replacing the integral over $G$ 
for each $x\in X$ by an integral over $\mathfrak g_x \oplus \mathfrak g_x^\perp$
via the diffeomorphism $\kappa_{\mathfrak g_x}$ we obtain 
\begin{align*}
&\left|I(f_1,f_2,\varphi,t\eta)\right|=\left|\int\limits_X \int\limits_G \langle l_g f_1(g^{-1}\cdot x),f_2(x)\rangle_{\mathcal V_x} \varphi(g) e^{\langle t\eta,\log(g)\rangle} dg\right|\\
&\leq\int\limits_X \left|\int\limits_G \langle l_g f_1(g^{-1}\cdot x),f_2(x)\rangle_{\mathcal V_x} \varphi(g) e^{\langle t\eta,\log(g)\rangle} dg\right|_{\mathcal D^{1}_x}\\
&\leq\int\limits_X \int\limits_{\mathfrak g_x^\perp}\Bigg|\int\limits_{\mathfrak g_x} 
\langle l_{e^Ye^Z}f_1((e^Ye^Z)^{-1}\cdot x),f_2(x) \rangle_{\mathcal V_x} \varphi(e^Ye^Z) 
e^{\langle t\eta,\log(e^Ye^Z)\rangle}\\
&\quad\quad\quad\quad\quad\quad\quad\quad\quad\quad\quad\quad\quad\quad\quad\quad\quad\quad j_{\mathfrak{g}_x,\mathfrak{g}_x^{\perp}}(Y+Z)dY\Bigg|_{\mathcal D^{1}_x}dZ\\
&= \int\limits_X \int\limits_{\mathfrak g_x^\perp}\Bigg|\int\limits_{\mathfrak g_x} 
\langle \tau_x\otimes\sigma_x (e^Y)l_{e^Z}f_1((e^Z)^{-1}\cdot x),f_2(x) \rangle_{\mathcal V_x} \varphi(e^Ye^Z)e^{\langle t\eta,\log(e^Ye^Z)\rangle} \\
&\quad\quad\quad\quad\quad\quad\quad\quad\quad\quad\quad\quad\quad\quad\quad\quad\quad\quad  j_{\mathfrak{g}_x,\mathfrak{g}_x^{\perp}}(Y+Z)dY\Bigg|_{\mathcal D^1_x}dZ
\end{align*}
where $j_{\mathfrak{g}_x,\mathfrak{g}_x^{\perp}}$ is the Jacobian of the diffeomorphism 
$\kappa_{\mathfrak g_x}$ and $dY$, $dZ$ are Lebesgue measures on $\mathfrak{g}_x$ 
and $\mathfrak{g}_x^\perp$ which are fixed by the choice of the scalar product on $\mathfrak g$. 

The analogous statement for the singular spectrum case is 
\begin{align*}
&\left|I(f_1,f_2,\log^*\varphi_{N,U_1,U_2},t\eta)\right|\\
&\leq \int\limits_X \int\limits_{\mathfrak g_x^\perp}\Bigg|\int\limits_{\mathfrak g_x} 
\langle \tau_x\otimes\sigma_x (e^Y)l_{e^Z}f_1((e^Z)^{-1}\cdot x),f_2(x) \rangle_{\mathcal V_x} \log^*\varphi_{N,U_1,U_2}(e^Ye^Z)e^{\langle t\eta,\log(e^Ye^Z)\rangle} \\
&\quad\quad\quad\quad\quad\quad\quad\quad\quad\quad\quad\quad\quad\quad\quad\quad\quad\quad  j_{\mathfrak{g}_x,\mathfrak{g}_x^{\perp}}(Y+Z)dY\Bigg|_{\mathcal D^1_x}dZ.
\end{align*}

Next for fixed $x\in X$ and $Z\in \mathfrak{g}_x^\perp$, we write 
$l_{e^Z}f_1(e^{-Z}\cdot x)=v_1\otimes z_1$ and $f_2(x)=v_2\otimes z_2$ with $v_{i}\in\mathcal V_x$
and $z_{i}\in\mathcal D^{1/2}_x$. We can then write the inner integral over $\mathfrak{g}_x$ as
\[
 \left\langle \langle \tau_x (e^Y)v_1,v_2 \rangle_{\mathcal V_x} \cdot (\sigma_x(e^Y)z_1)\otimes \overline z_2\cdot\varphi(e^Ye^Z)j_{\mathfrak{g}_x,\mathfrak{g}_x^{\perp}}(Y+Z),\overline{ e^{\langle t\eta,\log(e^Ye^Z)\rangle}}\right\rangle_{L^2(\mathfrak{g}_x)}
\]
which takes values in $\mathcal D^1_x$ (For the singular spectrum case, we simply 
replace $\varphi$ by $\log^*\varphi_{N,U_1,U_2}$).
For the proof of Theorem~\ref{thm:WF_condU}, we choose a second cutoff function 
$\tilde \varphi\in C^\infty_c(\mathfrak g)$ 
which fulfills
\begin{eqnarray*}
 \operatorname{supp}(\tilde\varphi)&\subset& U_{\textup{nsp}}\cap U_{\textup{CU,WF}}\cap U_{\textup{inj}}\\
 \tilde \varphi (Y) =1 &\textup{for}& Y \in B_{\tilde \varepsilon}(0)
\end{eqnarray*}
which is possible because of the choice of $\tilde \varepsilon$ according to 
(\ref{eq:wf_tilde_eps}). For the proof of Theorem~\ref{thm:SS_condU}, 
we choose two open sets
\begin{eqnarray*}
 \widetilde{U_2}:= & U_{\textup{nsp}}\cap U_{\textup{CU,SS}}\cap U_{\textup{inj}}\\
 \widetilde{U_1}:= & B_{\tilde \varepsilon}(0)
\end{eqnarray*}
which is possible because of the choice of $\tilde \varepsilon$ according to 
(\ref{eq:ss_tilde_eps}). In the proof of Theorem~\ref{thm:SS_condU}, 
we will utilize the second cutoff function 
$\varphi_{N,\widetilde{U_1},\widetilde{U_2}}$ instead of $\widetilde{\varphi}$.
From (\ref{eq:supp_phi}), we deduce that for all $x\in X$ and all 
$Y\notin B_{\tilde \varepsilon}(0) \cap \mathfrak g_x$ we have $\varphi(e^Ye^Z)=0$
for all $Z\in\mathfrak g_x^\perp$. Thus, we can insert this new, real valued, cutoff function 
$\tilde \varphi$ into our scalar product and calculate
\begin{align*}
 &\left|\left\langle \langle \tau_x (e^Y)v_1,v_2 \rangle_{\mathcal V_x} \cdot (\sigma_x(e^Y)z_1)\otimes \overline z_2\cdot\varphi(e^Ye^Z)j_{\mathfrak{g}_x,\mathfrak{g}_x^{\perp}}(Y+Z),\overline{e^{\langle t\eta,\log(e^Ye^Z)\rangle}\tilde\varphi(Y)}\right\rangle_{L^2(\mathfrak{g}_x)}\right|_{\mathcal D^1_x}\\
 &= \Bigg|\Bigg\langle \mathcal{F}\left[\langle \tau_x (e^\bullet)v_1,v_2 \rangle_{\mathcal V_x} \cdot (\sigma_x(e^\bullet)z_1)\otimes \overline z_2\cdot\varphi(e^\bullet e^Z)j_{\mathfrak{g}_x,\mathfrak{g}_x^{\perp}}(\bullet+Z)\right](\xi), \\
 &\quad\quad\quad\quad \mathcal{F}\left[\overline{e^{\langle t\eta,\log(e^\bullet e^Z)\rangle}\tilde\varphi(\bullet)}\right](\xi)\Bigg\rangle_{L^2(i\mathfrak{g}^*_x)}\Bigg|_{\mathcal D^1_x}\\
  &= |t|^{\dim \mathfrak h} \Bigg|\Bigg\langle \mathcal{F}\left[\langle \tau_x (e^\bullet)v_1,v_2 \rangle_{\mathcal V_x} \cdot (\sigma_x(e^\bullet)z_1)\otimes \overline z_2\cdot\varphi(e^\bullet e^Z)j_{\mathfrak{g}_x,\mathfrak{g}_x^{\perp}}(\bullet+Z)\right](t \xi), \\
  &\quad\quad\quad\quad \mathcal{F}\left[e^{-\langle t\eta,\log(e^\bullet e^Z)\rangle\tilde\varphi(\bullet)}\right](t \xi)\Bigg\rangle_{L^2(i\mathfrak{g}^*_x)}\Bigg|_{\mathcal D^1_x}\\
\end{align*}

For the singular spectrum case, the same expression holds with $\varphi$ replaced by 
$\log^*\varphi_{N,U_1,U_2}$ and $\widetilde{\varphi}$ replaced by 
$\varphi_{N,\widetilde{U_1},\widetilde{U_2}}$.
We will now split the integral over $i\mathfrak{g}_x^*$ into two parts, the first 
one over $B_{2\varepsilon}(q_x(\eta_0))\subset i\mathfrak g^*_x$ and the second one over 
$i\mathfrak g_x^*\setminus B_{2\varepsilon}(q_x(\eta_0))$. The idea is to get the 
fast decay in $t$ for the first integral by the wave front condition U 
(or, in the singular spectrum case, by singular spectrum condition U), and for 
the second part by the uniform nonstationary phase estimate (Lemma~\ref{lem:unif_nonstat}). 
Note that a similar strategy is used in the proof of \cite[Proposition 1.3.2]{Du73}

{\bf First part $\int_{B_{2\varepsilon}(q_x(\eta_0))}$:}\\
On the one side we have the trivial bound 
\begin{equation}
 \label{eq:wf_case1_trivial_bound}
 \left|\mathcal{F}\left[e^{-\langle t\eta,\log(e^\bullet e^Z)\rangle}\tilde\varphi(\bullet)\right](t \xi)\right|\leq \int\limits_{\mathfrak g_x} |\tilde\varphi (Y)|dY =:C_1.
\end{equation}

In the singular spectrum case, we utilize condition (3) satisfied by
$\varphi_{N,\widetilde{U_1},\widetilde{U_2}}$ (see the remarks preceding
Definition~\ref{def:ss_cond_U}) with $\alpha=\emptyset$ and $\beta=\emptyset$ 
to obtain the trivial bound
\begin{equation}
 \label{eq:ss_case1_trivial_bound}
\left|\mathcal{F}\left[e^{-\langle t\eta,\log(e^\bullet e^Z)\rangle}\varphi_{N,\widetilde{U_1},\widetilde{U_2}}(\bullet)\right](t \xi)\right|\leq \int\limits_{\mathfrak g_x} |\varphi_{N,\widetilde{U_1},\widetilde{U_2}} (Y)|dY \leq \op{vol}(\widetilde{U_2})C_{\emptyset}=:C_1.
\end{equation}

On the other hand let us consider
\begin{align*}
 & \left|\mathcal{F}\left[\langle \tau_x (e^\bullet)v_1,v_2 \rangle_{\mathcal V_x} \cdot (\sigma_x(e^\bullet)z_1)\otimes \overline z_2\cdot \varphi(e^\bullet e^Z)j_{\mathfrak{g}_x,\mathfrak{g}_x^{\perp}}(\bullet+Z)\right](t \xi)\right|_{\mathcal D^1_x}=\\
 &\left| \mathcal{F}\left[\langle \tau_x (e^\bullet)v_1,v_2 \rangle_{\mathcal V_x} \cdot (\sigma_x(e^\bullet)z_1)\otimes \overline z_2\cdot \tilde\varphi(\bullet)\varphi(e^\bullet e^Z)j_{\mathfrak{g}_x,\mathfrak{g}_x^{\perp}}(\bullet+Z)\right](t \xi)\right|_{\mathcal D^1_x}=\\
&\left| \left(\mathcal{F}\left[\langle \tau_x (e^\bullet)v_1,v_2 \rangle_{\mathcal V_x} \cdot (\sigma_x(e^\bullet)z_1)\otimes \overline z_2\cdot \tilde\varphi(\bullet)\right]*\mathcal F\left[\varphi(e^\bullet e^Z)j_{\mathfrak{g}_x,\mathfrak{g}_x^{\perp}}(\bullet+Z)\right]\right)(t \xi)\right|_{\mathcal D^1_x}
\end{align*}
Now wave front condition U implies 
\begin{equation}
\label{eq:wf_case1_unif_decay_a}
\begin{split}
 \left|\mathcal{F}\left[\langle \tau_x (e^\bullet)v_1,v_2 \rangle_{\mathcal V_x} \cdot (\sigma_x(e^\bullet)z_1)\otimes \overline z_2\cdot \tilde\varphi(\bullet)\right](t\xi) \right|_{\mathcal D^1_x}\leq\\
 C_{N,\tilde \varphi} (\|v_1\|_{\mathcal V_x} |z_1|_{\mathcal D^{1/2}_x})  \otimes(\|v_2\|_{\mathcal V_x}
 |z_2|_{\mathcal D^{1/2}_x} )t^{-N}
 \end{split}
\end{equation}
uniformly in $\xi\in B_{2\varepsilon}(q_x(\eta_0)) \subset q_x(\Omega)$, $x\in X$,
$v_1,v_2\in\mathcal V_x$, $z_1,z_2\in\mathcal D^{1/2}_x$. 
The analogous singular spectrum statement is
\begin{equation}
\label{eq:ss_case1_unif_decay_a}
\begin{split}
 \left|\mathcal{F}\left[\langle \tau_x (e^\bullet)v_1,v_2 \rangle_{\mathcal V_x} \cdot (\sigma_x(e^\bullet)z_1)\otimes \overline z_2\cdot \varphi_{N,\widetilde{U_1},\widetilde{U_2}}(\bullet)\right](t\xi) \right|_{\mathcal D^1_x}\leq\\
 C_{\widetilde{U_1},\widetilde{U_2}}^{N+1}(N+1)^N (\|v_1\|_{\mathcal V_x} |z_1|_{\mathcal D^{1/2}_x})  \otimes(\|v_2\|_{\mathcal V_x}
 |z_2|_{\mathcal D^{1/2}_x} )t^{-N}
 \end{split}
\end{equation}
uniformly in $\xi\in B_{2\varepsilon}(q_x(\eta_0)) \subset q_x(\Omega)$, $x\in X$,
$v_1,v_2\in\mathcal V_x$, $z_1,z_2\in\mathcal D^{1/2}_x$. Note that the value of $C_{N,\tilde\varphi}$
and $C_{\widetilde{U_1},\widetilde{U_2}}$ might change from line to line below, however the crucial 
uniformity properties will still be satisfied. 

Next consider the family of functions 
$\rho_{x,Z}(Y):= \varphi(e^Y e^Z)j_{\mathfrak{g}_x,\mathfrak{g}_x^{\perp}}(Y+Z)$ 
which can also be considered as a function of $Y\in \mathfrak g$. As the 
parameter $Z\in B_{\tilde\varepsilon}(0)$ 
can vary only in a bounded set due to the choice of $\varphi$ according to 
(\ref{eq:wf_tilde_eps}) and (\ref{eq:supp_phi}) and as the dependence of $\rho_{x,Z}$
on $x$ is only via the subvectorspace $\mathfrak g_x\subset\mathfrak g$ which can 
be considered as a point in the compact Grassmanian, $\rho_{x,Z}$ varies in a bounded 
set of $C^N(\mathfrak g)$ for all $N\geq 0$. Accordingly, partial integration 
yields uniform fast decay of its Fourier transform in all directions, i.e.
\begin{equation}
\label{eq:wf_case1_unif_decay_b}
 \mathcal F\left[\varphi(e^\bullet e^Z)j_{\mathfrak{g}_x,\mathfrak{g}_x^{\perp}}(\bullet+Z)\right](\xi) \leq C_{N,\varphi}\langle \xi\rangle^{-N}
\end{equation}
for all $\xi\in i\mathfrak g_x^*$ uniformly in $x\in X$ and 
$Z\in B_{\tilde\varepsilon}(0)\cap \mathfrak g_x^ \perp$. 
The analogous statement for the singular spectrum case can be deduced from Lemma~7.2
of \cite{HHO16}. We have
\begin{equation}
\label{eq:ss_case1_unif_decay_b}
 \mathcal F\left[\varphi_{N,U_1,U_2}(e^\bullet e^Z)j_{\mathfrak{g}_x,\mathfrak{g}_x^{\perp}}(\bullet+Z)\right](\xi) \leq C_{U_1,U_2}^{N+1}(N+1)^N\langle \xi\rangle^{-N}
\end{equation}
for all $\xi\in i\mathfrak g_x^*$ uniformly in $x\in X$ and 
$Z\in B_{\tilde\varepsilon}(0)\cap \mathfrak g_x^ \perp$.

Now a straightforward calculation shows that the uniform decay property
(\ref{eq:wf_case1_unif_decay_a}) is not destroyed by convolution with a function 
satisfying (\ref{eq:wf_case1_unif_decay_b}). So we obtain
\begin{equation}
 \label{eq:wf_case1_unif_decay_c}
 \begin{split}
 \left| \left(\mathcal{F}\left[\langle  \tau_x (e^\bullet)v_1,v_2 \rangle_{\mathcal V_x} 
 \cdot (\sigma_x(e^\bullet)z_1)\otimes \overline z_2\cdot\tilde\varphi(\bullet)\right]*\mathcal
 F\left[\rho_{x,Z}(\bullet)\right]\right)(t \xi)\right|_{\mathcal D^1_x} \\
 \leq C_{N,\tilde \varphi} (\|v_1\|_{\mathcal V_x} |z_1|_{\mathcal D^{1/2}_x})  \otimes(\|v_2\|_{\mathcal V_x}
 |z_2|_{\mathcal D^{1/2}_x} )t^{-N}
 \end{split}
 \end{equation}
uniformly in $\xi\in B_{2\varepsilon}(q_x(\eta_0)) \subset q_x(\Omega)$. 

For the singular spectrum case, we set 
$\rho_{x,Z}:=\varphi_{N,U_1,U_2}(e^Y e^Z)j_{\mathfrak{g}_x,\mathfrak{g}_x^{\perp}}(Y+Z)$. 
Then, the uniform decay property (\ref{eq:ss_case1_unif_decay_a}) 
is not destroyed by convolution with a function satisfying (\ref{eq:ss_case1_unif_decay_b}), 
and we obtain
 \begin{equation}
 \label{eq:ss_case1_unif_decay_c}
 \begin{split}
 \left| \left(\mathcal{F}\left[\langle  \tau_x (e^\bullet)v_1,v_2 \rangle_{\mathcal V_x} 
 \cdot (\sigma_x(e^\bullet)z_1)\otimes \overline z_2 \cdot\varphi_{N,\widetilde{U_1},\widetilde{U_2}}
 (\bullet)\right]*\mathcal F\left[\rho_{x,Z}(\bullet)\right]\right)(t \xi)\right|_{\mathcal D^1_x} \\
 \leq C_{U_1,U_2,\widetilde{U_1},\widetilde{U_2}}^{N+1}(N+1)^N (\|v_1\|_{\mathcal V_x} 
 |z_1|_{\mathcal D^{1/2}_x})  \otimes(\|v_2\|_{\mathcal V_x}
 |z_2|_{\mathcal D^{1/2}_x} )t^{-N}.
 \end{split}
 \end{equation}

 Now putting (\ref{eq:wf_case1_trivial_bound}) and (\ref{eq:wf_case1_unif_decay_c}) together we obtain
\begin{equation}\label{eq:wf_fast_decay_case1}
\begin{split}
  &\Bigg|\int\limits_{B_{2\varepsilon}(q_x(\eta_0))} \overline{\mathcal{F}\left[e^{-\langle t\eta,\log(e^\bullet e^Z)\rangle}\tilde\varphi(\bullet)\right](t \xi)}\\
  &\quad\quad\quad\quad\quad \mathcal{F}\left[\langle \tau_x (e^\bullet)v_1,v_2 \rangle_{\mathcal V_x} \cdot (\sigma_x(e^\bullet)z_1)\otimes \overline z_2\cdot\rho_{x,Z}(\bullet)\right](t \xi)d\xi \Bigg|_{\mathcal D^1_x} \\
  & \leq C_{N,\varphi} (\|v_1\|_{\mathcal V_x} |z_1|_{\mathcal D^{1/2}_x})  \otimes(\|v_2\|_{\mathcal V_x}
 |z_2|_{\mathcal D^{1/2}_x} )t^{-N}
\end{split}
\end{equation}
uniformly in $x\in X$ and $Z\in B_{\tilde \varepsilon}(0)$.

To get the singular spectrum analogue, we combine (\ref{eq:ss_case1_trivial_bound}) and (\ref{eq:ss_case1_unif_decay_c}) to obtain
\begin{equation}\label{eq:ss_fast_decay_case1}
\begin{split}
  &\Bigg|\int\limits_{B_{2\varepsilon}(q_x(\eta_0))} \overline{\mathcal{F}\left[e^{-\langle t\eta,\log(e^\bullet e^Z)\rangle}\varphi_{N,\widetilde{U_1},\widetilde{U_2}}(\bullet)\right](t \xi) }\\
  &\quad\quad\quad\quad\quad \mathcal{F}\left[\langle \tau_x (e^\bullet)v_1,v_2 \rangle_{\mathcal V_x} \cdot (\sigma_x(e^\bullet)z_1)\otimes \overline z_2\cdot\rho_{x,Z}(\bullet)\right](t \xi)d\xi \Bigg|_{\mathcal D^1_x} \\
  & \leq C_{U_1,U_2,\widetilde{U_1},\widetilde{U_2}}^{N+1}(N+1)^N (\|v_1\|_{\mathcal V_x} |z_1|_{\mathcal D^{1/2}_x})  \otimes(\|v_2\|_{\mathcal V_x}
 |z_2|_{\mathcal D^{1/2}_x} )t^{-N}
\end{split}
\end{equation}
uniformly in $x\in X$ and $Z\in B_{\tilde \varepsilon}(0)\cap\mathfrak{g}_x^\perp$.

{\bf Second part $\int_{i\mathfrak g_x^*\setminus B_{2\varepsilon}(q_x(\eta_0))}$:}\\
Both in the wavefront and singular spectrum cases, for 
$\xi \in i\mathfrak g_x^*\setminus B_{2\varepsilon}(q_x(0))$, Lemma~\ref{lem:unif_nonstat} 
provides us with fast decay in $t$ and $\xi$ of the second factor in the considered
scalar product. In order to obtain fast decay of the scalar product we thus need a general 
bound for the first factor
\[
 \mathcal{F}\left[\langle \tau_x (e^\bullet)v_1,v_2 \rangle_{\mathcal V_x} \cdot (\sigma_x(e^\bullet)z_1)\otimes \overline z_2\cdot \varphi(e^\bullet e^Z)j_{\mathfrak{g}_x,\mathfrak{g}_x^{\perp}}(\bullet+Z)\right](t \xi)
\]
in the wavefront case and
\[
 \mathcal{F}\left[\langle \tau_x (e^\bullet)v_1,v_2 \rangle_{\mathcal V_x} \cdot (\sigma_x(e^\bullet)z_1)\otimes \overline z_2\cdot \varphi_{N,U_1,U_2}(e^\bullet e^Z)j_{\mathfrak{g}_x,\mathfrak{g}_x^{\perp}}(\bullet+Z)\right](t \xi)
\]
in the singular spectrum case.
First note that (\ref{eq:wf_uniform_sigma_bound}) from wave front condition U 
and (\ref{eq:ss_uniform_sigma_bound}) from singular spectrum condition U together with 
the unitarity of $\tau_x$ imply
a general bound on the norm
\[
 \|\tau_x(e^Y)v_1\|_{\mathcal V_x} \cdot| \sigma_x(e^Y)z_1|_{\mathcal D^{1/2}_x } 
 \leq C_{\tau,\sigma} \|v_1\|_{\mathcal V_x}|z_1|_{\mathcal D^{1/2}_x}
\]
uniformly in $x\in X$ and $Y\in B_{\tilde\varepsilon}(0)\cap \mathfrak g_x$. 
Again arguing with the compactness of the Grassmanian, we obtain
\[
 \int \limits_{\mathfrak g_x} \left|\varphi(e^Ye^Z) 
 j_{\mathfrak{g}_x,\mathfrak{g}_x^{\perp}} (Y+Z)\right|dY \leq C_2
\]
uniformly in $x\in X$ and $Z\in B_{\tilde\varepsilon}(0)\cap \mathfrak g_x^ \perp$ in the wavefront case and
\[
 \int \limits_{\mathfrak g_x} \left|\varphi_{N,U_1,U_2}(e^Ye^Z) j_{\mathfrak{g}_x,\mathfrak{g}_x^{\perp}} (Y+Z)\right|dY \leq C_2
\]
uniformly in $x\in X$ and $Z\in B_{\tilde\varepsilon}(0)\cap \mathfrak g_x^ \perp$ in the singular spectrum case. We also 
utilized statement (3) of the remarks before Definition~\ref{def:ss_cond_U} 
in the singular spectrum case.
Consequently, in the wavefront case we obtain
\begin{align*}
& \left| \mathcal{F}\left[\langle \tau_x (e^\bullet)v_1,v_2 \rangle_{\mathcal V_x} \cdot (\sigma_x(e^\bullet)z_1)\otimes \overline z_2\cdot \varphi(e^\bullet e^Z)j_{\mathfrak{g}_x,\mathfrak{g}_x^{\perp}}(\bullet+Z)\right](t \xi) \right| \\
&\leq \int \limits_{\mathfrak g_x}  \|\tau_x(e^Y)v_1\|_{\mathcal V_x}\|v_2\|_{\mathcal V_x} \cdot| \sigma_x(e^Y)z_1|_{\mathcal D^{1/2}_x }\otimes|z_2|_{\mathcal D^{1/2}_x} \left|\varphi(e^Ye^Z) j_{\mathfrak{g}_x,\mathfrak{g}_x^{\perp}} (Y+Z)\right|dY\\
&\leq C_{\tau,\sigma}C_2 \|v_1\|_{\mathcal V_x} \|v_2\|_{\mathcal V_x}|z_1|_{\mathcal D^{1/2}_x}\otimes |z_2|_{\mathcal D^{1/2}_x}
\end{align*}
uniformly in $t>0, \xi \in i\mathfrak g_x^*, x\in X$, and $T\in B_{\tilde\varepsilon}(0)\cap \mathfrak g_x^ \perp$ 
and in the singular spectrum case, we obtain
\begin{align*}
& \left| \mathcal{F}\left[\langle \tau_x (e^\bullet)v_1,v_2 \rangle_{\mathcal V_x} \cdot (\sigma_x(e^\bullet)z_1)\otimes \overline z_2\cdot \varphi_{N,U_1,U_2}(e^\bullet e^Z)j_{\mathfrak{g}_x,\mathfrak{g}_x^{\perp}}(\bullet+Z)\right](t \xi) \right| \\
&\leq C_{\tau,\sigma}C_2 \|v_1\|_{\mathcal V_x} \|v_2\|_{\mathcal V_x}|z_1|_{\mathcal D^{1/2}_x}\otimes |z_2|_{\mathcal D^{1/2}_x}
\end{align*}
uniformly in $t>0, \xi \in i\mathfrak g_x^*, x\in X$, and $z\in B_{\tilde\varepsilon}(0)$.
Thus, in the wavefront case we get
\begin{equation} \label{eq:wf_fast_decay_case2}
\begin{split}
 &\Bigg|\int\limits_{i\mathfrak g_x^*\setminus B_{2\varepsilon}(q_x(\eta_0))} \overline{\mathcal{F}\left[e^{-\langle t\eta,\log(e^\bullet e^Z)\rangle}\tilde\varphi(\bullet)\right](t \xi)}\\
  &\quad\quad\quad\quad\quad \mathcal{F}\left[\langle \tau_x (e^\bullet)v_1,v_2 \rangle_{\mathcal V_x} \cdot (\sigma_x(e^\bullet)z_1)\otimes \overline z_2\cdot\rho_{x,Z}(Y)\right](t \xi)d\xi \Bigg|_{\mathcal D^1_x} \\
  & \leq C_{N,\varphi} (\|v_1\|_{\mathcal V_x} |z_1|_{\mathcal D^{1/2}_x})  \otimes(\|v_2\|_{\mathcal V_x}
 |z_2|_{\mathcal D^{1/2}_x} )t^{-N}
\end{split}
 \end{equation}
and in the singular spectrum case, we get
\begin{equation} \label{eq:ss_fast_decay_case2}
\begin{split}
 &\Bigg|\int\limits_{i\mathfrak g_x^*\setminus B_{2\varepsilon}(q_x(\eta_0))} \overline{\mathcal{F}\left[e^{-\langle t\eta,\log(e^\bullet e^Z)\rangle}\varphi_{N,\widetilde{U_1},\widetilde{U_2}}(\bullet)\right](t \xi)}\\
  &\quad\quad\quad\quad\quad \mathcal{F}\left[\langle \tau_x (e^\bullet)v_1,v_2 \rangle_{\mathcal V_x} \cdot (\sigma_x(e^\bullet)z_1)\otimes \overline z_2\cdot\rho_{x,Z}(Y)\right](t \xi)d\xi \Bigg|_{\mathcal D^1_x} \\
  & \leq C_{\widetilde{U_1},\widetilde{U_2}}^{N+1}(N+1)^N (\|v_1\|_{\mathcal V_x} |z_1|_{\mathcal D^{1/2}_x})  \otimes(\|v_2\|_{\mathcal V_x}
 |z_2|_{\mathcal D^{1/2}_x} )t^{-N}.
\end{split}
 \end{equation}
{\bf End of the proof of Theorem \ref{thm:WF_condU} and Theorem \ref{thm:SS_condU}}\\
To finish the proof of Theorem~\ref{thm:WF_condU}, we put (\ref{eq:wf_fast_decay_case1}) 
and (\ref{eq:wf_fast_decay_case2}) together to obtain 
\begin{eqnarray*}
 \left|I(f_1,f_2,\varphi,t\eta)\right|&\leq& \int\limits_{X} \int \limits_{g_x^\perp\cap B_{\tilde\varepsilon}(0)} C_{N,\varphi} (\|v_1\|_{\mathcal V_x} |z_1|_{\mathcal D^{1/2}_x})  \otimes(\|v_2\|_{\mathcal V_x}
 |z_2|_{\mathcal D^{1/2}_x} )t^{-N} dZ\\
 &=& \int\limits_{X} \int \limits_{\mathfrak g_x^\perp\cap B_{\tilde\varepsilon}(0)} C_{N,\varphi}  \|l_{e^Z}f_1(e^{-Z}x)\|_{\mathcal V_x\otimes D^{1/2}_x} \otimes \|f_2(x)\|_{\mathcal V_x\otimes\mathcal D^{1/2}_x} t^{-N}dZ
\end{eqnarray*}
uniformly in $\eta\in V_0$. Note that the compact support of the $Z$ integral 
follows from the choice of the cutoff 
function $\varphi$ according to (\ref{eq:supp_phi}). As the regular representation 
on $L^2(X,\mathcal V\otimes \mathcal D^{1/2})$ is 
unitary, $f_1\in L^2(X,\mathcal V\otimes \mathcal D^{1/2})$ implies that 
$\|l_{e^Z}f_1(e^{-Z}x)\|_{\mathcal V_x} \in L^2(X,\mathcal D^{1/2}_{\geq 0})$ and so we 
finally obtain
\[
 \left|I(f_1,f_2,\varphi,t\eta)\right|\leq C_{N,f_1,f_2,\varphi} |t|^{-N}
\]
uniformly in $\eta \in V_0$ which finishes the proof of Theorem~\ref{thm:WF_condU}.

To finish the proof of Theorem \ref{thm:SS_condU}, we put (\ref{eq:ss_fast_decay_case1}) and (\ref{eq:ss_fast_decay_case2}) together and make the analogous argument to obtain 
\[
 \left|I(f_1,f_2,\varphi_{N,U_1,U_2},t\eta)\right|\leq C_{U_1,U_2,f_1,f_2}^{N+1}(N+1)^N |t|^{-N}
\]
uniformly in $\eta \in V_0$.

\section{Proof of condition U for compact $X$}
\label{sec:CU_compact}

In this section, we verify the following Proposition.

\begin{proposition} \label{prop:compact} Suppose $G$ is a Lie group, $H\subset G$ 
is a closed subgroup, $(\tau,V)$ is a unitary representation of $H$, and $X=G/H$ 
is compact. Then wavefront condition U (Definition~\ref{def:wf_cond_U}) and singular 
spectrum condition U (Definition~\ref{def:ss_cond_U}) are both satisfied for the 
triple $(G,H,\tau)$.
\end{proposition}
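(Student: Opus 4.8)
The plan is to use the compactness of $X$ in two ways --- a finite cover of $X$ by open sets over which the principal $H$--bundle $G\to X$ is trivial, and the compactness of the resulting family of ``frequencies'' --- so as to reduce the uniform estimates of Definitions~\ref{def:wf_cond_U} and \ref{def:ss_cond_U} to the corresponding estimates for $\tau$ itself on one fixed neighborhood of $0\in\mathfrak h$, namely the estimates behind (\ref{eq:nonuniform_decay}), which are already uniform in the matrix coefficients; the finiteness of the cover then supplies the uniformity in $x$. I would first dispose of the auxiliary bounds (\ref{eq:wf_uniform_sigma_bound}) and (\ref{eq:ss_uniform_sigma_bound}): for $Y\in\mathfrak g_x$ the operator $\sigma_x(e^Y)$ on the line $\mathcal D^{1/2}_x$ is multiplication by $|{\det}_{T_xX}(d(e^Y)_{|x})|^{-1/2}=\exp(-\tfrac12\operatorname{tr}_{\mathfrak g/\mathfrak g_x}\operatorname{ad}Y)$, and $\operatorname{tr}_{\mathfrak g/\mathfrak g_x}\operatorname{ad}Y=\operatorname{tr}_{\mathfrak g}\operatorname{ad}Y-\operatorname{tr}_{\mathfrak g_x}\operatorname{ad}_{\mathfrak g_x}Y$ is a continuous function of $(x,Y)$ on the compact set $\{\,x\in X,\ Y\in\mathfrak g_x\cap\overline U\,\}$ for any bounded $U$; hence both bounds hold for any bounded choice of $U_{\textup{CU,WF}},U_{\textup{CU,SS}}$. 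By conjugation invariance of the trace, the same formula shows $Y'\mapsto\sigma_x(e^{\operatorname{Ad}(g_x)Y'})$ is a positive scalar $c(Y')$ depending only on $Y'\in\mathfrak h$ and not on $x$.

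Now fix $\eta\notin\mathcal W=\operatorname{Ind}_H^G\operatorname{WF}(\tau)$ and an open ball $\Omega_0\ni\eta$ with $\overline{\Omega_0}\cap\mathcal W=\emptyset$. Since $0\in\operatorname{WF}(\tau_x)$ in the conventions of \cite{HHO} (as $V\ne0$), one has $q_x^{-1}(0)\subset\mathcal W$, so $q_x(\eta)\ne0$ for all $x$; shrinking $\Omega_0$ and using that $X$ is compact, we get $\inf_{x\in X,\ \xi\in\overline{\Omega_0}}|q_x(\xi)|=:c_0>0$. Cover $X$ by finitely many open sets $W_1,\dots,W_k$ each carrying a continuous section $x\mapsto g_x$, $g_xH=x$. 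On $\overline{W_j}$ one has $\mathfrak g_x=\operatorname{Ad}(g_x)\mathfrak h$ and, via the Hilbert space isomorphism $g_x\colon V\to\mathcal V_x$, the identity $\tau_x(e^{\operatorname{Ad}(g_x)Y'})=g_x\tau(e^{Y'})g_x^{-1}$ for $Y'\in\mathfrak h$ (by (\ref{eq:tau_x})). Substituting $Y=\operatorname{Ad}(g_x)Y'$ in the integral in (\ref{eq:wf_uniform_decay}) and writing $w_i=g_x^{-1}v_i$ (so $\|w_i\|_V=\|v_i\|_{\mathcal V_x}$), that integral becomes
\[
\Big(\int_{\mathfrak h}\langle\tau(e^{Y'})w_1,w_2\rangle_V\,\Phi_{x,\varphi}(Y')\,e^{\langle t\zeta_x,Y'\rangle}\,dY'\Big)\cdot\big(z_1\otimes\overline{z_2}\big),
\]
where $\zeta_x=(\operatorname{Ad}(g_x))^*q_x(\xi)=q(\operatorname{Ad}^*(g_x^{-1})\xi)\in i\mathfrak h^*$ (the Lemma above) and $\Phi_{x,\varphi}\in C^\infty_c(\mathfrak h)$ collects $\varphi\circ\operatorname{Ad}(g_x)$, the scalar $c$, and the Jacobian of $\operatorname{Ad}(g_x)\colon\mathfrak h\to\mathfrak g_x$. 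Since $\mathcal W\supseteq\operatorname{Ad}^*(G)q^{-1}(\operatorname{WF}(\tau))$ and $\overline{\Omega_0}\cap\mathcal W=\emptyset$, we get $\zeta_x\notin\operatorname{WF}(\tau)$ for $\xi\in\overline{\Omega_0}$, while $|\zeta_x|\ge c_0>0$. Fix $\varphi\in C^\infty_c(U_{\textup{CU,WF}})$; for each $j$, by continuity of $x\mapsto g_x$ on $\overline{W_j}$, as $x$ ranges over $\overline{W_j}$ the functions $\Phi_{x,\varphi}$ form a bounded subset of $C^\infty_c(U'_j)$ for a neighborhood $U'_j\ni0$ shrinking to $\{0\}$ with $U_{\textup{CU,WF}}$, and the $\zeta_x$ (over all $\xi\in\overline{\Omega_0}$ too) fill a compact set $K_j\subset i\mathfrak h^*\setminus(\operatorname{WF}(\tau)\cup\{0\})$.

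It then remains to invoke the estimate behind (\ref{eq:nonuniform_decay}), now for the group $H$ and the representation $\tau$: around each point of $i\mathfrak h^*\setminus\operatorname{WF}(\tau)$ there are a bounded neighborhood and a cutoff near $0\in\mathfrak h$ --- by the usual flexibility of the wavefront set we may take this to be a single $\chi$ with $\chi\equiv1$ on a prescribed $U_0\ni0$ --- such that $|\mathcal F[\chi\langle\tau(e^\bullet)w_1,w_2\rangle](\zeta)|\le C_N\|w_1\|_V\|w_2\|_V\langle\zeta\rangle^{-N}$ on the open cone over that neighborhood, uniformly in $w_1,w_2$. Covering the compact set $K:=\bigcup_jK_j$ by finitely many such neighborhoods, choosing $U_{\textup{CU,WF}}$ small enough that every $U'_j\subset U_0$ (so that $\Phi_{x,\varphi}=\chi\,\Phi_{x,\varphi}$), and absorbing the factor $\mathcal F[\Phi_{x,\varphi}]$ --- uniformly Schwartz over $x\in\overline{W_j}$ --- by the convolution argument of (\ref{eq:wf_case1_unif_decay_c}), one gets $|\mathcal F[\langle\tau(e^\bullet)w_1,w_2\rangle\Phi_{x,\varphi}](t\zeta_x)|\le C_{N,\varphi}\|w_1\|_V\|w_2\|_V\langle t\zeta_x\rangle^{-N}\le C'_{N,\varphi}\|v_1\|_{\mathcal V_x}\|v_2\|_{\mathcal V_x}t^{-N}$, using $|\zeta_x|\ge c_0$. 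With $\Omega:=\operatorname{int}\Omega_0$ and $U_{\textup{CU,WF}}$ chosen to serve all $k$ charts at once, this is exactly (\ref{eq:wf_uniform_decay}), uniform in $x$ because the $W_j$ cover $X$. Singular spectrum condition U is verified the same way, with $\varphi$ replaced by the H\"ormander sequences $\varphi_{N,U_1,U_2}$ and (\ref{eq:nonuniform_decay}) replaced by the analogous uniform $\operatorname{SS}(\tau)$--estimate with constants of the form $C^{N+1}(N+1)^N$; the change of variables and the convolution step (Lemma~6.2 of \cite{HHO}, as in (\ref{eq:ss_case1_unif_decay_b})) preserve constants of this shape.

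The one point requiring genuine care --- not mere bookkeeping --- is the uniformity of the estimate for $\tau$ over \emph{all} vectors $w_1,w_2\in V$ and over the bounded family $\{\Phi_{x,\varphi}\}$. Since $Y'\mapsto\tau(e^{Y'})$ is not continuous into $C^\infty$ ($d\tau$ being unbounded), this is not a soft consequence of the pointwise statement ``$\zeta_0\notin\operatorname{WF}(\langle\tau(\cdot)w,v\rangle)$ for all $w,v$'' but must be drawn from the operator--valued formulation of $\operatorname{WF}(\tau)$ in \cite{HHO} that already yields the $\|v_1\|\,\|v_2\|$--uniform bound (\ref{eq:nonuniform_decay}). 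Granting that input, the proof is just its reduction --- via the compactness of $X$ (finite trivializing cover) and of the Grassmannian --- to finitely many applications, which is precisely the step that has no analogue for noncompact $X$.
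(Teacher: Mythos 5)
Your proposal is correct and follows essentially the same route as the paper's own proof: cover $X$ by finitely many trivializing charts with continuous sections $x\mapsto g_x$, conjugate the integral from $\mathfrak g_x$ to $\mathfrak h$ so the frequencies $\zeta_x=q(\operatorname{Ad}^*(g_x^{-1})\xi)$ fill a precompact subset of $i\mathfrak h^*\setminus\operatorname{WF}(\tau)$, invoke the vector-uniform decay estimate for $\tau$ with a fixed cutoff, and absorb the $x$-dependent cutoff/Jacobian/density factor by convolution with a uniformly Schwartz family (with the $C^{N+1}(N+1)^N$ bookkeeping for the singular spectrum case). The only cosmetic differences are your explicit trace formula for the half-density bound, where the paper argues by continuity and compactness, and your explicit lower bound $c_0$ on $|\zeta_x|$, which in the paper is implicit in the precompactness of $V_{0,\operatorname{WF}}$ away from $\operatorname{WF}(\tau)$.
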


When combined with Theorem~\ref{thm:WF_condU}, Theorem~\ref{thm:SS_condU}, and 
Theorem~1.1 of \cite{HHO16}, this will imply Theorem~\ref{thm:compact}.

First, we show (\ref{eq:wf_uniform_sigma_bound}) and the identical statement (\ref{eq:ss_uniform_sigma_bound}). To do this, we show that for any precompact, 
open subset $U\subset \mathfrak{g}$, there exists a 
constant $C$ such that for every $x\in X$, every 
$Y\in \mathfrak{g}_x\cap U$, and any $z\in \mathcal{D}_x^{1/2}$ 
\[
|\sigma_x(e^Y)z|_{\mathcal{D}_x^{1/2}}\leq C|z|_{\mathcal{D}_x^{1/2}}.
\]
For fixed $x\in X$ and $Y\in \mathfrak{g}_x$, the quotient
\[
\frac{|\sigma_x(e^Y)z|_{\mathcal{D}_x^{1/2}}}{|z|_{\mathcal{D}_x^{1/2}}} 
\]
is independent of $z\in \mathcal{D}_x^{1/2}\setminus \{0\}$. For each $x\in X$, 
this quotient defines a continuous function on $\mathfrak{g}_x$; in particular, 
it must attain a maximum value $C_x$ on $\overline{\mathfrak{g}_x\cap U}$. 
Moreover, since $x\mapsto \mathfrak{g}_x$ is a continuous function of $X$ into 
the Grassmannian and $U$ is an open set, we deduce that $x\mapsto C_x$ is a 
continuous function on the compact space $X$ and therefore attains a maximum 
value $C$. We deduce
\[
|\sigma_x(e^Y)z|_{\mathcal{D}_x^{1/2}}\leq C|z|_{\mathcal{D}_x^{1/2}} 
\]
for every $x\in X$, $Y\in \mathfrak{g}_x\cap U$, and $z\in \mathcal{D}_x^{1/2}$.
This is (\ref{eq:wf_uniform_sigma_bound}) and (\ref{eq:ss_uniform_sigma_bound}).

Next, we must verify (\ref{eq:wf_uniform_decay}) for the wavefront case and 
(\ref{eq:ss_uniform_decay}) for the singular spectrum case. For the wavefront 
case, we must show that for a fixed $\eta_0 \notin \op{Ind}_H^G\op{WF}\tau$, there exists 
a neighborhood $\Omega\subset i\mathfrak{g}^*\setminus \op{Ind}_H^G\op{WF}(\tau)$ 
of $\eta_0$ and an open subset $0\in U_{\textup{CU,WF}}\subset \mathfrak{g}$ such that 
for all $\varphi\in C_c^{\infty}(U_{\textup{CU,WF}})$ and all $N\in \mathbb{N}$, there 
exists a constant $C_{N,\varphi}$ such that
\begin{equation}\label{eq:wfcondUcomp}
\begin{split}
\left|\int\limits_{\mathfrak g_x} \langle \tau_x(e^Y) v_1,v_2\rangle_{\mathcal{V}_x}\cdot
 \left((\sigma_x(e^Y) z_1) \otimes \overline{z_2}\right) \cdot\varphi(Y) e^{\langle t\xi,Y\rangle}
 dY\right|_{\mathcal D^1_x}\\
\leq C_{N,\varphi}(\|v_1\|_{\mathcal{V}_x}|z_1|_{\mathcal{D}_x^{1/2}})\otimes (\|v_2\|_{\mathcal{V}_x}|z_2|_{\mathcal{D}_x^{1/2}})t^{-N}
\end{split}
\end{equation}
for $t>0$ and uniformly for all $\xi\in \Omega$, $x\in X$, $v_1,v_2\in \mathcal{V}_x$, 
and $z_1,z_2\in \mathcal{D}_x^{1/2}$. 

For the singular spectrum case, we must show that for a fixed 
$\eta_0 \notin \op{Ind}_H^G\op{SS}(\tau)$, there is a neighborhood 
$\Omega \subset i\mathfrak{g}^*\setminus \op{Ind}_H^G\op{SS}(\tau)$ of $\eta_0$ 
and a neighborhood $0\in U_{\textup{CU,SS}}\subset \mathfrak g$ such that for
every pair of neighborhoods $0\in U_1\Subset U_2\subset \mathfrak{g}$ with 
$U_2\subset U_{\textup{CU,SS}}$ and all $N\in \mathbb{N}$, there is a constant 
$C_{U_1,U_2}$ such that
 \begin{equation}\label{eq:sscondUcomp}
 \begin{split}
 \left|\int\limits_{\mathfrak g_x} \langle \tau_x(e^Y) v_1,v_2\rangle_{\mathcal{V}_x}\cdot
 \left((\sigma_x(e^Y) z_1) \otimes \overline{z_2}\right) \cdot\varphi_{N,U_1,U_2}(Y) e^{\langle t\xi,Y\rangle}
 dY\right|_{\mathcal D^1_x} \\ 
 \leq C_{U_1,U_2}^{N+1}(N+1)^N (\|v_1\|_{\mathcal V_x} |z_1|_{\mathcal D^{1/2}_x})  \otimes(\|v_2\|_{\mathcal V_x}
 |z_2|_{\mathcal D^{1/2}_x} )t^{-N}
 \end{split}
\end{equation}
for $t>0$ and uniformly for all $\xi\in \Omega$, $x\in X$, $v_1,v_2\in \mathcal{V}_x$, 
and $z_1,z_2\in \mathcal{D}_x^{1/2}$.
 
In order to study the integrals (\ref{eq:wfcondUcomp}) and (\ref{eq:sscondUcomp}), 
we utilize a calculation in Appendix~\ref{app:dense}. By (\ref{eq:densityaction}), 
if $x\in X$, then the group $G_x$ acts on the one dimensional complex vector space
$\mathcal{D}_x^{1/2}$ by the scalar
\begin{equation}
\sigma_{x}(h)z=|\op{det}_{T_{x}X}({dh}_{|x})|^{-1/2}z
\end{equation}
for all $z\in \mathcal{D}_x^{1/2}, h\in G_x$ where $\op{det}_{T_xX}(dh|_x)$ denotes the 
determinant of the differential map on the fibre $T_xX$
\[
{dh}_{|x}\colon T_xX\rightarrow T_xX.
\]
Injecting this expression into the left hand side of (\ref{eq:wfcondUcomp}) or 
(\ref{eq:sscondUcomp}) yields
\begin{align*}
 &\left|\int\limits_{\mathfrak g_x} \langle \tau_x(e^Y) v_1,v_2\rangle_{\mathcal{V}_x}\cdot
 \left((\sigma_x(e^Y) z_1) \otimes \overline{z_2}\right) \cdot\varphi(Y) e^{\langle t\xi,Y\rangle}
 dY\right|_{\mathcal D^1_x}\\ 
 =&\left|\int\limits_{\mathfrak g_x} \langle \tau_x(e^Y) v_1,v_2\rangle_{\mathcal{V}_x}\cdot
 |{\det}_{T_xX}({de^Y}_{|x})|^{-1/2}\cdot z_1 \otimes \overline{z_2} \cdot\varphi(Y) e^{\langle t\xi,Y\rangle}
 dY\right|_{\mathcal D^1_x}\\ 
 =&\left|\int\limits_{\mathfrak g_x} \langle \tau_x(e^Y) v_1,v_2\rangle_{\mathcal{V}_x}\cdot
 |{\det}_{T_xX}({de^Y}_{|x})|^{-1/2}\cdot \varphi(Y) e^{\langle t\xi,Y\rangle}
 dY\right|\cdot |z_1|_{\mathcal D^ {1/2}_x} \otimes |z_2|_{\mathcal D^{1/2}_x}. 
\end{align*}

Next, for each $x\in X$, we choose $g_x\in G$ such that $g_xHg_x^{-1}=G_x$. 
Then we conjugate our integral from $\mathfrak{g}_x$, the Lie algebra of $G_x$, 
to $\mathfrak{h}$, the Lie algebra of $H$.

\begin{align*}
&\left|\int\limits_{\mathfrak g_x} \langle \tau_x(e^Y) v_1,v_2\rangle_{\mathcal{V}_x}\cdot
 |{\det}_{T_xX}({de^Y}_{|x})|^{-1/2}\cdot \varphi(Y) e^{\langle t\xi,Y\rangle}
 dY\right|\cdot |z_1|_{\mathcal D^ {1/2}_x} \otimes |z_2|_{\mathcal D^{1/2}_x}\\ 
 =&\Bigg|\int\limits_{\mathfrak h} \langle \tau_x(e^{\operatorname{Ad}(g_x)\tilde Y}) v_1,v_2\rangle_{\mathcal{V}_x}\cdot
 |{\det}_{T_xX}({de^{\operatorname{Ad}(g_x)\tilde Y}}_{|x})|^{-1/2}\cdot \\
 &\varphi(\operatorname{Ad}(g_x)\tilde Y) e^{\langle t\xi,\operatorname{Ad}(g_x)\tilde Y\rangle}
 j_{\operatorname{Ad}(g_x):\mathfrak h\to\mathfrak g_x}(\tilde Y)d\tilde Y\Bigg|\cdot |z_1|_{\mathcal D^ {1/2}_x} \otimes |z_2|_{\mathcal D^{1/2}_x}\\ 
 =&\Bigg|\int\limits_{\mathfrak h} \langle g_x^ {-1}\tau_x(C_{g_x}e^{\tilde Y}) g_xg_x^ {-1}v_1,g_x^ {-1}v_2\rangle_{V}\cdot
 |{\det}_{T_{eH}X}({de^{\tilde Y}}_{|eH})|^{-1/2}\cdot \\
 &\varphi(\operatorname{Ad}(g_x)\tilde Y) e^{\langle t\xi,\operatorname{Ad}(g_x)\tilde Y\rangle}
 j_{\operatorname{Ad}(g_x):\mathfrak h\to\mathfrak g_x}(\tilde Y)d\tilde Y\Bigg|\cdot |z_1|_{\mathcal D^ {1/2}_x} \otimes |z_2|_{\mathcal D^{1/2}_x}\\ 
 =&\Bigg|\int\limits_{\mathfrak h} \langle\tau(e^{\tilde Y})g_x^ {-1}v_1,g_x^ {-1}v_2\rangle_{V}\cdot
 |{\det}_{T_{eH}X}({de^{\tilde Y}}_{|eH})|^{-1/2}\cdot \\
 &\varphi(\operatorname{Ad}(g_x)\tilde Y) e^{\langle t(\operatorname{Ad}(g_x))^*\xi,\tilde Y\rangle}
 j_{\operatorname{Ad}(g_x):\mathfrak h\to\mathfrak g_x}(\tilde Y)d\tilde Y\Bigg|\cdot |z_1|_{\mathcal D^ {1/2}_x} \otimes |z_2|_{\mathcal D^{1/2}_x}\\ 
 =&(\star)
\end{align*}
In reading the above calculations, recall that we have a fixed inner product on 
$\mathfrak{g}$ which restricts to $\mathfrak{g}_x$ and $\mathfrak{h}$, determining 
the translation invariant densities $dY$ and $d\tilde{Y}$ on these vector spaces. 
In the above calculation, we have used the Jacobian of the conjugation map 
$\op{Ad}(g_x)$, which is defined by the expression
\[
\op{Ad}(g_x)^*dY=j_{\op{Ad}(g_x)\colon \mathfrak{h}\rightarrow \mathfrak{g}_x}(\tilde{Y})d\tilde{Y}.
\]
In addition, as before, $C_{g_x}$ denotes conjugation by $g_x$.

The following Lemma will be useful.
\begin{lemma}\label{lem:precomp_gx}
 If $X=G/H$ is a compact homogenous space, then for any $x\in X$ we can choose
 a $g_x$ such that $x=g_xH$ and that 
$$\{g_x, x\in X\}\subset G$$
 is a precompact set.  
\end{lemma}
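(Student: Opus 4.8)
The plan is to produce a single compact subset $C\subset G$ with $\pi(C)=X$, where $\pi\colon G\to X=G/H$ is the quotient map, and then to take $g_x$ to be any element of the nonempty set $\pi^{-1}(x)\cap C$. Since the lemma only asserts the existence of a family $\{g_x\}$ contained in a precompact set — no continuity or measurability in $x$ is required — this set-theoretic choice inside $C$ is all that is needed.

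To build $C$, I would use that $\pi$ is an open map: for any open $U\subset G$ one has $\pi^{-1}(\pi(U))=UH=\bigcup_{h\in H}Uh$, which is open, so $\pi(U)$ is open in $X$. Since $G$ is a Lie group it is locally compact, so every point of $G$ has a precompact open neighborhood. Applying $\pi$ to these neighborhoods yields an open cover of $X$; by compactness of $X$ there are finitely many precompact open sets $V_1,\ldots,V_n\subset G$ with $X=\pi(V_1)\cup\cdots\cup\pi(V_n)$. Then $C:=\overline{V_1}\cup\cdots\cup\overline{V_n}$ is compact, and $\pi(C)\supseteq\pi(V_1)\cup\cdots\cup\pi(V_n)=X$, hence $\pi(C)=X$.

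Finally, for each $x\in X$ the fiber $\pi^{-1}(x)$ meets $C$ — this is exactly the content of $\pi(C)=X$ — so I can choose $g_x\in\pi^{-1}(x)\cap C$. Then $x=g_xH$ and $\{g_x:x\in X\}\subseteq C$, which is precompact, as desired.

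There is essentially no serious obstacle here; the only points requiring care are the two standard facts invoked — that the quotient map $G\to G/H$ is open (handled by the computation $\pi^{-1}(\pi(U))=UH$ above) and that $G$ is locally compact (immediate since $G$ is a manifold) — together with the observation that the lemma as stated demands nothing beyond a choice of coset representatives lying in a fixed compact set. Alternatively, if a continuous section were wanted, one could patch together the local sections of the principal $H$-bundle $G\to G/H$ (for instance the sections built from $Y\mapsto\exp(Y)H$ on a linear complement of $\mathfrak{h}$ in $\mathfrak{g}$, translated by group elements) over a finite subcover, after shrinking to a finite cover of $X$ by compact sets; but this refinement is not required for the statement.
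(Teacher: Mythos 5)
Your proof is correct. The paper disposes of this lemma in one line by a different (though closely related) device: it defines the $g_x$ via a finite number of continuous local sections of the principal $H$-bundle $G\to X$, i.e.\ it covers the compact base $X$ by finitely many trivializing neighborhoods, takes a continuous section over each, and lets $g_x$ be a section value; the image of each section over a compact piece of $X$ is compact, so the union of representatives is precompact. You instead avoid sections altogether: you use only that $\pi\colon G\to G/H$ is open (via $\pi^{-1}(\pi(U))=UH$) and that $G$ is locally compact, extract a finite cover of $X$ by images of precompact open sets, set $C=\overline{V_1}\cup\cdots\cup\overline{V_n}$, and choose $g_x\in\pi^{-1}(x)\cap C$ set-theoretically. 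Your route is slightly more elementary, since it does not invoke local triviality of $G\to G/H$, and it correctly observes that the lemma demands no continuity in $x$; the paper's route is shorter to state and would additionally hand you continuity of $x\mapsto g_x$ on each piece of the cover, which is sometimes convenient but, as you note, not needed here.
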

\begin{proof}
 Define the $g_x$ via a finite number of continuous sections of the principle 
 $H$-fiber bundle $G\to X$.
\end{proof}
Let us assume from now on that the $g_x$ are fixed according to Lemma~\ref{lem:precomp_gx}.

Now we are ready to prove wavefront condition U and singular spectrum condition U. 
For the wavefront case, let $\eta_0\notin \mathcal W=\op{Ind}_H^G\op{WF}(\tau)$. 
As $\mathcal W$ is closed we can fix 
$\Omega = B_{\varepsilon}(\eta_0)\in i\mathfrak g^*\setminus \mathcal W$.
Next let us introduce
\[
 V_{0,\op{WF}}:= \bigcup\limits_{x\in X} (\operatorname{Ad}(g_x))^*q_x(\Omega)=\bigcup\limits_{x\in X} q(\operatorname{Ad}^*(g_x^{-1})\Omega) \subset i\mathfrak h^*.
\]
The importance of the set $V_{0,\op{WF}}$ arises from the observation 
that for $\xi\in q_x(\Omega)$ we have $(\operatorname{Ad}(g_x))^*\xi \in V_{0,\op{WF}}$.
From the choice of $g_x$ according to Lemma~\ref{lem:precomp_gx} we conclude that 
$V_{0,\op{WF}}$ is a precompact set. Additionally we get from the $\operatorname{Ad}^*(G)$ 
invariance of $\mathcal W$ that
\[
 \overline{\bigcup_{x\in X} \operatorname{Ad}^*(g_x^{-1})\Omega} \cap \mathcal W = \emptyset
\]
and consequently
\[
 V_{0,\op{WF}}\subset  i\mathfrak{h}^* \setminus \operatorname{WF}(\tau)
\]
is a precompact subset. Using the fact that $V_{0,\op{WF}}$ is disjoint with 
$\op{WF}(\tau)$ and that it is precompact, we deduce that there is 
$\tilde \varepsilon_{\op{WF}} >0$ such that
for all smooth functions $\tilde \varphi$ with 
$\operatorname{supp}(\tilde\varphi)\subset B_{\tilde\varepsilon_{\op{WF}}}(0)\subset \mathfrak h$
and all $N>0$ there is a constant $C_{N,\tilde \varphi}$ uniformly in $\tilde \xi\in V_{0,\op{WF}}$ 
and $v_1,v_2\in V$ such that \cite[Theorem 1.4 (v)]{Ho81}
\begin{equation}\label{eq:ccu_uniform_decay_wf}
 \left| \int_{\mathfrak h}\langle\tau(e^{\tilde Y})v_1,v_2\rangle_V\tilde\varphi(\tilde Y)e^ {\langle t\tilde \xi,\tilde Y\rangle} d\tilde Y \right| 
 \leq C_{N,\tilde\varphi}\|v_1\|_V\cdot\|v_2\|_Vt^{-N}
\end{equation}

For the singular spectrum analogue, we define $\mathcal{S}:=\op{Ind}_H^G\op{SS}(\tau)$, 
and we define $V_{0,\op{SS}}$ with $\op{WF}$ replaced by $\op{SS}$ everywhere. 
We analogously obtain that there is $\tilde{\varepsilon}_{\op{SS}}>0$ such that 
whenever $0\in \widetilde{U_1}\Subset \widetilde{U_2}\subset B_{\tilde{\varepsilon}_{\op{SS}}}(0)\subset \mathfrak{h}$ 
are open sets, there exists a constant $C_{\widetilde{U_1},\widetilde{U_2}}$ for which
\begin{equation}\label{eq:ccu_uniform_decay_ss}
 \left| \int_{\mathfrak h}\langle\tau(e^{\tilde Y})v_1,v_2\rangle_V\varphi_{N,\widetilde{U_1},
 \widetilde{U_2}}(\tilde Y)e^ {\langle t\tilde \xi,\tilde Y\rangle} d\tilde Y \right| 
 \leq C_{\widetilde{U_1},\widetilde{U_2}}^{N+1}(N+1)^N\|v_1\|_V\cdot\|v_2\|_V t^{-N}
\end{equation}
uniformly in $\tilde \xi\in V_{0,\op{SS}}$.

Now we will fix the open neighborhood of $0\in U_{CU,\op{WF}}\subset\mathfrak g$ 
according to 
\[
U_{CU,\op{WF}}\subset \bigcap_{x\in X} \operatorname{Ad}(g_x) 
B_{\tilde \varepsilon_{\op{WF}}/2}(0) \subset \mathfrak g
\]
Once more this is possible due to Lemma~\ref{lem:precomp_gx}. Finally we choose 
a particular $\tilde\varphi \in C^\infty_0(B_{\tilde\varepsilon_{\op{WF}}}(0))$ 
such that $\tilde \varphi(Y)=1$ for 
$Y\in B_{\tilde \varepsilon_{\op{WF}}/2}(0)\subset \mathfrak h$. Note that the cutoff function
$\varphi$ from above satisfies $\operatorname{supp} \varphi \subset U_{CU,\op{WF}}$ and 
by the choice of $U_{CU,\op{WF}}$ we conclude 
$\operatorname{supp}(\varphi\circ\operatorname{Ad}(g_x))\subset B_{\tilde\varepsilon_{\op{WF}}/2}(0)$.
Thus we can insert $\tilde\varphi$ in $(\star)$ and obtain
\begin{align*}
 (\star)&=\Bigg| \mathcal F_{\mathfrak{h}}\bigg[\langle\tau(e^{\bullet})g_x^ {-1}v_1,g_x^ {-1}v_2\rangle_{V}\cdot
 \tilde\varphi(\bullet) \cdot |{\det}_{T_{eH} X}({de^{\bullet}}_{|x})|^{-1/2}\\
 &\cdot \varphi\circ\operatorname{Ad}(g_x)(\bullet)\cdot
 j_{\operatorname{Ad}g_x:\mathfrak h\to\mathfrak g_x}(\bullet)\bigg](t\cdot q(\operatorname{Ad}(g_x))^*\xi)\Bigg|
 \cdot |z_1|_{\mathcal D^ {1/2}_x} \otimes |z_2|_{\mathcal D^{1/2}_x}\\
 &=\Bigg| \bigg(\mathcal F_{\mathfrak{h}}\bigg[\langle\tau(e^{\bullet})g_x^ {-1}v_1,g_x^ {-1}v_2\rangle_{V}\cdot
 \tilde\varphi(\bullet)\bigg]*\mathcal F_{\mathfrak h}\bigg[\rho_{x,\varphi}(\bullet)\bigg]\Bigg)(t\cdot q(\operatorname{Ad}(g_x))^*\xi)\Bigg|\\
 &\cdot |z_1|_{\mathcal D^ {1/2}_x} \otimes |z_2|_{\mathcal D^{1/2}_x}\\
\end{align*}
where $\mathcal F_{\mathfrak{h}}:\mathcal E(\mathfrak h)\to C^\infty(i\mathfrak h^*)$
is the Euclidean Fourier transform on $\mathfrak h$ and
\[
 \rho_{x,\varphi}(\tilde Y)=|{\det}_{T_{eH}X}({de^{\tilde Y}}_{|x})|^{-1/2} 
 \cdot \varphi(\operatorname{Ad}(g_x)\tilde Y) \cdot j_{g_x:\mathfrak h\to\mathfrak g_x}(\tilde Y).
\]

For the singular spectrum case, we analogously define $U_{CU,\op{SS}}$ utilizing $\tilde\varepsilon_{\op{SS}}$ and we obtain the above statement with $\varphi$ replaced by $\varphi_{N,U_1,U_2}$ where $0\in U_1\Subset U_2\subset U_{CU,\op{SS}}$ and with $\tilde\varphi$ replaced by $\varphi_{N,\widetilde{U_1},\widetilde{U_2}}$ where $0\in \widetilde{U_1}\Subset \widetilde{U_2}\subset B_{\tilde\varepsilon_{\op{SS}}}(0)$.

Back in the wavefront case, we observe that from the compactness of $X$ we obtain that 
\[
|\mathcal F_{\mathfrak h}[\rho_{x,\varphi}](\xi)|\leq C_{N,\varphi} \langle \xi\rangle^{-N}
\]
with $C_{N,\varphi}$ is independent of $x\in X$ and $\xi\in \mathfrak h^*$. Using this observation together with 
(\ref{eq:ccu_uniform_decay_wf}) we finally obtain
\begin{align*}
 (\star)&\leq C_{N,\varphi} \|g_x^{-1}v_1\|_V\|g_x^{-1}v_2\|_V\cdot |z_1|\otimes|z_2| |t|^{-N}\\
        &=C_{N,\varphi} \|v_1\|_{\mathcal V_x}\|v_2\|_{\mathcal V_x}\cdot |z_1|\otimes|z_2| |t|^{-N}.\\
\end{align*}
 
We have thus shown (\ref{eq:wf_uniform_decay}) and verified wavefront condition U.
For the singular spectrum case, we replace $\varphi$ by $\varphi_{N,U_1,U_2}$ in the 
definition of $\rho_{x,\varphi}$ (which now depends on $N$) and we utilize the stronger bounds
\[
|\mathcal F_{\mathfrak h}[\rho_{x,\varphi}](\xi)|\leq C_{U_1,U_2}^{N+1}(N+1)^N \langle \xi\rangle^{-N}
\]
where $C_{U_1,U_2}$ is a constant independent of $N$. These stronger bounds can 
be obtained from a boundary
values of holomorphic functions argument which is similar to (though not identical to) 
the proof of Lemma 7.2 of \cite{HHO16}.
Finally the bound (\ref {eq:ss_uniform_sigma_bound}) follows directly with the compactness 
of $X$. This finishes the proof of singular spectrum condition U.

\section{Proof of condition U in the dense semisimple case}
\label{sec:CU_dense_semisimple}
\subsection{The dense semisimple condition}

Suppose $G$ is a real, linear algebraic group, and let $H\subset G$ be a closed 
subgroup. Let $\mathfrak{g}$ (resp. $\mathfrak{h}$) denote the Lie algebra of 
$G$ (resp. $H$). Assume that there exists a real, linear algebraic group 
$H_1\subset G$ with Lie algebra $\mathfrak{h}$ (Note that we do not assume $H$ is algebraic). 

\begin{definition} We say $X\in \mathfrak{h}$ is semisimple if and only if for 
every homomorphism of algebraic groups 
$$\rho\colon H_1\rightarrow \operatorname{GL}(N,\mathbb{R}),$$ 
$d\rho(X)\in \mathfrak{gl}(N,\mathbb{R})$ is diagonalizable over the complex numbers.
\end{definition}

By the general theory of linear, algebraic groups, to check whether $X$ is 
semisimple, it is enough to find one \emph{injective} map 
$$\rho\colon H_1 \rightarrow \operatorname{GL}(N,\mathbb{R})$$ and check 
whether $d\rho(X)$ is semisimple (see for instance 4.4 Theorem on pages 83 and 
84 of \cite{Bo}). Moreover, we note that $X\in \mathfrak{h}\subset \mathfrak{g}$ 
is semisimple with respect to $H_1$ iff it is semisimple with respect to $G$.

\begin{definition} Let $\mathfrak{h}_s\subset \mathfrak{h}$ denote the cone of 
elements of $\mathfrak{h}$ that are semisimple with respect to $H_1$.
\end{definition}

We wish to study the case where $\mathfrak{h}_s\subset \mathfrak{h}$ is dense and 
prove the following theorem.

\begin{theorem} \label{thm:cond_U_dense_semisimple} 
Suppose $G$ is a real, linear algebraic group, 
suppose $H\subset G$ is a closed subgroup, and suppose $\tau$ is a finite dimensional, 
unitary representation of $H$. Assume that there exists a real, linear algebraic 
group $H_1\subset G$ such that $\mathfrak{h}$, the Lie algebra of $H$, is also the 
Lie algebra of $\mathfrak{h}_1$. In addition, assume $\mathfrak{h}_s\subset \mathfrak{h}$ 
is dense. Then wavefront condition U and singular spectrum condition U are both satisfied.
\end{theorem}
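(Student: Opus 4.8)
The plan is to verify wavefront condition U (Definition~\ref{def:wf_cond_U}) and singular spectrum condition U (Definition~\ref{def:ss_cond_U}) directly, essentially by the non-stationary phase argument of Lemma~\ref{lem:unif_nonstat} carried out fibrewise, the whole difficulty being the uniformity in $x\in X$. Since $\tau$ is finite dimensional, every matrix coefficient $h\mapsto\langle\tau(h)v_1,v_2\rangle$ is the restriction to $H$ of an entire function on $\exp(\mathfrak h_{\mathbb C})$, hence real analytic; so $\op{WF}(\tau)=\op{SS}(\tau)=\{0\}$ and, by the identities in Section~\ref{sec:uniformity}, $\mathcal W=\mathcal S=\operatorname{Ind}_H^G\op{WF}(\tau)=\overline{\bigcup_{x\in X}i\mathfrak g_x^\perp}$. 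Fix $\eta_0\notin\mathcal W$. Then $\op{dist}(\eta_0,i\mathfrak g_x^\perp)=|\op{pr}_{\mathfrak g_x}(\eta_0)|$ is bounded below by some $2\delta>0$ uniformly in $x$, so with $\Omega:=B_\delta(\eta_0)$ we get $|\op{pr}_{\mathfrak g_x}(\xi)|\geq\delta$ for all $x\in X$ and $\xi\in\Omega$; take $U_{\textup{CU,WF}}=U_{\textup{CU,SS}}$ to be any fixed precompact ball around $0\in\mathfrak g$. The auxiliary bounds (\ref{eq:wf_uniform_sigma_bound}), (\ref{eq:ss_uniform_sigma_bound}) follow at once from $\sigma_x(e^Y)=|{\det}_{T_xX}(de^Y|_x)|^{-1/2}=\exp\!\big(-\tfrac12\op{tr}_{\mathfrak g/\mathfrak g_x}\op{ad}(Y)\big)$ together with $|\op{tr}_{\mathfrak g/\mathfrak g_x}\op{ad}(Y)|\leq(\dim\mathfrak g)\,\|\op{ad}(Y)\|_{\textup{op}}\leq C\|Y\|$, which is independent of $x$.

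For (\ref{eq:wf_uniform_decay}) and (\ref{eq:ss_uniform_decay}) I would integrate the $\mathfrak g_x$-integral by parts $N$ times along the unit vector $u_x:=\op{pr}_{\mathfrak g_x}(\xi)/|\op{pr}_{\mathfrak g_x}(\xi)|\in\mathfrak g_x$. The phase satisfies $\partial_{u_x}\langle t\xi,Y\rangle=t|\op{pr}_{\mathfrak g_x}(\xi)|\geq t\delta$, so each integration by parts yields a factor $\leq(t\delta)^{-1}$ and a $u_x$-derivative of the amplitude $\langle\tau_x(e^Y)v_1,v_2\rangle\,\sigma_x(e^Y)\,\varphi(Y)$. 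Because $d\tau_x(\cdot)$ is skew-Hermitian valued one has $\|e^{s\,d\tau_x(Y)}\|=1$, hence $|\partial_{u_x}^a\langle\tau_x(e^Y)v_1,v_2\rangle|\leq\|d\tau_x(u_x)\|^a\|v_1\|\|v_2\|$; from the formula for $\sigma_x$ one gets $|\partial_{u_x}^b\sigma_x(e^Y)|\leq\big(\tfrac12(\dim\mathfrak g)\|\op{ad}(u_x)\|\big)^b\sigma_x(e^Y)$; and the $\varphi$-derivatives are harmless in the wavefront case, while in the singular spectrum case the cut-offs obey $|\partial_{u_x}^c\varphi_{N,U_1,U_2}|\leq C^{c+1}(N+1)^c$. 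Expanding by Leibniz and using the multinomial identity, the resulting bound is exactly the right-hand side of (\ref{eq:wf_uniform_decay}), respectively the $C^{N+1}(N+1)^Nt^{-N}$ of (\ref{eq:ss_uniform_decay}), \emph{provided}
\[
\sup_{x\in X}\ \sup_{\xi\in\Omega}\ \|d\tau_x(u_x)\|_{\textup{op}}<\infty .
\]
This uniform bound is the heart of the matter, and it is the only point where algebraicity of $H_1$ and the density of $\mathfrak h_s$ are used.

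To prove it, fix $x$, write $G_x=g_xHg_x^{-1}$, and recall from (\ref{eq:tau_x}) that $\tau_x\circ\op{C}_{g_x}\cong\tau$, so that $\|d\tau_x(Y)\|_{\textup{op}}=\|d\tau(\op{Ad}(g_x)^{-1}Y)\|_{\textup{op}}$ for $Y\in\mathfrak g_x$. Since $\tau$ is unitary, $d\tau(Z)$ is skew-Hermitian, hence normal, for every $Z\in\mathfrak h$, so $\|d\tau(Z)\|_{\textup{op}}$ equals the spectral radius of $d\tau(Z)$ and depends only on the $\op{Ad}(H_1)$-conjugacy class of $Z$. As $\mathfrak h_s$ is dense in $\mathfrak h$ (equivalently, the semisimple elements of $\mathfrak g_x$ are dense in $\mathfrak g_x$) and $Y\mapsto\|d\tau_x(Y)\|_{\textup{op}}$ is continuous, it suffices to bound $\|d\tau(W)\|_{\textup{op}}$ when $W:=\op{Ad}(g_x)^{-1}\op{pr}_{\mathfrak g_x}(\xi)$ is semisimple in $\mathfrak h$. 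Now $\op{ad}_{\mathfrak g}(W)$ is conjugate to $\op{ad}_{\mathfrak g}(\op{pr}_{\mathfrak g_x}(\xi))$, whose operator norm is $\leq\|\op{ad}\|\cdot|\op{pr}_{\mathfrak g_x}(\xi)|\leq C_0:=\|\op{ad}\|(|\eta_0|+\delta)$, so every eigenvalue of $\op{ad}_{\mathfrak g}(W)$ is bounded by $C_0$. Finally, a semisimple $W\in\mathfrak h$ all of whose $\op{ad}_{\mathfrak g}$-eigenvalues are $\leq C_0$ is $\op{Ad}(H_1)$-conjugate to an element of norm bounded in terms of $C_0$ and $G$ alone: one separates off the component of $W$ in the centre $\mathfrak z(\mathfrak g)$ (which is $\op{Ad}(G)$-fixed and hence already equals the a priori bounded $\mathfrak z(\mathfrak g)$-component of $\op{pr}_{\mathfrak g_x}(\xi)$), and conjugates the remaining piece into a fixed maximal torus of $H_1$, where the $\mathfrak g$-weights span the relevant dual space and are bounded by $C_0$, forcing a bound on its norm; as these two commuting pieces are mapped by $d\tau$ to commuting normal operators, $\|d\tau(W)\|_{\textup{op}}$ is bounded uniformly, as required.

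This verifies wavefront condition U, and singular spectrum condition U is obtained by the same scheme, the extra bookkeeping of constants being handled as in the proof of Lemma~\ref{lem:unif_nonstat} (for the H\"ormander cut-offs) together with a boundary-values-of-holomorphic-functions estimate in the spirit of Lemma~6.2 of~\cite{HHO}, applied to the entire extensions of the matrix coefficients of $\tau$; Theorem~\ref{thm:WF_condU} and Theorem~\ref{thm:SS_condU} then give Theorem~\ref{thm:cond_U_dense_semisimple}. I expect the main obstacle to be exactly the last step above --- passing from a semisimple element with bounded $\op{ad}$-eigenvalues to a conjugate of bounded norm --- which fails without semisimplicity: for a unit vector $u_x\in\mathfrak g_x$ with $\op{Ad}(g_x)^{-1}u_x$ \emph{nilpotent} rather than semisimple, $\|d\tau(\op{Ad}(g_x)^{-1}u_x)\|$ can blow up as $x$ runs off to infinity, and this is precisely the mechanism behind the $\op{SL}(2,\mathbb R)$ counterexamples of Section~\ref{sec:counterexamples}.
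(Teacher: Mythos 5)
Your overall architecture coincides with the paper's: reduce everything to a bound, uniform in $x\in X$, of the form $\|d\tau_x(Z)\|_{\op{op}}\leq C|Z|$ for $Z\in\mathfrak g_x$ (plus the analogous bound for $d\sigma_x$), prove it on the dense cone of semisimple elements by conjugating them into $\mathfrak h$ in a norm-controlled way and extend by continuity, then integrate by parts $N$ times along a $\mathfrak g_x$-direction on which the phase is uniformly nondegenerate. Your lower bound $|\op{pr}_{\mathfrak g_x}(\xi)|\geq\delta$ on $\Omega$ is an acceptable substitute for Lemma~\ref{lem:lowerbound} and Corollary~\ref{cor:vectorchoices}, and your identity $\sigma_x(e^Y)=\exp\bigl(-\tfrac12\op{tr}_{\mathfrak g/\mathfrak g_x}\op{ad}(Y)\bigr)$ gives the bounds (\ref{eq:wf_uniform_sigma_bound}), (\ref{eq:ss_uniform_sigma_bound}) more directly than the paper does. (One caveat, shared with the paper's own write-up: since $\mathfrak g_x$ is not abelian, $\partial_{u_x}\langle\tau_x(e^Y)v_1,v_2\rangle$ produces $d\tau_x\bigl(B(Y)u_x\bigr)$ with $B(Y)=\frac{1-e^{-\op{ad}Y}}{\op{ad}Y}$, so one really needs the linear bound for all $Z\in\mathfrak g_x$, not just the chosen direction; your density/continuity reduction supplies exactly this, so that point is fine. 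The appeal to a holomorphic-extension argument in the singular spectrum case is unnecessary: the H\"ormander cutoff bounds plus the uniform operator bounds already give the $C^{N+1}(N+1)^N$ growth.)

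The genuine gap is in your proof of the bound you yourself call the heart of the matter, namely the analogue of Lemma~\ref{lem:bounded_AdY_norm}: a semisimple unit vector in $\mathfrak g_x$ can be conjugated into $\mathfrak h$ with uniformly bounded norm. You argue with the eigenvalues of $\op{ad}_{\mathfrak g}(W)$, (i) splitting off the $\mathfrak z(\mathfrak g)$-component of $W$ and claiming it equals the bounded central component of $\op{pr}_{\mathfrak g_x}(\xi)$, and (ii) conjugating the rest into ``a fixed maximal torus of $H_1$'' where the $\op{ad}$-weights force a bound. Step (i) requires an $\op{Ad}(G)$-invariant complement of $\mathfrak z(\mathfrak g)$; this exists when $G$ is reductive but not for a general real linear algebraic $G$, which is the stated generality (there are legitimate non-reductive examples, e.g.\ a compact torus inside the oscillator group), and without an invariant projection the ``central component of $\op{Ad}(g_x)^{-1}\op{pr}_{\mathfrak g_x}(\xi)$'' is neither canonical nor bounded. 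Step (ii) is not available over $\mathbb R$: real maximal toral subalgebras of $\mathfrak h$ are not all $H$-conjugate (hyperbolic versus elliptic lines in $\mathfrak{sl}(2,\mathbb R)$ already), so you need ``one of finitely many'' torus representatives, and that finiteness --- for a possibly nonreductive, disconnected $H_1$, and with $H$-conjugacy rather than $H_1$-conjugacy, which matters because $\tau$ is only a representation of $H$ and only $\op{Ad}(H)$ manifestly preserves the spectrum of $d\tau$ --- is a nontrivial chunk of structure theory (Proposition~\ref{lem:finitetori}, Lemmas~\ref{lem:toritoral}, \ref{lem:realtoral}, Proposition~\ref{prop:finitetoral}) that you neither prove nor cite. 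The paper avoids both problems at once: for each of the finitely many torus representatives $\mathfrak t_j$ it fixes a faithful algebraic representation $\rho_j$ of $G_{\mathbb C}$ diagonalizing a maximal torus containing $\mathfrak t_{j,\mathbb C}$ and uses that the norm $|X|_{\rho_j}=|d\rho_j(X)|_{\op{op}}$ is minimized, within each semisimple orbit, at torus elements; faithfulness makes this a genuine norm, so no separate treatment of the center is needed. Replacing your $\op{ad}$-based step by such faithful representations, together with the finiteness of real torus classes, would close the gap; as written, your argument covers essentially only reductive $G$ and assumes the key finiteness statement.
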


Note that Theorem~\ref{thm:cond_U_dense_semisimple} together with 
Theorem~\ref{thm:WF_condU} (resp. Thm~\ref{thm:SS_condU} in the singular 
spectrum case) and \cite[Theorem 1.1]{HHO16} imply Theorem~\ref{thm:dense semisimple}.

If $H$ is a real, reductive algebraic group, then $\mathfrak{h}$ 
contains an open, dense subset of semisimple elements. If $H=P$ is a parabolic subgroup
of a real, reductive algebraic group, then $\mathfrak{h}=\mathfrak{p}$ has a dense subset
of semisimple elements. In particular, this holds when $H=B$ and 
\[
B=\left\{\left(\begin{matrix} a & x\\ 0 & a^{-1}\end{matrix}\right)\Big|\ a\in \mathbb{R}^{\times},\ x\in \mathbb{R}\right\}
\]
denotes the motion group of the real line. If $H=N$ is an infinite unipotent group, for instance
\[
N=\left\{\left(\begin{matrix} 1 & x\\ 0 & 1\end{matrix}\right)\Big|\ x\in \mathbb{R}\right\},
\]
then $\mathfrak{h}_s=\mathfrak{n}_s=\{0\}$ and $\mathfrak{h}=\mathfrak{n}$ does not have a dense subset
of semisimple elements.

\subsection{On the conjugacy of maximal toral subalgebras in real, linear algebraic groups}
In order to prove Theorem~\ref{thm:cond_U_dense_semisimple}, we first 
need a technical fact from the structure theory of real, linear algebraic groups.

Let $H_{\mathbb{C}}$ be a connected, complex linear algebraic group defined over $\mathbb{R}$, and let $H=H_{\mathbb{C}}(\mathbb{R})$ 
be the real points of $H_{\mathbb{C}}$. 

\begin{definition}
We say that a complex linear algebraic group $T_{\mathbb{C}}$ is \emph{diagonalizable} 
if it is isomorphic to a closed subgroup of a product of copies of $\mathbb{C}^{\times}$.
A \emph{torus} $T_{\mathbb{C}}\subset H_{\mathbb{C}}$ in $H_{\mathbb{C}}$ is a connected, 
diagonalizable subgroup of $H_{\mathbb{C}}$. A \emph{maximal torus} in $H_{\mathbb{C}}$ 
is a torus in $H_{\mathbb{C}}$ that is not properly contained in another torus in 
$H_{\mathbb{C}}$.
\end{definition}

\begin{proposition} \label{lem:finitetori} There are finitely many $H$ conjugacy 
classes of maximal tori $T_{\mathbb{C}}\subset H_{\mathbb{C}}$ which are defined 
over $\mathbb{R}$. 
\end{proposition}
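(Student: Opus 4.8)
We wish to prove that there are finitely many $H$ conjugacy classes of maximal tori $T_{\mathbb{C}} \subset H_{\mathbb{C}}$ defined over $\mathbb{R}$, where $H_{\mathbb{C}}$ is a connected complex linear algebraic group defined over $\mathbb{R}$ and $H = H_{\mathbb{C}}(\mathbb{R})$.

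\textbf{Proof plan.} The plan is to reduce the statement to the finiteness of a single Galois cohomology set over $\mathbb{R}$. Write $\Gamma=\operatorname{Gal}(\mathbb{C}/\mathbb{R})=\{1,\sigma\}$, with $\sigma$ complex conjugation, so that $H=H_{\mathbb{C}}(\mathbb{C})^{\Gamma}$. By the standard structure theory of connected linear algebraic groups over a perfect field, $H_{\mathbb{C}}$ possesses a maximal torus $T_{\mathbb{C}}^{0}$ defined over $\mathbb{R}$, and any two maximal tori of $H_{\mathbb{C}}$ are conjugate under $H_{\mathbb{C}}(\mathbb{C})$. Set $N:=N_{H_{\mathbb{C}}}(T_{\mathbb{C}}^{0})$; this is a linear algebraic group defined over $\mathbb{R}$ because $T_{\mathbb{C}}^{0}$ is.

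The first step is to attach a cohomology class to every maximal $\mathbb{R}$-torus. Given such a torus $T_{\mathbb{C}}$, choose $g\in H_{\mathbb{C}}(\mathbb{C})$ with $\operatorname{Int}(g)T_{\mathbb{C}}^{0}=T_{\mathbb{C}}$ (possible by $\mathbb{C}$-conjugacy of maximal tori). Since $T_{\mathbb{C}}^{0}$ and $T_{\mathbb{C}}$ are both $\sigma$-stable, applying $\sigma$ yields $\operatorname{Int}(\sigma(g))T_{\mathbb{C}}^{0}=T_{\mathbb{C}}$, so $c_{\sigma}:=g^{-1}\sigma(g)\in N(\mathbb{C})$; a short computation using $\sigma^{2}=1$ gives $c_{\sigma}\,\sigma(c_{\sigma})=1$, so $c_{\sigma}$ is a $1$-cocycle, and replacing $g$ by $gn$ with $n\in N(\mathbb{C})$ replaces $c_{\sigma}$ by $n^{-1}c_{\sigma}\sigma(n)$, which is cohomologous. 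Hence $T_{\mathbb{C}}\mapsto[c_{\sigma}]\in H^{1}(\Gamma,N(\mathbb{C}))$ is well defined.

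The second step is to check that this map descends to an injection on $H$-conjugacy classes. If $T_{\mathbb{C}}'=\operatorname{Int}(h)T_{\mathbb{C}}$ with $h\in H$, then $hg$ conjugates $T_{\mathbb{C}}^{0}$ to $T_{\mathbb{C}}'$ and $(hg)^{-1}\sigma(hg)=g^{-1}h^{-1}\sigma(h)\sigma(g)=g^{-1}\sigma(g)$ because $\sigma(h)=h$, so the classes agree. Conversely, if the cocycles attached to $T_{\mathbb{C}}$ and $T_{\mathbb{C}}'$ (computed via $g$ and $g'$) are cohomologous, then after replacing $g'$ by a suitable translate $g'n^{-1}$ (which still conjugates $T_{\mathbb{C}}^{0}$ to $T_{\mathbb{C}}'$ since $n\in N(\mathbb{C})$) one arranges $g'^{-1}\sigma(g')=g^{-1}\sigma(g)$; then $h:=g'g^{-1}$ satisfies $\sigma(h)=h$, so $h\in H$, and $\operatorname{Int}(h)T_{\mathbb{C}}=T_{\mathbb{C}}'$. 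Thus the set of $H$-conjugacy classes of maximal tori of $H_{\mathbb{C}}$ defined over $\mathbb{R}$ injects into $H^{1}(\Gamma,N(\mathbb{C}))=H^{1}(\mathbb{R},N)$.

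Finally, $H^{1}(\mathbb{R},N)$ is finite: this is the finiteness theorem for Galois cohomology of linear algebraic groups over local fields (Borel--Serre; see also Serre, \emph{Galois Cohomology}), and in the archimedean case it can be deduced directly, since $H^{1}(\mathbb{R},-)$ vanishes on unipotent groups, is finite on the finite component group, and is classically finite for connected reductive groups over $\mathbb{R}$. Combining the injection with this finiteness proves the Proposition. The technical heart is really just the cocycle/coboundary bookkeeping of the second and third steps together with an accurate citation of the cohomological finiteness input; I expect the only genuine subtlety to be making sure the chosen $T_{\mathbb{C}}^{0}$ is defined over $\mathbb{R}$ and that the argument indeed sees \emph{all} maximal $\mathbb{R}$-tori (which it does, since over $\mathbb{C}$ every maximal torus is conjugate to $T_{\mathbb{C}}^{0}$).
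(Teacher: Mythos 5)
Your argument is correct, but it takes a genuinely different route from the paper. You attach to each maximal $\mathbb{R}$-torus $T_{\mathbb{C}}$ the class of the cocycle $g^{-1}\sigma(g)$ in $H^{1}(\mathrm{Gal}(\mathbb{C}/\mathbb{R}),N(\mathbb{C}))$, where $N=N_{H_{\mathbb{C}}}(T_{\mathbb{C}}^{0})$ for a fixed maximal $\mathbb{R}$-torus $T_{\mathbb{C}}^{0}$, check the cocycle/coboundary bookkeeping to get a well-defined injection on $H$-conjugacy classes (your computations here are right: uniqueness of $g$ up to right translation by $N(\mathbb{C})$, invariance under $h\in H$, and the converse via adjusting $g'$ by an element of $N(\mathbb{C})$ so that $g'g^{-1}$ is $\sigma$-fixed), and then quote the Borel--Serre finiteness of $H^{1}(\mathbb{R},N)$ for an arbitrary linear algebraic $\mathbb{R}$-group. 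The paper instead avoids Galois cohomology entirely: it passes to the reductive quotient $H_{\mathbb{C}}/(H_{\mathbb{C}})_{u}$ by the unipotent radical, invokes the classical fact (via Cartan subalgebras, citing Knapp) that a real reductive group has finitely many conjugacy classes of maximal tori defined over $\mathbb{R}$, and then lifts back to $H_{\mathbb{C}}$ using the theorem that maximal $\mathbb{R}$-tori of a solvable $\mathbb{R}$-group are conjugate under its real points (Borel, 19.2). Your approach is shorter, uniform, and works over any field of type (F), at the cost of importing the finiteness of Galois $H^{1}$ as a black box; the paper's approach stays inside the structure theory it already uses (unipotent radicals, solvable conjugacy, real Cartan subalgebras) and is more self-contained for a reader of real groups. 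One small caution: your parenthetical dévissage sketch of the finiteness of $H^{1}(\mathbb{R},N)$ (unipotent part, component group, reductive part) requires twisting arguments to be made rigorous, so it is cleaner to rest that step squarely on the Borel--Serre theorem, which applies to arbitrary linear algebraic groups and needs no reduction.
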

\begin{proof}
Let $(H_{\mathbb{C}})_u$ denote the unipotent radical of $H_{\mathbb{C}}$. It is 
the unique maximal closed, connected, normal, unipotent subgroup of $H_{\mathbb{C}}$ 
(see for instance 11.21 on page 157 of \cite{Bo}). Moreover, $(H_{\mathbb{C}})/(H_{\mathbb{C}})_u$ 
is a complex, reductive algebraic group. Now, we assumed that $H_{\mathbb{C}}$ 
was defined over $\mathbb{R}$ with real form $H$. It follows that $(H_{\mathbb{C}})_u$ 
is defined over $\mathbb{R}$ (see 14.4.5 Proposition on page 250 of \cite{Sp}); 
we will denote the corresponding set of real points by $H_u$. By 
Corollary 12.2.2 on page 212 of \cite{Sp}, we see that $H/H_u$ is a real form 
of $H_{\mathbb{C}}/(H_{\mathbb{C}})_u$. In particular, $H/H_u$ is a real, reductive 
algebraic group. 

Let
$$\rho_{\mathbb{C}}\colon H_{\mathbb{C}}\rightarrow H_{\mathbb{C}}/(H_{\mathbb{C}})_u$$
denote the natural surjective homomorphism. By 11.14 Proposition (1) on page 152 
of \cite{Bo}, $\rho_{\mathbb{C}}$ induces a map from maximal tori in $H_{\mathbb{C}}$ 
to maximal tori in $H_{\mathbb{C}}/(H_{\mathbb{C}})_u$. Note that this map is 
surjective. Indeed, suppose $B_{\mathbb{C}}\subset H_{\mathbb{C}}/(H_{\mathbb{C}})_u$ 
is a maximal torus in $H_{\mathbb{C}}/(H_{\mathbb{C}})_u$. Choose a maximal 
torus $T_{\mathbb{C}}\subset H_{\mathbb{C}}$, and note that $\rho_{\mathbb{C}}(T_{\mathbb{C}})$ 
is conjugate to $B_{\mathbb{C}}$ since all maximal tori in 
$H_{\mathbb{C}}/(H_{\mathbb{C}})_u$ are conjugate (see 11.3 Corollary (1) on 
page 148 of \cite{Bo}). In particular, there exists 
$\rho_{\mathbb{C}}(g)\in H_{\mathbb{C}}/(H_{\mathbb{C}})_u$ such that
$$\rho_{\mathbb{C}}(g)\rho_{\mathbb{C}}(T_{\mathbb{C}})\rho_{\mathbb{C}}(g)^{-1}=B_{\mathbb{C}}.$$
One sees
$$\rho_{\mathbb{C}}(gT_{\mathbb{C}}g^{-1})=B_{\mathbb{C}}$$
and the induced map on maximal tori is surjective.

Now, we know that there are finitely many $H$ conjugacy classes of maximal tori 
$B_{\mathbb{C}}\subset H_{\mathbb{C}}/(H_{\mathbb{C}})_u$ that are defined over 
$\mathbb{R}$ (this may be deduced from the definition of Cartan subalgebra on 
page 254 of \cite{Kn05}, the remarks on pages 457 and 458 of \cite{Kn05} including 
Proposition 7.35, and the easy fact that a maximal torus is determined by its 
Lie algebra). Let $\{B_{\mathbb{C}}^{\alpha}\}_{\alpha\in \mathcal{A}}$ be a set
of representatives for these conjugacy classes. For each $\alpha$, choose a 
maximal torus $T_{\mathbb{C}}^{\alpha}\subset H_{\mathbb{C}}$ such that 
\[
\rho_{\mathbb{C}}(T_{\mathbb{C}}^{\alpha})=B_{\mathbb{C}}^{\alpha}.
\]

We claim that every maximal torus $T_{\mathbb{C}}\subset H_{\mathbb{C}}$ that is 
defined over $\mathbb{R}$ is $H$ conjugate to a maximal torus of the form 
$T_{\mathbb{C}}^{\alpha}$. Indeed, $\rho_{\mathbb{C}}(T_{\mathbb{C}})$ is defined 
over $\mathbb{R}$; hence, there exists $g\in H$ such that 
\[
\rho_{\mathbb{C}}(g)\rho_{\mathbb{C}}(T_{\mathbb{C}})\rho_{\mathbb{C}}(g)^{-1}=B_{\mathbb{C}}^{\alpha}
\]
for some $\alpha$. In particular, 
$gT_{\mathbb{C}}g^{-1}\in \rho_{\mathbb{C}}^{-1}(B_{\mathbb{C}}^{\alpha})$. 

Next, we observe
\[
\rho_{\mathbb{C}}^{-1}(B_{\mathbb{C}}^{\alpha})=T_{\mathbb{C}}^{\alpha}\cdot (H_{\mathbb{C}})_u
\]
since $(H_{\mathbb{C}})_u\subset H_{\mathbb{C}}$ is a normal subgroup. One deduces 
that $\rho_{\mathbb{C}}^{-1}(B_{\mathbb{C}}^{\alpha})$ is a solvable subgroup of 
$H_{\mathbb{C}}$ that is defined over $\mathbb{R}$. By 19.2 Theorem on page 223 
of \cite{Bo}, all maximal tori in $\rho_{\mathbb{C}}^{-1}(B_{\mathbb{C}}^{\alpha})$ 
that are defined over $\mathbb{R}$ are conjugate via the set of real points of 
$\rho_{\mathbb{C}}^{-1}(B_{\mathbb{C}}^{\alpha})$ (in particular, they are conjugate 
via $H$). Since $T_{\mathbb{C}}^{\alpha}$ is a maximal torus in 
$\rho_{\mathbb{C}}^{-1}(B_{\mathbb{C}}^{\alpha})$ that is defined over $\mathbb{R}$, 
we deduce that $gT_{\mathbb{C}}g^{-1}$ is conjugate to $T_{\mathbb{C}}^{\alpha}$. 
Thus, $T_{\mathbb{C}}$ is conjugate to $T_{\mathbb{C}}^{\alpha}$. The Proposition 
follows.
\end{proof}
Next, we move to the Lie algebra. Let $\mathfrak{h}$ denote the Lie algebra of 
$H$, and let $\mathfrak{h}_{\mathbb{C}}$ denote the Lie algebra of $H_{\mathbb{C}}$

\begin{definition} A \emph{toral subalgebra} $\mathfrak{t}_{\mathbb{C}}$ in 
$\mathfrak{h}_{\mathbb{C}}$ is an abelian subalgebra consisting of semisimple 
elements. A \emph{maximal toral subalgebra} of $\mathfrak{h}_{\mathbb{C}}$ is a 
toral subalgebra of $\mathfrak{h}_{\mathbb{C}}$ that is not properly contained 
in another toral subalgebra of $\mathfrak{h}_{\mathbb{C}}$. 
\end{definition}

\begin{lemma} \label{lem:toritoral} Suppose $\mathfrak{t}_{\mathbb{C}}\subset \mathfrak{h}_{\mathbb{C}}$ 
is a maximal toral subalgebra. Then there exists a maximal torus $T_{\mathbb{C}}\subset G_{\mathbb{C}}$
such that $\mathfrak{t}_{\mathbb{C}}$ is the Lie algebra of $T_{\mathbb{C}}$. 
In particular, all maximal toral subalgebras of $\mathfrak{h}_{\mathbb{C}}$ are 
conjugate. 

In addition, if $\mathfrak{t}_{\mathbb{C}}\subset \mathfrak{h}_{\mathbb{C}}$ is 
a maximal toral subalgebra defined over $\mathbb{R}$, then the corresponding 
maximal torus $T_{\mathbb{C}}\subset H_{\mathbb{C}}$ is defined over $\mathbb{R}$. 
In particular, there are finitely many $H$ conjugacy classes of maximal toral 
subalgebras of $\mathfrak{h}_{\mathbb{C}}$ that are defined over $\mathbb{R}$. 
\end{lemma}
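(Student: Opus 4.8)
The plan is to reduce the statement about maximal toral subalgebras of $\mathfrak{h}_{\mathbb{C}}$ to Proposition \ref{lem:finitetori}, which already gives the corresponding finiteness for maximal tori. First I would establish the bijective correspondence between maximal toral subalgebras of $\mathfrak{h}_{\mathbb{C}}$ and maximal tori of $H_{\mathbb{C}}$. Given a maximal torus $T_{\mathbb{C}}\subset H_{\mathbb{C}}$, its Lie algebra $\mathfrak{t}_{\mathbb{C}}=\operatorname{Lie}(T_{\mathbb{C}})$ is abelian and consists of semisimple elements (since $T_{\mathbb{C}}$ is diagonalizable, hence embeds in a product of $\mathbb{C}^\times$'s, and the differential of a faithful such embedding sends $\mathfrak{t}_{\mathbb{C}}$ to simultaneously diagonalizable matrices), so $\mathfrak{t}_{\mathbb{C}}$ is a toral subalgebra. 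Conversely, given a maximal toral subalgebra $\mathfrak{t}_{\mathbb{C}}$, I would take its centralizer or rather the Zariski closure of $\exp(\mathfrak{t}_{\mathbb{C}})$: since $\mathfrak{t}_{\mathbb{C}}$ is abelian and consists of commuting semisimple elements, the subgroup it generates lies in a diagonalizable subgroup, whose identity component $T_{\mathbb{C}}$ is a torus with $\mathfrak{t}_{\mathbb{C}}\subset\operatorname{Lie}(T_{\mathbb{C}})$; by the maximality of $\mathfrak{t}_{\mathbb{C}}$ and the fact that $\operatorname{Lie}(T_{\mathbb{C}})$ is itself a toral subalgebra containing $\mathfrak{t}_{\mathbb{C}}$, one gets equality, and one then enlarges $T_{\mathbb{C}}$ to a maximal torus (whose Lie algebra then still equals $\mathfrak{t}_{\mathbb{C}}$, again by maximality of the latter). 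These two constructions are mutually inverse because a torus is determined by its Lie algebra, which is the ``easy fact'' already cited in the proof of Proposition \ref{lem:finitetori}.

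Next I would observe that this correspondence is $H$-equivariant: conjugation by $g\in H$ on tori corresponds to the adjoint action $\operatorname{Ad}(g)$ on toral subalgebras, and it sends real structures to real structures. Hence $\mathfrak{t}_{\mathbb{C}}$ is defined over $\mathbb{R}$ if and only if the associated maximal torus $T_{\mathbb{C}}$ is (here one should be slightly careful: a priori $T_{\mathbb{C}}$ is the unique maximal torus with that Lie algebra, and the complex conjugate $\overline{T_{\mathbb{C}}}$ is a maximal torus with Lie algebra $\overline{\mathfrak{t}_{\mathbb{C}}}=\mathfrak{t}_{\mathbb{C}}$, so by uniqueness $\overline{T_{\mathbb{C}}}=T_{\mathbb{C}}$, i.e. $T_{\mathbb{C}}$ is defined over $\mathbb{R}$). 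Since all maximal tori in $H_{\mathbb{C}}$ are conjugate (by the usual structure theory of linear algebraic groups over an algebraically closed field), the correspondence immediately gives that all maximal toral subalgebras of $\mathfrak{h}_{\mathbb{C}}$ are conjugate under $H_{\mathbb{C}}$.

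Finally, for the counting statement over $\mathbb{R}$, I would transport the finiteness of $H$-conjugacy classes of maximal tori defined over $\mathbb{R}$ (Proposition \ref{lem:finitetori}) through the equivariant correspondence: two maximal toral subalgebras defined over $\mathbb{R}$ are $H$-conjugate if and only if the associated maximal tori are $H$-conjugate, so the number of $H$-conjugacy classes of $\mathbb{R}$-defined maximal toral subalgebras equals the number of $H$-conjugacy classes of $\mathbb{R}$-defined maximal tori, which is finite.

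The main obstacle I anticipate is being careful about the passage between ``abstract'' toral subalgebras and the group-level objects, in particular verifying that the subgroup generated by $\exp(\mathfrak{t}_{\mathbb{C}})$ really does lie in a torus and that maximality of $\mathfrak{t}_{\mathbb{C}}$ forces equality $\mathfrak{t}_{\mathbb{C}}=\operatorname{Lie}(T_{\mathbb{C}})$ for the maximal torus $T_{\mathbb{C}}$ one ends up with — one wants to rule out the possibility that the maximal torus one obtains is strictly larger at the Lie algebra level, which follows since $\operatorname{Lie}(T_{\mathbb{C}})$ is a toral subalgebra containing $\mathfrak{t}_{\mathbb{C}}$. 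The rest is a formal transport of structure, and the real-form bookkeeping is routine once the uniqueness ``a torus is determined by its Lie algebra'' is invoked.
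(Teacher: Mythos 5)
Your proposal is correct, but it reaches the key existence statement by a genuinely different route than the paper. You build the torus directly: take the Zariski closure of the subgroup generated by $\exp(\mathfrak{t}_{\mathbb{C}})$, observe it is commutative with a dense subgroup of semisimple elements, hence diagonalizable (in a commutative linear algebraic group the semisimple elements form a closed subgroup), take its identity component $T_0$, note $\mathfrak{t}_{\mathbb{C}}\subset\operatorname{Lie}(T_0)$ via the one-parameter subgroups, and then use maximality of $\mathfrak{t}_{\mathbb{C}}$ twice (for $T_0$ and for a maximal torus containing it) to force equality of Lie algebras; over $\mathbb{R}$ you use Galois stability plus the characteristic-zero fact that a connected algebraic subgroup is determined by its Lie algebra, so $\overline{T_{\mathbb{C}}}=T_{\mathbb{C}}$. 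The paper instead works inside the centralizer $B_{\mathbb{C}}=Z_{H_{\mathbb{C}}}(\mathfrak{t}_{\mathbb{C}})$: it picks a maximal torus $\widetilde{T_{\mathbb{C}}}\subset B_{\mathbb{C}}$, passes to the Cartan subgroup $Z_{B_{\mathbb{C}}}(\widetilde{T_{\mathbb{C}}})$, and invokes the Jordan-type decomposition of nilpotent algebraic groups to identify $\mathfrak{t}_{\mathbb{C}}$ with the unique maximal toral part there; for the real statement it uses that $B_{\mathbb{C}}$ is defined over $\mathbb{R}$ together with the existence of a maximal torus defined over $\mathbb{R}$ in any group defined over $\mathbb{R}$ (Borel, 18.2). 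Both arguments then transport finiteness from Proposition~\ref{lem:finitetori} in the same way, via the injectivity of ``Lie algebra of a maximal torus'' on $H$-conjugacy classes. What your route buys is brevity and a cleaner descent argument, at the cost of leaning on the complex-analytic exponential (or, to stay purely algebraic, on the standard fact that a set of commuting semisimple elements lies in a torus) and on uniqueness of the connected subgroup with a given Lie algebra; the paper's route stays entirely within the algebraic-group references it cites and produces the $\mathbb{R}$-form of the torus directly from the quoted theorem rather than by a Galois argument. Two small points you should make explicit if you write this up: that $\operatorname{Lie}(T_{\mathbb{C}})$ of the enlarged maximal torus is again a toral subalgebra (so maximality applies), which you do note, and that Galois stability of a closed subgroup of a group defined over $\mathbb{R}$ indeed implies it is defined over $\mathbb{R}$ (Galois descent for the quadratic extension $\mathbb{C}/\mathbb{R}$).
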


\begin{proof} Let $B_{\mathbb{C}}=Z_{H_{\mathbb{C}}}(\mathfrak{t}_{\mathbb{C}})$ 
be the centralizer in $H_{\mathbb{C}}$ of $\mathfrak{t}_{\mathbb{C}}$, and note
$$\mathfrak{b}_{\mathbb{C}}:=\op{Lie}(B_{\mathbb{C}})=Z_{\mathfrak{h_{\mathbb{C}}}}(\mathfrak{t}_{\mathbb{C}}).$$
Let 
$\widetilde{T_{\mathbb{C}}}\subset B_{\mathbb{C}}$ be a maximal torus, and let 
$C_{\mathbb{C}}=Z_{B_{\mathbb{C}}}(\widetilde{T_{\mathbb{C}}})$ be the associated 
Cartan subgroup of $B_{\mathbb{C}}$. Then 
\[
\mathfrak{c}_{\mathbb{C}}:=\operatorname{Lie}(Z_{B_{\mathbb{C}}}(\widetilde{T_{\mathbb{C}}}))=Z_{\mathfrak{b}_{\mathbb{C}}}(\op{Lie}(\widetilde{T_{\mathbb{C}}}))
\]
\noindent Hence, 
$\mathfrak{t}_{\mathbb{C}}\subset \mathfrak{c}_{\mathbb{C}}$ since $\mathfrak{t}_{\mathbb{C}}$ is in the center of $\mathfrak{b}_{\mathbb{C}}$, and therefore it must centralize $\op{Lie}(\widetilde{T}_{\mathbb{C}})\subset \mathfrak{b}_{\mathbb{C}}$. But, $C_{\mathbb{C}}$ is a nilpotent Lie group; hence, by part (3) of 10.6 Theorem on page 138 of \cite{Bo}, 
we have 
\[
C_{\mathbb{C}}\simeq (C_{\mathbb{C}})_s\times (C_{\mathbb{C}})_u
\]
and 
\[
\mathfrak{c}_{\mathbb{C}}\simeq (\mathfrak{c}_{\mathbb{C}})_s\oplus (\mathfrak{c}_{\mathbb{C}})_n.
\]
In particular, we see that $\widetilde{T_{\mathbb{C}}}=(C_{\mathbb{C}})_s$ is 
the unique maximal toral subgroup of $C_{\mathbb{C}}$ and its Lie algebra 
$\widetilde{\mathfrak{t}_{\mathbb{C}}}=(\mathfrak{c}_{\mathbb{C}})_s$ is the 
unique maximal toral subalgebra of $\mathfrak{c}_{\mathbb{C}}$. Since 
$\mathfrak{t}_{\mathbb{C}}$ is a toral subalgebra in $\mathfrak{c}_{\mathbb{C}}$, 
it must be a subalgebra of $\widetilde{\mathfrak{t}_{\mathbb{C}}}$. But, 
$\mathfrak{t}_{\mathbb{C}}$ is maximal in $\mathfrak{g}_{\mathbb{C}}$; hence, it 
must be maximal in the smaller algebra $\mathfrak{c}_{\mathbb{C}}$. Therefore, 

$$\mathfrak{t}_{\mathbb{C}}=\widetilde{\mathfrak{t}_{\mathbb{C}}}$$
and $\mathfrak{t}_{\mathbb{C}}$ is the Lie algebra of the maximal torus 
$\widetilde{T_{\mathbb{C}}}\subset B_{\mathbb{C}}$. However, any torus containing 
$\widetilde{T_{\mathbb{C}}}$ in $H_{\mathbb{C}}$ would have to centralize 
$\widetilde{T_{\mathbb{C}}}$; hence, it would have to lie in $B_{\mathbb{C}}$. 
Therefore $\mathfrak{t}_{\mathbb{C}}$ is the Lie algebra of a maximal torus in 
$H_{\mathbb{C}}$. 

The second statement follows from the conjugacy of maximal tori and the fact 
that if two groups are conjugate, then their Lie algebras must also be conjugate.

For the third statement, we assume $\mathfrak{t}_{\mathbb{C}}$ is defined over 
$\mathbb{R}$. Then its centralizer $B_{\mathbb{C}}$ must be defined over $\mathbb{R}$. 
Now, a complex, linear algebraic group defined over $\mathbb{R}$ always has a 
maximal torus defined over $\mathbb{R}$ (see 18.2 Theorem (i) on page 182 of \cite{Bo}).
Then we may choose $\widetilde{T_{\mathbb{C}}}$ in the above argument to be 
defined over $\mathbb{R}$, and we have a maximal torus with Lie algebra 
$\mathfrak{t}_{\mathbb{C}}$ that is defined over $\mathbb{R}$.

The last statement follows since two maximal toral subalgebras are conjugate by 
an element of $H$ if they are the Lie algebras of maximal tori which are conjugate 
by an element of $H$. 
\end{proof}

\begin{definition} We say $\mathfrak{t}\subset \mathfrak{h}$ is a 
\emph{toral subalgebra} of $\mathfrak{h}$ if $\mathfrak{t}$ is an abelian subalgebra 
consisting of semisimple elements. A \emph{maximal toral subalgebra} of $\mathfrak{h}$ 
is a toral subalgebra of $\mathfrak{h}$ that is not properly contained in another 
toral subalgebra of $\mathfrak{h}$. 
\end{definition}

\begin{lemma} \label{lem:realtoral} If $\mathfrak{t}\subset \mathfrak{h}$ is a 
maximal toral subalgebra, then $\mathfrak{t}_{\mathbb{C}}\subset \mathfrak{h}_{\mathbb{C}}$ 
is a maximal toral subalgebra. 
\end{lemma}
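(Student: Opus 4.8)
The plan is to show that $\mathfrak t_{\mathbb C}$ is in fact the \emph{unique} maximal toral subalgebra of $\mathfrak h_{\mathbb C}$. First note that $\mathfrak t_{\mathbb C}$ is toral: it is abelian, and since the elements of $\mathfrak t$ commute and are semisimple, a faithful algebraic representation of $H$ sends $\mathfrak t$ to a family of commuting diagonalizable matrices, hence to a simultaneously diagonalizable family; so its $\mathbb C$-span $\mathfrak t_{\mathbb C}$ consists of semisimple elements. The same remark shows that the semisimple elements of any abelian subalgebra of $\mathfrak h$ or of $\mathfrak h_{\mathbb C}$ form a linear subspace, and that an $\mathbb R$-linear combination of commuting semisimple elements is semisimple; I will use this repeatedly.

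The next step is a reduction to the case where $\mathfrak t$ is central in $\mathfrak h$. Put $\mathfrak z=Z_{\mathfrak h}(\mathfrak t)$, the Lie algebra of the closed subgroup $Z_H(\mathfrak t)\subset H$; this subgroup is again the real points of a connected complex algebraic group defined over $\mathbb R$ (the identity component of $Z_{H_{\mathbb C}}(\mathfrak t_{\mathbb C})$, which is defined over $\mathbb R$ since $\mathfrak t_{\mathbb C}$ is), and the notion of semisimple element is the same whether computed in $H$ or in $Z_H(\mathfrak t)$, so the hypotheses of the section apply to $(\mathfrak z$, $\mathfrak t)$. By construction $\mathfrak t$ is central in $\mathfrak z$, and $\mathfrak t$ is still maximal toral in $\mathfrak z$ because a toral subalgebra of $\mathfrak z$ containing $\mathfrak t$ is in particular a toral subalgebra of $\mathfrak h$ containing $\mathfrak t$. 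Finally $\mathfrak z_{\mathbb C}=Z_{\mathfrak h_{\mathbb C}}(\mathfrak t_{\mathbb C})$, so any toral $\mathfrak s_{\mathbb C}\subset\mathfrak h_{\mathbb C}$ with $\mathfrak t_{\mathbb C}\subset\mathfrak s_{\mathbb C}$, being abelian, lies in $\mathfrak z_{\mathbb C}$; hence it suffices to prove $\mathfrak t_{\mathbb C}$ maximal toral in $\mathfrak z_{\mathbb C}$. Thus we may assume $\mathfrak z=\mathfrak h$, i.e.\ $\mathfrak t$ is central (and maximal toral) in $\mathfrak h$.

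In that case $\mathfrak t$ is exactly the set of all semisimple elements of $\mathfrak h$: if $Y\in\mathfrak h$ is semisimple then, $\mathfrak t$ being central, $\mathfrak t+\mathbb R Y$ is abelian and toral, hence equals $\mathfrak t$, so $Y\in\mathfrak t$. Now pick a Levi decomposition $\mathfrak h=\mathfrak l\ltimes\mathfrak n$ with $\mathfrak n=\op{Lie}(R_u(H))$ the nilradical and $\mathfrak l$ reductive. If $[\mathfrak l,\mathfrak l]\ne 0$ then it is a nonzero semisimple (hence non-nilpotent) algebraic subalgebra, so it contains a nonzero semisimple element of $\mathfrak h$; that element would lie in $\mathfrak t$, hence be central in $\mathfrak h$ and a fortiori in $[\mathfrak l,\mathfrak l]$, contradicting triviality of the center of a semisimple Lie algebra. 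So $\mathfrak l$ is abelian, the corresponding Levi subgroup is a torus $L$, every element of $\mathfrak l=\op{Lie}(L)$ is semisimple (so $\mathfrak l\subset\mathfrak t$), and $[\mathfrak l,\mathfrak n]\subset[\mathfrak t,\mathfrak h]=0$; therefore $\mathfrak h=\mathfrak l\times\mathfrak n$ is a direct product of Lie algebras, $\mathfrak l=\op{Lie}(L)$ with $L$ a torus and $\mathfrak n=\op{Lie}(N)$ with $N$ unipotent, and comparing with the previous sentence $\mathfrak t=\mathfrak l$.

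To finish, complexify. In a faithful algebraic representation of $H=L\times N$ the factor $\mathfrak l$ maps to commuting diagonalizable matrices and $\mathfrak n$ to commuting nilpotent matrices (simultaneously strictly triangularizable), and these properties persist after extension of scalars to $\mathbb C$; so for $A\in\mathfrak l_{\mathbb C}$, $B\in\mathfrak n_{\mathbb C}$ the sum $A+B$ is its own Jordan decomposition, and $A+B$ is semisimple iff $B=0$. Hence the semisimple elements of $\mathfrak h_{\mathbb C}=\mathfrak l_{\mathbb C}\times\mathfrak n_{\mathbb C}$ are exactly $\mathfrak l_{\mathbb C}=\mathfrak t_{\mathbb C}$. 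Since every toral subalgebra of $\mathfrak h_{\mathbb C}$ consists of semisimple elements, every such subalgebra lies in $\mathfrak t_{\mathbb C}$; thus $\mathfrak t_{\mathbb C}$ is the unique maximal toral subalgebra of $\mathfrak h_{\mathbb C}$, which is what we wanted. The step I expect to be the most delicate is the combination of the reduction to $\mathfrak z=Z_{\mathfrak h}(\mathfrak t)$ with the structural claim that maximality of $\mathfrak t$ forces the Levi factor to be a central torus; once $\mathfrak t$ is identified with the full space of semisimple elements and $\mathfrak h$ is split as ``torus Lie algebra $\times$ nilradical'', the behaviour under complexification is routine because semisimplicity is detected by one faithful representation and is compatible with base change.
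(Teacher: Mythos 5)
Your argument is correct, but it takes a genuinely different route from the paper's. The paper complexifies at once: it embeds $\mathfrak t_{\mathbb C}$ in a maximal toral subalgebra of $\mathfrak h_{\mathbb C}$, uses Lemma \ref{lem:toritoral} to realize that subalgebra as the Lie algebra of a maximal torus lying in $Z_{H_{\mathbb C}}(\mathfrak t_{\mathbb C})$, invokes Borel's theorem that this centralizer contains a maximal torus defined over $\mathbb R$, and then descends: the real points of that torus's Lie algebra form a toral subalgebra of $\mathfrak h$ containing $\mathfrak t$, so maximality over $\mathbb R$ forces equality, and $\mathfrak t_{\mathbb C}$ is then the Lie algebra of a maximal torus. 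You instead stay over $\mathbb R$ longer: you reduce to the case where $\mathfrak t$ is central by passing to $\mathfrak z=Z_{\mathfrak h}(\mathfrak t)$ (which is indeed the Lie algebra of an $\mathbb R$-group, so the section's framework and the notion of semisimplicity are unchanged), show that centrality plus maximality of $\mathfrak t$ forces the Levi factor of a Mostow decomposition to be a central torus, so that $\mathfrak h=\mathfrak t\oplus\mathfrak n$ with $\mathfrak n$ the Lie algebra of the unipotent radical, and then identify the semisimple locus of $\mathfrak h_{\mathbb C}$ with $\mathfrak t_{\mathbb C}$ by uniqueness of the Jordan decomposition. The paper's route is economical given its earlier results --- it reuses exactly the rationality and conjugacy statements for maximal tori already in play for Proposition \ref{lem:finitetori} --- whereas your route trades those for Mostow's real Levi decomposition and Jordan-decomposition compatibility, and in return gives a sharper conclusion: inside the centralizer, $\mathfrak t_{\mathbb C}$ is the full set of semisimple elements, hence the \emph{unique} maximal toral subalgebra there. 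Three cosmetic remarks: what you call the nilradical should be the Lie algebra of the unipotent radical (the nilradical of $\mathfrak h$ can be strictly larger, e.g.\ when $\mathfrak h$ is abelian), which is in fact the object you use; the elements of $\mathfrak n_{\mathbb C}$ need not commute with one another --- all your Jordan argument needs is that each has nilpotent image and commutes with $\mathfrak l_{\mathbb C}$, which the direct product supplies; and the Levi subgroup must be taken defined over $\mathbb R$ (Mostow provides this) so that $\mathfrak l$ genuinely lies in $\mathfrak h$, while your opening claim of uniqueness in $\mathfrak h_{\mathbb C}$ is of course only valid after the reduction, as your own argument shows. None of these affects the correctness of the proof.
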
 

\begin{proof} Suppose $\mathfrak{t}\subset \mathfrak{h}$ is a maximal toral 
subalgebra with complexification $\mathfrak{t}_{\mathbb{C}}$. Note 
$Z_H(\mathfrak{t})\subset Z_{H_{\mathbb{C}}}(\mathfrak{t}_{\mathbb{C}})$ is a 
real form. Choose a maximal toral subalgebra $\widetilde{\mathfrak{t}_{\mathbb{C}}}\subset \mathfrak{h}_{\mathbb{C}}$ of $\mathfrak{h}_{\mathbb{C}}$ containing $\mathfrak{t}_{\mathbb{C}}$. Clearly $\widetilde{\mathfrak{t}_{\mathbb{C}}}$ is contained in $Z_{\mathfrak{h}_{\mathbb{C}}}(\mathfrak{t}_{\mathbb{C}})$, the Lie algebra of $Z_{H_{\mathbb{C}}}(\mathfrak{t}_{\mathbb{C}})$, since $\mathfrak{t}_{\mathbb{C}}$ is contained in $\widetilde{\mathfrak{t}_{\mathbb{C}}}$ and $\widetilde{\mathfrak{t}_{\mathbb{C}}}$ is abelian. By Lemma \ref{lem:toritoral}, there exists a maximal torus $\widetilde{T_{\mathbb{C}}}$ with Lie algebra $\widetilde{\mathfrak{t}_{\mathbb{C}}}$. This maximal torus of $H_{\mathbb{C}}$ is contained in $Z_{H_{\mathbb{C}}}(\mathfrak{t}_{\mathbb{C}})$ since it is abelian and $\mathfrak{t}_{\mathbb{C}}$ is contained in its Lie algebra. All maximal tori in a complex, linear algebraic group are conjugate; therefore, every maximal torus 
in $Z_{H_{\mathbb{C}}}(\mathfrak{t}_{\mathbb{C}})$ is a maximal torus in $H_{\mathbb{C}}$. 

Now, by 18.2 Theorem (i) on page 218 of \cite{Bo}, there exists a maximal torus 
$B_{\mathbb{C}}\subset Z_{H_{\mathbb{C}}}(\mathfrak{t}_{\mathbb{C}})$ that is 
defined over $\mathbb{R}$. Then its Lie algebra $\mathfrak{b}_{\mathbb{C}}$ is 
defined over $\mathbb{R}$ with real points $\mathfrak{b}$. Since $\mathfrak{t}_{\mathbb{C}}$ 
is in the center of $Z_{\mathfrak{h}_{\mathbb{C}}}(\mathfrak{t}_{\mathbb{C}})$, we must have 
$\mathfrak{t}_{\mathbb{C}}\subset \mathfrak{b}_{\mathbb{C}}$. In particular, 
$\mathfrak{t}\subset \mathfrak{b}$. By maximality, we must have $\mathfrak{t}=\mathfrak{b}$. 
Therefore, $\mathfrak{t}_{\mathbb{C}}=\mathfrak{b}_{\mathbb{C}}$ is a maximal 
toral subalgebra of $\mathfrak{h}_{\mathbb{C}}$ as desired.
\end{proof}  

\begin{proposition} \label{prop:finitetoral} Let $H$ be a Lie group with Lie algebra 
$\mathfrak{h}$, and suppose that there exists a real, linear algebraic group 
$H_1$ with Lie algebra $\mathfrak{h}$ as well. Assume $\mathfrak{h}_s\subset \mathfrak{h}$ 
is dense. Then there exists a finite number of maximal toral subalgebras 
$\mathfrak{t}_1,\ldots,\mathfrak{t}_r$ in $\mathfrak{h}$ such that 
\[
\overline{\bigcup_{j=1}^r\bigcup_{g\in H} \operatorname{Ad}(g)\cdot \mathfrak{t}_j}=\mathfrak{h}.
\]
\end{proposition}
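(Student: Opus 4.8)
The plan is to reduce the assertion to a finiteness statement about conjugacy classes of maximal toral subalgebras and then invoke the density hypothesis. First I would observe that $\mathfrak{h}_s$ is precisely the union of all maximal toral subalgebras of $\mathfrak{h}$: if $X\in\mathfrak{h}_s$ then $\mathbb{R}X$ is a toral subalgebra, and by finite dimensionality it is contained in a maximal one, while conversely every maximal toral subalgebra consists of semisimple elements by definition. Hence it is enough to exhibit maximal toral subalgebras $\mathfrak{t}_1,\dots,\mathfrak{t}_r$ with the property that every maximal toral subalgebra of $\mathfrak{h}$ is of the form $\operatorname{Ad}(g)\mathfrak{t}_j$ for some $j$ and some $g\in H$. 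Granting this, $\mathfrak{h}_s\subset\bigcup_{j}\bigcup_{g\in H}\operatorname{Ad}(g)\mathfrak{t}_j$, and taking closures together with the density $\mathfrak{h}=\overline{\mathfrak{h}_s}$ yields $\mathfrak{h}\subset\overline{\bigcup_{j}\bigcup_{g\in H}\operatorname{Ad}(g)\mathfrak{t}_j}\subset\mathfrak{h}$, which is the claim.

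So the heart of the matter is to prove that there are only finitely many $\operatorname{Ad}(H)$-conjugacy classes of maximal toral subalgebras of $\mathfrak{h}$, and here I would pass to the complexification. By Lemma~\ref{lem:realtoral}, if $\mathfrak{t}\subset\mathfrak{h}$ is maximal toral then $\mathfrak{t}_{\mathbb{C}}\subset\mathfrak{h}_{\mathbb{C}}$ is maximal toral, and it is clearly defined over $\mathbb{R}$ with real form $\mathfrak{t}$. I would then check the key point that $\mathfrak{t}=\mathfrak{t}_{\mathbb{C}}\cap\mathfrak{h}$: the right-hand side is an abelian subalgebra of $\mathfrak{h}$ each of whose elements is semisimple with respect to $H_1$ (a real element of a maximal toral subalgebra of $\mathfrak{h}_{\mathbb{C}}$ stays semisimple after complexifying algebraic representations of $H_1$), hence it is toral and contains $\mathfrak{t}$, so equality follows from maximality. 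Consequently the assignment $\mathfrak{t}\mapsto\mathfrak{t}_{\mathbb{C}}$ is injective and equivariant: if $\operatorname{Ad}(g)\mathfrak{t}_{\mathbb{C}}=\mathfrak{t}'_{\mathbb{C}}$ for an element $g$ whose adjoint action preserves the real form $\mathfrak{h}$, then $\operatorname{Ad}(g)\mathfrak{t}=\mathfrak{t}'$. Therefore the maximal toral subalgebras of $\mathfrak{h}$, modulo the relevant conjugation, inject into the maximal toral subalgebras of $\mathfrak{h}_{\mathbb{C}}$ that are defined over $\mathbb{R}$, modulo conjugation, and the latter set is finite by Lemma~\ref{lem:toritoral} applied to the complexification of $H_1$.

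The last step I would carry out is to reconcile the conjugating group with $H$ itself, which is not assumed algebraic. The conjugation in Lemma~\ref{lem:toritoral} is performed by the real points of a connected complex algebraic group with Lie algebra $\mathfrak{h}_{\mathbb{C}}$; acting on $\mathfrak{h}$, this group contains $\operatorname{Int}(\mathfrak{h})$ --- generated by the operators $e^{\operatorname{ad}X}=\operatorname{Ad}(\exp X)$ for $X\in\mathfrak{h}$ --- as its identity component, hence with finite index, since a real algebraic group has finitely many connected components. Thus there are finitely many $\operatorname{Int}(\mathfrak{h})$-conjugacy classes of maximal toral subalgebras of $\mathfrak{h}$, and since $\operatorname{Int}(\mathfrak{h})\subset\operatorname{Ad}(H)$ these are a fortiori contained in finitely many $\operatorname{Ad}(H)$-classes. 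Picking a representative $\mathfrak{t}_j$ from each class finishes the proof.

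I expect the main obstacle to be exactly this reconciliation together with the bookkeeping of the complexification: verifying that the map on maximal toral subalgebras is genuinely conjugation-equivariant and injective (which hinges on $\mathfrak{t}=\mathfrak{t}_{\mathbb{C}}\cap\mathfrak{h}$), and tracing the finiteness of conjugacy classes from the complex algebraic setting, through $\operatorname{Int}(\mathfrak{h})$, down to the possibly non-algebraic group $H$. The remaining ingredients --- that scalar multiples of semisimple elements are semisimple, that toral subalgebras extend to maximal ones, and that a real algebraic group has finitely many components --- are standard.
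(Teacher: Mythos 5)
Your proposal is correct and takes essentially the same route as the paper: complexify, invoke Lemma \ref{lem:realtoral} and Lemma \ref{lem:toritoral} to get finitely many conjugacy classes of maximal toral subalgebras defined over $\mathbb{R}$, transfer the finiteness to $\operatorname{Ad}(H)$ via the identity-component observation (your $\operatorname{Int}(\mathfrak{h})\subset\operatorname{Ad}(H)$ with finite index plays the role of the paper's $\operatorname{Ad}(H_e)=\operatorname{Ad}((H_1)_e)$ together with the finiteness of components of $H_1$), and conclude using density of $\mathfrak{h}_s$. Your explicit verification that $\mathfrak{t}=\mathfrak{t}_{\mathbb{C}}\cap\mathfrak{h}$, which gives injectivity and equivariance of complexification, merely spells out a step the paper leaves implicit.
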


\begin{proof} Note that $H_e$ and $(H_1)_e$ are both connected Lie groups with 
the same Lie algebra. In particular, we have
\[
\operatorname{Ad}(H_e)=\operatorname{Ad}((H_1)_e)\subset \operatorname{Aut}(\mathfrak{h}).
\]
Now, there are finitely many $((H_1)_{\mathbb{C}})_e(\mathbb{R})$ conjugacy classes of maximal toral subalgebras 
in $\mathfrak{h}$ by Lemma \ref{lem:toritoral} and Lemma \ref{lem:realtoral}. Hence, there are finitely
many $H_1\supset ((H_1)_{\mathbb{C}})_e(\mathbb{R})$ conjugacy classes of maximal toral subalgebras in $\mathfrak{h}$.
Also, note that $H_1$ has finitely many connected components since $H_1$ is algebraic. Hence, 
there are finitely many $(H_1)_e$ conjugacy classes of maximal tori in $\mathfrak{h}$. 
Since $\operatorname{Ad}(H_e)=\operatorname{Ad}((H_1)_e)$, there are also finitely 
many $H$ conjugacy classes of maximal tori in $\mathfrak{h}$. Let 
$\mathfrak{t}_1,\ldots,\mathfrak{t}_r$ be representatives of these conjugacy 
classes. Since every semisimple element in $\mathfrak{h}$ is contained in a 
maximal toral subalgebra, we have
$$\mathfrak{h}_s=\bigcup_{j=1}^r\bigcup_{g\in H} \operatorname{Ad}(g)\cdot \mathfrak{t}_j$$
for maximal toral subalgebras $\mathfrak{t}_1,\ldots,\mathfrak{t}_r$. Since we 
assumed $\mathfrak{h}_s\subset \mathfrak{h}$ to be dense, the Proposition follows.
\end{proof}

\subsection{Proof of Theorem~\ref{thm:cond_U_dense_semisimple}}

Recall that we fixed an inner product $\langle \cdot,\cdot\rangle$ on 
$\mathfrak{g}$, and let $|\cdot|$ be the associated norm. Using the inner product 
and division by $i$ we can identify $i\mathfrak{g}^*\simeq \mathfrak{g}$, and by 
abuse of notation also denote by $|\cdot|$ the corresponding norm on $i\mathfrak{g}^*$.

Recall that for $x\in X$, $G_x\subset G$ is the stabilizer subgroup
of $x$ and $\mathfrak{g}_x$ is the associated Lie algebra.
Let us first show that Proposition~\ref{prop:finitetoral} allows
us to conjugate any semisimple element $Y\in\mathfrak g_x$ to 
an elment in $\mathfrak h$ in a bounded way.

\begin{lemma}\label{lem:bounded_AdY_norm}
Let $G$ be a real, linear algebraic group, let $H\subset G$ 
be a closed subgroup, and suppose there exists a real algebraic
subgroup $H_1\subset G$ such that the Lie algebra of $H_1$, $\mathfrak{h}_1$, and 
the Lie algebra of $H$, $\mathfrak{h}$, are equal as subsets of the Lie algebra of $G$, $\mathfrak{g}$.
 Then there is a constant $C$ such that for any $x\in X=G/H$ and any
 semisimple element  $Y\in \mathfrak g_x$ there is a  $g_Y \in G$
 such that 
 $\operatorname{Ad}(g_Y^{-1})Y\in\mathfrak h$ and
 \[
  |\operatorname{Ad}(g_Y^{-1})Y|\leq C |Y|.
 \] 
\end{lemma}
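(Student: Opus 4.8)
The plan is to reduce the statement to a compactness argument built on Proposition~\ref{prop:finitetoral}. First I would observe that $\mathfrak{g}_x$ is $\mathrm{Ad}(G)$-conjugate to $\mathfrak{h}$: indeed $G_x = g_x H g_x^{-1}$ for some $g_x\in G$, so $\mathfrak{g}_x = \mathrm{Ad}(g_x)\mathfrak{h}$. Hence, given a semisimple $Y\in\mathfrak{g}_x$, the element $\mathrm{Ad}(g_x^{-1})Y$ lies in $\mathfrak{h}$ and is semisimple (semisimplicity is a conjugation-invariant notion, and by the remark following the first definition in Section~\ref{sec:CU_dense_semisimple} it may be checked inside $G$). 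Thus the real content is not the existence of \emph{some} conjugating element but the existence of one that moves $Y$ in a norm-bounded way; the naive choice $g_x$ is useless since nothing controls $|\mathrm{Ad}(g_x^{-1})|$ as $x$ ranges over $X$.

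The key step is to use Proposition~\ref{prop:finitetoral}: there are finitely many maximal toral subalgebras $\mathfrak{t}_1,\dots,\mathfrak{t}_r\subset\mathfrak{h}$ with $\overline{\bigcup_j \mathrm{Ad}(H)\mathfrak{t}_j} = \mathfrak{h}$, and in fact (as in the proof of that Proposition) $\mathfrak{h}_s = \bigcup_j \mathrm{Ad}(H)\mathfrak{t}_j$. Conjugating by $g_x$, this gives $(\mathfrak{g}_x)_s = \bigcup_j \mathrm{Ad}(g_x)\mathrm{Ad}(H)\mathfrak{t}_j$. So our semisimple $Y\in\mathfrak{g}_x$ equals $\mathrm{Ad}(h')\,W$ for some $W\in\mathfrak{t}_j$ (some $j$), where $h'\in g_x H g_x^{-1} = G_x$; equivalently there is $g\in G$ with $\mathrm{Ad}(g^{-1})Y = W \in \mathfrak{t}_j\subset\mathfrak{h}$. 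Now I want to bound $|W|$ in terms of $|Y|$. The point is that $W$ and $Y$ are semisimple elements in the same $G$-orbit, and on a single (closed, in fact algebraic) adjoint orbit the norm is essentially constant up to bounded factors on the relevant compact pieces. Concretely: consider the continuous, $G$-invariant function on semisimple elements given by the multiset of eigenvalues of $\mathrm{ad}$ (or, cleanly, use that $\mathrm{Ad}(g)$ acts by a bounded operator when restricted appropriately). A cleaner route: scale $Y$ to the unit sphere, i.e. set $Y_0 = Y/|Y|$; then $\mathrm{Ad}(g^{-1})Y_0 = W/|Y|$. It suffices to bound $|W/|Y||$, i.e. to bound $|\mathrm{Ad}(g^{-1})Y_0|$ uniformly over all unit-norm semisimple $Y_0\in\bigcup_x\mathfrak{g}_x$. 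For this I would argue as follows: each such $Y_0$ lies in some $\mathrm{Ad}(g)\mathfrak{t}_j$ with $\mathrm{Ad}(g^{-1})Y_0 \in \mathfrak{t}_j$, and the closed-orbit structure of semisimple conjugacy classes in a real algebraic group means that the map $Y_0 \mapsto \{$its $G$-orbit $\}$ descends continuously to the compact quotient of the unit sphere, so that $C := \sup\{\, |W| : W\in\mathfrak{t}_j \text{ for some } j,\ W\in \mathrm{Ad}(G)Y_0,\ |Y_0|=1\,\}$ is finite.

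The step I expect to be the main obstacle is precisely this last uniform bound — showing that a unit-norm semisimple $Y$, wherever it sits among the $\mathfrak{g}_x$, can be conjugated \emph{into one of the finitely many fixed} $\mathfrak{t}_j$ with bounded norm. Two facts make it go through and I would spell them out carefully. First, the $\mathrm{Ad}(G)$-orbit of a semisimple element of a real linear algebraic group is closed (a standard structural fact, e.g. via the Jordan decomposition and the closedness of semisimple orbits, cf.\ \cite{Bo}); second, the intersection of such a closed orbit with the union $\bigcup_j\mathfrak{t}_j$ together with the unit sphere is compact, so the norm is bounded on it. Combining: for unit $Y$, the orbit $\mathrm{Ad}(G)Y$ meets $\mathfrak{t}_j$ (some $j$) in a nonempty compact set $K_{Y}$, on which $|\cdot|$ attains a maximum; letting $Y$ vary over the unit sphere, continuity of the orbit map and compactness of the sphere bound these maxima uniformly by a single constant $C$. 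Then for general semisimple $Y\in\mathfrak{g}_x$, rescaling gives $g_Y\in G$ with $\mathrm{Ad}(g_Y^{-1})Y\in\mathfrak{t}_j\subset\mathfrak{h}$ and $|\mathrm{Ad}(g_Y^{-1})Y| = |Y|\cdot|\mathrm{Ad}(g_Y^{-1})(Y/|Y|)| \le C|Y|$, which is the claim. The one technical check I would not want to skip is that $x\mapsto\mathfrak{g}_x$ being continuous into the Grassmannian (used implicitly above) ensures $\bigcup_x(\mathfrak{g}_x)_s$ intersected with the unit sphere is contained in a set on which the relevant suprema behave, but since we only need a single constant $C$ over all of it, closedness of semisimple orbits plus compactness of $\{|Y_0|=1\}$ suffices and no continuity of the $\mathfrak{g}_x$ is strictly required.
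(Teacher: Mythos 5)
Your first step --- invoking Proposition~\ref{prop:finitetoral} to conjugate the semisimple element $Y\in\mathfrak g_x$ into one of finitely many fixed maximal toral subalgebras $\mathfrak t_1,\dots,\mathfrak t_r\subset\mathfrak h$ --- is exactly the paper's first step, and you correctly identify that the entire content of the lemma is the uniform norm bound. But the argument you offer for that bound has a genuine gap. You rescale to the unit sphere and claim that $\sup\{|W| : |Y_0|=1 \text{ semisimple},\ W\in\operatorname{Ad}(G)Y_0\cap\bigcup_j\mathfrak t_j\}$ is finite ``by continuity of the orbit map and compactness of the sphere.'' Neither ingredient is available: the set of unit-norm semisimple elements of $\bigcup_x\mathfrak g_x$ is not compact, because semisimplicity is not a closed condition (a sequence of unit-norm semisimple elements can converge to a nilpotent one, near which the orbits, and hence the quantity you are maximizing, vary discontinuously), and no semicontinuity of $Y_0\mapsto\max\{|W|:W\in\operatorname{Ad}(G)Y_0\cap\bigcup_j\mathfrak t_j\}$ is proved or obvious. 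Moreover, closedness of semisimple orbits only gives that $\operatorname{Ad}(G)Y_0\cap\bigcup_j\mathfrak t_j$ is closed; its boundedness is exactly the point at issue (and if by ``together with the unit sphere'' you meant intersecting with the unit sphere, the argument is circular, since whether the toral representative can be chosen with bounded norm is precisely what must be proved). So the reduction to ``a compact set on which the norm is bounded'' does not close.

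What actually closes the gap in the paper is a norm comparison through a diagonalizing representation, which you gesture at parenthetically and then abandon for the ``cleaner'' compactness route. For each $j$ one chooses (via Lemmas~\ref{lem:realtoral} and~\ref{lem:toritoral}) a maximal torus $T_{j,\mathbb C}\subset G_{\mathbb C}$ with $\operatorname{Lie}(T_{j,\mathbb C})\supset\mathfrak t_{j,\mathbb C}$ and an algebraic embedding $\rho_j\colon G_{\mathbb C}\to\operatorname{GL}(N_j,\mathbb C)$ sending $T_{j,\mathbb C}$ to diagonal matrices, and sets $|X|_{\rho_j}:=|d\rho_j(X)|_{\operatorname{op}}$. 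The operator norm dominates the spectral radius, with equality on diagonal matrices, and the spectrum is conjugation invariant; hence $|\cdot|_{\rho_j}$ attains its minimum over each semisimple orbit on $\mathfrak t_{j,\mathbb C}$, so $|\operatorname{Ad}(g_Y^{-1})Y|_{\rho_j}\le|Y|_{\rho_j}$, and the equivalence of the finitely many norms $|\cdot|_{\rho_j}$ with $|\cdot|$ yields the uniform constant immediately. Note also that your suggested variant with eigenvalues of $\operatorname{ad}$ would not suffice as stated if some $\mathfrak t_j$ meets the center of $\mathfrak g$, which is one reason to use a faithful algebraic representation of $G_{\mathbb C}$ rather than the adjoint. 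If you replace the compactness claim by this spectral comparison, your proof becomes the paper's.
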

\begin{proof} 
For any element $Y\in\mathfrak g_x$ there is an element $g_x$ 
such that $\operatorname{Ad}(g_x^{-1})Y\in\mathfrak h$. If $Y$
is semisimple then also $\operatorname{Ad}(g_x^{-1})Y\in
\mathfrak h$ is semisimple and as in the proof of Proposition~\ref{prop:finitetoral} we may conjugate 
$\operatorname{Ad}(g_x^{-1})Y$ by some $h\in H$ to an element 
in one of the finitely many toral subalgebras $\mathfrak t_1,\ldots,\mathfrak t_r$. In particular, putting $g_Y=g_x h$, we obtain
\[
 \operatorname{Ad}(g_Y^{-1})Y\in \mathfrak t_j
\]
for some $j=1,\ldots,r$.

Now lets choose an embedding 
\[
\rho:\ G_{\mathbb{C}}\rightarrow \operatorname{GL}(N,\mathbb{C}),
\]
and choose a maximal torus $T_{\mathbb{C}}\subset G_{\mathbb{C}}$. Since 
$\rho(T_{\mathbb{C}})$ is a group of commuting diagonalizable matrices, they are 
simultaneously diagonalizable. In particular, after conjugating $\rho$, we may 
assume that $\rho$ takes $T_{\mathbb{C}}$ into the set of diagonal matrices in 
$\operatorname{GL}(N,\mathbb{C})$. Now, we fix a new norm $|\cdot|_{\rho}$ on 
$\mathfrak{g}_{\mathbb C}$ by 
\[
|X|_{\rho}:=|d\rho(X)|_{\operatorname{op}}.
\]
That is, the norm of $X$ is the operator norm of the endomorphism $d\rho(X)$ of 
$\mathbb{C}^N$. Suppose $X\in \mathfrak{g}_{\mathbb C}$ and $v\in \mathbb{C}^N$ is an eigenvector 
for $d\rho(X)$ with eigenvalue $\lambda$. Then $d\rho(X)v=\lambda v$ and we deduce 
$|X|_{\rho}\geq |\lambda|$. More generally, we have
\[
|X|_{\rho}\geq \sup_{\lambda\in \operatorname{Spec}d\rho(X)} |\lambda|
\]
where $\operatorname{Spec}d\rho(X)$ denotes the set of eigenvalues of $d\rho(X)$. 
On the other hand, if $X\in \mathfrak{t}_{\mathbb C}=\operatorname{Lie}(T_{\mathbb C})$, then $d\rho(X)$ 
is a diagonal matrix, and the above inequality is in fact an equality. If $g\in G_{\mathbb C}$, 
since $d\rho(\operatorname{Ad}(g)X)=\rho(g)d\rho(X)\rho(g)^{-1}$ is conjugate to 
$d\rho(X)$, we deduce
\[
|X|_{\rho}=\sup_{\lambda\in \operatorname{Spec}d\rho(X)} |\lambda|=\sup_{\lambda\in \operatorname{Spec}d\rho(\operatorname{Ad}_gX)} |\lambda|\leq |\operatorname{Ad}(g)X|_{\rho}.
\]
In particular, for every semisimple $G_{\mathbb{C}}$ orbit 
$\mathcal{O}\subset \mathfrak{g}_{\mathbb{C}}$, the norm $|\cdot|_{\rho}$ takes 
its minimum value on $\mathfrak{t}_{\mathbb{C}}\cap \mathcal{O}$ (note that the 
latter set is not empty since all maximal tori are conjugate and every semisimple 
element belongs to some maximal torus). 
Recall that $\mathfrak t_j\subset \mathfrak h$ are maximal toral subalgebras so 
accroding to Lemma~\ref{lem:realtoral}, $\mathfrak t_{j,\mathbb C}\subset \mathfrak h_{\mathbb C}$
is a maximal toral subalgebra. Applying Lemma~\ref{lem:toritoral} to $G_{\mathbb C}$
and using $\mathfrak h_{\mathbb C} \subset\mathfrak g_{\mathbb C}$ we fix for each $j$ a 
maximal torus $T_{j,\mathbb{C}}\subset G_{\mathbb{C}}$ such that 
\[
\operatorname{Lie}(T_{j,\mathbb{C}})\supset\mathfrak{t}_{j,\mathbb{C}}
\]

For each $j$, we fix a homomorphism
\[
\rho_j\colon G_{\mathbb{C}}\rightarrow \operatorname{GL}(N_j,\mathbb{C})
\]
for which $\rho_j(T_{j,\mathbb{C}})$ is a collection of diagonal matrices. By this map
we obtain a finite number of norms $|~|_{\rho_j}$ on the Lie algebra $\mathfrak g_{\mathbb C}$
and thus also in the real subspace $\mathfrak g$.
Since all norms on a finite dimensional vectorspace are equivalent we obtain a 
constant $d>0$ such that for all $j=1,\ldots,r$
\begin {equation}\label{eq:norm_equiv}
 \frac{1}{d}|X|\leq|X|_{\rho_j}\leq d|X|.
\end{equation}
Now suppose that $j$ is such that $\operatorname{Ad}(g_Y^ {-1})Y\in \mathfrak t_j$.
As noted before, when restricted to the orbit $G_{\mathbb{C}}\cdot Y$, the norm 
$|\cdot|_{\rho_j}$ takes its minimum on $\mathfrak{t}_{j,\mathbb{C}}$. Therefore,
\[
|\operatorname{Ad}(g_Y^ {-1})Y|_{\rho_j}\leq |Y|_{\rho_j}
\]
and using (\ref{eq:norm_equiv}) we get
\[
 |\operatorname{Ad}(g_Y^ {-1})Y|\leq d^2|Y|.
\]
Note that the constant $d^2$ did only depend on the choices of the norms $\rho_j$
and is thus independent of $Y$ and $g_Y$ and we have thus proven Lemma~\ref{lem:bounded_AdY_norm}.
\end{proof}

As a corollary of this lemma we can now prove (\ref{eq:wf_uniform_sigma_bound}) and 
(\ref{eq:ss_uniform_sigma_bound}) for $U_{\textup{CU,WF}}=U_{\textup{CU,SS}}=U_{\textup{CU}}\subset \mathfrak g$ being any precompact 
open neighborhood of $0\in \mathfrak g$: 
Let $ g_x\in G$ be an arbitrary representative of $x= g_xH \in X$. 
After choosing a nonzero point of reference in $\mathcal D^{1/2}_x$ and using 
(\ref{eq:sigma_eH}) and (\ref{eq:sigma_x}) we can identify $\mathcal D^{1/2}_x\cong \mathbb C$
and
\[
 \sigma_x(e^Y) \cong |{\det}_{T_{eH}X}(d{e^{\operatorname{Ad}( g_x^{-1}) Y}}_{|eH})|^{-1/2}
\]
for any $Y\in \mathfrak g_x$. From continuity we conclude
\[
 |{\det}_{T_{eH}X}(d{e^{\operatorname{Ad}( g_x^{-1}) Y}}_{|eH})|^{-1/2} \leq C_1|\operatorname{Ad}( g_x^{-1}) Y|.
\]
Note that the left side is independent of the 
choice of the representative $ g_x$. Now suppose that $Y \in \mathfrak g_x$ is semisimple, then we can take $g_x=g_Y$ 
according to Lemma~\ref{lem:bounded_AdY_norm} and we obtain
\[
 |{\det}_{T_{eH}X}(d{e^{\operatorname{Ad}(g_Y^{-1}) Y}}_{|eH})|^{-1/2} \leq C_1C|Y|.
\]
By the precompactnes of $U_{\textup{CU}}$, we have thus established 
(\ref{eq:wf_uniform_sigma_bound}) and (\ref{eq:ss_uniform_sigma_bound}) on the dense subset of semisimple elements and finally 
by continuity (\ref{eq:wf_uniform_sigma_bound}) and (\ref{eq:ss_uniform_sigma_bound}) follow on the whole 
set $U_{\textup{CU}}$.

In order to prove the central estimates (\ref{eq:wf_uniform_decay}) and (\ref{eq:ss_uniform_decay}), we will perform a partial integration with respect to some vectors
$Y_x\in \mathfrak{g}_x$ for every $x\in X$ satisfying certain properties. We first show that we can choose them appropriately.

As we assume in Theorem~\ref{thm:cond_U_dense_semisimple} that the representation
$\tau$ is finite dimensional, we know that $\operatorname{WF}(\tau_x) = \op{SS}(\tau_x) = \{0\}$, so
\[
 \mathcal W = \mathcal S = \overline{\bigcup_{x\in X} q_x^{-1}(0)} = \overline{\bigcup_{x\in X} i(\mathfrak g/\mathfrak g_x)^*}
\]
where $(\mathfrak g/\mathfrak g_x)^*$ are those linear functionals on $\mathfrak g$ 
that vanish on $\mathfrak g_x$.
\begin{lemma} \label{lem:lowerbound} 
Suppose $\eta_0 \notin \mathcal W=\mathcal S$. Then there exists an open neighborhood 
$\eta_0 \in \Omega \subset i\mathfrak{g}^*\setminus \mathcal W$ such that the 
nonnegative constant
\[
C_{\Omega}:= \inf_{x\in X}  \sup_{\substack{Y_x\in \mathfrak{g}_x\\ |Y_x|=1}} 
\inf_{\xi\in \Omega} |\langle \xi,Y_x\rangle|
\]
is nonzero.
\end{lemma}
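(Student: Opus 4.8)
The plan is to argue by contradiction using a compactness argument, exploiting that $x\mapsto\mathfrak g_x$ is a continuous map into the (compact) Grassmannian of subspaces of $\mathfrak g$ of dimension $\dim\mathfrak h$. First I would fix $\eta_0\notin\mathcal W=\mathcal S$ and observe that, since $\mathcal W$ is closed, there is $\varepsilon_0>0$ with $B_{2\varepsilon_0}(\eta_0)\cap\mathcal W=\emptyset$; I will take $\Omega=B_{\varepsilon_0}(\eta_0)$ and show $C_\Omega>0$ (for this or a possibly smaller choice of $\varepsilon_0$). The key point is to understand the quantity
\[
c(x):=\sup_{\substack{Y\in\mathfrak g_x\\ |Y|=1}}\ \inf_{\xi\in\Omega}\ |\langle\xi,Y\rangle|,
\]
and to see that $x\mapsto c(x)$ is bounded below away from zero.

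The main step is to show $c(x)>0$ for each fixed $x$. Suppose not: then for every unit vector $Y\in\mathfrak g_x$ there is $\xi_Y\in\overline\Omega$ with $\langle\xi_Y,Y\rangle=0$. Since $\eta_0\notin\mathcal W\supset\overline{\bigcup_{x}i(\mathfrak g/\mathfrak g_x)^*}$, we know $q_x(\eta_0)\neq 0$ in $i\mathfrak g_x^*$, i.e.\ $\eta_0$ does not annihilate $\mathfrak g_x$; more to the point, after identifying $i\mathfrak g^*\cong\mathfrak g$ via the fixed inner product, $\mathrm{pr}_{\mathfrak g_x}(\eta_0)\neq 0$. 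Taking $Y=\mathrm{pr}_{\mathfrak g_x}(\eta_0)/|\mathrm{pr}_{\mathfrak g_x}(\eta_0)|$, the inner product with any $\xi\in B_{\varepsilon_0}(\eta_0)$ satisfies $|\langle\xi,Y\rangle|\ge|\langle\eta_0,Y\rangle|-\varepsilon_0=|\mathrm{pr}_{\mathfrak g_x}(\eta_0)|-\varepsilon_0$, which is positive provided $\varepsilon_0<|\mathrm{pr}_{\mathfrak g_x}(\eta_0)|$. So the real content is the uniformity in $x$: I must check that $\inf_{x\in X}|\mathrm{pr}_{\mathfrak g_x}(\eta_0)|>0$. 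This follows because $\eta_0\notin\overline{\bigcup_{x}i\mathfrak g_x^\perp}=\overline{\bigcup_x i(\mathfrak g/\mathfrak g_x)^*}=\mathcal W$ (this identity was recorded in Section \ref{sec:scalar}): if the infimum were zero, there would be a sequence $x_n$ with $\mathrm{pr}_{\mathfrak g_{x_n}}(\eta_0)\to 0$, hence $\eta_0-\mathrm{pr}_{\mathfrak g_{x_n}}(\eta_0)=\mathrm{pr}_{\mathfrak g_{x_n}^\perp}(\eta_0)\to\eta_0$, exhibiting $\eta_0$ as a limit of elements of $\bigcup_x\mathfrak g_x^\perp\cong\bigcup_x i(\mathfrak g/\mathfrak g_x)^*$, contradicting $\eta_0\notin\mathcal W$.

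Putting this together: set $\varepsilon_0:=\tfrac12\inf_{x\in X}|\mathrm{pr}_{\mathfrak g_x}(\eta_0)|>0$ and $\Omega:=B_{\varepsilon_0}(\eta_0)$. Then for each $x$, choosing $Y_x:=\mathrm{pr}_{\mathfrak g_x}(\eta_0)/|\mathrm{pr}_{\mathfrak g_x}(\eta_0)|\in\mathfrak g_x$ gives $\inf_{\xi\in\Omega}|\langle\xi,Y_x\rangle|\ge|\mathrm{pr}_{\mathfrak g_x}(\eta_0)|-\varepsilon_0\ge\varepsilon_0$, hence $c(x)\ge\varepsilon_0$ uniformly in $x$, so $C_\Omega\ge\varepsilon_0>0$. (One should also shrink $\Omega$ if needed so that $\Omega\subset i\mathfrak g^*\setminus\mathcal W$, which is automatic once $2\varepsilon_0\le\mathrm{dist}(\eta_0,\mathcal W)$; alternatively take $\varepsilon_0$ to be the minimum of the two bounds.) The main obstacle is purely the uniformity in $x\in X$, and it is handled exactly as in the proof of Theorem \ref{thm:L^2 space}: the identity $\mathcal W=\overline{\bigcup_x i\mathfrak g_x^\perp}$ together with $\eta_0\notin\mathcal W$ forces $|\mathrm{pr}_{\mathfrak g_x}(\eta_0)|$ to be bounded below, and no compactness of $X$ is actually needed for this particular estimate — only the closedness of $\mathcal W$ and the description of its elements.
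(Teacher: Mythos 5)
Your proof is correct, but it takes a genuinely different route from the paper's. The paper argues by contradiction: it takes a shrinking family of neighborhoods $\Omega_m$ with $\bigcap_m\Omega_m=\{\eta_0\}$, assumes $C_{\Omega_m}=0$ for all $m$, extracts sequences $x^m_n\in X$, passes to a limit subspace $V$ in the compact Grassmannian of $(\dim\mathfrak h)$-dimensional subspaces of $\mathfrak g$, shows that $\eta_0$ annihilates $V$, and then shows $i(\mathfrak g/V)^*\subset\overline{\bigcup_{x\in X}i(\mathfrak g/\mathfrak g_x)^*}=\mathcal W$, contradicting $\eta_0\notin\mathcal W$. You instead give a direct, constructive argument in the spirit of the proof of Theorem \ref{thm:L^2 space} in Section \ref{sec:scalar}: under the inner-product identification $i\mathfrak g^*\cong\mathfrak g$ one has $\mathcal W=\overline{\bigcup_x\mathfrak g_x^\perp}$, so $\eta_0\notin\mathcal W$ forces $\delta:=\inf_x|\mathrm{pr}_{\mathfrak g_x}(\eta_0)|>0$ (your sequence argument is fine; even more directly, $|\mathrm{pr}_{\mathfrak g_x}(\eta_0)|=\mathrm{dist}(\eta_0,\mathfrak g_x^\perp)\geq\mathrm{dist}(\eta_0,\mathcal W)$), and then the explicit witness $Y_x=\mathrm{pr}_{\mathfrak g_x}(\eta_0)/|\mathrm{pr}_{\mathfrak g_x}(\eta_0)|$ together with Cauchy--Schwarz gives $\inf_{\xi\in\Omega}|\langle\xi,Y_x\rangle|\geq\delta-\varepsilon_0$ uniformly in $x$ for $\Omega=B_{\varepsilon_0}(\eta_0)$ with $\varepsilon_0$ small (and shrunk, as you note, so that $\Omega\cap\mathcal W=\emptyset$). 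What your approach buys: it avoids the Grassmannian compactness and diagonal-subsequence bookkeeping entirely, it produces an explicit neighborhood and a quantitative lower bound $C_\Omega\geq\varepsilon_0$ rather than a pure existence statement, and it makes the parallel with the scalar case of Section \ref{sec:scalar} transparent; the paper's contradiction argument, on the other hand, needs no identification-and-projection bookkeeping and is the template the authors reuse for Grassmannian-limit arguments elsewhere. Either version feeds into Corollary \ref{cor:vectorchoices} unchanged, since the set of admissible $Y_x$ in the unit sphere of $\mathfrak g_x$ is open and the dense semisimplicity is invoked only afterwards.
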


\begin{proof} We prove the Lemma by contradiction. Let $\{\Omega_m\}$ be a 
sequence of open sets in $i\mathfrak{g}^*$ such that $\Omega_m\supset \Omega_{m+1}$ 
for every $m$ and 
\[
\bigcap_m \Omega_m=\{\eta_0\}.
\]
Note that by the definition of $C_\Omega$, we have $C_{\omega_{m+1}}\geq C_{\Omega_m}$. Let us now
suppose that $C_{\Omega_m}=0$ for every $m$. Then, for every $m$, we must be able to 
find a sequence $\{x_n^m\}$ of elements of $X$ such that
\[
c_n^m=\sup_{\substack{Y_{n,m}\in \mathfrak{g}_{x_n^m}\\ |Y_{n,m}|=1}} \inf_{\xi\in \Omega_m}|\langle \xi,Y_{n,m}\rangle|
\]
converges to zero as $n\rightarrow \infty$. Choose a sequence of increasing 
natural numbers $\{n_m\}$ such that $c_{n_m}^m$ converges monotonically to zero 
as $m\rightarrow \infty$.

Since the Grassmannian of all vector spaces of dimension $\dim \mathfrak{h}$ in 
$\mathfrak{g}$ is a compact space, after passing to a subsequence, we may assume 
that $\{\mathfrak{g}_{x_{n_m}^m}\}\rightarrow V$ where $V\subset \mathfrak{g}$ is 
a subspace of $\mathfrak{g}$ of dimension $\dim \mathfrak{h}$.

Next, we will show $q_V(\eta_0)=0$. If $Y\in V$ and $|Y|=1$, then we may write 
$Y=\lim_{m\to\infty} Y_m$ with $Y_m\in \mathfrak{g}_{x_{n_m}^m}$ and $|Y_m|=1$. We note 
\[
\inf_{\xi\in \Omega_m}|\langle \xi,Y_m\rangle|\leq c_{n_m}^m.
\]
For every $\xi\in \Omega_m$ we can bound
\[
|\langle \eta_0,Y \rangle|\leq |\langle \eta_0,Y-Y_m\rangle|+|\langle \eta_0-\xi,Y_m\rangle|+|\langle \xi,Y_m\rangle|.
\]
By choosing $\xi$ such that the last term becomes as small as possible, we obtain
\[
|\langle \eta_0,Y\rangle|\leq |\langle \eta_0,Y-Y_m\rangle|+\sup_{\xi\in \Omega_m}|\langle \eta_0-\xi,Y_m\rangle|+\inf_{\xi\in \Omega_m}|\langle \xi,Y_m\rangle|.
\]
The right hand side converges to zero as $m\rightarrow \infty$ since 
$c_{n_m}^m\rightarrow 0$ and $\cap \Omega_m=\eta_0$. 
Therefore, $\eta_0$ vanishes on $V$.

Finally, we show that
\[
i(\mathfrak{g}/V)^*\subset \overline{\bigcup_{x\in X} i(\mathfrak{g}/\mathfrak{g}_x)^*}. 
\]

Indeed, it is not difficult to see that if $\{\mathfrak{g}_{x_n}\}\rightarrow V$ 
in the Grassmannian of dimension $\dim \mathfrak{h}$ subspaces of $\mathfrak{g}$, 
then $\{\mathfrak{g}_{x_n}^{\perp}\}\rightarrow V^{\perp}$ in the Grassmannian of 
dimension $\dim \mathfrak{g}-\dim \mathfrak{h}$ subspaces of $\mathfrak{g}$. 
Dividing by $i$ and utilizing our fixed inner product on $\mathfrak{g}$, we may 
identify imaginary valued linear functionals on $\mathfrak{g}$ that vanish on 
$V$ (resp. $\mathfrak{g}_{x_n}$) with $V^{\perp}$ (resp. $\mathfrak{g}_{x_n}^{\perp}$). 
The statement now follows. 

Now, $\eta_0\in i(\mathfrak{g}/V)^*$. Hence, 
$\eta_0\in \overline{\bigcup_{x\in X} i(\mathfrak{g}/\mathfrak{g}_x)^*}$. But, this 
contradicts our hypothesis. Hence, $C_{\Omega_m}\neq 0$ for some $m$, and the 
Lemma has been proven. 
\end{proof}
Now putting together Lemma~\ref{lem:bounded_AdY_norm} and Lemma~\ref{lem:lowerbound} we can specify the vectors $Y_x$.
\begin{corollary} \label{cor:vectorchoices} Let $G$ be a real,
linear algebraic group, let $H\subset G$ be a closed subgroup, 
and suppose there exists a real algebraic group $H_1\subset G$ 
such that the Lie algebra of $H_1$, $\mathfrak{h}_1$, and 
the Lie algebra of $H$, $\mathfrak{h}$, are equal as subsets of the Lie algebra 
of $G$, $\mathfrak{g}$. In addition, assume that the set of
semisimple elements of $\mathfrak{h}$, denoted $\mathfrak{h}_s$ 
is dense in $\mathfrak{h}$. Fix 
\[
\eta_0\notin \mathcal W = \mathcal S = \overline{\bigcup_{x\in X} i(\mathfrak{g}/\mathfrak{g}_x)^*}, 
\]
and let $\eta_0\in \Omega\subset i\mathfrak{g}^*$ be the open set 
from Lemma~\ref{lem:lowerbound}. For every $x\in X$, we 
can choose $Y_x$ and $g_x$ satisfying
\begin{enumerate}
\item $Y_x\in \mathfrak{g}_x\ \text{for\ all}\ x\in X$
\item $|Y_x|=1\ \text{for\ all}\ x\in X$
\item For all $x\in X$, we have the inequality 
\[
\inf_{\xi\in\Omega} |\langle \xi, Y_x\rangle|>\frac{C_{\Omega}}{2} 
\]
\item For each $x\in X$, there exists $g_x\in G$ such that
$\operatorname{Ad}(g_x^ {-1})Y_x\in \mathfrak h$ and
\[
|\operatorname{Ad}(g_x^ {-1})Y_x|\leq C
\]
uniformly in $x$. 
\end{enumerate}
\end{corollary}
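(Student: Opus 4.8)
The plan is to combine the two preceding lemmas: Lemma~\ref{lem:lowerbound} supplies, in each $\mathfrak{g}_x$, a direction along which every $\xi\in\Omega$ enjoys a uniform lower bound, while Lemma~\ref{lem:bounded_AdY_norm} conjugates semisimple elements of $\mathfrak{g}_x$ into $\mathfrak{h}$ with controlled norm. The one new ingredient is that the direction coming out of Lemma~\ref{lem:lowerbound} can be chosen to be semisimple, so that the second lemma applies to it.

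First I would fix $\Omega\ni\eta_0$ and $C_\Omega>0$ as in Lemma~\ref{lem:lowerbound}. Intersecting $\Omega$ with a ball about $\eta_0$ keeps the hypotheses of that lemma in force and does not decrease $C_\Omega$ (an infimum over a smaller set of $\xi$'s only grows), so I may assume $\Omega$ is bounded, say $|\xi|\le M$ for all $\xi\in\Omega$. For each $x\in X$, the unit sphere of $\mathfrak{g}_x$ is compact and $Y\mapsto\inf_{\xi\in\Omega}|\langle\xi,Y\rangle|$ is upper semicontinuous, being an infimum of continuous functions; hence the supremum in the definition of $C_\Omega$ is attained at some unit vector $Y_x^0\in\mathfrak{g}_x$ with $\inf_{\xi\in\Omega}|\langle\xi,Y_x^0\rangle|\ge C_\Omega$.

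The key step is to perturb $Y_x^0$ to a semisimple unit vector without losing too much of this bound. Here I would use that the semisimple elements of $\mathfrak{g}_x$ are dense in $\mathfrak{g}_x$: if $x=gH$ then $\mathfrak{g}_x=\operatorname{Ad}(g)\mathfrak{h}$, semisimplicity is $\operatorname{Ad}$-invariant, and (as recalled in the paper) an element of $\mathfrak{h}$ is semisimple relative to $H_1$ iff it is semisimple relative to $G$; therefore the semisimple elements of $\mathfrak{g}_x$ are exactly $\operatorname{Ad}(g)\mathfrak{h}_s$, which is dense because $\mathfrak{h}_s$ is dense in $\mathfrak{h}$ by hypothesis. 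Pick a semisimple $Z\in\mathfrak{g}_x$ so close to $Y_x^0$ that $Y_x:=Z/|Z|$ — again semisimple, and close to $Y_x^0$ since $|Y_x^0|=1$ and $v\mapsto v/|v|$ is continuous near $Y_x^0$ — satisfies $|Y_x-Y_x^0|<C_\Omega/(4M)$. From $|\langle\xi,Y_x\rangle|\ge|\langle\xi,Y_x^0\rangle|-M|Y_x-Y_x^0|$, valid for all $\xi\in\Omega$, one gets $\inf_{\xi\in\Omega}|\langle\xi,Y_x\rangle|>3C_\Omega/4>C_\Omega/2$, which is exactly (1), (2), (3). For (4), $Y_x$ is a semisimple element of $\mathfrak{g}_x$, so Lemma~\ref{lem:bounded_AdY_norm} gives $g_x:=g_{Y_x}\in G$ with $\operatorname{Ad}(g_x^{-1})Y_x\in\mathfrak{h}$ and $|\operatorname{Ad}(g_x^{-1})Y_x|\le C|Y_x|=C$, the constant $C$ being independent of $x$; moreover one may take $g_x$ with $g_xH=x$, as the construction in that lemma shows.

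The main obstacle is the tension in the third step between optimality of the direction $Y_x^0$ (which is forced on us by the definition of $C_\Omega$) and the semisimplicity needed to invoke Lemma~\ref{lem:bounded_AdY_norm}. It is resolved by the stability of the estimate in (3) under small perturbations of the direction, which is precisely why it matters that $\Omega$ may be taken bounded; and the density of semisimple elements is uniform over $x\in X$ exactly because all the $\mathfrak{g}_x$ are $\operatorname{Ad}$-conjugate to $\mathfrak{h}$, so the entire argument can be run simultaneously for every $x$.
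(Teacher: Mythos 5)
Your argument is correct and follows essentially the same route as the paper: Lemma~\ref{lem:lowerbound} guarantees a unit direction in $\mathfrak{g}_x$ with $\inf_{\xi\in\Omega}|\langle\xi,Y\rangle|$ close to $C_\Omega$, the density of semisimple elements (transported to $\mathfrak{g}_x$ by $\operatorname{Ad}$) lets one perturb it to a semisimple unit vector while keeping the bound above $C_\Omega/2$, and Lemma~\ref{lem:bounded_AdY_norm} then yields $g_x$ for (4). Your extra steps (shrinking $\Omega$ to a bounded neighborhood of $\eta_0$ and the Lipschitz perturbation estimate) merely make explicit the paper's claim that the set of admissible unit vectors is open and nonempty, and since shrinking the neighborhood is harmless for the later use of the corollary, this is only a presentational difference.
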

\begin{proof}
From Lemma~\ref{lem:lowerbound} we conclude that the set of possible 
choices of $Y_x$ satisfying (1)-(3)
\[
\left\{Y\in \mathfrak{g}_x\Big|\ \inf_{\xi\in\Omega} |\langle \xi,Y\rangle|>\frac{C_{\Omega}}{2}\ \text{and}\ |Y|=1\right\}
\]
is an open subset of the unit sphere in $\mathfrak{g}_x$.

As $\mathfrak{h}_s\subset \mathfrak{h}$ is a dense cone we can choose 
$Y_x$ semisimple. We may then choose $g_x=g_Y$ according to 
Lemma~\ref{lem:bounded_AdY_norm} and obtain (4).
\end{proof}

For the remainder of the proof of Theorem~\ref{thm:cond_U_dense_semisimple}, 
we fix choices of $Y_x$ and $g_x$ satisfying the above four properties. 

Recall that, in order to prove the central estimate (\ref{eq:wf_uniform_decay})
in the wavefront case, we have to consider the asymptotics of the 
oscillating integrals
\[
\mathcal I_x(t):= \left|\int\limits_{\mathfrak g_x} \langle \tau_x(e^Y) v_1,v_2\rangle_{\mathcal{V}_x}\cdot
 \left((\sigma_x(e^Y) z_1) \otimes \overline{z_2}\right) \cdot\varphi(Y) e^{\langle t\xi,Y\rangle}
 dY\right|_{\mathcal D^1_x}.
\]
In the singular spectrum case, we have to consider the same expression with $\varphi$ 
replaced by $\varphi_{N,U_1,U_2}$. 
We will do this by partial integration, analogous to the case of homogeneous spaces
in Section~\ref{sec:scalar}. In order to get the uniform bounds in $x$, we will
first rewrite $\mathcal I_x(t)$ as an integral over $\mathfrak g$, respectively a neighborhood
of the identity in $G$. Let us first describe how we do this:

For any $(\dim \mathfrak h)$-dimensional subspace $V\subset \mathfrak g$ we consider 
the map 
\[
 \kappa_V:\mathfrak g=V\oplus V^\perp \to G,~Y+Z\mapsto e^Ye^Z,
\]
which is a local diffeomorphism around $0\in\mathfrak g$.
As the Grassmanian of all (dim$\mathfrak h$)-dimensional subspaces is 
compact, we can find an open neighborhood of $0\in\mathfrak g$
such that $\kappa_V$ is a diffeomorphism for any subspace $V\subset \mathfrak g$.
We choose $U_{\textup{CU}}$ to be contained in this subset.

Next, let $\rho\in C_c^\infty(U_{\textup{CU}})$, be a positive function such that
$\int_{V^\perp}\rho(Z)dZ=1$ for all subspaces $V$. This is possible 
by taking a function that is radial w.r.t.~the chosen scalar product on $\mathfrak g$.

We can write
\begin{align*}
 \mathcal I_x(t)&= \left|\int\limits_{\mathfrak g_x}\int\limits_{\mathfrak g_x^\perp} \langle \tau_x(e^Y) v_1,v_2\rangle_{\mathcal{V}_x}\cdot
 \left((\sigma_x(e^Y) z_1) \otimes \overline{z_2}\right) \cdot\varphi(Y) e^{\langle t\xi,Y\rangle}
 \rho(Z)dYdZ\right|_{\mathcal D^1_x}\\
 &=\left|\int\limits_{G} \langle \tau_x(e^{Y_{\mathfrak g_x}(g)}) v_1,v_2\rangle_{\mathcal{V}_x}\cdot
 \left((\sigma_x(e^{Y_{\mathfrak g_x}(g)}) z_1) \otimes \overline{z_2}\right) 
 \cdot\varphi(Y_{\mathfrak g_x}(g)) \rho(Z_{\mathfrak g_x}(g)) 
 j_{\mathfrak g_x}(g)e^{\langle t\xi,{Y_{\mathfrak g_x}(g)}\rangle} dg\right|_{\mathcal D^1_x}.
\end{align*}
Here dg is a left invariant Haar measure on $G$ and for any (dim$\mathfrak h$)-dimensional subspace 
$V\subset \mathfrak g$, $j_V(g)$ is the analytic Jacobian, such that $\kappa_V^*(j_V(g)dg)$ is
the Lebesgue measure $dYdZ$ on $\mathfrak g$. Furthermore $Y_V(g)\in V$ and $Z_V(g)\in V^\perp$
are the projections on $V$ and $V^\perp$ of $\kappa_V^{-1}(g)$. 

We can now perform the partial integration. Therefore consider the Lie algebra elements
$ Y_x\in \mathfrak g_x$ as right invariant differential operators on $G$ acting from left and
define
\[
 \mu(\xi, x,g):=Y_x \langle\xi ,Y_{\mathfrak g_x}(g)\rangle .
\]
Note that $\mu(\xi, x, e) = \langle \xi, Y_x\rangle$, thus by property (3) of 
Corollary~\ref{cor:vectorchoices}, $|\mu(\xi, x, e)|$ is bounded away from zero uniformly in $\xi\in \Omega$ and 
$x\in X$. We choose $U_{\textup{CU}}$ small enough, such that $|\mu(\xi, x, g)|>c$
is still bounded away from zero, if additionally $g$ is in the support of 
$\varphi(Y_{\mathfrak g_x}(g)) \rho(Z_{\mathfrak g_x}(g))$. 
Note that this is again possible due to the compactness of the Grassmanian 
of (dim$\mathfrak h$)-dimensional subspaces. Now we can insert the differential operator 
$(t^{-1}\mu(\xi, x,g)^{-1}Y_x)^N$ in front of 
$e^{\langle t\xi,{Y_{\mathfrak g_x}(g)}\rangle}$ and integrate by parts which yields
\begin{align*}
\mathcal I_x(t)=&\Bigg|\int\limits_{G} \langle \tau_x(e^{Y_{\mathfrak g_x}(g)}) v_1,v_2\rangle_{\mathcal{V}_x}\cdot
 \left((\sigma_x(e^{Y_{\mathfrak g_x}(g)}) z_1) \otimes \overline{z_2}\right) \\
 &\quad\quad\quad\quad\quad\quad \cdot\varphi(Y_{\mathfrak g_x}(g)) \rho(Z_{\mathfrak g_x}(g)) 
 j_{\mathfrak g_x}(g)\left[t^{-1}\mu(\xi, x,g)Y_x\right]^N 
 e^{\langle t\xi,{Y_{\mathfrak g_x}(g)}\rangle} dg\Bigg|_{\mathcal D^1_x}\\
 \leq&t^{-N}
  \sum_{q+r+s\leq N} C_{q,r,s} \Bigg|\int\limits_{G} \langle {Y_x}^q\tau_x(e^{Y_{\mathfrak g_x}(g)}) v_1,v_2\rangle_{\mathcal{V}_x}\cdot\left((Y_x^r\sigma_x(e^{Y_{\mathfrak g_x}(g)}) z_1) \otimes \overline{z_2}\right) \\
  &\quad\quad\quad\quad\quad\quad\quad\quad
  \cdot \Big(Y_x^s \varphi(Y_{\mathfrak g_x}(g)) \rho(Z_{\mathfrak g_x}(g)) 
 j_{\mathfrak g_x}(g)\Big) e^{\langle t\xi,{Y_{\mathfrak g_x}(g)}\rangle}
 dY\Bigg|_{\mathcal D^1_x}\\
 =&t^{-N}
  \sum_{q+r+s \leq N} C_{q,r,s} \Bigg|\int\limits_{G} \langle d\tau_x(Y_x)^q 
  \tau_x(e^{Y_{\mathfrak g_x}(g)})v_1,v_2\rangle_{\mathcal{V}_x}\cdot \left((d\sigma_x(Y_x)^r\sigma_x(e^{Y_{\mathfrak g_x}(g)}) z_1) 
  \otimes \overline{z_2}\right) \\
  &\quad\quad\quad\quad\quad\quad\quad\quad
  \cdot  \rho(Z_{\mathfrak g_x}(g))\cdot  \Big({Y_x}^s\varphi(Y_{\mathfrak g_x}(g))
 j_{\mathfrak g_x}(g) \Big) e^{\langle t\xi,{Y_{\mathfrak g_x}(g)}\rangle} dY\Bigg|_{\mathcal D^1_x}\\
\end{align*}
Note that in the first inequality we absorbed the functions $\mu(\xi,x,g)$ and its derivatives in the
constants $C_{q,r,s}$ which is possible as $|Y_x|=1$ (Corollary~\ref{cor:vectorchoices} (2))
and because the function $Y_V(g)$ depends continuously in the $C^\infty$-topology on on the subspace
$V\subset\mathfrak g$ considered as a point in the compact Grassmanian. For the second equality, 
we used, that 
\[
\tfrac{d}{ds}_{|s=0}Z_{\mathfrak{g}_x}(e^{sY_x}g)=0 \textup{ and }
\tfrac{d}{ds}_{|s=0}e^{Y_{\mathfrak{g}_x}(e^{sY_x}g)}=\tfrac{d}{ds}_{|s=0}e^{sY_x}e^{Y_{\mathfrak{g}_x}},
\]
which follows as $Y_X\in \mathfrak g_x$ (Corollary~\ref{cor:vectorchoices} (1)).

We obtain an analogous formula for the singular spectrum case if we insert this 
differential operator on the left hand side of (\ref{eq:ss_uniform_decay}). In the 
singular spectrum case formula, $\varphi$ is replaced by $\varphi_{N,U_1,U_2}$. 

Back in the wavefront case, for any  $\varphi\in C_c^\infty(U_{\textup{CU}})$ the 
precompactnes of $U_{\textup{CU}}$ and the compactness of the Grassmanian 
in which $\mathfrak g_x$ varies, assures a bound of the supremum norm
\[
 \|{Y_x}^s\varphi(Y_{\mathfrak g_x}(g)) j_{\mathfrak g_x}(g)) )\|_\infty \leq C_{s,\varphi}
\]
uniformly in $x\in X$. For the singular spectrum case, part (3) of the definition 
of the family of functions $\varphi_{N,U_1,U_2}$ given directly above
Definition \ref{def:ss_cond_U} together with the analyticity of 
$j_{\mathfrak g_x}(g)$ give the stronger bounds
\[
 \|{Y_x}^s\varphi_{N,U_1,U_2}(Y_{\mathfrak g_x}(g))j_{\mathfrak g_x}(g)\|_\infty \leq C_{U_1,U_2}^{s+1}(N+1)^s.
\]
Additionally we have shown above that
$|\sigma_x(e^Y)|_{\textup{op}} < C$ uniformly in $x\in X$, $Y\in U_{\textup{CU}}\cap\mathfrak g_x$ 
and a uniform bound for $|\tau_x(e^Y)|_{\textup{op}}$ follows trivially from the
unitarity of $\tau_x$. It thus remains to prove uniform bounds for 
$|d\tau_x(Y_x)|_{\operatorname{op}}$ and $|d\sigma_x(Y_x)|_{\operatorname{op}}$.
Using once more (\ref{eq:tau_x}) we obtain
\[
|d\tau_x(Y_x)|_{\operatorname{op}}=|d\tau(\operatorname{Ad}(g_x^{-1})Y_x)|_{\operatorname{op}}.
\]
Now the continuity of $d\tau:\mathfrak h\to\operatorname{End}(V)$ and 
$|~|_{\textup{op}}: \operatorname{End}(V) \to \mathbb R$ imply
\[
|d\tau_x(Y_x)|_{\operatorname{op}}\leq C |\operatorname{Ad}(g_x^{-1})Y_x| \leq \tilde C 
\]
where the last inequality is justified by Lemma~\ref{lem:bounded_AdY_norm}. The
same arguments apply to $|d\sigma_x(Y_x)|_{\operatorname{op}}$. Putting everything 
together, in the wavefront case, we obtain 
\begin{align*}
 \mathcal I_x(t)\leq  
 t^{-N} C_{N,\varphi}(\|v_1\|_{\mathcal V_x} |z_1|_{\mathcal D^{1/2}_x})  \otimes(\|v_2\|_{\mathcal V_x}
 |z_2|_{\mathcal D^{1/2}_x} ).
\end{align*}
Analogously, in the singular spectrum case, we obtain
\begin{align*}
 \mathcal I_x(t)\leq  
 t^{-N} C_{U_1,U_2}^{N+1}(N+1)^N(\|v_1\|_{\mathcal V_x} |z_1|_{\mathcal D^{1/2}_x})  \otimes(\|v_2\|_{\mathcal V_x}
 |z_2|_{\mathcal D^{1/2}_x} ).
\end{align*}
This finishes the proofs of (\ref{eq:wf_uniform_decay}) and (\ref{eq:ss_uniform_decay}) 
and thus of Theorem~\ref{thm:cond_U_dense_semisimple}.

\section{Applications and Examples}
\label{sec:examples}

In this section, we consider applications of Theorem~\ref{thm:L^2 space} and Theorem~\ref{thm:dense semisimple}, and we consider examples of those applications. We begin by expanding upon Theorem \ref{thm:main} and Theorem \ref{thm:tempered}. As in the introduction, let $G_x$ be the stabilizer in $G$ of $x\in X$, and let $\mathfrak{g}_x$ denote the Lie algebra of $G_x$. For each $x\in X$, we identify $iT_x^*X$ with $i(\mathfrak{g}/\mathfrak{g}_x)^*$ in the obvious way.  

\begin{corollary} \label{cor:main} Suppose $G$ is a real, reductive algebraic group, and suppose $X$ is a homogeneous space for $G$ with a non-zero invariant density. Then
\begin{equation}\label{eq:maincor1}
\operatorname{AC}\left(\bigcup_{\substack{ \sigma\in \operatorname{supp}L^2(X)\\ \sigma\in \widehat{G}_{\text{temp}}}}\mathcal{O}_{\sigma}\right)\subset \overline{\bigcup_{x\in X} iT_x^*X}=\overline{\bigcup_{x\in X} i(\mathfrak{g}/\mathfrak{g}_x)^*}.
\end{equation}
Intersecting with the set of regular semisimple elements, we obtain the equality
\begin{equation}\label{eq:maincor2}
\operatorname{AC}\left(\bigcup_{\substack{ \sigma\in \operatorname{supp}L^2(X)\\ \sigma\in \widehat{G}_{\text{temp}}^{\text{\ }\prime}}}\mathcal{O}_{\sigma}\right)\cap (i\mathfrak{g}^*)'=\overline{\bigcup_{x\in X} iT_x^*X}\cap (i\mathfrak{g}^*)'=\overline{\bigcup_{x\in X} i(\mathfrak{g}/\mathfrak{g}_x)^*}\cap (i\mathfrak{g}^*)'.
\end{equation}
If, in addition, $\operatorname{supp}L^2(X)\subset \widehat{G}_{\text{temp}}$, then we obtain equality without intersecting with the set of regular semisimple elements,
\begin{equation}\label{eq:maincor3}
\operatorname{AC}\left(\bigcup_{\substack{ \sigma\in \operatorname{supp}L^2(X)\\ \sigma\in \widehat{G}_{\text{temp}}}}\mathcal{O}_{\sigma}\right)=\overline{\bigcup_{x\in X} iT_x^*X}=\overline{\bigcup_{x\in X} i(\mathfrak{g}/\mathfrak{g}_x)^*}.
\end{equation}
\end{corollary}	

This Corollary is a consequence of Theorems~1.1 and 1.2 of \cite{HHO16}, Theorem~1.1 of \cite{Ha}, and Theorem~1.1 of this paper. The last statement is especially interesting to us because of the recent work of Benoist and Kobayashi \cite{BK15}, which gives a large class of homogeneous spaces $X$ for a real, reductive algebraic group $G$ for which $\operatorname{supp}L^2(X)\subset \widehat{G}_{\text{temp}}$.
Let us write down a brief proof of Corollary \ref{cor:main}.

\begin{proof} Consider the representation $L^2(X)$ of $G$. We may decompose this 
representation into irreducibles as 
$$L^2(X)\simeq \int^{\oplus}_{\sigma\in \widehat{G}}\sigma^{\oplus m(L^2(X),\sigma)} d\mu$$
where $\mu$ is a nonnegative measure on the unitary dual $\widehat{G}$. Recall that $\widehat{G}_{\text{temp}}\subset \widehat{G}$ is a closed subset and $\widehat{G}_{\text{temp}}'\subset \widehat{G}$ is an open subset. This can be deduced from Corollary 2 on page 391 of \cite{Fel60} together with standard facts about tempered characters (see for instance \cite{Kn86}). Therefore, we may decompose
$$\widehat{G}=\widehat{G}_{\text{temp}}\bigcup \left(\widehat{G}\setminus\widehat{G}_{\text{temp}}\right)$$
as the union of a closed set and an open set and we may analogously decompose
$$\mu=\mu|_{\widehat{G}_{\text{temp}}}+\mu|_{\widehat{G}\setminus\widehat{G}_{\text{temp}}}.$$
We then obtain an analogous direct sum decomposition of the representation $L^2(X)$,
$$L^2(X)\cong \int^{\oplus}_{\sigma\in \widehat{G}_{\text{temp}}} \sigma^{\oplus m(L^2(X),\sigma)} d\mu|_{\widehat{G}_{\text{temp}}}\bigoplus \int^{\oplus}_{\sigma\in \widehat{G}\setminus\widehat{G}_{\text{temp}}} \sigma^{\oplus m(L^2(X),\sigma)} d\mu|_{\widehat{G}\setminus\widehat{G}_{\text{temp}}}.$$
Since every matrix coefficient of the first summand is also a matrix coefficient of $L^2(X)$, we obtain the inclusion
$$\op{WF}(L^2(X))\supset\op{WF}\left(\int^{\oplus}_{\sigma\in \widehat{G}_{\text{temp}}} \sigma^{\oplus m(L^2(X),\sigma)} d\mu|_{\widehat{G}_{\text{temp}}}\right).$$
Now, by Theorem 1.2 of \cite{HHO16}, we obtain
$$\op{WF}\left(\int^{\oplus}_{\sigma\in \widehat{G}_{\text{temp}}} \sigma^{\oplus m(L^2(X),\sigma)} d\mu|_{\widehat{G}_{\text{temp}}}\right)=\op{AC}\left(\bigcup_{\substack{\sigma\in \op{supp}L^2(X)\\ \sigma\in \widehat{G}_{\text{temp}}}}\mathcal{O}_{\sigma}\right).$$
On the other hand, one obtains from Theorem 1.1 of this paper
$$\op{WF}(L^2(X))=\overline{\bigcup_{x\in X} iT_x^*X}=\overline{\bigcup_{x\in X} i(\mathfrak{g}/\mathfrak{g}_x)^*}.$$
Now, statement (\ref{eq:maincor1}) follows. If, in addition, $\op{supp}L^2(X)\subset \widehat{G}_{\text{temp}}$, then 
$$\mu|_{\widehat{G}\setminus\widehat{G}_{\text{temp}}}=0$$
and we obtain 
$$\op{WF}(L^2(X))=\op{WF}\left(\int^{\oplus}_{\sigma\in \widehat{G}_{\text{temp}}} \sigma^{\oplus m(L^2(X),\sigma)} d\mu|_{\widehat{G}_{\text{temp}}}\right).$$
Now, statement (\ref{eq:maincor3}) follows from Theorem 1.2 of \cite{HHO16} together with Theorem 1.1 of this paper (which utilizes Theorem 1.1 of \cite{HHO16} in its proof).

To show statement (\ref{eq:maincor2}), we require a different decomposition of measures. We instead break up
$$\widehat{G}=\widehat{G}_{\text{temp}}^{\ \prime}\bigcup \left(\widehat{G}\setminus\widehat{G}_{\text{temp}}^{\ \prime}\right)$$
into the union of an open set and a closed set and we have the corresponding decomposition of measures
$$\mu=\mu|_{\widehat{G}_{\text{temp}}^{\ \prime}}+\mu|_{\widehat{G}\setminus\widehat{G}_{\text{temp}}^{\ \prime}}.$$
This yields a decomposition of representations
$$L^2(X)\cong \int^{\oplus}_{\sigma\in \widehat{G}_{\text{temp}}^{\ \prime}} \sigma^{\oplus m(L^2(X),\sigma)} d\mu|_{\widehat{G}_{\text{temp}}^{\ \prime}}\bigoplus \int^{\oplus}_{\sigma\in \widehat{G}\setminus\widehat{G}_{\text{temp}}^{\ \prime}} \sigma^{\oplus m(L^2(X),\sigma)} d\mu|_{\widehat{G}\setminus\widehat{G}_{\text{temp}}^{\ \prime}}.$$
Now, as shown in Proposition 1.3 of \cite{Ho81}, every matrix coefficient of $L^2(X)$ decomposes into the sum of a matrix coefficient of the first representation plus a matrix coefficient of the second representation which means that we can write $\op{WF}(L^2(X))$ as a union of 
$$\op{WF}\left(\int^{\oplus}_{\sigma\in \widehat{G}_{\text{temp}}^{\ \prime}} \sigma^{\oplus m(L^2(X),\sigma)} d\mu|_{\widehat{G}_{\text{temp}}^{\ \prime}}\right)$$
and
$$\op{WF}\left(\int^{\oplus}_{\sigma\in \widehat{G}\setminus\widehat{G}_{\text{temp}}^{\ \prime}} \sigma^{\oplus m(L^2(X),\sigma)} d\mu|_{\widehat{G}\setminus\widehat{G}_{\text{temp}}^{\ \prime}}\right).$$
Now, Theorem 1.1 of \cite{Ha} says that the second set is contained in the singular set, $i\mathfrak{g}^*\setminus(i\mathfrak{g}^*)'$, since it is a direct integral of singular representations. Therefore, we obtain
$$\op{WF}(L^2(X))\cap (i\mathfrak{g}^*)'=\op{WF}\left(\int^{\oplus}_{\sigma\in \widehat{G}_{\text{temp}}^{\ \prime}} \sigma^{\oplus m(L^2(X),\sigma)} d\mu|_{\widehat{G}_{\text{temp}}^{\ \prime}}\right)\cap (i\mathfrak{g}^*)^{\prime}$$
By Theorem 1.1 of this paper (which utilizes Theorem 1.1 of \cite{HHO16} in its proof)
$$\op{WF}(L^2(X))=\overline{\bigcup_{x\in X} iT_x^*X}=\overline{\bigcup_{x\in X} i(\mathfrak{g}/\mathfrak{g}_x)^*}.$$
And by Theorem 1.2 of \cite{HHO16}, we obtain
$$\op{WF}\left(\int^{\oplus}_{\sigma\in \widehat{G}_{\text{temp}}^{\ \prime}} \sigma^{\oplus m(L^2(X),\sigma)} d\mu|_{\widehat{G}_{\text{temp}}^{\ \prime}}\right)=\op{AC}\left(\bigcup_{\substack{\sigma\in \op{supp}L^2(X)\\ \sigma\in \widehat{G}_{\text{temp}}^{\ \prime}}}\mathcal{O}_{\sigma}\right).$$
Statement (\ref{eq:maincor2}) now follows.
\end{proof}

Utilizing Theorem \ref{thm:dense semisimple}, we give a variant of Corollary \ref{cor:main} for finite rank vector bundles.

\begin{corollary} \label{cor:dense semisimple cor} Suppose $G$ is a real, reductive algebraic group, suppose $X$ is a homogeneous space for $G$, let $\mathcal{D}^{1/2}\rightarrow X$ denote the bundle of complex half densities on $X$ (see Appendix A), and suppose $\mathcal{V}\rightarrow X$ is a finite rank, $G$ equivariant, Hermitian vector bundle on $X$. Assume that for some (equivalently any) $x\in X$, there is a closed, real, linear algebraic subgroup $\widetilde{G_x}\subset G$ whose Lie algebra is $\mathfrak{g}_x$ (of course, this is satisfied if $G_x$ is itself an algebraic subgroup of $G$) and $\mathfrak{g}_x$ contains a dense subset of semisimple elements.
Then
\begin{equation}\label{eq:dense_semisimple_cor_part1}
\operatorname{AC}\left(\bigcup_{\substack{ \sigma\in \operatorname{supp}L^2(X,\mathcal{D}^{1/2}\otimes \mathcal{V})\\ \sigma\in \widehat{G}_{\text{temp}}}}\mathcal{O}_{\sigma}\right)\subset \overline{\bigcup_{x\in X} iT_x^*X}=\overline{\bigcup_{x\in X} i(\mathfrak{g}/\mathfrak{g}_x)^*}.
\end{equation}
Intersecting with the set of regular semisimple elements, we obtain the equality
\begin{equation}\label{eq:dense_semisimple_cor_part2}
\operatorname{AC}\left(\bigcup_{\substack{ \sigma\in \operatorname{supp}L^2(X,\mathcal{D}^{1/2}\otimes \mathcal{V})\\ \sigma\in \widehat{G}_{\text{temp}}^{\text{\ }\prime}}}\mathcal{O}_{\sigma}\right)\cap (i\mathfrak{g}^*)'=\overline{\bigcup_{x\in X} i(\mathfrak{g}/\mathfrak{g}_x)^*}\cap (i\mathfrak{g}^*)'.
\end{equation}
If, in addition, $\operatorname{supp}L^2(X,\mathcal{D}^{1/2}\otimes \mathcal{V})\subset \widehat{G}_{\text{temp}}$, then we obtain equality without intersecting with the set of regular, semisimple elements,
\begin{equation}\label{eq:dense_semisimple_cor_part3}
\operatorname{AC}\left(\bigcup_{\substack{ \sigma\in \operatorname{supp}L^2(X,\mathcal{D}^{1/2}\otimes \mathcal{V})\\ \sigma\in \widehat{G}_{\text{temp}}}}\mathcal{O}_{\sigma}\right)=\overline{\bigcup_{x\in X} iT_x^*X}=\overline{\bigcup_{x\in X} i(\mathfrak{g}/\mathfrak{g}_x)^*}.
\end{equation}
\end{corollary}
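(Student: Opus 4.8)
The plan is to run the argument of Corollary~\ref{cor:L^2 cor v2} with Theorem~\ref{thm:L^2 space} replaced by Theorem~\ref{thm:dense semisimple}. First I would fix a base point $x_0 = eH \in X$, so that $H = G_{x_0}$, and note that the finite rank, $G$ equivariant, Hermitian vector bundle $\mathcal{V}$ is the associated bundle $G \times_H \mathcal{V}_{x_0}$, whose fiber $\mathcal{V}_{x_0}$ carries the finite dimensional unitary representation $\tau := \tau_{x_0}$ of $H$; hence $L^2(X, \mathcal{D}^{1/2} \otimes \mathcal{V}) = \operatorname{Ind}_H^G \tau$ by the definition of the induced representation recalled in the introduction. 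By hypothesis there is a closed, real, linear algebraic subgroup $\widetilde{G_{x_0}} \subset G$ with Lie algebra $\mathfrak{g}_{x_0} = \mathfrak{h}$, and $\mathfrak{h}$ has a dense set of semisimple elements, so the hypotheses of Theorem~\ref{thm:dense semisimple} are satisfied by $(G, H, \tau)$. The choice of base point is immaterial here, since all stabilizers $G_x$ are conjugate, semisimplicity is a conjugation invariant notion, and the existence of the algebraic subgroup transports along conjugation; this is the content of the ``equivalently any'' in the statement.

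Next I would compute the wave front set of the induced representation. Because $\tau$ is finite dimensional, its matrix coefficients are real analytic on $H$, so $\operatorname{WF}(\tau) = \operatorname{SS}(\tau) = \{0\} \subset i\mathfrak{h}^*$. Applying Theorem~\ref{thm:dense semisimple} and then unwinding the definition of the induced cone exactly as at the beginning of the proof of Theorem~\ref{thm:L^2 space}, I would obtain
\[
\operatorname{WF}\big(L^2(X, \mathcal{D}^{1/2} \otimes \mathcal{V})\big)
= \operatorname{Ind}_H^G \operatorname{WF}(\tau)
= \overline{\operatorname{Ad}^*(G) \cdot i(\mathfrak{g}/\mathfrak{h})^*}
= \overline{\bigcup_{x \in X} i(\mathfrak{g}/\mathfrak{g}_x)^*}
= \overline{\bigcup_{x \in X} iT_x^* X}.
\]
Only the wave front version of Theorem~\ref{thm:dense semisimple} is needed, since all three assertions of the corollary are phrased in terms of $\operatorname{WF}$ and $\operatorname{AC}$.

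With this identity available, the statements (\ref{eq:dense_semisimple_cor_part1}), (\ref{eq:dense_semisimple_cor_part2}), and (\ref{eq:dense_semisimple_cor_part3}) then follow by repeating, verbatim, the direct integral argument in the proof of Corollary~\ref{cor:L^2 cor v2} with $L^2(X)$ replaced throughout by $L^2(X, \mathcal{D}^{1/2} \otimes \mathcal{V})$. Concretely: decompose $L^2(X, \mathcal{D}^{1/2} \otimes \mathcal{V}) \cong \int^{\oplus}_{\widehat{G}} \sigma^{\oplus m}\, d\mu$; for (\ref{eq:dense_semisimple_cor_part1}) split $\mu$ along $\widehat{G}_{\text{temp}}$, observe that every matrix coefficient of the tempered summand is a matrix coefficient of $L^2(X, \mathcal{D}^{1/2} \otimes \mathcal{V})$ and that by Theorem~1.2 of \cite{HHO} the wave front set of the tempered summand equals $\operatorname{AC}\big(\bigcup_{\sigma \in \operatorname{supp},\ \sigma\ \text{temp}} \mathcal{O}_\sigma\big)$, and combine with the displayed computation to get the inclusion; equality (\ref{eq:dense_semisimple_cor_part3}) is the special case in which the non-tempered part of $\mu$ vanishes. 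For (\ref{eq:dense_semisimple_cor_part2}) split $\mu$ along $\widehat{G}_{\text{temp}}^{\ \prime}$, use Proposition~1.3 of \cite{Ho81} to write $\operatorname{WF}(L^2(X, \mathcal{D}^{1/2} \otimes \mathcal{V}))$ as the union of the wave front sets of the two summands, Theorem~1.1 of \cite{Ha} to confine the wave front set of the non-regular-tempered summand to $i\mathfrak{g}^* \setminus (i\mathfrak{g}^*)'$, and Theorem~1.2 of \cite{HHO} together with the displayed computation to identify the remaining contribution after intersecting with $(i\mathfrak{g}^*)'$.

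The one genuinely new ingredient over Corollary~\ref{cor:L^2 cor v2} is the replacement of Theorem~\ref{thm:L^2 space} by Theorem~\ref{thm:dense semisimple}, and the only step that calls for any care is the verification that the dense-semisimple hypotheses of Theorem~\ref{thm:dense semisimple} apply to the triple $(G, G_{x_0}, \tau_{x_0})$, which is exactly what the corollary assumes. I therefore do not expect a serious obstacle: the entire analytic content resides in Theorem~\ref{thm:dense semisimple}, already proved above, and what remains is the same formal bookkeeping with direct integrals and asymptotic cones that underlies Corollary~\ref{cor:L^2 cor v2}.
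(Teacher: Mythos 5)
Your proposal is correct and follows the paper's own route: the paper proves this corollary by repeating the direct-integral argument of Corollary~\ref{cor:L^2 cor v2} verbatim, with Theorem~\ref{thm:L^2 space} replaced by Theorem~\ref{thm:dense semisimple} applied to the triple $(G,H,\tau)$ with $\tau$ the finite dimensional unitary fiber representation, exactly as you do (including the observation that $\operatorname{WF}(\tau)=\{0\}$ so the induced cone is $\overline{\operatorname{Ad}^*(G)\cdot i(\mathfrak{g}/\mathfrak{h})^*}$). No gaps to flag.
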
 

This Corollary is a consequence of Theorems 1.1 and 1.2 of \cite{HHO16}, Theorem 1.1 of \cite{Ha}, and Theorem 1.2 of this paper. The verification of Corollary \ref{cor:dense semisimple cor} is nearly identical to the verification of Corollary \ref{cor:main} with Theorem 1.1 of this paper replaced by Theorem 1.2 of this paper in all arguments. Utilizing results of Matumoto \cite{Ma92}, we will point out in Section \ref{sec:counterexamples} that all three of the statements of Corollary \ref{cor:dense semisimple cor} may fail if one does not assume that $\mathfrak{g}_x$ contains a dense subset of semisimple elements.

In order to utilize Corollary \ref{cor:main} and Corollary \ref{cor:dense semisimple cor}, it is useful to write these statements in terms of purely imaginary valued linear functionals on the Cartan subalgebras of $\mathfrak{g}$, the Lie algebra of $G$. Suppose $\mathfrak{b}\subset \mathfrak{g}$ is a Cartan subalgebra of $\mathfrak{g}$, and identify $i\mathfrak{b}^*\subset i\mathfrak{g}^*$ utilizing the decomposition
$$\mathfrak{g}=\mathfrak{b}\oplus [\mathfrak{g},\mathfrak{b}].$$
Given $\sigma$, an irreducible, tempered representation of $G$ with regular infinitesimal character, define
$$\lambda_{\sigma,\mathfrak{b}}:=\mathcal{O}_{\sigma}\cap i\mathfrak{b}^*.$$
When $\mathfrak{b}$ is understood, we will often just write $\lambda_{\sigma}$. If $\sigma$ is an irreducible, tempered representation with regular infinitesimal character, then $\lambda_{\sigma}$ is a single $W$ orbit for the real Weyl group $W=W(G,B)=N_G(B)/B$. Here $B:=N_G(\mathfrak{b})\subset G$ is the Cartan subgroup of $G$ with Lie algebra $\mathfrak{b}$.
 
\begin{corollary} \label{cor:Cartan L^2 cor} Let $G$ be a real, reductive algebraic group, and let $X$ be a homogeneous space for $G$ with a nonzero invariant density. Suppose $\mathfrak{b}\subset \mathfrak{g}$ is a Cartan subalgebra of the Lie algebra $\mathfrak{g}$ of $G$. As before, $G_x$ is the stabilizer in $G$ of $x\in X$ and $\mathfrak{g}_x$ is the Lie algebra of $G_x$. In addition, $(i\mathfrak{b}^*)'=i\mathfrak{b}^*\cap (i\mathfrak{g}^*)'$. Then 
\begin{equation}\label{eq:Cartan_L^2_cor}
\op{AC}\left(\bigcup_{\substack{\sigma\in \op{supp}L^2(X)\\ \sigma\in \widehat{G}_{\text{temp}}^{\ \prime}}}\lambda_{\sigma,\mathfrak{b}}\right)\cap (i\mathfrak{b}^*)'=\overline{\left\{\xi\in i\mathfrak{b}^*|\exists\ x\in X\ \text{s.t. }\ \xi|_{\mathfrak{g}_x}=0\ \right\}}\cap (i\mathfrak{b}^*)'.
\end{equation}
The asymptotic cone is taken inside the vector space $i\mathfrak{b}^*$.
\end{corollary}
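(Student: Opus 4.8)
The plan is to deduce Corollary~\ref{cor:Cartan L^2 cor} from \eqref{eq:L^2_cor_part2} of Corollary~\ref{cor:L^2 cor v2} by transporting that identity from $i\mathfrak g^*$ to $i\mathfrak b^*$, i.e.\ intersecting it with $i\mathfrak b^*$ and with the regular semisimple locus. Write $\mathcal O:=\bigcup_\sigma\mathcal O_\sigma$ and $\Lambda:=\bigcup_\sigma\lambda_{\sigma,\mathfrak b}$, the unions running over $\sigma\in\operatorname{supp}L^2(X)\cap\widehat G_{\mathrm{temp}}^{\ \prime}$; for each such $\sigma$ the set $\mathcal O_\sigma$ is a single regular semisimple coadjoint orbit, and $\lambda_{\sigma,\mathfrak b}=\mathcal O_\sigma\cap i\mathfrak b^*$ is either empty (if $\mathcal O_\sigma$ is not $G$-conjugate into $i\mathfrak b^*$) or a single $W$-orbit, $W=W(G,B)$. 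Thus the left-hand side of \eqref{eq:Cartan_L^2_cor} equals $\operatorname{AC}_{i\mathfrak b^*}(\Lambda)\cap(i\mathfrak b^*)'$, the subscript recording that the asymptotic cone is formed inside $i\mathfrak b^*$. Since $\operatorname{AC}$ is monotone and $\operatorname{AC}_{i\mathfrak b^*}(S)\subset\operatorname{AC}_{i\mathfrak g^*}(S)$ whenever $S\subset i\mathfrak b^*$, one inclusion in each of the two transfers below is immediate, and the content lies in the reverse inclusions.

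The structural facts we use all follow from $G$ having finitely many conjugacy classes of Cartan subgroups. First, the set $(i\mathfrak g^*)'_{\mathfrak b}:=\operatorname{Ad}^*(G)\cdot(i\mathfrak b^*)'$ of regular semisimple functionals whose centraliser is $G$-conjugate to $\mathfrak b$ is an open and closed $\operatorname{Ad}^*(G)$-invariant cone in $(i\mathfrak g^*)'$; in particular every $\xi\in(i\mathfrak b^*)'$ has a conical neighbourhood in $i\mathfrak g^*$ contained in $(i\mathfrak g^*)'_{\mathfrak b}$. Second, the orbit map
\[
\pi\colon (G/B)\times(i\mathfrak b^*)'\longrightarrow(i\mathfrak g^*)'_{\mathfrak b},\qquad (gB,\mu)\mapsto\operatorname{Ad}^*(g)\mu,
\]
is a proper covering map of degree $|W|$: its differential is an isomorphism at every point (using $\mathfrak g=\mathfrak b\oplus[\mathfrak g,\mathfrak b]$ and that the orbit tangent space $[\mathfrak g,\mu]$ is complementary to $i\mathfrak b^*$ for $\mu$ regular), its fibre over $\eta$ is canonically $\operatorname{Ad}^*(G)\eta\cap i\mathfrak b^*$, and properness follows from closedness of semisimple coadjoint orbits together with the Chevalley restriction theorem. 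Finally, since the nilpotent cone is the common zero set of the homogeneous $\operatorname{Ad}^*(G)$-invariant polynomials of positive degree, for any regular semisimple $\xi$ there is such a polynomial $p$, of some degree $d\ge 1$, with $p(\xi)\ne 0$.

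For the right-hand side, the inclusion $\overline{\{\xi\in i\mathfrak b^*:\exists x,\ \xi|_{\mathfrak g_x}=0\}}\subset\overline{\bigcup_x i(\mathfrak g/\mathfrak g_x)^*}$ is trivial. Conversely, let $\xi\in(i\mathfrak b^*)'$ be a limit of $\eta_n\in i(\mathfrak g/\mathfrak g_{x_n})^*$. For large $n$, $\eta_n\in(i\mathfrak g^*)'_{\mathfrak b}$, so, fixing a lift $\widetilde\xi=(g_\xi B,\mu_\xi)\in\pi^{-1}(\xi)$ and lifting $\eta_n$ through the local homeomorphism $\pi$, we get $(g_nB,\mu_n)\to\widetilde\xi$ with $\operatorname{Ad}^*(g_n)\mu_n=\eta_n$. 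Since $\mu_n=\operatorname{Ad}^*(g_n^{-1})\eta_n$ vanishes on $\operatorname{Ad}(g_n^{-1})\mathfrak g_{x_n}=\mathfrak g_{g_n^{-1}x_n}$, it lies in $\{\xi'\in i\mathfrak b^*:\exists x,\ \xi'|_{\mathfrak g_x}=0\}$; passing to the limit, $\mu_\xi$ lies in the closure of that set, and as $\mu_\xi\in W\xi$ (because $\operatorname{Ad}^*(g_\xi)$ carries $\mu_\xi\in i\mathfrak b^*$ to $\xi\in i\mathfrak b^*$, both regular, so $g_\xi\in N_G(B)$) and the set is $W$-invariant, $\xi$ itself lies in the closure. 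Combining with \eqref{eq:L^2_cor_part2} yields $\operatorname{AC}_{i\mathfrak g^*}(\mathcal O)\cap(i\mathfrak b^*)'=\overline{\{\xi\in i\mathfrak b^*:\exists x,\ \xi|_{\mathfrak g_x}=0\}}\cap(i\mathfrak b^*)'$, so it remains to prove $\operatorname{AC}_{i\mathfrak b^*}(\Lambda)\cap(i\mathfrak b^*)'\supset\operatorname{AC}_{i\mathfrak g^*}(\mathcal O)\cap(i\mathfrak b^*)'$.

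Fix $\xi\in(i\mathfrak b^*)'\cap\operatorname{AC}_{i\mathfrak g^*}(\mathcal O)$ and an open cone $\mathcal C\ni\xi$ in $i\mathfrak b^*$ with $\overline{\mathcal C}\setminus\{0\}\subset(i\mathfrak b^*)'$; we must show $\mathcal C\cap\Lambda$ is unbounded in $i\mathfrak b^*$. Put $\widehat{\mathcal C}:=\operatorname{Ad}^*(G)\cdot\mathcal C\subset(i\mathfrak g^*)'_{\mathfrak b}$, an open $G$-invariant cone through $\xi$, and let $\widehat{\mathcal C}_\delta\subset\widehat{\mathcal C}$ be a subcone of small angular radius $\delta$ about $\xi$. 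Since $\xi\in\operatorname{AC}_{i\mathfrak g^*}(\mathcal O)$, there exist $\eta_n\in\widehat{\mathcal C}_\delta\cap\mathcal O_{\sigma_n}$ with $|\eta_n|\to\infty$ and $\eta_n/|\eta_n|\to\xi/|\xi|$; writing $\eta_n=\operatorname{Ad}^*(g_n)\mu_n$ with $\mu_n\in\mathcal C$ gives $\mu_n\in\mathcal O_{\sigma_n}\cap i\mathfrak b^*=\lambda_{\sigma_n,\mathfrak b}\subset\Lambda\cap\mathcal C$. The main obstacle of the whole argument is that these $\mu_n$ need not be large: a single noncompact orbit $\mathcal O_{\sigma_n}$ already runs off to infinity, so conjugating $\eta_n$ back into $i\mathfrak b^*$ could a priori leave a bounded sequence. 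This is exactly where the invariant polynomial enters: $p(\mu_n)=p(\eta_n)=|\eta_n|^d\,p(\eta_n/|\eta_n|)$, and for $\delta$ small $|p(\eta_n/|\eta_n|)|\ge\tfrac12|p(\xi/|\xi|)|>0$, so $|p(\mu_n)|\to\infty$; on the other hand $|p(\mu_n)|=|\mu_n|^d\,|p(\mu_n/|\mu_n|)|\le M|\mu_n|^d$ since $p$ is bounded on the compact set $\overline{\mathcal C}\cap\{\,|\cdot|=1\,\}$, whence $|\mu_n|\to\infty$. Thus $\mathcal C\cap\Lambda$ is unbounded, $\xi\in\operatorname{AC}_{i\mathfrak b^*}(\Lambda)$, and the corollary follows. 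Everything outside this last step is bookkeeping with the finiteness of Cartan conjugacy classes, the elementary behaviour of asymptotic cones under restriction to linear subspaces, and the covering $\pi$.
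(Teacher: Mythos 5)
Your proposal is correct, and at the top level it follows the same strategy as the paper: both arguments deduce \eqref{eq:Cartan_L^2_cor} from \eqref{eq:L^2_cor_part2} of Corollary \ref{cor:L^2 cor v2} by transporting that identity into $i\mathfrak{b}^*$, and both use the finite-to-one local diffeomorphism $G/B\times(i\mathfrak{b}^*)'\to(i\mathfrak{g}^*)'$ to lift a sequence $\eta_n\to\xi$ with $\eta_n|_{\mathfrak{g}_{x_n}}=0$ to Cartan representatives (the paper chooses lifts converging to $\xi$ itself, you fix one lift and invoke $W$-invariance of $\{\xi'\in i\mathfrak{b}^*\mid \exists x,\ \xi'|_{\mathfrak{g}_x}=0\}$; these are interchangeable). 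The genuine difference is the mechanism for the hard unboundedness transfer, i.e.\ showing that when the orbits $\mathcal{O}_\sigma$ run off to infinity in a cone around $\xi\in(i\mathfrak{b}^*)'$, the parameters $\lambda_{\sigma,\mathfrak{b}}$ do so in a cone inside $i\mathfrak{b}^*$. The paper works with cones $\mathcal{C}=K\cdot\mathcal{C}_1$, $K\subset G$ a precompact neighborhood of $e$, so that the map $K\times\mathcal{C}_1\to i\mathfrak{g}^*$ is proper and an unbounded image forces unbounded $\mathcal{C}_1$-components; you allow conjugation by all of $G$ (the cone $\op{Ad}^*(G)\cdot\mathcal{C}$) and control the Cartan representative $\mu_n$ through a homogeneous $\op{Ad}^*(G)$-invariant polynomial $p$ with $p(\xi)\neq0$, using $p(\mu_n)=p(\eta_n)$ together with the angular localization to force $|\mu_n|\to\infty$. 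Your device requires the standard Kostant--Chevalley fact that a regular semisimple element lies outside the common zero set of the positive-degree invariants, but in exchange it is insensitive to how far one conjugates and yields the quantitative growth $|\mu_n|\gtrsim|\eta_n|$; the paper's device is more elementary, needing only that $\op{Ad}^*(k)$ has uniformly bounded operator norm for $k$ in a precompact set. Two cosmetic remarks: properness of your covering $\pi$ is asserted but never used (only the local diffeomorphism property enters, exactly as in the paper), so it needs no justification; and the normalization $\eta_n/|\eta_n|\to\xi/|\xi|$ is more than you need, since membership of $\eta_n$ in the $\delta$-angular cone already gives the lower bound on $|p(\eta_n/|\eta_n|)|$.
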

\begin{proof} We show how to deduce Corollary \ref{cor:Cartan L^2 cor} from Corollary \ref{cor:main}. To show that the right hand side is contained in the left hand side, take $\xi\in (i\mathfrak{b}^*)'$ with $\xi|_{\mathfrak{g}_x}=0$ for some $x\in X$, and choose an open cone $\xi\in \mathcal{C}_1\subset (i\mathfrak{b})^*$ for which $\mathcal{C}_1\subset (i\mathfrak{b}^*)'$. Choose a precompact, open subset $e\in K\subset G$, and consider the open cone $\xi\in \mathcal{C}:=K\cdot \mathcal{C}_1\subset i\mathfrak{g}^*$. Then by Corollary \ref{cor:main}, we deduce 
$$\left(\bigcup_{\substack{\sigma\in \op{supp}L^2(X)\\ \sigma\in \widehat{G}_{\text{temp}}^{\ \prime}}} \mathcal{O}_{\sigma}\right)\cap \mathcal{C}$$
is unbounded. Now, we have a proper, continuous map $K\times \mathcal{C}_1\rightarrow i\mathfrak{g}^*$, and we know that the image of the set 
$$K\times \left(\bigcup_{\substack{\sigma\in \op{supp}L^2(X)\\ \sigma\in \widehat{G}_{\text{temp}}^{\ \prime}}}\lambda_{\sigma,\mathfrak{b}}\right)\cap \mathcal{C}_1$$
is unbounded in $i\mathfrak{g}^*$. We deduce that 
$$\left(\bigcup_{\substack{\sigma\in \op{supp}L^2(X)\\ \sigma\in \widehat{G}_{\text{temp}}^{\ \prime}}}\lambda_{\sigma,\mathfrak{b}}\right)\cap \mathcal{C}_1$$ is unbounded in $i\mathfrak{b}^*$. Therefore, 
$$\xi\in  \op{AC}\left(\bigcup_{\substack{\sigma\in \op{supp}L^2(X)\\ \sigma\in \widehat{G}_{\text{temp}}^{\ \prime}}}\lambda_{\sigma,\mathfrak{b}}\right).$$
If $\xi\in (i\mathfrak{b}^*)'$ is a limit of $\xi_n\in (i\mathfrak{b}^*)'$ with $\xi_n|_{\mathfrak{g}_{x_n}}=0$ each $n$, then every open cone containing $\xi$ must also contain some $\xi_n$. Hence, the required set must intersect this cone in an unbounded set. We have shown that the right hand side of (\ref{eq:Cartan_L^2_cor}) is contained in the left hand side of (\ref{eq:Cartan_L^2_cor}).

Next, we show that the left hand side of (\ref{eq:Cartan_L^2_cor}) is contained in the right hand side of (\ref{eq:Cartan_L^2_cor}). Suppose 
$$\xi\in \op{AC}\left(\bigcup_{\substack{\sigma\in \op{supp}L^2(X)\\ \sigma\in \widehat{G}_{\text{temp}}^{\ \prime}}}\lambda_{\sigma,\mathfrak{b}}\right)\cap (i\mathfrak{b}^*)',$$
and let $\xi\in \mathcal{C}\subset i\mathfrak{g}^*$ be an open cone in $i\mathfrak{g}^*$ containing $\xi$. Without loss of generality, we may assume $\mathcal{C}\subset (i\mathfrak{g}^*)'$. Then $\mathcal{C}_1:=\mathcal{C}\cap i\mathfrak{b}^*$ is an open cone in $i\mathfrak{b}^*$ containing $\xi$. By our assumption on $\xi$, we conclude that 
$$\mathcal{C}_1\cap \op{AC}\left(\bigcup_{\substack{\sigma\in \op{supp}L^2(X)\\ \sigma\in \widehat{G}_{\text{temp}}^{\ \prime}}}\lambda_{\sigma,\mathfrak{b}}\right)$$
is unbounded. Then we deduce that
$$\mathcal{C}\cap \left(\bigcup_{\substack{\sigma\in \op{supp}L^2(X)\\ \sigma\in \widehat{G}_{\text{temp}}^{\ \prime}}}\mathcal{O}_{\sigma}\right)$$
is unbounded. Therefore, 
$$\xi\in \op{AC}\left(\bigcup_{\substack{\sigma\in \op{supp}L^2(X)\\ \sigma\in \widehat{G}_{\text{temp}}^{\ \prime}}}\mathcal{O}_{\sigma}\right).$$
And by Corollary \ref{cor:main}, we deduce that 
$$\xi\in \overline{\bigcup_{x\in X} i(\mathfrak{g}/\mathfrak{g}_x)^*}.$$
Therefore, we can write $\xi=\lim_{n\rightarrow \infty} \xi_n'$ with $\xi_n'|_{\mathfrak{g}_{x_n'}}=0$ for some $x_n'\in X$. Using the finite to one, local diffeomorphism onto its image $G/B\times (i\mathfrak{b}^*)'\rightarrow (i\mathfrak{g}^*)'$, we deduce that we may find $\xi_n\in (i\mathfrak{b}^*)'$ such that $\xi_n'=\operatorname{Ad}_{g_n}^*\xi_n$ for some $g_n\in G$ and $\{\xi_n\}\rightarrow \xi$. If $\op{Ad}_{g_n}x_n'=x_n$, then we note that 
$$\xi_n|_{\mathfrak{g}_{x_n}}=0$$
for all $n$, and we conclude 
$$\xi\in\overline{\left\{\eta\in i\mathfrak{b}^*|\ \eta|_{\mathfrak{g}_x}=0\ \text{some}\ x\in X\right\}}\cap (i\mathfrak{b}^*)'.$$
The Corollary has been verified.
\end{proof}

Next, we give a version of Corollary \ref{cor:Cartan L^2 cor} for vector bundles.

\begin{corollary} \label{cor:Cartan dense semisimple cor} Suppose $G$ is a real, reductive algebraic group, suppose $X$ is a homogeneous space for $G$, let $\mathcal{D}^{1/2}\rightarrow X$ denote the bundle of complex half densities on $X$ (see Appendix A), and suppose $\mathcal{V}\rightarrow X$ is a finite rank, $G$-equivariant, Hermitian vector bundle on $X$. As before, $G_x$ is the stabilizer in $G$ of $x\in X$ and $\mathfrak{g}_x$ is the Lie algebra of $G_x$. Assume that for some (equivalently any) $x\in X$, there is a closed, real, linear algebraic subgroup $\widetilde{G_x}\subset G$ whose Lie algebra is $\mathfrak{g}_x$ (of course, this is satisfied if $G_x$ is itself an algebraic subgroup of $G$) and $\mathfrak{g}_x$ contains a dense subset of semisimple elements. Suppose $\mathfrak{b}\subset \mathfrak{g}$ is a Cartan subalgebra of the Lie algebra $\mathfrak{g}$ of $G$, and let $(i\mathfrak{b}^*)':=i\mathfrak{b}^*\cap (i\mathfrak{g}^*)'$ be the set of regular elements in $i\mathfrak{b}^*$. Then 
\begin{equation}\label{eq:Cartan_dense_semisimple_cor}
\op{AC}\left(\bigcup_{\substack{\sigma\in \op{supp}L^2(X,\mathcal{D}^{1/2}\otimes \mathcal{V})\\ \sigma\in \widehat{G}_{\text{temp}}^{\ \prime}}}\lambda_{\sigma,\mathfrak{b}}\right)\cap (i\mathfrak{b}^*)'=\overline{\left\{\xi\in i\mathfrak{b}^*|\exists\ x\in X\text{ s.t. } \xi|_{\mathfrak{g}_x}=0\right\}}\cap (i\mathfrak{b}^*)'.
\end{equation}
The asymptotic cone is taken inside the vector space $i\mathfrak{b}^*$.
\end{corollary}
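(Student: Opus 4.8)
The plan is to deduce Corollary~\ref{cor:Cartan dense semisimple cor} from Corollary~\ref{cor:dense semisimple cor} by exactly the argument used to pass from Corollary~\ref{cor:L^2 cor v2} to Corollary~\ref{cor:Cartan L^2 cor}; replacing the scalar coefficients $L^2(X)$ by the bundle-valued coefficients $L^2(X,\mathcal D^{1/2}\otimes\mathcal V)$ changes nothing in the formal structure, since the only inputs used are the equality (\ref{eq:dense_semisimple_cor_part2}) and the fact that for $\sigma\in\widehat G_{\text{temp}}^{\ \prime}$ the orbit $\mathcal O_\sigma$ is a single $G$-orbit whose intersection with $i\mathfrak b^*$ is the single $W$-orbit $\lambda_{\sigma,\mathfrak b}$.

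For the inclusion $\supseteq$ I would start with $\xi\in(i\mathfrak b^*)'$ such that $\xi|_{\mathfrak g_x}=0$ for some $x\in X$, fix an open cone $\xi\in\mathcal C_1\subset(i\mathfrak b^*)'$, choose a precompact open neighbourhood $e\in K\subset G$, and form $\mathcal C:=K\cdot\mathcal C_1\subset i\mathfrak g^*$. Corollary~\ref{cor:dense semisimple cor} then forces $\big(\bigcup_\sigma\mathcal O_\sigma\big)\cap\mathcal C$ to be unbounded, the union running over $\sigma\in\op{supp}L^2(X,\mathcal D^{1/2}\otimes\mathcal V)$ with $\sigma\in\widehat G_{\text{temp}}^{\ \prime}$. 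Since the map $K\times\mathcal C_1\to i\mathfrak g^*$, $(k,\mu)\mapsto\op{Ad}^*(k)\mu$, is proper and continuous and its image contains that intersection, the set $\big(\bigcup_\sigma\lambda_{\sigma,\mathfrak b}\big)\cap\mathcal C_1$ is already unbounded in $i\mathfrak b^*$, so $\xi$ lies in the left-hand asymptotic cone; the closure is handled by the standard limiting argument, noting that every open cone about a limit point of such $\xi$'s contains one of them.

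For the reverse inclusion $\subseteq$, given $\xi$ in the left-hand asymptotic cone intersected with $(i\mathfrak b^*)'$ and an arbitrary open cone $\xi\in\mathcal C\subset(i\mathfrak g^*)'$, I would set $\mathcal C_1:=\mathcal C\cap i\mathfrak b^*$, observe that $\big(\bigcup_\sigma\lambda_{\sigma,\mathfrak b}\big)\cap\mathcal C_1$ is unbounded, hence so is $\big(\bigcup_\sigma\mathcal O_\sigma\big)\cap\mathcal C$, conclude $\xi\in\op{AC}\big(\bigcup_\sigma\mathcal O_\sigma\big)$, and invoke Corollary~\ref{cor:dense semisimple cor} to get $\xi\in\overline{\bigcup_{x\in X}i(\mathfrak g/\mathfrak g_x)^*}$. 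Writing $\xi=\lim_n\xi_n'$ with $\xi_n'|_{\mathfrak g_{x_n'}}=0$, I would use the finite-to-one local diffeomorphism onto its image $G/B\times(i\mathfrak b^*)'\to(i\mathfrak g^*)'$ to replace each $\xi_n'$ by a conjugate $\xi_n\in(i\mathfrak b^*)'$ and simultaneously conjugate the basepoint $x_n'$ to some $x_n$ with $\xi_n|_{\mathfrak g_{x_n}}=0$, yielding $\xi$ in the right-hand closure.

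The only step that requires genuine care is the reduction to Corollary~\ref{cor:dense semisimple cor} itself: one must check that its hypotheses (the existence of an algebraic $\widetilde{G_x}$ with Lie algebra $\mathfrak g_x$, and density of the semisimple elements in $\mathfrak g_x$) are precisely the standing assumptions of Corollary~\ref{cor:Cartan dense semisimple cor}, and track that the equality (\ref{eq:dense_semisimple_cor_part2}) — which rests on Theorem~\ref{thm:dense semisimple} of this paper — is exactly what controls the regular semisimple part from both sides. Everything else is a routine transport of unboundedness through proper maps together with a limiting argument, verbatim as in the proof of Corollary~\ref{cor:Cartan L^2 cor}; I expect no substantive obstacle beyond this bookkeeping.
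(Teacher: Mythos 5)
Your proposal is correct and matches the paper's own treatment: the paper proves this corollary precisely by repeating the argument of Corollary \ref{cor:Cartan L^2 cor} verbatim, with Corollary \ref{cor:dense semisimple cor} (whose regular semisimple equality rests on Theorem \ref{thm:dense semisimple}) substituted for Corollary \ref{cor:L^2 cor v2}. Both inclusions you outline, via the proper map $K\times\mathcal{C}_1\to i\mathfrak{g}^*$ and the finite-to-one local diffeomorphism $G/B\times(i\mathfrak{b}^*)'\to(i\mathfrak{g}^*)'$, are exactly the steps of that proof.
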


The proof of Corollary \ref{cor:Cartan dense semisimple cor} is identical to the proof of Corollary \ref{cor:Cartan L^2 cor}. Of course, one utilizes Corollary \ref{cor:dense semisimple cor} instead of Corollary \ref{cor:main} in the proof.
\bigskip

Before doing several examples, we briefly recall some key facts about Harish-Chandra discrete series representations. If $G$ is a real, reductive algebraic group, then $G$ has at most one (up to $G$ conjugacy) compact Cartan subgroup $T\subset G$. A \emph{Harish-Chandra discrete series} representation of $G$ is an irreducible, unitary representation $\sigma$ with 
$$\op{Hom}_G(\sigma,L^2(G))\neq \{0\}.$$
The group $G$ has discrete series representations if, and only if $G$ has a compact Cartan subgroup $T\subset G$ \cite{HC65b}, \cite{HC66}, \cite{HC70}, \cite{HC76}. Since Harish-Chandra discrete series occur discretely in the unitary dual $\widehat{G}$, whenever $\pi$ is a unitary representation of $G$ and $\sigma\in \op{supp}\pi$, we must have
$$\op{Hom}_G(\sigma,\pi)\neq \{0\}.$$
If $G$ has a compact Cartan subgroup $T\subset G$, then the Lie algebras of compact Cartan subgroups $\mathfrak{t}\subset \mathfrak{g}$ are called \emph{fundamental Cartan subaglebras}. The \emph{regular elliptic} elements in $\mathfrak{g}$ (resp. $i\mathfrak{g}^*$) are precisely the regular, semisimple elements of $\mathfrak{g}$ (resp. $i\mathfrak{g}^*$) meeting $\mathfrak{t}$ (resp. $i\mathfrak{t}^*$). Further, if $\sigma$ is an irreducible, tempered representation of $G$, then $\mathcal{O}_{\sigma}\cap (i\mathfrak{t}^*)'\neq \{0\}$ if and only if $\sigma$ is a Harish-Chandra discrete series representation of $G$.


\begin{example} Let us consider the class of examples $G=\op{Sp}(2n,\mathbb{R})$ and $H=\op{Sp}(2l,\mathbb{Z})\times \op{Sp}(2m,\mathbb{R})$ that is mentioned in the introduction. Let $\mathfrak{g}=\mathfrak{sp}(2n,\mathbb{R})$ and $\mathfrak{h}=\op{sp}(2m,\mathbb{R})$ denote the Lie algebras of $G$ and $H$. Viewing $\mathfrak{g}$ as a set of $2n$ by $2n$ matrices and $\mathfrak{h}$ as a subset of $\mathfrak{g}$ of matrices with at least $4n^2-4m^2$ zeroes, one can define the complementary subspace $\mathfrak{q}$ consisting of matrices having zeroes in precisely the entries where elements of $\mathfrak{h}$ can have nonzero entries. One notes that $\mathfrak{q}$ is orthogonal to $\mathfrak{h}$ under the Killing form, $B$, of $\mathfrak{g}$. In particular, under the isomorphism $i\mathfrak{g}^*\simeq \mathfrak{g}$, which involves dividing by $i$ and using the Killing form, one obtains
$$i(\mathfrak{g}/\mathfrak{h})^*\simeq \mathfrak{q}.$$
In particular, utilizing Corollary \ref{cor:Cartan L^2 cor}, we see that if there exists $\xi\in \mathfrak{q}$ that is a regular elliptic element of $\mathfrak{g}$, then there must exist infinitely many Harish-Chandra discrete series representations $\sigma$ of $G=\op{Sp}(2n,\mathbb{R})$ such that
$$\op{Hom}_{\op{Sp}(2n,\mathbb{R})}(\sigma,L^2(X_{l,m,n}))\neq \{0\}$$
where
\[X_{l,m,n}:=\operatorname{Sp}(2n,\mathbb{R})/[\operatorname{Sp}(2l,\mathbb{Z})\times \operatorname{Sp}(2m,\mathbb{R})].\]
Utilizing a bit of linear algebra, one readily checks that this is the case precisely when $2m\leq n$. In the special case $m=0$, much stronger results are already known (see Proposition 10.5 on pages 117-118 of \cite{KK16}). When $2m>n$, we note that there are no regular elliptic elements in $\mathfrak{q}$ (in fact, $\mathfrak{q}$ has no regular elements at all in this case). In this case, one might like to deduce that $L^2(X_{l,m,n})$ has no Harish-Chandra discrete series of $\op{Sp}(2n,\mathbb{R})$ occurring in its Plancherel formula. Unfortunately, our results are not that powerful. Instead, let $\mathfrak{t}\subset \op{sp}(2n,\mathbb{R})$ be a compact Cartan subalgebra of $\mathfrak{g}$. Applying Corollary \ref{cor:Cartan L^2 cor}, we obtain
\[
\op{AC}\left(\bigcup_{\substack{\sigma\in \op{supp}L^2(X_{l,m,n})\\ \sigma\in \widehat{\op{Sp}(2n,\mathbb{R})}_{\text{temp}}^{\ \prime}}}\lambda_{\sigma,\mathfrak{t}}\right)\cap (i\mathfrak{t}^*)'=\{0\} 
\]
whenever $2m>n$. Again, this does not show that $L^2(X_{l,m,n})$ has no Harish-Chandra discrete series representations when $2m>n$. The above statement would still be true if there were finitely many Harish-Chandra discrete series in the above Plancherel formula or even if there were infinitely many Harish-Chandra discrete series in the Plancherel formula but their parameters were ``bunched up near the singular set'' in a certain way.
\end{example}

\begin{example}
Next, we give an example of Corollary \ref{cor:dense semisimple cor}. Suppose $G$ is a real, reductive algebraic group, $H\subset G$ is a real, reductive algebraic subgroup, and $P\subset H$ is a parabolic subgroup of $H$. Let $P=MAN$ be a Langlands decomposition of $P$, let $(\chi,\mathbb{C}_{\chi})$ be a unitary character of $MA$ extended trivially on $N$ to a character of $P$, and let
$$\mathcal{L}_{\chi}:=G\times_P \mathbb{C}_{\chi}$$
be the corresponding $G$ equivariant, Hermitian vector bundle on $G/P$. Then by Corollary \ref{cor:dense semisimple cor}, the set
\[
\op{AC}\left(\bigcup_{\substack{\sigma\in L^2(G/P,\mathcal{D}^{1/2}\otimes \mathcal{L}_{\chi})\\ \sigma \in \widehat{G}_{\text{temp}}^{\ \prime}}}\mathcal{O}_{\sigma}\right)\cap (i\mathfrak{g}^*)'
\]
is independent of the unitary character $\chi$ of $P$. Consider the case where $G_1=\op{SL}(2,\mathbb{R})$, $G=\op{SL}(2,\mathbb{R})\times \op{SL}(2,\mathbb{R})$ and $B\subset G_1=\op{SL}(2,\mathbb{R})$ is a Borel subgroup embedded diagonally in $G$.  Let us write $\sigma_{n}^+$ (resp. $\sigma_{n}^-$) for the holomorphic (resp. antiholomorphic) discrete series with parameter $n=1,2,\ldots$. Let us write $\sigma_{\nu,+}$ (resp. $\sigma_{\nu,-}$) for the spherical (resp. non-spherical) unitary principal series with parameter $\nu\in \mathbb{R}_{\geq 0}$. Observe 
\[
\overline{\op{Ad}^*(G)\cdot i(\mathfrak{g}/\Delta(\mathfrak{b}))^*}=\overline{\{(\xi_1,\xi_2)\in \mathfrak{g}_1^{\oplus 2}|\ \op{Ad}^*(g_1)\xi_1+\op{Ad}^*(g_2)\xi_2\in \mathcal{N}\ \text{some}\ g_1,g_2\in G_1\}}
\]
where $\mathcal{N}\subset \op{sl}(2,\mathbb{R})$ denotes the nilpotent cone. Analyzing this set together with Corollary \ref{cor:dense semisimple cor} and Corollary \ref{cor:Cartan dense semisimple cor}, one arrives at several conclusions regarding 
\[
L^2(G/\Delta(B),\mathcal{D}^{1/2}\otimes \mathcal{L}_{\chi}),
\]
all of which are independent of the character $\chi$ of $B$:

\begin{itemize}
\item There exists a natural number $m$ such that whenever $(m_1,m_2)\in \mathbb{N}\times \mathbb{N}$ and
$$\op{Hom}(\sigma_{m_1}^+\otimes \sigma_{m_2}^+,L^2(G/\Delta(B),\mathcal{D}^{1/2}\otimes \mathcal{L}_{\chi}))\neq \{0\}$$
we must have $m_j\leq m$ for some $j$.
\noindent (The analogous statement holds for $\sigma_{m_1}^-\otimes \sigma_{m_2}^-$).
\item There are infinitely many pairs $(m_1,m_2)\in \mathbb{N}\times \mathbb{N}$ for which
$$\op{Hom}(\sigma_{m_1}^+\otimes \sigma_{m_2}^-,L^2(G/\Delta(B),\mathcal{D}^{1/2}\otimes \mathcal{L}_{\chi})))\neq \{0\}.$$
In fact, for any open cone $\Gamma\subset \overline{\Gamma}\subset \mathbb{R}_{>0}^2$, there
are infinitely many such pairs $(m_1,m_2)\in \Gamma$.
\noindent (The analogous statement holds for $\sigma_{m_1}^-\otimes \sigma_{m_2}^+$).
\item The collection of $(m,\nu)\in \mathbb{N}\times \mathbb{R}_{\geq 0}\subset \mathbb{R}^2$ such that
$$\sigma_m^{\pm}\otimes \sigma_{\nu,\pm}\in \op{supp}L^2(G/\Delta(B),\mathcal{D}^{1/2}\otimes \mathcal{L}_{\chi})$$
is unbounded in every direction in $\mathbb{R}_{\geq 0}^2$.
\noindent (The analogous statement holds for $\sigma_{\nu,\pm}\otimes \sigma_m^{\pm}$).
\item For every $\epsilon>0$, the collection of $(\nu_1,\nu_2)\in \mathbb{R}_{\geq 0}^2\subset \mathbb{R}^2$
for which 
$$\sigma_{\nu_1,\pm}\otimes \sigma_{\nu_2,\pm}\in \op{supp}L^2(G/\Delta(B),\mathcal{D}^{1/2}\otimes \mathcal{L}_{\chi})$$
is unbounded in every direction in $\mathbb{R}^2_{\geq 0}$.
\end{itemize}

In particular, we see that 
$$\op{supp}L^2([\op{SL}(2,\mathbb{R})\times \op{SL}(2,\mathbb{R})]/\Delta(B),\mathcal{D}^{1/2}\otimes \mathcal{L}_{\chi})$$
is much larger than 
$$\op{supp}L^2([\op{SL}(2,\mathbb{R})\times \op{SL}(2,\mathbb{R})]/\Delta(\op{SL}(2,\mathbb{R})))$$
for every unitary character $\chi$ of $B$.
\end{example}

\begin{example}
We end the section with a family of examples related to an interesting example of Kobayashi. In Theorem 6.2 of \cite{Ko98a}, Kobayashi considers the group $G=\op{O}(p,q)$ and the subgroup $H=U(r,s)$ with $2r=p$ and $2s\leq q$. He assumes that $p$ is positive and divisible by four. Under these assumptions, Kobayashi shows that there exist infinitely many distinct discrete series $\sigma$ of $G=\op{O}(p,q)$ for which 
$$\op{Hom}_G(\sigma,L^2(G/H))\neq \{0\}.$$
We note that the techniques of Kobayashi in \cite{Ko98a} are very different from our own techniques. In particular, he utilizes his work on the theory of discretely decomposable restrictions (\cite{Ko94}, \cite{Ko98b}, \cite{Ko98c}) together with his work on the decay of functions on certain types of homogeneous spaces \cite{Ko97} and the classification of the discrete spectrum of reductive symmetric spaces (\cite{FJ80}, \cite{MO84}; see also the exposition \cite{Vo88}). 

Since our work utilizes very different ideas, it is worth considering what we can show. Let $G=\op{SO}(p,q)$ and $H=\op{U}(r,s)$. Assume 
\[2r+1\leq p\ \text{or}\ 2r=p\ \text{and}\ 4|p\]
and
\[2s+1\leq q\ \text{or}\ 2s=q\ \text{and}\ 4|q.\]
In addition, assume that at least one of $p$ and $q$ is even (This condition is necessary so that Harish-Chandra discrete series of $G=\op{SO}(p,q)$ exist). Let $\mathcal{L}\rightarrow G/H$ be any (possibly trivial) finite rank, $G$-equivariant, Hermitian line bundle on $G/H$. It follows from Corollary \ref{cor:dense semisimple cor} that there exist infinitely many distinct Harish-Chandra discrete series $\sigma$ of $G=\op{SO}(p,q)$ for which 
\begin{equation}\label{eq:Kobayashi_example}
\op{Hom}_G(\sigma,L^2(G/H,\mathcal{L}))\neq \{0\}.
\end{equation}

In some ways, our results are more general. We do not assume $2r=p$, we do not assume at least one of $p$ or $q$ is divisible by four, and we consider bundle-valued harmonic analysis. However, Kobayashi's construction of Harish-Chandra discrete series is more explicit, and there are a few cases where his methods show existence of Harish-Chandra discrete series and our methods do not. The simplest example is $G=\op{SO}(4,2)$, $H=U(2,1)$. In this case, if $X=G/H$, then
\begin{equation}\label{eq:singular_momentum}
\mu(iT^*X)\subset i\mathfrak{g}^*\setminus (i\mathfrak{g}^*)'.
\end{equation}
That is the image of the momentum map lies in the singular set. Corollary \ref{cor:Cartan dense semisimple cor} then implies that the asymptotic cone of the Harish-Chandra parameters of the Harish-Chandra discrete series of $G=\op{SO}(4,2)$ occurring in $L^2(X)=L^2(\op{SO}(4,2)/U(2,1))$ lies in the singular set. In particular, the discrete spectrum of $L^2(\op{SO}(4,2)/U(2,1))$ is less robust than the discrete spectrum of $L^2(\op{SO}(4,2)/U(1,1))$ or $L^2(\op{SO}(4,2)/U(2))$. This example shows how it would be useful to strengthen Theorem \ref{thm:main} so that one intersects both sides with a larger set than $(i\mathfrak{g}^*)'$ in order to compute singular asymptotics. This would require generalizing the main results of \cite{HHO16} and \cite{Ha}.

In order to check that (\ref{eq:Kobayashi_example}) follows from Corollary \ref{cor:dense semisimple cor}, one first checks that
\[i(\mathfrak{o}(4)/\mathfrak{u}(2))^*\subset i\mathfrak{o}(4)^*\ \text{and}\ i(\mathfrak{o}(3)/\mathfrak{u}(1))^*\subset i\mathfrak{o}(3)^*\]
both contain regular, elliptic elements. Then one embeds $r/2$ copies of 
$$i(\mathfrak{o}(4)/\mathfrak{u}(2))^*\subset i\mathfrak{o}(4)^*$$
into $i(\mathfrak{o}(2r)/\mathfrak{u}(r))^*$ if $r$ even to deduce that $i(\mathfrak{o}(2r)/\mathfrak{u}(r))^*$ contains regular, elliptic elements. One then extends these regular, elliptic elements to regular, elliptic elements of $i(\mathfrak{o}(p)/\mathfrak{u}(r))^*$ using that $2r\leq p$. If $r$ odd, then one embeds $(r-1)/2$ copies of $i(\mathfrak{o}(4)/\mathfrak{u}(2))^*\subset i\mathfrak{o}(4)^*$ and one copy of $i(\mathfrak{o}(3)/\mathfrak{u}(1))^*\subset i\mathfrak{o}(3)^*$ into 
\[i(\mathfrak{o}(2r+1)/\mathfrak{u}(r))^*\]
 to show that $i(\mathfrak{o}(2r+1)/\mathfrak{u}(r))^*$ contains regular, elliptic elements. And one embeds $(r+1)/2$ copies of $i(\mathfrak{o}(4)/\mathfrak{u}(2))^*\subset i\mathfrak{o}(4)^*$ into $i(\mathfrak{o}(2r+2)/\mathfrak{u}(r))^*$ to show that 
\[i(\mathfrak{o}(2r+2)/\mathfrak{u}(r))^*\]
contains regular elliptic elements. Using one of these two statements and extending regular, elliptic elements to regular, elliptic elements, one deduces that 
\[i(\mathfrak{o}(p)/\mathfrak{u}(r))^*\] 
contains regular, elliptic elements if $r$ odd and $2r+1\leq p$. Identical statements hold when $p$ is replaced by $q$ and $r$ is replaced by $s$. Putting these together, we embed
\[i(\mathfrak{o}(p)/\mathfrak{u}(r))^*\times i(\mathfrak{o}(q)/\mathfrak{u}(s))^*\hookrightarrow i(\mathfrak{o}(p,q)/\mathfrak{u}(r,s))^*.\]
If at least one of $p$ and $q$ is even, then regular, elliptic elements map to regular, elliptic elements. The claim (\ref{eq:Kobayashi_example}) follows.

\end{example}

\section{Counterexamples and Whittaker Functionals}
\label{sec:counterexamples}

Let $G$ be a real, reductive algebraic group, let $N\subset G$ be a unipotent subgroup, and let $(\chi,\mathbb{C}_{\chi})$ denote a unitary character of $N$. If $(\sigma,W_{\sigma})$ is a unitary representation of $G$ with smooth vectors $W_{\sigma}^{\infty}$, then a \emph{distribution Whittaker functional} on $(\sigma,W_{\sigma})$ with respect to $(N,\chi)$ is a continuous, $N$-equivariant homomorphism
$$\psi\colon W_{\sigma}^{\infty}\longrightarrow \mathbb{C}_{\chi}.$$
We denote the vector space of such homomorphisms by $\op{Wh}_{N,\chi}(\sigma)$. Distribution Whittaker functionals have primarily been studied in the special case where $N$ is the nilradical of a parabolic subgroup $P\subset G$. In the case where $(\sigma,W_{\sigma})$ is an irreducible, tempered representation of $G$, the study of distribution Whittaker functionals was related to harmonic analysis by Harish-Chandra (unpublished) and Wallach \cite{Wa92}. Let us write down this relationship.

We may associate a Hermitian line bundle 
$$\mathcal{L}_{\chi}=G\times_N \mathbb{C}_{\chi}\longrightarrow G/N$$
 to the unitary character $\chi$ of $N$, and we may consider the unitary representation $L^2(G/N,\mathcal{L}_{\chi})$ of $G$. First, by the Lemma on page 365 of \cite{Wa92}, we have
$$\op{supp}L^2(G/N,\mathcal{L}_{\chi})\subset \widehat{G}_{\text{temp}}.$$
That is, the decomposition of $L^2(G/N,\mathcal{L}_{\chi})$ into irreducibles consists entirely of irreducible, tempered representations. Next, the multiplicity of an irreducible, tempered representation $\sigma$ in $L^2(G/N,\mathcal{L}_{\chi})$ is equal to the dimension of the space of distribution Whittaker functionals for $\sigma$ with respect to the pair $(N,\chi)$ (see the Theorem on page 425 of \cite{Wa92}).

Now, the space $\op{Wh}_{N,\chi}(\sigma)$ is not completely understood in general. However, some partial results exist in special cases (\cite{Ko78}, \cite{Ma92}, \cite{GS15}). For instance, consider the case where $N$ is the nilradical of a minimal parabolic subgroup $P=MAN$.
Note that $d\chi$ is trivial on the commutator algebra $[\mathfrak{n},\mathfrak{n}]$, and therefore it descends to a linear functional
$$d\chi\in i(\mathfrak{n}/[\mathfrak{n},\mathfrak{n}])^*.$$
We say $\chi$ is nondegenerate if $d\chi$ is contained in an open $MA$ orbit in $i(\mathfrak{n}/[\mathfrak{n},\mathfrak{n}])^*$. Matumoto proved a nice result under these conditions \cite{Ma92}.

\begin{theorem}[Matumoto]
Let $G$ be a real, reductive algebraic group, let $P$ be a minimal parabolic subgroup with nilradical $N$ and Langlands decomposition $P=MAN$, and let $\chi$ be a nondegerate character of $N$. Then there exists a distribution Whittaker functional for an irreducible, unitary representation $\sigma$ with respect to $(N,\chi)$ if and only if
$$d\chi\in \operatorname{WF}(\sigma).$$
Here we identify $d\chi \in i\mathfrak{n}^*\subset i\mathfrak{g}^*$ in the usual way.
\end{theorem}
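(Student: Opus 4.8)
The plan is to reduce the statement to algebra on the Harish--Chandra module of $\sigma$ and to geometry on its associated variety, and then to match the two sides through the identification of the wave front set with the wave front cycle of the character. First I would translate the left side: by the Casselman--Wallach comparison theorem a continuous $N$--equivariant map $W_\sigma^\infty\to\mathbb C_\chi$ is the same datum as a linear functional on the twisted $\mathfrak n$--coinvariants (the \emph{generalized Jacquet module}) $V_{\mathfrak n,\chi}:=V/\bigl(\{X\cdot v-d\chi(X)v:\ X\in\mathfrak n,\ v\in V\}\bigr)$ of the underlying $(\mathfrak g,K)$--module $V$ of $\sigma$; thus $\operatorname{Wh}_{N,\chi}(\sigma)\neq 0$ iff $V_{\mathfrak n,\chi}\neq 0$, and Kostant's argument bounds this space by $1$ when $G$ is quasisplit. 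Next I would describe the right side geometrically: for irreducible unitary $\sigma$ the cone $\operatorname{WF}(\sigma)$ coincides with the wave front set at the identity of the distribution character $\Theta_\sigma$, which by Howe, Rossmann and Barbasch--Vogan is the union of the closures of the finitely many real nilpotent coadjoint orbits occurring in the \emph{wave front cycle}; under the fixed identification $i\mathfrak g^*\cong\mathfrak g$ this cycle is supported on the \emph{associated variety} $\operatorname{AV}(V)\subset\mathcal N$, and $d\chi\in\operatorname{WF}(\sigma)$ holds iff the nilpotent orbit $\mathcal O_{d\chi}$ through the image of $d\chi$ is contained in $\operatorname{AV}(V)$.

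With these reductions the theorem becomes the assertion that $V_{\mathfrak n,\chi}\neq 0$ iff $\mathcal O_{d\chi}\subset\operatorname{AV}(V)$, for $\chi$ nondegenerate on the minimal--parabolic nilradical. To prove this I would fix a good filtration of $V$ and pass to the associated graded $S(\mathfrak g)$--module $\operatorname{gr}V$, whose support is $\operatorname{AV}(V)$; a Koszul--complex/spectral--sequence argument then computes the twisted $\mathfrak n$--homology of $V$ (whose degree--zero part is $V_{\mathfrak n,\chi}$) from the restriction of $\operatorname{gr}V$ to the affine \emph{Whittaker slice} $q^{-1}(d\chi)\subset\mathfrak g^*_{\mathbb C}$, where $q$ is the complexified restriction map $\mathfrak g^*\to\mathfrak n^*$. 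If $\operatorname{AV}(V)$ avoids $\overline{\mathcal O_{d\chi}}$ the restriction vanishes by a support argument, giving one implication for free. For the converse one must show that when $\mathcal O_{d\chi}\subset\operatorname{AV}(V)$ the slice restriction of $\operatorname{gr}V$ is nonzero in homological degree zero; this is exactly the place where nondegeneracy of $\chi$ is essential, since it makes $q^{-1}(d\chi)$ a genuine transversal to the nilpotent cone along $\mathcal O_{d\chi}$, so that no higher differential can kill the generic fibre of $\operatorname{gr}V$ there.

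It remains to dispose of two routine points: that nothing is lost in passing between $V$ and its smooth Fréchet completion $W_\sigma^\infty$ (Casselman--Wallach again), and between complex and real nilpotent orbits (the relevant orbit is real because $\chi$ is a real character). As an independent cross--check in the tempered case one can instead combine Wallach's multiplicity formula $\dim\operatorname{Wh}_{N,\chi}(\sigma)=m\bigl(L^2(G/N,\mathcal L_\chi),\sigma\bigr)$ together with $\operatorname{supp}L^2(G/N,\mathcal L_\chi)\subset\widehat G_{\text{temp}}$ with the induced--representation bounds of this paper: Theorem~1.1 of \cite{HHO} gives $\operatorname{WF}(\operatorname{Ind}_N^G\chi)\supset\overline{\operatorname{Ad}^*(G)\cdot q^{-1}(\mathbb R_{\geq 0}d\chi)}$, and Theorem~1.2 of \cite{HHO} identifies $\operatorname{WF}(\operatorname{Ind}_N^G\chi)$ with the closure of the asymptotic cone of $\bigcup_{\sigma\in\operatorname{supp}}\mathcal O_\sigma$, so that membership of $\sigma$ in the support can be read off the wave front data.

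The main obstacle is the ``hard'' half above: producing a nonzero twisted Jacquet functional out of the geometric hypothesis $\mathcal O_{d\chi}\subset\operatorname{AV}(V)$, equivalently $d\chi\in\operatorname{WF}(\sigma)$. Every soft estimate in this circle of ideas bounds wave front sets only from below by induced cones (Theorem~1.1 of \cite{HHO}) or describes the wave front sets of whole direct integrals (Theorem~1.2 of \cite{HHO}), and so cannot attach a functional to an individual irreducible; the substance is the transversality and non--vanishing in degree zero of the Whittaker--slice restriction of $\operatorname{gr}V$. I expect this to be the step where the nondegeneracy of $\chi$ genuinely cannot be dispensed with --- consistent with the fact, exploited in Section~\ref{sec:counterexamples}, that the equivalence fails for degenerate $\chi$.
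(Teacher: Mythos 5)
You should first note that the paper does not prove this statement at all: it is quoted verbatim from Matumoto \cite{Ma92} and used as a black box in Section 7, so the only ``proof'' to compare against is the citation. Your sketch therefore has to stand on its own, and as written it has two genuine gaps. First, the opening reduction is not routine: a continuous $N$-equivariant functional on $W_\sigma^\infty$ is \emph{not} the same thing as a linear functional on the twisted coinvariants $V_{\mathfrak n,\chi}$ of the Harish--Chandra module. The algebraic Whittaker space is in general strictly larger than the distribution one (already for principal series of $\op{SL}(2,\mathbb R)$ Kostant's algebraic space has dimension $|W|=2$, while the continuous multiplicities appearing in Wallach's formula are $1$), so equating the two requires a twisted Casselman-type comparison theorem, which is itself a hard result and not a consequence of Casselman--Wallach globalization. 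Second, and more seriously, the implication you yourself flag as ``the main obstacle'' --- that $\mathcal O_{d\chi}\subset\operatorname{AV}(V)$ (equivalently $d\chi\in\op{WF}(\sigma)$) forces the degree-zero twisted $\mathfrak n$-homology to be nonzero --- is asserted, not proved: ``transversality of the Whittaker slice so that no higher differential can kill the generic fibre'' is a hope about a spectral sequence, and this nonvanishing is precisely the analytic core of Matumoto's theorem. Relatedly, replacing $\op{WF}(\sigma)$ by the associated variety via the wave front cycle is itself a deep identification (in full generality it is the Schmid--Vilonen theorem, which postdates \cite{Ma92}), so it cannot be treated as a harmless translation step.

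Your proposed ``independent cross-check'' in the tempered case also does not close the gap, for two reasons visible in the paper itself. Theorem 1.1 of \cite{HHO} only gives $\op{WF}(\op{Ind}_N^G\chi)\supset\op{Ind}_N^G\op{WF}(\chi)=\overline{\op{Ad}^*(G)\cdot i(\mathfrak g/\mathfrak n)^*}$, since $\op{WF}(\chi)=\{0\}$ for a unitary character; it does \emph{not} give the lower bound $\overline{\op{Ad}^*(G)\cdot q^{-1}(\mathbb R_{\geq 0}\,d\chi)}$ you invoke --- indeed the whole point of Section 7 is that the elliptic directions in $\op{WF}(\op{Ind}_N^G\chi_\lambda)$ lie strictly beyond this induced bound, and they are obtained \emph{from} Matumoto's theorem, not the other way around. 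Moreover, Theorem 1.2 of \cite{HHO} only controls the asymptotic cone of the orbits of the full direct integral, and asymptotic-cone information cannot detect whether an individual irreducible $\sigma$ occurs in the support (a limitation the paper stresses repeatedly). So the cross-check is circular where it is not simply too weak. To make the proposal into a proof you would need to supply the twisted comparison theorem and the nonvanishing-on-the-slice argument, which together amount to reproving \cite{Ma92}.
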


Combining Matumoto's Theorem with the above result of Harish-Chandra (and independently Wallach), one can determine $\op{supp}L^2(G/N,\mathcal{L}_{\chi})$ whenever $G$ is a real, reductive algebraic group, $N\subset G$ is the nilradical of a minimal parabolic subgroup, and $\chi$ is a nondegenerate character of $N$. Combining this information with Theorem 1.2 of \cite{HHO16} on wave front sets of arbitrary direct integrals of tempered representations, one checks that in many cases
$$\op{WF}(\op{Ind}_N^G\chi)\supsetneq \op{Ind}_N^G\op{WF}(\chi)=\overline{\bigcup_{g\in G} \op{Ad}^*(g)\cdot i(\mathfrak{g}/\mathfrak{n})^*}.$$
Let us write down the simplest example. Let $G=\operatorname{SL}(2,\mathbb{R})$, and let
$$N=\left\{\left(\begin{matrix} 1 & x\\ 0 & 1\end{matrix}\right)\Big|\ x\in \mathbb{R}\right\}.$$
The unitary characters of $N$ are parametrized by $i\mathbb{R}$; let us write
$$\chi_{\lambda}\left(\begin{matrix} 1 & x\\ 0 & 1\end{matrix}\right)=e^{\lambda x}$$
for $\lambda\in i\mathbb{R}$. Now, we may form the Hermitian line bundle
$$\mathcal{L}_{\lambda}=G\times_{N} \chi_{\lambda}$$
and the unitary representation $$L^2(G/N,\mathcal{L}_{\lambda})$$ of $G=\op{SL}(2,\mathbb{R})$ for every $\lambda\in i\mathbb{R}$. 
\bigskip

To consider wave front sets, let $\mathfrak{g}=\mathfrak{sl}(2,\mathbb{R})$ denote the Lie algebra of $G=\operatorname{SL}(2,\mathbb{R})$. Introduce coordinates on the Lie algebra 
$$\mathfrak{g}=\left\{X_{x,y,z}=\left(\begin{matrix} x & y-z\\ y+z & -x\end{matrix}\right)\Big|\ x,y,z\in \mathbb{R}\right\}.$$
Notice that the $G$ orbits on $\mathfrak{g}$ are the hyperboloids 
$$x^2+y^2-z^2=c$$
for $c>0$, half of this hyperboloid when $c<0$, and one of three pieces of the cone when $c=0$. Notice that when $x^2+y^2-z^2>0$, the matrix $X_{x,y,z}$ is diagonalizable with real eigenvalues; we call such an element of $\mathfrak{g}$ \emph{hyperbolic} and we denote the set of hyperbolic elements in $\mathfrak{g}$ by $\mathfrak{g}_{\text{hyp}}$. When $x^2+y^2-z^2<0$, the matrix $X_{x,y,z}$ is diagonalizable with purely imaginary eigenvalues; we call such an element of $\mathfrak{g}$ \emph{elliptic} and we denote the set of elliptic elements in $\mathfrak{g}$ by $\mathfrak{g}_{\text{ell}}$. The set of nonzero elliptic elements has two connected components which we denote by $\mathfrak{g}_{\text{ell}}^+$ (the set of nonzero elliptic elements $X_{x,y,z}$ with $z>0$) and $\mathfrak{g}_{\text{ell}}^-$ (the set of nonzero elliptic elements $X_{x,y,z}$ with $z<0$). When $x^2+y^2-z^2=0$, the matrix $X_{x,y,z}$ is nilpotent, and we call such an element of $\mathfrak{g}$ \emph{nilpotent}. We denote by $\mathfrak{g}_{\text{nilp}}$ the set of nilpotent elements in $\mathfrak{g}$.
\bigskip

We may identify $\mathfrak{g}\simeq \mathfrak{g}^*$ via the trace form 
$$X\mapsto \left(Y\mapsto \operatorname{Tr}(XY)\right).$$
This isomorphism is $G$ equivariant (so it takes $G$ orbits on $\mathfrak{g}$ to $G$ orbits on $\mathfrak{g}^*$). After dividing by $i$, we obtain a $G$ invariant identification of $\mathfrak{g}$ with $i\mathfrak{g}^*$. We denote by $i\mathfrak{g}^*_{\text{hyp}}$ (resp. $i\mathfrak{g}^*_{\text{ell}}$, $i(\mathfrak{g}_{\text{ell}}^*)^+$, $i(\mathfrak{g}_{\text{ell}}^*)^-$, $i\mathfrak{g}^*_{\text{nilp}}$) the subset of $i\mathfrak{g}^*$ which corresponds under the above isomorphism to $\mathfrak{g}_{\text{hyp}}$ (resp. $\mathfrak{g}_{\text{ell}}$, $\mathfrak{g}_{\text{ell}}^+$, $\mathfrak{g}_{\text{ell}}^-$, $\mathfrak{g}_{\text{nilp}}$).
\bigskip

By Theorem 1.1, we know
$$\operatorname{WF}(L^2(G/N))=\overline{\operatorname{Ad}^*(G)\cdot i(\mathfrak{g}/\mathfrak{n})^*}.$$
Under the identification, $i\mathfrak{g}^*\cong \mathfrak{g}$, one notes that $i(\mathfrak{g}/\mathfrak{n})^*$ corresponds to 
$$\mathfrak{b}=\left\{\left(\begin{matrix} a & x\\ 0 & -a\end{matrix}\right)\Big|\ a,x\in \mathbb{R}\right\}.$$
Now, all of the elements in $\mathfrak{b}$ are either hyperbolic or nilpotent, and all hyperbolic or nilpotent elements in $\mathfrak{g}$ are conjugate to elements in $\mathfrak{b}$. Thus, if we break up 
$$i\mathfrak{g}^*= i\mathfrak{g}^*_{\text{hyp}}\cup i\mathfrak{g}^*_{\text{nilp}}\cup i\mathfrak{g}^*_{\text{ell}},$$
then 
$$\operatorname{WF}(L^2(G/N))=i\mathfrak{g}^*_{\text{hyp}}\cup i\mathfrak{g}^*_{\text{nilp}}.$$

Let us consider the representation theory side for a moment. As we stated before, all of the irreducible representations of $G=\op{SL}(2,\mathbb{R})$ occurring in the direct integral decomposition of $L^2(G/N,\mathcal{L}_{\lambda})$ also occur in $L^2(G)$, that is, they are tempered. The irreducible, tempered representations of $G$ are as follows. There are the holomorphic discrete series $\sigma_n^+$ for $n\in \mathbb{N}$, the antiholomorphic discrete series $\sigma_n^-$ for $n\in \mathbb{N}$, the spherical unitary principal series $\sigma_{\nu,+}$ for $\nu\in \mathbb{R}_{\geq 0}$, the non-spherical unitary principal series $\sigma_{\nu,-}$ for $\nu\in \mathbb{R}_{>0}$, and the two limits of discrete series $\sigma_0^+$ and $\sigma_0^-$. We have the direct integral decomposition

$$L^2(G/N)\simeq \int^{\oplus}_{\nu\in i\mathbb{R}_{\geq 0}} \sigma_{\nu,+}d\nu\bigoplus \int^{\oplus}_{\nu\in i\mathbb{R}_{>0}}\sigma_{\nu,-}d\nu.$$
Now, one computes from Theorem 1.2 of \cite{HHO16} that
$$\operatorname{WF}\left(\int^{\oplus}_{\nu\in i\mathbb{R}_{\geq 0}} \sigma_{\nu,+}d\nu\bigoplus \int^{\oplus}_{\nu\in i\mathbb{R}_{>0}}\sigma_{\nu,-}d\nu\right)=\overline{i\mathfrak{g}^*_{\text{hyp}}}=i\mathfrak{g}^*_{\text{hyp}}\cup i\mathfrak{g}^*_{\text{nilp}}.$$ 
Of course, we observe that computing the wave front set from the $L^2$ side and the representation theory side yield the same thing. Now, let us consider the more general case $L^2(G/N,\mathcal{L}_{\lambda})$ for $\lambda\neq 0$. We cannot compute the wave front set from the $L^2$ side using Corollary \ref{cor:dense semisimple cor} because $\mathfrak{n}=\operatorname{Lie}(N)$ does not contain a dense subset of semisimple elements (in fact all of the elements in $\mathfrak{n}$ are nilpotent). But, we can still look at the representation theory side. Break up
$$i\mathfrak{n}^*=i\mathfrak{n}^*_+\cup \{0\}\cup i\mathfrak{n}^*_-$$
so that $$i\mathfrak{n}^*_+\subset \overline{i(\mathfrak{g}_{\text{ell}}^*)^+},\ i\mathfrak{n}^*_-\subset \overline{i(\mathfrak{g}_{\text{ell}}^*)^-}.$$

Utilizing the work of Matumoto \cite{Ma92}, Harish-Chandra (unpublished), and Wallach \cite{Wa92} together with knowledge of the wave front sets of irreducible, tempered representations of $\op{SL}(2,\mathbb{R})$ (this knowledge can be derived from work of Rossmann \cite{Ro78}, \cite{Ro80}, \cite{Ro95} and Barbasch-Vogan \cite{BV}; see Section 8.1 of \cite{HHO16} where this example is worked out in detail), we have

$$L^2(G/N,\mathcal{L}_{\lambda})\simeq \int_{\nu\in \mathbb{R}_{\geq 0}} \sigma_{\nu,+} d\nu\oplus \int_{\nu\in \mathbb{R}_{> 0}} \sigma_{\nu,-}d\nu\oplus \sum_{n\in \mathbb{N}} \sigma_n^+.$$ 
if $d\chi_{\lambda}\in i\mathfrak{n}^*_+$ (the fact that the multiplicities are one was shown by Kostant \cite{Ko78}). The analogous formula holds if $d\chi_{\lambda}\in i\mathfrak{n}^*_-$ with $+$ and $-$ swapped. 
\bigskip

Now, applying Theorem 1.2 of \cite{HHO16}, we obtain 
$$\operatorname{WF}(L^2(G/N,\mathcal{L}_{\lambda}))=i\mathfrak{g}^*_{\text{hyp}}\cup i(\mathfrak{g}^*_{\text{ell}})^+\cup i\mathfrak{g}^*_{\text{nilp}}.$$
if $d\chi_{\lambda}\in i\mathfrak{n}^*_+$. Similarly, we obtain 
$$\operatorname{WF}(L^2(G/N,\mathcal{L}_{\lambda}))=i\mathfrak{g}^*_{\text{hyp}}\cup i(\mathfrak{g}^*_{\text{ell}})^-\cup i\mathfrak{g}^*_{\text{nilp}}.$$
if $d\chi_{\lambda}\in i\mathfrak{n}^*_-$. 

In particular, $$\operatorname{WF}(\operatorname{Ind}_N^G \chi_{\lambda})= i\mathfrak{g}^*_{\text{hyp}}\cup i(\mathfrak{g}^*_{\text{ell}})^+\cup i\mathfrak{g}^*_{\text{nilp}}\supsetneq i\mathfrak{g}^*_{\text{hyp}}\cup i\mathfrak{g}^*_{\text{nilp}}$$
$$ =\overline{\operatorname{Ad}^*(G)\cdot i(\mathfrak{g}/\mathfrak{n})^*}=\op{Ind}_N^G \op{WF}(\chi_{\lambda})$$
when $d\chi_{\lambda}\in i\mathfrak{n}^*_+$ and 
$$\operatorname{WF}(\operatorname{Ind}_N^G \chi_{\lambda})= i\mathfrak{g}^*_{\text{hyp}}\cup i(\mathfrak{g}^*_{\text{ell}})^-\cup i\mathfrak{g}^*_{\text{nilp}}\supsetneq i\mathfrak{g}^*_{\text{hyp}}\cup i\mathfrak{g}^*_{\text{nilp}}$$
$$ =\overline{\operatorname{Ad}^*(G)\cdot i(\mathfrak{g}/\mathfrak{n})^*}=\op{Ind}_N^G \op{WF}(\chi_{\lambda})$$
when $d\chi_{\lambda}\in i\mathfrak{n}^*_-$.

Hence, we have a counterexample to the converse to Theorem 1.1 of \cite{HHO16}, and we have demonstrated the necessity of at least some hypothesis in Theorem 1.2 that does not exist in Theorem 1.1.

\appendix
\section{Density bundles}\label{app:dense}
Let $X$ be an $n$-dimensional smooth manifold and let $\mathcal F X\to X$ be the 
frame bundle whose fibers over the base point $x\in X$ consist of all ordered 
bases of the $n$-dimensional vector space $T_x X$. Note that a choice of such a 
basis is equivalent to a choice of a linear isomorphism $b: \mathbb R^n\to T_x X$
and we will denotes points in $\mathcal F X$ by two-tuples $(x,b)$. Given any 
$g\in \operatorname{GL}(n,\mathbb R)$ we can define canonically its right action on
$\mathcal FX$ by the pullback of the isomorphism $b: \mathbb R^n\to T_x X$ with $g$
\[
 (x,b)g:=(x,b\circ g).
\]
This action is free and transitive on the fibers, so $\mathcal FX$ is a principle 
$\operatorname{GL}(n,\mathbb R)$ fiber bundle.

If $U\subset X$ and $V\subset \mathbb R^ n$ are open and 
\[
 \kappa:U\subset X \to V\subset \mathbb R^ n
\]
is a smooth chart, then this chart naturally gives rise to a local section of 
the frame bundle defined by
\begin{equation} \label{eq:frame_bundle_section}
\tau_\kappa:U\to \pi^{-1}_{\mathcal FX}(U),~x\mapsto (x,d\kappa^ {-1}_{|\kappa(x)}). 
\end{equation}

For $\alpha >0$ the map $\operatorname{GL}(n,\mathbb R)\to \operatorname{End}(\mathbb C), g\mapsto |\det(g)|^ {-\alpha}$ 
is a one dimensional representation and we can define the density bundles as the
associated fiber bundles with respect to these representations.
\begin{definition}
For a smooth $n$-dimensional manifold $X$ and $\alpha>0$ we define the 
$\alpha$-density bundle over $X$ as
\[
 \mathcal D^ \alpha := \mathcal F X\times_{|\det(\bullet)|^{-\alpha}} \mathbb C.
\]
As $\mathbb R_{\geq 0}\subset \mathbb C$ is invariant under the action by $|\det(\bullet)|^{-\alpha}$
we can also define the positive $\alpha$ density bundle as
\[
 \mathcal D_{\geq 0}^ \alpha:= \mathcal F X\times_{|\det(\bullet)|^{-\alpha}} \mathbb R_{\geq 0}.
\]
\end{definition}
We will denote elements in the density bundles by equivalence classes $[(x,b),z]\in \mathcal D^ \alpha $
where $(x,b)\in \mathcal F X$, $z\in\mathbb C$ and the equivalence relation is 
given by $((x,b\circ g),z)\sim ((x,b),|\det(g)|^{-\alpha} z)$. 

Note that the density bundles behave nicely under tensor products as we have 
for $\alpha,\beta>0$
\begin{equation}
 \mathcal D^\alpha \otimes \mathcal D^ \beta  \cong \mathcal D^{\alpha+\beta}. 
\end{equation}

Moreover, there is a global absolute value map
\begin{equation}\label{eq:def_abs_global}
 |\quad|_{\mathcal D^\alpha}: \Gamma(\mathcal D^\alpha ) \to\Gamma(D^\alpha_{\geq 0} )
\end{equation}
given by
$$[(x,b),z]\mapsto [(x,b),|z|].$$

Sometimes it is useful to work in coordinates. Given a smooth chart 
$\kappa:U\to V$ of $X$ and using the local sections $\tau_\kappa$
defined in (\ref{eq:frame_bundle_section}) we obtain a local trivialization of 
the density bundles and can thus locally identify sections 
$\Psi: U\to \pi^ {-1}_{\mathcal D^ \alpha  }(U)$
with a function $\Psi_\kappa :V\to \mathbb C$ by the condition
\[
 \Psi(m) = [\tau_\kappa(m) ,\Psi_\kappa(\kappa(m))]
\]
which determines the function $\Psi_\kappa$ uniquely as the right $\operatorname{GL}(n,\mathbb R)$
action on the fiber is free.

If $\kappa':U\to V'$ is another chart then
\begin{eqnarray*}
 \Psi(m) &=& [\tau_\kappa(m), \Psi_\kappa(\kappa(m))] \\
      &=& [(m, d\kappa^ {-1}_{|\kappa(m)}), \Psi_\kappa(\kappa(m))] \\
      &=& [(m, d\kappa'^ {-1}_{|\kappa'(m)}\circ d(\kappa'\circ\kappa^ {-1})_{|\kappa(m)} ), \Psi_\kappa(\kappa(m))] \\
      &=& [\tau_{\kappa'}(m), |\det(d(\kappa'\circ\kappa^ {-1})_{|\kappa(m)} )|^{-\alpha} \cdot\Psi_\kappa(\kappa\circ\kappa'^ {-1}(\kappa'(m))] .
\end{eqnarray*}
Consequently we have for $y\in V'$
\begin{eqnarray}
 \Psi_{\kappa'}(y) &=& |\det(d(\kappa'\circ\kappa^ {-1})_{|\kappa\circ\kappa'^ {-1}(y)} )|^{-\alpha} \cdot\Psi_\kappa(\kappa\circ\kappa'^ {-1}(y)) \nonumber \\
 &=&|\det(d(\kappa\circ\kappa'^ {-1})_{|y} )|^{\alpha} \cdot\Psi_\kappa(\kappa\circ\kappa'^ {-1}(y)). \label{eq:coordinate_transform}
\end{eqnarray}
Note that the same construction associates sections $\rho$ of the positive density 
bundle $\mathcal D^\alpha_{\geq 0}  $ to functions $\rho_\kappa:V\to\mathbb R_{\geq 0}$
and the transformation with respect to coordinate change is also according to (\ref{eq:coordinate_transform})..
We can thus give an alternate definition of the global absolute value map for sections 
on the density bundle
$|\quad|_{\mathcal D^\alpha}: \Gamma(\mathcal D^\alpha ) \to\Gamma(D^\alpha_{\geq 0} )$
by requiring locally for a chart $\kappa:U\to V$ and $x\in V$ that
\[
 (|\Psi|_{\mathcal D^ \alpha})_\kappa (x) := |\Psi_\kappa(x)|.
\]
Note that (\ref{eq:coordinate_transform}) assures that this definition is chart 
independent. It is easy to see that this coordinate definition agrees with the 
definition (\ref{eq:def_abs_global}).

Given a chart $\kappa:U\to V$ and a section $\Psi \in \Gamma(\mathcal D^1 )$ compactly 
supported in $U$, we say that $\Psi$ is \emph{integrable} if and only if
$\Psi_\kappa$ is Lebesgue integrable on $V\subset \mathbb R^n$ and we define
\begin{equation}
 \label{def:density_integral}
 \int_U \Psi:=\int_V \Psi_\kappa(x) d\lambda(x)
\end{equation}
where $d\lambda(x)$ is the usual Lebesgue measure. The behavior of $\Psi_\kappa$ 
under coordinate changes (\ref{eq:coordinate_transform}) assures that this definition
is independent of the choice of charts. The same definition holds for sections in 
the positive density bundle $\rho \in \Gamma(\mathcal D^1_{\geq 0} )$. 
Note that from the definition of the global absolute value map 
(\ref{eq:def_abs_global}) we directly obtain, that $\Psi$ is 
integrable if and only if $|\Psi|_{\mathcal D^1}$ is integrable and that
\begin{equation}\label{eq:integral_inequ}
 \left|\int_U \Psi\right|\leq \int_U |\Psi|_{\mathcal D^1}.
\end{equation}
An arbitrary not necessarily compactly supported section 
$\Psi\in \Gamma(\mathcal D^1 )$ is said to be integrable if for a countable
atlas $(\kappa_i,U_i,V_i)$ and a partition of unity $\chi_i$ subordinate to 
the cover $U_i$ we have that for all $i$, $\chi_i \Psi$ are integrable as compactly
supported sections and that
\[
 \sum\limits_{i} \left(\int_{U_i} |\chi_i\Psi|_{\mathcal D^1}\right)<\infty.
\]
We then define 
\[
 \int_X \Psi:=  \sum\limits_{i} \left(\int_{U_i} \chi_i\Psi\right).
\]
Again one checks, that this definition is independent of the choice of the atlas
and partition of unity using (\ref{eq:coordinate_transform}). As above the same
definition applies for sections in the positive density bundle. Furthermore we 
have that an arbitrary section $\Psi\in \Gamma(\mathcal D^1 )$ is integrable 
if and only if $|\Psi|_{\mathcal D^1}$ is integrable and the inequality (\ref{eq:integral_inequ})
holds. If $\rho_1,\rho_2 \in \mathcal D^1_{\geq 0}$ then we say $\rho_1\leq\rho_2$
if this equality holds fiber wise. We then immediately obtain from the definition 
of the integrals
\begin{equation}\label{eq:integral_inequ_pos}
 \int_X \rho_1\leq\int_X\rho_2.
\end{equation}

Let $\mathcal V\to X$ be a Hermitian vector bundle, where the fibers $\mathcal V_x$
are Hilbert spaces with scalar products $\langle~,~\rangle_{\mathcal V_x}$ respectively
norms $\|\quad\|_{\mathcal V_x}$. By tensoring with the density bundles we can
define $L^1$ and $L^2$-norms of sections in $\mathcal V\otimes \mathcal D^\alpha $ 
($\alpha =1,1/2$) as follows: 
Given a section 
\[
f\in \Gamma(\mathcal V\otimes \mathcal D^\alpha ) , f:x\to v_x\otimes z_x
\]
we can associate a section $\|f\|_{\mathcal V\otimes \mathcal D^\alpha}$ in 
$\mathcal D^\alpha_{\geq 0}$ by setting
\[
 \|f\|_{\mathcal V\otimes \mathcal D^\alpha}: x\mapsto \|v_x\|_{\mathcal V_x} \cdot |z_x|_{\mathcal D^\alpha}.
\]
We now say that the $L^1$-norm of $f\in \Gamma(\mathcal V\otimes \mathcal D^1 )$
(respectively the $L^2$-norm of $f\in \Gamma(\mathcal V\otimes \mathcal D^{1/2} )$)
is defined if $\|f\|_{\mathcal V\otimes \mathcal D^1}$ (respectively 
$(\|f\|_{\mathcal V\otimes \mathcal D^{1/2}})^{\otimes 2}$) is integrable and set
\[
 \|f\|_{L^1} := \int_X \|f\|_{\mathcal V\otimes \mathcal D^1},
\]
respectively
\[
 \|f\|_{L^2} := \sqrt{ \int_X (\|f\|_{\mathcal V\otimes \mathcal D^{1/2}})^{\otimes 2} }.
\]
We define for $p=1,2$
\[
 \mathcal L^p(\mathcal V\otimes \mathcal D^{1/p}):=\left\{ f\in \Gamma(\mathcal V\otimes \mathcal D^{1/p}), \textup{s.t. }\|f\|_{L^p} \textup{ is defined }\right\}
\]
and
\[
 L^p(\mathcal V\otimes \mathcal D^{1/p}) := \mathcal L^p(\mathcal V\otimes \mathcal D^{1/p}) /\{f \in \mathcal L^p(\mathcal V\otimes \mathcal D^{1/p}),~\|f\|_{L^p} =0 \}
\]
then $L^1(\mathcal V\otimes \mathcal D^{1/p})$ becomes a Banach space with norm $\|\quad\|_{L^1}$
and $L^2(\mathcal{V}\otimes \mathcal D^{1/2})$ a Hilbert space with the scalar product
\[
 \langle f_1,f_2\rangle_{L^2} := \int \langle f_1(x),f_2(x)\rangle_{\mathcal V_x\otimes\mathcal D^{1/2}_x}.
\]
Here $\langle f_1(x),f_2(x)\rangle_{\mathcal V_x\otimes\mathcal D^{1/2}_x}$ denotes the 
section in $\mathcal D^1 $ which is assigned to the two sections $f_i:x\mapsto v_i(x)\otimes z_i(x)$, $i=1,2$ by 
\[
 x\mapsto \langle v_1(x),v_2(x)\rangle_{\mathcal V_x} z_1(x)\otimes \overline{z_2}(x).
\]

If the base manifold $X$ is equipped with a smooth left action by a Lie group $G$, 
then this action can be lifted to $\mathcal D^ \alpha$ as follows. Recall that 
for $(x,b)\in \mathcal F_xX$, $b :\mathbb R^ n\to T_xX$ is a linear isomorphism. 
Now for any $g\in G$ the differential of this left action is a linear isomorphism between 
the tangent spaces $dg_{|x}:T_xX\to T_{gx}X$. Thus 
we can define
\[
 l_g:\mathcal F_xX\to\mathcal F_{gx}X,\quad\quad (x,b)\mapsto (gx,dg_{|x}\circ b).
\]
This left action on the frame bundle then leads to a canonical left action on 
the density bundles
\[
 l_g:\mathcal D^ \alpha_x\to\mathcal D^ \alpha_{gx},\quad\quad [(x,b),z]\mapsto [(gx,dg_{|x}\circ b),z].
\]
If $h\in G_x$ lies in the stabilizer of the point $x\in X$, then $l_{h}$ is a 
linear isomorphism of the fiber $\mathcal D^\alpha_x\cong \mathbb C$ and we get a
one dimensional representation $(\sigma_x,\mathcal D^\alpha_x)$ of $G_x$ which is 
in general not unitary. We can even explicitly calculate this representation:
\begin{eqnarray}\label{eq:densityaction}
 \sigma_x(h)[(x,b),z]&=& [(x,dh_{|x}\circ b),z] \nonumber\\
 &=&[(x,b\circ b^ {-1}\circ dh_{|x}\circ b),z]\nonumber \\
 &=&[(x,b),|{\det}_{\mathbb R^ n}( b^ {-1}\circ dh_{|x}\circ b)|^{-\alpha} z] \nonumber\\
 &=&[(x,b),|{\det}_{T_xX}( dh_{|x})|^{-\alpha} z]\label{eq:sigma_x_explicit}.
\end{eqnarray}

\bibliographystyle{amsalpha}
\bibliography{WFSetsIII}

\end{document}